\documentclass[11pt,oneside,a4paper,mathscr]{amsart}

\openup1.6\jot
\setlength{\topmargin}{0.1\topmargin}
\setlength{\oddsidemargin}{0.5\oddsidemargin}
\setlength{\evensidemargin}{0.5\oddsidemargin}
\setlength{\textheight}{1.02\textheight}
\setlength{\textwidth}{1.1\textwidth}
\usepackage{amsthm}
\usepackage{amsmath}
\usepackage{amssymb}
\usepackage{amsfonts}
\usepackage{amscd}
\usepackage{mathrsfs} 
\usepackage{mathtools}
\usepackage[dvipdfmx]{graphicx}
\usepackage{bm}   
\usepackage[all]{xy}
\usepackage{ulem}
%%%%%%%%
\makeatletter

\@addtoreset{equation}{section}
\makeatother   
%%%%%%%%%%%%%%%%%%%%%%%%%%%%%%

\newtheorem{theorem}{Theorem}[section]
\newtheorem{lemma}[theorem]{Lemma}
\newtheorem{proposition}[theorem]{Proposition}
\newtheorem{corollary}[theorem]{Corollary}

\theoremstyle{definition}
\newtheorem{definition}[theorem]{Definition}
\newtheorem{example}[theorem]{Example}
\newtheorem{remark}[theorem]{Remark} 
%%%%%%%%%%%%%%%%%%%%%%%%%%%%%%%%
%mathbb
\def\C{\mathbb{C}} 
\def\R{\mathbb{R}}
\def\Q{\mathbb{Q}}
\def\P{\mathbb{P}}
\def\Z{\mathbb{Z}}

\def\i{{\sqrt{-1}}}
\def\del{\partial}
%morphisms
\def\id{\mathrm{id}}
\def\pr{\mathrm{pr}}

\newcommand{\Hom}{\mathrm{Hom}}

\def\hom{{\mathscr{H}\! \! om}}
\def\Cok{{\mathrm{Cok}}}

%brackets
\def\({\left(}
\def\){\right)}
\def\<{\langle}
\def\>{\rangle}
\newcommand{\simeqto}{\xrightarrow{\sim}}

%cal scr
\newcommand{\cal}[1]{\mathcal{#1}}
\newcommand{\scr}[1]{\mathscr{#1}}

%growth conditions
\def\rd{{\rm rd}}
\def\mg{{\rm mod}}

%parameters
\def\tw{\lambda}
\def\eq{\mu}

%often used letters
\def\X{{\mathcal{X}}}
\renewcommand{\L}{{\mathscr{L}}}
\def\A{{\mathscr{A}}}
\def\O{{\mathscr{O}}}

%Stokes
\newcommand{\St}{{\mathrm{St}}}
\newcommand{\bl}{{\mathrm{Bl}}}
\newcommand{\gr}{{\mathrm{gr}}}
\def\Gr{{\mathrm{Gr}}}

\renewcommand{\k}{{\bm{k}}}
\newcommand{\et}{{\acute{e}t}}

\newcommand{\Crit}{{\mathrm{Crit}}}

%De Rham
\def\shift{{\mathbb{S}}}
\def\euler{{\mathfrak{a}}}

\def\DR{{\mathrm{DR}}}
\def\M{{\mathscr{M}}}
\def\N{{\scr{N}}}
\def\H{{\mathscr{H}}}
\def\j{\iota}
\def\T{{{T}}}
\def\IQ{{\mathscr{Q}}}
\def\K{{\mathscr{K}}}
\def\F{{\mathscr{F}}}

\def\dR{{\rm dR}}
\def\Be{{\rm Be}}
\def\Per{{\rm Per}}

\def\nb{{\mathrm{nb}}}
\def\Re{{\mathrm{Re}}}
\def\Im{{\mathrm{Im}}}
\def\e{{\bm{e}}}
\def\q{{q}}
\def\s{{\eq}}
\def\Db{{\mathfrak{Db}}}
\def\RH{{\mathrm{RH}}}

\begin{document}
\title{Stokes filtered sheaves and 
differential-difference modules}
\author{Yota Shamoto}
\address{Kavli Institute for the Physics and Mathematics of the Universe (WPI), The University of Tokyo Institutes for Advanced Study, The University of Tokyo, Kashiwa, Chiba 277-8583, Japan}
\email{yota.shamoto@ipmu.jp}

\begin{abstract}
We introduce the notion of 
Stokes filtered quasi-local systems.
It is proved that the 
category of Stokes filtered quasi-local systems is an Abelian category.
We also give a geometric way to
construct Stokes filtered quasi-local systems,
which describe 
the asymptotic behavior of
certain classes of solutions to 
some differential-difference modules. 
\end{abstract}
\maketitle

\section{Introduction}

In his letter \cite{Deligne} to B. Malgrange, P. Deligne
introduced the 
notion of a sheaf
with a filtration indexed by 
a local system of ordered sets 
in order to express the Stokes phenomenon of 
a linear differential equation of one complex variable
in a sheaf-theoretic way. 
The notion has been developed and extended to 
arbitrary dimensions (See \cite{Sabbah} and references therein)
and is now called a \textit{Stokes filtered sheaf}.
Here, the term ^^ sheaf' is often replaced by a more precise
term like local system, perverse sheaf, and so on.

In this paper, 
we 
introduce an analogous 
notion of a Stokes filtered sheaf
to express the 
Stokes phenomenon of a
differential-\textit{difference}
module of two complex variables 
in a sheaf-theoretic way.
Although 
we only treat a
special class of
differential-difference modules, 
we expect that 
this approach gives a clue 
to investigate more general difference equations 
in a sheaf theoretic way.

%The main result of this paper
%is the following:
%\begin{enumerate}
%\item The strictness of the morphisms in the 
%category of Stokes filtered sheaves for differential-difference modules. 
%\item
%The geometric construction of 
%the Stokes filtered sheaves. 
%\end{enumerate}

To clarify the analogy, in \S \ref{Intro recall}
we will briefly recall some parts of 
the theory of Stokes filtered local systems 
for differential equations.
We also give a class of examples
of the Stokes filtered local systems constructed in a geometric way
since
we will mainly consider the analogue of such examples. 
We then
explain our notion of Stokes filtered ^^ ^^ quasi-local systems"
for differential-difference modules 
in \S \ref{Intro q-Stokes}
and the main results of this paper
in \S \ref{Intro main}. 
Further direction related to mirror symmetry and 
Dubrovin's conjecture
will be discussed in \S \ref{Intro mirror}. 

\subsection{Stokes filtered local systems for differential equations}\label{Intro recall}
Let us briefly recall the theory of Stokes filtered local systems
on $S^1$ following \cite{Sabbah}. 
Let $\scr{I}_1$
denote the constant
sheaf on $S^1$
with fiber $\tw^{-1}\C[\tw^{-1}]$
equipped with the order depends on the point 
$e^{\i\theta}\in S^1$ as follows:
For $\varphi,\psi\in \tw^{-1}\C[\tw^{-1}]$
and $e^{\i\theta}\in S^1$,
$
\varphi\leqslant_{\theta}\psi$ (resp. $\varphi<_\theta\psi$)
if and only if 
$\exp(\varphi(\tw)-\psi(\tw))$
is of moderate growth (resp. rapid decay)
when 
$\lambda$ tends to zero 
satisfying $\arg(\tw)=\theta$.

Let $\cal{L}$ be a local system on a circle $S^1$. 
A non-ramified pre-Stokes filtration 
$\cal{L}_{\leqslant}$
is a family of subsheaves $\cal{L}_{\leqslant\varphi}\subset \cal{L}$
on $\cal{L}$
indexed by $\varphi\in \tw^{-1}\C[\tw^{-1}]$
with the following condition:
For any $e^{\i\theta}\in S^1$, 
$\varphi\leqslant_\theta\psi$
implies 
$\cal{L}_{\leqslant\varphi,\theta}\subset \cal{L}_{\leqslant\psi,\theta}$. 
We may naturally define the 
non-ramified pre-Stokes filtration $\gr\cal{L}_\leqslant$ on 
$\gr\cal{L}\coloneqq \bigoplus_\varphi\gr_\varphi\cal{L}$, 
$\gr_\varphi\cal{L}\coloneqq \cal{L}_{\leqslant \varphi}/\cal{L}_{<\varphi}$.
Then a non-ramified pre-Stokes filtration 
is called a 
non-ramified Stokes filtration 
if $(\cal{L},\cal{L}_\leqslant)$ is locally isomorphic to 
$(\gr\cal{L},\gr\cal{L}_\leqslant)$. 
A morphism of non-ramified Stokes filtered local is defined in an obvious way. 
The following theorem is fundamental
(cf. \cite[Theorem 3.1, Theorem 3.5]{Sabbah}):
\begin{theorem}\label{diff. abel}
The category of 
non-ramified Stokes filtered local systems is abelian.  
\end{theorem}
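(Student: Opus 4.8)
The plan is to verify directly that the category of non-ramified Stokes filtered local systems on $S^1$ satisfies the axioms of an abelian category. Additivity is immediate: $\Hom$-sets are subgroups of the $\Hom$-sets of the underlying local systems, finite direct sums carry the obvious Stokes filtration, and $0$ is a zero object. So the two points requiring work are that every morphism admits a kernel and a cokernel inside the category, and that for every morphism $f$ the canonical arrow $\mathrm{Coim}(f)\to\mathrm{Im}(f)$ is an isomorphism; the latter is equivalent to the strictness statement $f(\cal{L}_{\leqslant\varphi})=f(\cal{L})\cap\cal{L}'_{\leqslant\varphi}$ for all $\varphi$, i.e.\ to the exactness of $\gr$.

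For kernels and cokernels: given $f\colon(\cal{L},\cal{L}_{\leqslant})\to(\cal{L}',\cal{L}'_{\leqslant})$, I would take the kernel $\cal{K}$ and cokernel $\cal{C}$ of $f$ in the abelian category of local systems on $S^1$ (equivalently, of finite dimensional representations of $\pi_1(S^1)\cong\Z$) and equip $\cal{K}$ with the induced family $\cal{K}_{\leqslant\varphi}\coloneqq\cal{K}\cap\cal{L}_{\leqslant\varphi}$ and $\cal{C}$ with the quotient family $\cal{C}_{\leqslant\varphi}\coloneqq\mathrm{Im}(\cal{L}'_{\leqslant\varphi}\to\cal{C})$. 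These are pre-Stokes filtrations, and to see they are Stokes filtrations — locally isomorphic to their gradeds — I would argue locally on $S^1$: near a point the defining local isomorphisms $\cal{L}\cong\gr\cal{L}$ and $\cal{L}'\cong\gr\cal{L}'$ present $f$ as a morphism $\bigoplus_\varphi\gr_\varphi\cal{L}\to\bigoplus_\psi\gr_\psi\cal{L}'$, and a finite dimensional computation with flat frames adapted to the flags shows that $\cal{K}$ and $\cal{C}$, with their induced/quotient filtrations, are again locally split. Thus $\cal{K}$ and $\cal{C}$ are objects of the category, and they are the kernel and cokernel of $f$.

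It remains to prove $\mathrm{Coim}(f)\simeqto\mathrm{Im}(f)$, and this is where the circle $S^1$ (as opposed to a mere arc) is essential. I would first treat the case in which $\cal{L}$ and $\cal{L}'$ are graded. There a morphism $g$ is a tuple of components $g_{\psi\varphi}\colon\gr_\varphi\cal{L}\to\gr_\psi\cal{L}'$, each a horizontal section of the local system $\hom(\gr_\varphi\cal{L},\gr_\psi\cal{L}')$; compatibility with the orders $\leqslant_\theta$ forces $g_{\psi\varphi}$ to vanish at every $e^{\i\theta}$ with $\varphi<_\theta\psi$, and when $\varphi\neq\psi$ the leading term of $\varphi-\psi$ shows that these $\theta$ form a nonempty open arc, whence $g_{\psi\varphi}$, being horizontal on the connected $S^1$, vanishes identically. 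So $g$ is diagonal, and then strictness and the exactness of $\gr$ are obvious. For the general case I would reduce to the graded case using the defining local isomorphisms together with an induction on the level structure (pole order of the exponents), peeling off one level at a time via the associated graded of the corresponding level filtration.

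The step I expect to be the main obstacle is precisely this last reduction. Because the local splittings of a Stokes filtration are not defined globally on $S^1$, promoting the transparent graded picture to the general one — i.e.\ controlling how those local splittings fit together around the circle — is exactly the delicate part of the one-variable asymptotic theory (Deligne--Malgrange; cf.\ \cite{Sabbah}), and in particular it is what distinguishes $S^1$ from an arc, on which the analogous category fails to be abelian (the counterexample being a strictly non-strict morphism that does not extend around the circle). Granting this reduction, we have shown the category is additive, has kernels and cokernels, and has $\mathrm{Coim}\simeqto\mathrm{Im}$ for every morphism, hence is abelian.
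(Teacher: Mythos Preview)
The paper does not prove this theorem; it is stated as background with a citation to \cite{Sabbah}. However, the paper's proof of the analogous Theorem~\ref{STRICTNESS} (via Lemmas~\ref{Split 1}, \ref{m=1}, \ref{key lemma1}) exhibits the standard technique, and your proposal departs from it precisely at the step you yourself flag as the obstacle. Your argument in the graded case is correct and clean, but the proposed reduction of the general case to the graded one by ``induction on the level structure, peeling off one level at a time'' is not a working mechanism as stated: the local splittings $\cal{L}\cong\gr\cal{L}$ exist only on arcs and are highly non-unique on small arcs, so nothing forces a given morphism $f$ to respect any particular choice of splitting, and the level filtration alone does not supply the missing compatibility.

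The standard argument avoids this reduction entirely. Its key lemma is that on an open interval $I\subset S^1$ meeting each Stokes direction for the relevant exponents \emph{exactly once}, the splitting $\Gamma(I,\gr\cal{L})\simeqto\Gamma(I,\cal{L})$ not only exists but is \emph{unique}: existence comes from $H^1(I,\cal{L}_{<\varphi})=0$, and uniqueness from $H^0(I,\cal{L}_{<\varphi})=0$, the latter holding precisely because $I$ is wide enough to cross every Stokes line. Uniqueness of the splittings on both source and target then forces $f$ to become diagonal on $I$ (the map induced on gradeds via the two splittings is filtered with the correct associated graded, hence equals $\gr f$), which gives strictness at every point of $I$; such intervals cover $S^1$. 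So rather than attempting a global comparison with the graded object, one works on intervals just large enough that the local splitting becomes rigid, and that rigidity plays the role your horizontality argument plays in the already-graded case.
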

\begin{remark}
This theorem holds for more general Stokes filtered local systems explained below. 
More generally, the notion of Stokes filtered sheaves 
can be defined in higher dimensions. In higher dimensional case, 
the notion of ^^ ^^ goodness" plays an important role
to see the abelianity of the category. 
\end{remark}
We may consider 
(ramified) Stokes filtered local systems.
In the definition, the index sheaf $\scr{I}_1$
is replaced by 
$\scr{I}=\bigcup_{d\geqslant 1}\scr{I}_d$, 
where, roughly speaking, 
$\scr{I}_d$ denotes the local system on $S^1$
with the fiber $\tw^{-1/d}\C[\tw^{-1/d}]$
and the monodromy $\exp(-2\pi\i/d)$.  
Let $\scr{I}^\et$ denote the \'etale 
space of $\scr{I}$ and $\tau \colon \scr{I}^\et\to S^1$
denote the projection. 
The (pre-)Stokes filtration $\cal{L}_\leqslant$ is
defined as the subsheaf of $\tau^{-1}\cal{L}$
with some conditions 
(See \cite{Sabbah} for more details).  

\begin{theorem}[{Deligne \cite{Deligne}, Malgrange \cite{Mal}, see also
 \cite[Theorem 5.8]{Sabbah}}]\label{Riemann-Hilbert}
There is a functor 
 \[\cal{H}\mapsto \mathrm{RH}(\cal{H})= \(\mathrm{RH}(\cal{H}),\mathrm{RH}_\leqslant(\cal{H})\)\]
from the category of germs of meromorphic connections on $(\C,0)$
to the 
category of Stokes filtered local systems on $S^1$. \qed
\end{theorem}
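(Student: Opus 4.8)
The plan is to follow the classical route of Deligne and Malgrange, producing $\RH_\leqslant(\cal{H})$ as a \emph{growth filtration} on the local system of horizontal sections. Fix a germ $\cal{H}$ of meromorphic connection at $0 \in \C$, represented by some $(\M,\nabla)$ on a small disk $\Delta$. Its sheaf of horizontal sections on $\Delta^{*} = \Delta \setminus \{0\}$ is a local system, and since $S^1 \hookrightarrow \Delta^{*}$ is a homotopy equivalence this produces a local system $\RH(\cal{H}) =: \cal{L}$ on $S^1$; the assignment $\cal{H} \mapsto \cal{L}$ is manifestly functorial and exact, being the horizontal-sections fiber functor, and it is insensitive to the choice of representative since only arbitrarily small neighborhoods of $0$ intervene. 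To retain the growth information I would pass to the real blow-up $\varpi \colon \widetilde{\Delta} \to \Delta$ of the origin, whose boundary circle is $S^1$, and to the subsheaves $\A^{\mg}, \A^{\rd} \subset \varpi^{-1}\O_{\Delta^{*}}$ of holomorphic functions of moderate growth, resp.\ rapid decay, along $S^1$; let $\cal{L}^{\mg}$ be the sheaf of horizontal sections of $\A^{\mg} \otimes_{\varpi^{-1}\O_\Delta} \varpi^{-1}\M$, and similarly $\cal{L}^{\rd}$.

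For the filtration itself, recall that $\scr{I}^{\et} \to S^1$ parametrizes the ramified exponential factors $\varphi \in \tw^{-1/d}\C[\tw^{-1/d}]$; over a point above $e^{\i\theta}$ such a $\varphi$ becomes single-valued after the ramification $\rho_d \colon \tw \mapsto \tw^{d}$, where it defines the rank-one meromorphic connection $\mathcal{E}^{\varphi}$ with connection form $\mathrm{d}\varphi$. I would declare $\RH_\leqslant(\cal{H})_{\leqslant\varphi}$ at that point to consist of the germs of horizontal sections $s$ of $\tau^{-1}\cal{L}$ such that, under the canonical identification with horizontal sections of $\rho_d^{*}\M \otimes \mathcal{E}^{-\varphi}$, the twisted section $e^{-\varphi}s$ extends to a section of the associated sheaf $\cal{L}^{\mg}$ near $\theta$. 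Three points are then routine: that this is a subsheaf of $\tau^{-1}\cal{L}$; that it is a pre-Stokes filtration, because $\varphi \leqslant_\theta \psi$ means exactly that $\exp(\varphi - \psi)$ has moderate growth along $\arg\tw = \theta$, so multiplication by it preserves moderate sections and yields $\cal{L}_{\leqslant\varphi} \subseteq \cal{L}_{\leqslant\psi}$ there; and functoriality, since a horizontal morphism sends moderate sections to moderate sections, hence is automatically compatible with the filtrations, and composition is clear.

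The substantive step --- and the one I expect to be the main obstacle --- is showing that $\RH_\leqslant(\cal{H})$ is a genuine Stokes filtration, i.e.\ that $(\cal{L}, \cal{L}_\leqslant)$ is locally isomorphic to its associated graded $(\gr\cal{L}, \gr\cal{L}_\leqslant)$. This forces one to invoke the Hukuhara--Turrittin--Levelt--Sibuya asymptotic theory: after a suitable ramification $\tw = \tau^{d}$ the formal connection $\widehat{\M}$ splits as $\bigoplus_{\varphi} (\mathcal{E}^{\varphi} \otimes R_\varphi)$ with each $R_\varphi$ regular singular, and --- the essential analytic input --- this formal splitting lifts to an \emph{actual} isomorphism of the analytic connection over every sufficiently small open sector of $S^1$. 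On such a sector the growth filtration of an elementary block $\mathcal{E}^{\varphi} \otimes R_\varphi$ is, by direct inspection, the split filtration that jumps precisely at $\varphi$; hence the sectorial isomorphism identifies $(\cal{L}, \cal{L}_\leqslant)$ with $(\gr\cal{L}, \gr\cal{L}_\leqslant)$ over that sector, which is exactly the required local triviality. It then remains to verify that the ramified monodromy relating the determinations of $\tau$ is compatible with all of this, so that the construction descends to $S^1$ (in dimension one no further goodness condition is needed), and to match ranks and elementary models through the sectorial decomposition. Modulo the asymptotic existence theorem, which I would quote rather than reprove, the rest is bookkeeping together with the verification of the sheaf-theoretic axioms.
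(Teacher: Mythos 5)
The paper states this theorem as a cited background result (Deligne, Malgrange, Sabbah) and offers no proof of its own, so there is nothing in the paper to compare against; your proposal correctly reconstructs the standard Deligne--Malgrange argument that those references give --- horizontal sections on the real blow-up for $\RH(\cal{H})$, the moderate-growth condition on $e^{-\varphi}s$ for $\RH_\leqslant(\cal{H})$, and the Hukuhara--Turrittin--Levelt--Sibuya sectorial asymptotic decomposition as the analytic input establishing local splitting --- and your identification of that last step as the genuine content, to be quoted rather than reproved, is exactly right.
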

\begin{remark}
In this theorem, the functor $\mathrm{RH}$ is called the Riemann-Hilbert functor. 
The local system $\RH(\cal{H})$ on $S^1$ is denoted by $\H^0(\widetilde{\DR}(\cal{H}))$ 
and the Stokes filtration $\RH_{\leqslant}(\cal{H})$ on it 
is denoted by $\H^0(\DR_\leqslant(\cal{H}))$ in \cite{Sabbah}.
\end{remark}

We can construct interesting examples
of Stokes filtered local system 
and the corresponding differential equation in a geometric way. 
For simplicity, we restrict ourselves to one dimensional case. 
Let $X$ be a compact Riemann surface. 
Let $f\colon X\to \P^1$ be a 
meromorphic function on it. 
Put $P=f^{-1}(\infty)$. 
Assume that
$f$ has only $A_1$-singularities on $U\coloneqq X\setminus P$. 
Put $\mathfrak{X}\coloneqq \C^*_\tw\times X$
and $\mathfrak{P}\coloneqq \C^*_\tw\times P$. 

Consider the meromorphic connection
$\cal{M}(f)=(\O_{\mathfrak{X}}(*\mathfrak{P}),d+d(\tw^{-1}f))$ on $\mathfrak{X}$. 
Then, we obtain a 
meromorphic connection on $\C^*_\tw$
by taking the pushing forward
\[\cal{H}_\dR^1(f)\coloneqq
\Cok\left[\pi_{\mathfrak{X*}}\O_{\mathfrak{X}}(*\mathfrak{P})\xrightarrow{d_{\mathfrak{X}/\C^*}+\tw^{-1}df}
\pi_{\mathfrak{X}*}\Omega_{\mathfrak{X}/\C^*}^1(*\mathfrak{P})\right] \]
where $\pi_{\mathfrak{X}}\colon \mathfrak{X}\to \C^*_\tw$ denotes the projection
and $d_{\mathfrak{X}/\C^*}\colon \O_{\mathfrak{X}}\to\Omega_{\mathfrak{X}/\C^*}^1$
denotes the relative differential. 
It is easy to see that 
$\cal{H}^1_\dR(f)$ has singularities only on $\{0,\infty\}$. 
$\cal{H}_\dR^1(f)$ can be seen as 
a filtered de Rham cohomology studied in the theory of 
primitive forms by Kyoji Saito (see \cite{ST} and references therein). 

Applying the Riemann-Hilbert functor in Theorem \ref{Riemann-Hilbert}
to the germ of $\cal{H}_\dR^1(f)$, 
we obtain the Stokes filtered local system $\RH(\cal{H}_\dR^1(f))$.
It is well known that
$\RH(\cal{H}_\dR^1(f))$
is non-ramified and moreover of exponential type 
(c.f. \cite{Pham} for the case where $U=\C^n$, and \cite{Sabbah-Saito} for general case).
In other words, 
the graded part
$\gr_\varphi\RH(\cal{H}_\dR^1(f))$
is non-zero if and only if 
$\varphi=-c/\tw$
for a critical value $c$ of $f$.

There is a geometric description of $\mathrm{RH}(\cal{H}_\dR^1(f))$. 
For each $\tw\in \C^*$, 
let 
$H^\rd_1(U, f/\tw)$
denote the rapid decay homology of 
Bloch-Esnault-Hien (\cite{BE}, \cite{Hien})
of a meromorphic connection 
$(\O_X(*P),d+\tw^{-1}df)$. 
As proved in \cite{HR} (in a more general setting),
$\cal{H}_1^\rd(f)\coloneqq\bigcup_{\tw\in \C^*}H^\rd_1(U, f/\tw)$
is a local system, and the period integral
\begin{align*}
\Per\colon\cal{H}_1^\rd(f)\longrightarrow \hom_{\scr{D}_{\C^*}}(\cal{H}_\dR^1(f)^\vee,\O_{\C^*}),
\quad
[\gamma\otimes e^{-f/\tw}]\mapsto (\omega\mapsto \int_{\gamma}e^{-f/\tw}\omega)
\end{align*}
gives an isomorphism. 
Here, we have put $\cal{H}_\dR^1(f)^\vee\coloneqq \hom_\O(\cal{H}_\dR^1(f),\O_{\C^*})$
and hence $\Per$ induces an inclusion 
$\cal{H}^\rd_1(f)\hookrightarrow \cal{H}^1_\dR(f)$. 
Take the real blowup $\bl^\R_0(\C)$
of $\C$ at the origin
and consider the inclusions
$S^1\xrightarrow{i_{S^1}}\bl^\R_0(\C)\xleftarrow{j_{\C^*}}\C^*.$
Then \[\cal{L}^\rd\coloneqq i_{S^1}^{-1}j_{\C^**}\cal{H}^\rd_1(f)\]
is a local system of $S^1$. 
Take any meromorphic basis $e_1,\dots, e_r$ of $\cal{H}^1_{\dR}(f)$. 
Then we can define the 
filtration on $\cal{L}^\rd(f)$
as follows: 
For a section $\varepsilon\in\cal{L}^\rd(f)$, 
take representative $\widetilde{\varepsilon}\in \cal{H}^\rd_1(f)$. 
Then there is an expression
\begin{align*}
\widetilde{\varepsilon}=\sum_{i=1}^r h_i(\tw)e_i
\end{align*}
where $h_i(\tw)$ denotes holomorphic function on a sector in $\C^*$. 
Then $\varepsilon \in \cal{L}^\rd_{\leqslant \varphi}(f)$
if and only if $e^{-\varphi}h_i(\tw)$ is of moderate growth when 
$\tw$ tends to zero on the sector for all $i=1,\dots, r$. 
Then we can prove that 
$(\cal{L}^\rd(f),\cal{L}^\rd_\leqslant(f))\simeq (\RH(\cal{H}_\dR^1(f)),\RH_\leqslant (\cal{H}_\dR^1(f)))$. 

Let $\omega_1,\dots,\omega_r$ denote the section of 
$\pi_{\mathfrak{X}*}\Omega^1_{\mathfrak{X}/\C^*}(*\mathfrak{P})$
which represents dual bases $e_1^\vee,\dots, e_r^\vee$ of $e_1,\dots,e_r$ in  $\cal{H}^1_\dR(f)^\vee$. 
Assume that $\varepsilon=\gamma\otimes e^{-f/\tw}$ for some family of paths $\gamma$. 
Then we have 
\[h_i(\tw)=\int_{\gamma}e^{-f/\tw}\omega_i\]
By the saddle point method, 
we can directly obtain the following theorem 
without using the Riemann-Hilbert correspondence (Theorem \ref{Riemann-Hilbert}).  
\begin{theorem}\label{L(f)}
The pair $(\cal{L}^\rd(f), \cal{L}^\rd_\leqslant(f))$
is a non-ramified Stokes filtered local system on $S^1$
such that $\gr_\varphi\cal{L}^\rd(f)\neq 0$
iff $\varphi=-c/\tw$ for a critical value $c$ of $f$. \qed
\end{theorem}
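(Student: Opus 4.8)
The plan is to analyze the asymptotic behavior of the period integrals $h_i(\lambda) = \int_\gamma e^{-f/\lambda}\omega_i$ via the saddle point (stationary phase) method, and to read off both the Stokes structure and the location of the jumps directly from this analysis. The setup is that $f\colon X \to \mathbb P^1$ has only $A_1$-singularities on $U = X \setminus P$, so the critical points $p_1,\dots,p_N$ of $f$ on $U$ are nondegenerate, with critical values $c_1,\dots,c_N$ (which we may, after a generic perturbation or simply by working away from the relevant walls, assume to be distinct — or handle the coincidences by grouping). First I would recall the construction of a Lefschetz-thimble-type basis of the rapid decay homology: for each critical point $p_k$ and each choice of $\arg\lambda = \theta$ avoiding the Stokes directions, the descending manifold of $\mathrm{Re}(f/\lambda)$ through $p_k$ gives a rapid decay cycle $\gamma_k^\theta$, and these cycles form a basis of $H_1^{\mathrm{rd}}(U, f/\lambda)$ over the corresponding sector. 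This is exactly the content that makes $\mathcal H_1^{\mathrm{rd}}(f)$ a local system of rank $N = \#\Crit(f|_U)$ on $S^1$, compatibly with the inclusion into $\mathcal H^1_{\dR}(f)$.

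Next I would carry out the stationary phase estimate: along the thimble $\gamma_k^\theta$ one has, as $\lambda \to 0$ with $\arg\lambda = \theta$,
\begin{align*}
\int_{\gamma_k^\theta} e^{-f/\lambda}\omega_i \sim e^{-c_k/\lambda}\,\lambda^{1/2}\,\bigl(a_{i,k} + O(\lambda)\bigr),
\end{align*}
where $a_{i,k}$ is determined by $\omega_i$ and the Hessian of $f$ at $p_k$ (for an $A_1$ point this is the standard Gaussian integral; the half-power of $\lambda$ comes from the one-dimensional transverse Gaussian). Hence for the section $\varepsilon_k$ corresponding to $\gamma_k^\theta$, the coefficients $h_i(\lambda)$ in the meromorphic basis $e_1,\dots,e_r$ are all asymptotic to a constant times $e^{-c_k/\lambda}\lambda^{1/2}$. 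Consequently $e^{-\varphi}h_i(\lambda)$ is of moderate growth as $\lambda \to 0$ along $\arg\lambda=\theta$ precisely when $\mathrm{Re}\bigl((-c_k/\lambda) - \varphi(\lambda)\bigr)$ is bounded above, i.e.\ exactly when $-c_k/\lambda \leqslant_\theta \varphi$ in the order on $\mathscr I_1$ (the extra $\lambda^{1/2}$ and the bounded correction $a_{i,k}+O(\lambda)$ do not affect moderate growth versus rapid decay). This shows $\varepsilon_k \in \mathcal L^{\mathrm{rd}}_{\leqslant\varphi}(f)$ iff $\varphi \geqslant_\theta -c_k/\lambda$, which both exhibits $\mathcal L^{\mathrm{rd}}_{\leqslant}(f)$ as a genuine pre-Stokes filtration (the compatibility $\varphi \leqslant_\theta \psi \Rightarrow \mathcal L_{\leqslant\varphi,\theta} \subset \mathcal L_{\leqslant\psi,\theta}$ is immediate from transitivity of $\leqslant_\theta$) and identifies, in the basis $(\varepsilon_k)$, the associated graded: $\gr_\varphi \mathcal L^{\mathrm{rd}}(f)$ is spanned by the classes of those $\varepsilon_k$ with $-c_k/\lambda = \varphi$, hence is nonzero iff $\varphi = -c/\lambda$ for a critical value $c$.

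To finish, I would verify the local abelianity/splitting condition in the definition of a Stokes filtered local system, namely that $(\mathcal L^{\mathrm{rd}}(f), \mathcal L^{\mathrm{rd}}_\leqslant(f))$ is locally isomorphic to $(\gr, \gr_\leqslant)$. Near a point $e^{i\theta_0} \in S^1$ that is not a Stokes direction for any pair $(c_j, c_k)$, the thimble basis $(\varepsilon_k)$ is defined on a neighborhood and is, by the asymptotic computation above, exactly a basis adapted to the filtration (each jump $e^{-c_k/\lambda}$ occurs at a single value), which gives the local isomorphism there; at a Stokes direction one uses the usual Picard–Lefschetz description of how the thimbles transform — the new basis differs from the old by a unipotent transformation compatible with the filtration up to the jump, which is precisely the statement that one can still find a local splitting. \textbf{The main obstacle} I anticipate is making the saddle point estimate uniform in $\lambda$ up to and including the Stokes rays, and controlling the non-compact directions of $U$ (the behavior near $P = f^{-1}(\infty)$) so that the rapid decay condition genuinely holds and no extra growth is hidden in the tails of the cycles; this is where the $A_1$ hypothesis and a careful choice of representatives $\omega_i$ (with controlled poles along $\mathfrak P$) are essential, and it is the step that the phrase ``by the saddle point method, we can directly obtain'' is quietly doing the real work in.
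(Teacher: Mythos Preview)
Your proposal is correct and follows exactly the approach the paper indicates: the paper states this theorem with a bare \qed, having just remarked that ``by the saddle point method, we can directly obtain the following theorem without using the Riemann--Hilbert correspondence,'' and your sketch (Lefschetz thimbles through each $A_1$ critical point, stationary-phase asymptotics $\sim e^{-c_k/\lambda}\lambda^{1/2}(a_{i,k}+O(\lambda))$, reading off the filtration and the local splittings) is precisely what that remark is pointing to. The references the paper cites for this circle of ideas (\cite{Pham}, \cite{Sabbah-Saito}, and later \cite{AGV}, \cite{Mochizuki} in the analogous quasi-local setting) carry out the analytic details you flag as the main obstacle.
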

\subsection{Stokes filtered quasi-local systems}\label{Intro q-Stokes}
The purpose of 
this paper is to introduce 
an analogous notion of a
Stokes filtered sheaf
to express the asymptotic behavior
of a differential-difference module of two variables.
Before explaining the relation to differential-difference modules, 
we would like to explain the idea of the definition of
such Stokes filtered sheaves. 

Let $T=(S^1)^2\simeq \{(\theta_u,\theta_v)\in (\R/2\pi\Z)^2\}$ be the torus
considered as the corner of the real blowing up 
\[\varpi_{B}\colon \widetilde{B}=\bl^\R_Z(B)\longrightarrow B=\C^2
\]
along the divisor $Z=\{(u, v)\in \C\mid uv=0\}$.
Put $B^*\coloneqq B\setminus Z$ and 
let $T\xrightarrow{\imath_T} \widetilde{B}\xleftarrow{\jmath_B}B^*$ denote the inclusions.
%Let $\k$ be a field. 
%We consider the sheaves of
%$\k[q^{\pm 1}]$-modules on $T$. 
As the counterpart of $\scr{I}_1$ (or $\tw^{-1}\C_{S^1}\subset \scr{I}_1$), 
we consider the sheaf $\IQ$ of index 
defined as 
the restrictions of the sub-sheaf of $j_{B*}\O_{B^*}$ 
generated by the sections of the form
\begin{align*}
u^{-1}\(n\log v+\frac{h(v)}{v}\)\quad\quad(n\in\Z, h(v)\in \O_{\C})
\end{align*}
to $T$ (see Definitions \ref{tilde Q} and \ref{IQ definition} for more precise). 
The sheaf $\IQ$ admits a sheaf of order $\leqslant$ on $T$ (see Definition \ref{IQ order}). 

Let $\k$ be a field. 
As a counterpart of $\k$-local system on $S^1$, 
we consider a quasi-local system of $\k[q^{\pm1}]$-modules on $T$. 
Here, by a quasi-local system
of $\k[q^{\pm 1}]$-module  on $T$,
we mean an $\R$-constructible sheaf 
$\L$ of finite rank free $\k[q^{\pm 1}]$-modules on $T$
such that 
$\L(q)=\L\otimes_{k[q^{\pm 1}]}k(q)$ is a local system of $\k(q)$-vector spaces. 
We moreover assume that 
$\L$ is constructible with respect to the stratification 
$\Theta=(T_{\R_+}, T_{\R_-}, T_{+}, T_-)$ of $T$, 
where $T_{\R_\pm}=\{e^{\i\theta_u}\in \R_\pm\}$ and $T_{\pm}=\{\pm\Im e^{\i\theta_u}>0\}$. 

Let $\tau\colon \IQ^\et\to T$ denote the \'etale space of $\IQ$. 
Then, a pre-Stokes filtration is defined as the  
subsheaf $\L_\leqslant$ in 
$\tau^{-1}\L$ of $\k$-modules with the similar properties as in \S \ref{Intro recall}
(see Definition \ref{pre-q}). 
The new property we add here is the
compatibility of the filtration with
the action of $\k[q^{\pm 1}]$. 
More precisely, we impose the condition 
that 
\begin{align*}
q\cdot \L_{\leqslant \varphi}= \L_{\leqslant\varphi+2\pi\i u^{-1}}
\end{align*}
for any local section $\varphi$ of $\IQ$. 
By this compatibility, 
we can induce the \textit{coarse} filtration on $\L$, which
consists of $\k[q^{\pm 1}]$-submodules in $\L$(see \S \ref{2.2}).

Then, 
we can define the 
notion of a (good)
Stokes filtered quasi-local system $(\L,\L_\leqslant)$
by imposing existence of the local isomorphism to its graded part (see Definition \ref{Def q-Stokes} for more precise). 
Because of some technical reasons, 
we also impose some conditions on the graded part with respect to the coarse filtrations.
The following is the main theorem of this part, which is an analogue of Theorem \ref{diff. abel}:
\begin{theorem}[See Theorem \ref{STRICTNESS} for a more precise statement]
The category of good Stokes filtered quasi-local systems on $T$ is an abelian category. 
\end{theorem}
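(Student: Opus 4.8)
The overall strategy is the one used by Deligne and Malgrange in the differential case behind Theorem~\ref{diff. abel}: realize the category of good Stokes filtered quasi-local systems as a full subcategory of an additive category in which kernels and cokernels exist with their tautological filtrations, and then show that these stay inside the subcategory by proving that \emph{every morphism is strict}. Given a morphism $f\colon(\L,\L_\leqslant)\to(\L',\L'_\leqslant)$, the underlying $\Theta$-constructible sheaves have a kernel, cokernel, image and coimage which are again quasi-local systems of $\k[\q^{\pm1}]$-modules (these operations commute with $\L\mapsto\L(\q)$ and preserve $\Theta$-constructibility). Equip each with the induced pre-Stokes filtration $(\ker f)_{\leqslant\varphi}=\ker f\cap\L_{\leqslant\varphi}$, with $(\mathrm{coker}\,f)_{\leqslant\varphi}$ the image of $\L'_{\leqslant\varphi}$, and similarly for $\mathrm{im}\,f$ and $\mathrm{coim}\,f$; these satisfy the axioms of Definition~\ref{pre-q}, including the $\q$-equivariance $\q\cdot(-)_{\leqslant\varphi}=(-)_{\leqslant\varphi+2\pi\i u^{-1}}$. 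The canonical map $\mathrm{coim}\,f\to\mathrm{im}\,f$ is an isomorphism of underlying sheaves, and it is an isomorphism of pre-Stokes filtered objects exactly when $f(\L_{\leqslant\varphi})=f(\L)\cap\L'_{\leqslant\varphi}$ for every local section $\varphi$ of $\IQ$ --- that is, exactly when $f$ is strict. So the theorem reduces to: (i) every morphism is strict, and (ii) kernels and cokernels of strict morphisms again satisfy the local-gradedness condition of Definition~\ref{Def q-Stokes} together with the auxiliary conditions on the coarse gradeds.

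For (i), strictness is local on $T$: both sides of $f(\L_{\leqslant\varphi})=f(\L)\cap\L'_{\leqslant\varphi}$ are subsheaves of $\L'$, the inclusion ``$\subseteq$'' is automatic, and equality may be checked on stalks. Fix $p\in T$. By Definition~\ref{Def q-Stokes}, on a small enough neighbourhood $U$ of $p$ both $(\L,\L_\leqslant)|_U$ and $(\L',\L'_\leqslant)|_U$ become isomorphic to their graded objects, so over $U$ one has splittings $\L=\bigoplus_\varphi\gr_\varphi\L$ and $\L'=\bigoplus_\varphi\gr_\varphi\L'$ into local systems with $(\L_{\leqslant\psi})_\theta=\bigoplus_{\varphi\leqslant_\theta\psi}(\gr_\varphi\L)_\theta$ at each $\theta\in U$, and likewise for $\L'$. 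Imposing that $f$ respects the filtration at \emph{every} $\theta\in U$ simultaneously forces the matrix $(f_{\chi\psi})$ of $f$ with respect to these splittings, $f_{\chi\psi}\colon\gr_\psi\L\to\gr_\chi\L'$, to vanish unless $\chi\leqslant_\theta\psi$ for all $\theta\in U$; after shrinking $U$ this is a partial order $\leqslant_U$ on the finitely many exponents occurring near $p$, and strictness on $U$ then becomes an elementary statement about filtered modules split compatibly with $\leqslant_U$. The input needed is an explicit description, near each stratum $T_{\R_\pm}$ and $T_\pm$, of the order $\leqslant$ on $\IQ^\et$ from Definition~\ref{IQ order}: one must check that the indices $u^{-1}(n\log v+h(v)/v)$ are comparable in a lexicographic fashion --- the pole part $h(0)/v$ first, as in the exponential case of \S\ref{Intro recall}, then the logarithmic part $n\log v$, then the higher-order terms --- so that the combinatorics reduces to a totally preordered situation and the triangular argument applies.

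For (ii), a strict morphism between pre-Stokes filtered objects that are split over $U$ has kernel and cokernel again split over $U$, into the kernels and cokernels of the graded pieces; hence $\ker f$ and $\mathrm{coker}\,f$ are locally isomorphic to their graded objects and are genuine (not merely pre-)Stokes filtered quasi-local systems. The remaining point is the coarse filtration (\S\ref{2.2}): since the $\q$-equivariance is inherited by the induced filtrations, the fine strictness just proved descends to strictness for the coarse filtration by $\k[\q^{\pm1}]$-submodules, and the auxiliary conditions imposed on the coarse gradeds in Definition~\ref{Def q-Stokes} must be verified to be stable under passage to strict sub- and quotient objects --- this I would check directly from their formulation. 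With (i) and (ii) in hand, the standard categorical formalism (the coimage--image comparison being an isomorphism in an additive category with kernels and cokernels) yields abelianity, which is the content of Theorem~\ref{STRICTNESS}.

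I expect the main obstacle to be the local analysis of the order $\leqslant$ on $\IQ^\et$ invoked in the second paragraph --- the step ruling out that a morphism ``raises'' the Stokes filtration. This is the analogue, in the present differential-difference setting, of the ``goodness'' condition needed for abelianity in the higher-dimensional theory of ordinary Stokes filtered sheaves, and the new logarithmic terms $n\log v$ together with the $\q$-shift $\varphi\mapsto\varphi+2\pi\i u^{-1}$ make the comparison of indices along the special strata $T_{\R_\pm}$ and $T_\pm$ genuinely more delicate than in the purely exponential situation; everything after strictness is routine abelian-category bookkeeping.
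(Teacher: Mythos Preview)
Your overall architecture is right---reduce abelianity to strictness of morphisms, then check that kernels and cokernels inherit the Stokes property---and this is exactly what the paper does.  But the heart of your strictness argument contains a genuine gap.

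You write that ``on a small enough neighbourhood $U$ of $p$ both $(\L,\L_\leqslant)|_U$ and $(\L',\L'_\leqslant)|_U$ become isomorphic to their graded objects'', citing Definition~\ref{Def q-Stokes}.  But that definition only provides \emph{stalkwise} splittings $\eta_{\bm\theta}$.  A stalk isomorphism extends to a sheaf isomorphism on a neighbourhood, but there is no reason it remains a \emph{filtered} isomorphism once you cross a Stokes line---and you need to cross Stokes lines for your ``varying $\theta$ forces diagonality'' argument to bite.  Worse, within a single component $\Phi_j=\Phi_{n_j,h_j}$ the exponents differ by $2\pi\i m u^{-1}$, and the induced order among them is \emph{constant} on each of $T_+$ and $T_-$; it flips only across $T_{\R_\pm}$.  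So on any neighbourhood $U\subset T_+$, the matrix of $f$ restricted to $\Phi_j$ is merely triangular, never forced to be diagonal, and strictness does not follow.  To get both orderings simultaneously you would need a single splitting valid on an open set meeting both $T_+$ and $T_-$, which is exactly what is not given by the definition.

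The paper resolves this by working not on two-dimensional neighbourhoods but on one-dimensional intervals $I\subset\ell_a^{\bm\theta}(S^1)$ inside carefully chosen circles.  The existence of splittings over such $I$ is a cohomological statement ($H^1(I,\L_{<\varphi})=0$, Lemma~\ref{first vanishing}) proved by an induction on the number of Stokes points in $I$ (Lemmas~\ref{Split 1}, \ref{key lemma1}, \ref{key lemma2}).  Crucially, when $I$ contains \emph{exactly one} point of each Stokes line (respectively, of $T_{\R_+}\cup T_{\R_-}$ in the single-$\Phi_j$ case), the splitting is \emph{unique}; uniqueness then forces any filtered morphism to coincide with its graded part, hence to be diagonal, hence strict.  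Your proposal does not isolate this uniqueness mechanism, which is the actual engine of the proof; the ``elementary statement about filtered modules split compatibly with $\leqslant_U$'' you invoke is not elementary here, because the relevant partial order $\leqslant_U$ over a genuinely small $U$ is too coarse to separate exponents within a $\Phi_j$.  The two-step structure (coarse graded first via Lemma~\ref{key lemma1}, then fine graded within each $\Phi_j$ via Lemma~\ref{m=1}) and the separate treatment of points on $T_{\R_\pm}$ via $\L^\pm$ (Lemma~\ref{key lemma2}) are likewise missing from your outline.
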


\subsection{Geometric construction}\label{Intro main}
In this paper, we do not try to  formulate an analog of Theorem \ref{Riemann-Hilbert}. 
Instead, we shall give an analog of Theorem \ref{L(f)}.

\subsubsection{De Rham cohomology}\label{intro dR}
As in \S \ref{Intro recall}, 
let $X$ be an compact Riemann surface and 
$f$ be a meromorphic function on it. 
We also consider a meromorphic function $g$ on $X$.
Let $D$ be the union of $P$ and the pole of $g^{-1}dg$. 
Let $S= \C^2$ be the surface with coordinates $(\tw,\eq)$. 
We put $\X=S\times X$ and $\cal{D}=S\times D$.

We then define the module $\M(f,g)=\O_{\X}(*\cal{D})$
with operators $\nabla$, $\nabla_\euler$, and $\shift$. 
Here, $\nabla=d+\tw^{-1}(df-\eq g^{-1}dg)$ denotes the relative connection, 
$\nabla_\euler$ denotes the differential operator corresponding to 
the vector field $\euler=\tw^2\del_\tw+\tw\eq \del_\eq$
defined by $\nabla_\euler(1)=-f$, 
and  the difference operator $\shift$ corresponding to the 
shift of the parameter $\sigma\colon (\tw,\eq)\mapsto(\tw,\eq-\tw)$
determined by $\shift(1)=g$ (see Definition \ref{MFG} for more precise). 
Since $(\nabla,\nabla_\euler,\shift)$ satisfies a kind of integrability condition (Lemma \ref{INT}), 
the de Rham cohomology group (or, a pushing forward)
$\H^1_\dR(f,g)_{|S^\circ}$ of $\M(f, g)$ 
restricted to $S^\circ=\{(\tw,\eq)\in S^\circ\mid \tw\neq 0\}$ 
is naturally equipped with the operators $\nabla_\euler$ and $\shift$
(see \S \ref{DR2} for the definition). 

Roughly speaking, the differential-difference module $\H^1_\dR(f,g)_{|S^\circ}$ is a counterpart of $\cal{H}^1_\dR(f)$.
However, there appears some difficulties when 
$E=D\setminus P$ is not empty (as we will see below, this case contains an interesting example). 
In this case, 
$\H^1_\dR(f,g)(*\tw)$ is not locally free of finite rank
over $\O_S(*\tw)$, although we can show that 
$\H^1_\dR(f,g)_{|S^\circ}$ is locally free over $\O_{S^\circ}$
(see Theorem \ref{dRT}). 
It means that we can not 
take a meromorphic frame of $\H^1_\dR(f,g)$ around $\tw=0$. 
This causes some problem since we 
would like to investigate 
the asymptotic behavior
when $\tw\to0$. 

To avoid this problem, we take some sub-sheaves $\H^1_{\dR, a, b}(f,g)$
of $\H^1_{\dR}(f,g)$ of free $\O_S$-modules
indexed by two integers $(a, b)\in \Z^2$ (see \S \ref{S lattice}).
These two integers correspond to 
the pole orders along the components $E=E_0\sqcup E_\infty$
where $E_0=g^{-1}(0)\setminus P$ and $E_\infty=g^{-1}(\infty)\setminus P$.  
If $a\leq a'$ and $b\leq b'$, then we have the inclusion
$\H^1_{\dR, a, b}(f,g)\subset \H^1_{\dR,a',b'}(f,g)$. 
The operators $\nabla_\euler$ and $\shift$ act as
\[\nabla_\euler\colon \H_{\dR, a, b}^1(f,g)\to\H_{\dR,a, b}^1(f,g),
\text{ and }\shift\colon \H^1_{\dR, a, b}(f,g)\to \sigma_*\H^1_{\dR,a+1,b-1}(f,g).\]
The limit $\lim_{a, b\to\infty}\H^1_{\dR,a, b}(f,g)$
is isomorphic to $\H^1_{\dR}(f,g)$
and the other limits when
$(a, b)\to (\infty,-\infty), (-\infty,\infty)$ and $(-\infty,-\infty)$
also exist and have geometric meaning 
concerning the asymptotic behavior 
of the de Rham complexes 
along the divisor $E$
(see Proposition \ref{limits} for more precise).

%Although it seems important 
%to treat this kind of subsheaves, 
%it is unclear to the author whether 
%this kind of structure exists for a general differential-difference module. 
%This is one of the reasons why we avoid to discuss the 
%differential-difference modules in a general context. 

\subsubsection{Betti homology}\label{intro Betti}
Put $Y=X\setminus D$ and $\cal{Y}^\circ=S^\circ\times Y$. 
Let $\k$ be a subfield in $\C$. 
Then we consider the following local system of 
$\k[q^{\pm1}]$-modules on $\cal{Y}$:
\begin{align*}
\K(f,g)\coloneqq \k[q^{\pm 1}]e^{-f/\tw}g^{\eq/\tw}\subset \M(f,g)_{|\cal{Y}^\circ},
\end{align*}
where we put $q=\exp(2\pi\i\eq/\tw)$. 
Although $g^{\eq/\tw}$ is a multivalued function, 
the submodule $\K(f,g)$ is well defined. 
We regard $\K(f,g)$ as a counterpart 
of the local system of flat sections of $\cal{M}(f)_{|\mathfrak{X}\setminus \mathfrak{P}}$. 

In the case where $E=D\setminus P$ is empty, 
we consider the family of rapidly decay homology group
to obtain a local system 
$\H^\rd_1(f,g)$ of 
$\k[q^{\pm 1}]$-modules on $S^\circ$,
which we regard as a counterpart of 
$\cal{H}^\rd_1(f)$. 

In the general case where $E$ is not necessarily empty, as in 
\S \ref{intro dR}, 
we should consider 
moderate growth or rapid decay 
condition on the components $E_0$ and $E_\infty$
and construct four kinds of 
local systems 
$\H^\mg_1(f,g)$, $\H^\Be_{1,E_0!}(f, g)$, $\H^\Be_{1,E_\infty!}(f, g)$
and $\H^\rd_1(f,g)$ of $\k[q^{\pm 1}]$-modules on $S^\circ$.

Using the relations of 
(co)homology intersection pairings and period integrals 
(c.f. \cite{MMT}, \cite{FSY}, and a review in \S \ref{3.4} in this case),
we obtain the 
inclusion of the four sheaves above to the four limits considered in \S \ref{intro dR}.
In particular, we have the inclusion 
$\H^\mg_1(f,g)\hookrightarrow \H^1_{\dR}(f,g)$.

Glueing 
the sheaves $\H^\mg_1(f,g)$, $\H^\Be_{1,E_0!}(f, g)$, $\H^\Be_{1,E_\infty!}(f, g)$
and $\H^\rd_1(f,g)$
on $S^*\coloneqq S^\circ\setminus \{\eq=0\}$,
we obtain a quasi-local system
$\H^\Be_{1,\bm{0}}$ on $S^*$ (see \S \ref{Glueing Betti}). 
Put $\H^1_{\dR,\bm0}=\H^1_{\dR,0,0}$.
The inclusions defined
above
induce
the inclusion
$\H^\Be_{1,\bm{0}}\hookrightarrow \H^1_{\dR,\bm0|S^*} $
such that it induces the isomorphism
$\H^\Be_{1,\bm{0}}\otimes\O_{S^*}\simeq \H^1_{\dR,\bm0|S^*}$
(see \S \ref{Glueing Betti}).
This inclusion plays a similar role as the inclusion 
$\cal{H}^\rd_1(f)\hookrightarrow \cal{H}^1_\dR(f)$. 

\subsubsection{Main theorem and an example}

Take the holomorphic map 
$\phi_S\colon B\to S$ defined by $\phi_S(u, v)=(uv, v)$,
which induces an isomorphism 
$\phi_S\colon B^*\simeqto S^*$. 
Put
\begin{align*}
\L^\Be(f,g)\coloneqq \imath_T^{-1}\jmath_{B*}\phi_{S}^{-1}\H^\Be_{1,\bm0}(f,g),
\end{align*}
which is a quasi-local system on $T$. 
Using a local frame of $\phi_S^*\H^1_{\dR,\bm0}$ around the origin 
of $B$, 
we can define the pre-Stokes filtration $\L^\Be_{\leqslant}(f,g)$ on 
$\L^\Be(f,g)$.

Let $\Crit(f)$ denote the set of critical points 
of $f_{|U}$. 
Assume 
$E\cap \Crit(f)=\emptyset$. 
Under this assumption,
we can define the goodness condition on $(f,g)$
(See Definition \ref{good pair}).
The following is the main theorem of this paper:
\begin{theorem}[Theorem \ref{main theorem}]
Assume that $(f,g)$ is good. 
Then the pair
 \[(\L^\Be(f,g),\L^\Be_{\leqslant}(f,g))\]
is a good Stokes filtered quasi-local system. 
\end{theorem}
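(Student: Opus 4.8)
The plan is to argue locally on $T$ and to read off the whole Stokes structure from the asymptotics of period integrals, exactly as in the differential case (Theorem~\ref{L(f)}); the genuinely new ingredients are the bookkeeping of the $\k[q^{\pm1}]$-module and coarse-filtration data, and the analysis near the divisor $E=E_0\sqcup E_\infty$. Fix $\theta=(\theta_u,\theta_v)\in T$, a small open sector $\widetilde V\subset\widetilde B$ around it, and put $V=\phi_S(\widetilde V\cap B^*)\subset S^*$. Three things must be checked: (i) $\L^\Be(f,g)$ is a quasi-local system, constructible for $\Theta$; (ii) $\L^\Be_\leqslant(f,g)$ is a pre-Stokes filtration, i.e.\ compatible with the order of $\IQ$ and with the $q$-action; (iii) locally $(\L^\Be(f,g),\L^\Be_\leqslant(f,g))$ is isomorphic to its graded part, with graded and coarse-graded pieces of the shape demanded in Definition~\ref{Def q-Stokes}.

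For (i) I would invoke \S\ref{Glueing Betti}: $\H^\Be_{1,\bm0}(f,g)$ is glued on $S^*$ from the four locally free $\k[q^{\pm1}]$-sheaves $\H^\mg_1$, $\H^\Be_{1,E_0!}$, $\H^\Be_{1,E_\infty!}$, $\H^\rd_1$ on $S^\circ$, so finiteness and freeness of the stalks follow from the corresponding properties of the rapid-decay and moderate-growth homologies of \cite{BE,Hien,HR}; the isomorphism $\H^\Be_{1,\bm0}\otimes\O_{S^*}\simeq\H^1_{\dR,\bm0|S^*}$ together with local freeness of $\H^1_{\dR,\bm0}$ over $\O_{S^\circ}$ (Theorem~\ref{dRT}) shows that $\L^\Be(f,g)\otimes\k(q)$ is a local system of $\k(q)$-vector spaces; and, by the shape of the gluing in \S\ref{Glueing Betti}, the jumps of $\imath_T^{-1}\jmath_{B*}\phi_S^{-1}\H^\Be_{1,\bm0}$ sit only over $T_{\R_\pm}$, giving constructibility for $\Theta$. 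Step (ii) is then essentially formal: writing a representative of a local section of $\L^\Be$ as $\widetilde\varepsilon=\sum_i h_i e_i$ in the fixed local frame $e_1,\dots,e_r$ of $\phi_S^*\H^1_{\dR,\bm0}$ near $0\in B$, with $h_i=\int_\gamma e^{-f/\tw}g^{\eq/\tw}\omega_i$, the membership $\varepsilon\in\L^\Be_{\leqslant\varphi}$ reads ``$e^{-\varphi}h_i$ of moderate growth as $(u,v)\to0$ in $\widetilde V$, for all $i$''; if $\varphi\leqslant_\theta\psi$ then $\exp(\varphi-\psi)$ is of moderate growth, hence so is $e^{-\psi}h_i=\exp(\varphi-\psi)\,e^{-\varphi}h_i$, which is the order compatibility, and since $q=\exp(2\pi\i\eq/\tw)=\exp(2\pi\i u^{-1})$ acts by multiplying the factor $e^{-f/\tw}g^{\eq/\tw}$, comparing growth conditions gives $q\cdot\L^\Be_{\leqslant\varphi}=\L^\Be_{\leqslant\varphi+2\pi\i u^{-1}}$.

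The substance is (iii), and the tool is the saddle-point method applied to the $h_i$. After shrinking $\widetilde V$ I would split the relevant homology (whichever of the four theories is in force on $V$) into the span of Lefschetz thimbles attached to the critical points of $f_{|U}$ --- all lying off $D$, since $E\cap\Crit(f)=\emptyset$ --- together with a complementary part carried by cycles localized near $E_0\sqcup E_\infty$ and near $P$. For a thimble $\gamma_p$ through a critical point $p$ with $f(p)=c$ and $g(p)=c_g\in\C^*$, stationary phase gives
\[
\int_{\gamma_p}e^{-f/\tw}g^{\eq/\tw}\omega_i
=\exp\!\big(u^{-1}(\log c_g-c\,v^{-1})\big)\cdot\eta_{i,p}(u,v),
\]
where $\eta_{i,p}$ admits an asymptotic expansion (in particular is of moderate growth) as $(u,v)\to0$ in $\widetilde V$; and $u^{-1}(\log c_g-c v^{-1})=u^{-1}\big((-c+(\log c_g)v)/v\big)$ is a section of $\IQ$ (take $n=0$, $h(v)=-c+(\log c_g)v\in\O_\C$), the ambiguity $\log c_g\mapsto\log c_g+2\pi\i$ matching the $q$-action. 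For the cycles near $E_0$ (resp.\ $E_\infty$) the local form of $g$ produces, after pull-back along $\phi_S$ and passage to the chosen lattice frame, exponents with a nonzero $u^{-1}\log v$ part, i.e.\ $n\neq0$. The goodness of $(f,g)$ (Definition~\ref{good pair}) is arranged precisely so that the resulting finite set $\{\varphi_\alpha\}\subset\IQ$ is good --- the pairwise differences $\varphi_\alpha-\varphi_\beta$ having non-degenerate dominant behaviour, so that the Stokes directions are well-defined and the $\varphi_\alpha$ are pairwise $\leqslant_\theta$-comparable at each $\theta$ --- which is exactly what is needed to upgrade the asymptotic expansions of the $h_i$ to a genuine local splitting $(\L^\Be,\L^\Be_\leqslant)\simeq(\gr\L^\Be,\gr\L^\Be_\leqslant)$ by the usual Hukuhara--Turrittin-type argument. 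Finally, grouping the $\varphi_\alpha$ by the integer $n$ recovers the coarse filtration, whose graded pieces are free over $\k[q^{\pm1}]$ because each fixed-$n$ block of cycles is freely permuted by $q$; this yields the remaining conditions of Definition~\ref{Def q-Stokes}.

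The hard part will be the uniformity of the saddle-point estimates up to the corner $T$: one needs the asymptotic expansions of the $h_i$ valid uniformly on closed subsectors of $\widetilde V$ and all the way to $T$, including the regime where a cycle near $E$ simultaneously approaches the branch locus of $g^{\eq/\tw}$ and the modulus $|q|=\exp(2\pi\Im(u^{-1}))$ degenerates. This is where the lattice truncations $\H^1_{\dR,a,b}(f,g)$ of \S\ref{S lattice} (keeping all integrands in a controlled $\O_S$-lattice near $\tw=0$) and the goodness hypothesis (keeping the exponents separated) do the essential work, and making this precise is the main labour behind Theorem~\ref{main theorem}.
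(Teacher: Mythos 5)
Your overall strategy is the same as the paper's---construct cycles attached to the zeros of $\alpha_\eq=df-\eq\,g^{-1}dg$, read off the Stokes data from the saddle-point asymptotics of the resulting period integrals, and verify goodness---but two points in the plan are not merely unelaborated, they are wrong or unsupported as stated. First, the exponent in your stationary-phase formula is a truncation that does \emph{not} give the right exponential factor. The saddle moves with $\eq$: the relevant critical point of $f-\eq\log g$ is $\nu_p(\eq)$, and the exponent that comes out of the saddle-point method is $u^{-1}\big(\log g_p(v)-f_p(v)/v\big)$ with $f_p=f\circ\nu_p$, $g_p=g\circ\nu_p$ full holomorphic functions of $v=\eq$, i.e.\ the class $\Phi_p(f,g)$ of Theorem \ref{main theorem}. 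Your $h(v)=-c+(\log c_g)v$ is only the first-order Taylor truncation; the discrepancy is $u^{-1}\cdot O(v)$ (and for $e\in E$ it is already $u^{-1}\cdot O(1)$, since $f_e'(0)\neq0$), which is \emph{not} locally bounded near $T$ and therefore changes the class in $\IQ$. With your approximate $\varphi$ the factor you call $\eta_{i,p}$ is not of moderate growth, and the pre-Stokes filtration built on these exponents would be the wrong one.

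Second, the passage ``upgrade the asymptotic expansions ... to a genuine local splitting by the usual Hukuhara--Turrittin-type argument'' is exactly the step that does not exist in this differential--difference setting and that the paper must, and does, replace. The paper's proof (Theorem \ref{WTS} and \S\ref{construction of gamma}--\ref{saddle point method}) instead \emph{constructs explicit sections} $\mathfrak{s}^\phi_c$ and $\mathfrak{s}^\phi_{e,\pm}$ from carefully engineered families of paths $\gamma^\phi_c$, $\gamma^\phi_{e,\pm}$, and then certifies that they form frames by computing the intersection matrix $\<\mathfrak{s}^\phi_{p,+},\mathfrak{s}^{\phi+\pi}_{p',-}\>_{\Be+}$ to be the identity (condition (2) of Theorem \ref{WTS}); this intersection input, missing entirely from your plan, is what guarantees the splittings $\eta_{\bm\theta}$, $\eta_{\bm\theta_\pm}$ of Definition \ref{Def q-Stokes}. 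Likewise, the $\pm$saturation of the coarse graded pieces $\Gr_{\Phi_j}(\L)$ (condition (3) of Definition \ref{Def q-Stokes}) does not follow from ``grouping by $n$'': it is verified by condition (3) of Theorem \ref{WTS}, which rests on the asymmetric construction of $\gamma^\phi_{e,+}$ versus $\gamma^\phi_{e,-}$ on the two sectors $\cal S^\phi_\pm(\delta)$, including the modification of $\gamma^\phi_{e,+}$ near $E$ by a radial segment (\S4.8.3). Without some substitute for these ingredients, steps (iii) in your plan do not close.
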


As an easy example, 
we consider the case 
$X=\P^1$, $f=z$ and $g=z$,
where $z$ denotes the non-homogeneous coordinate on 
the projective line $\P^1$. 
In this case, $E=E_0=\{0\}$ is non-empty,
and the differential-difference equation 
essentially corresponds to the difference equation 
for gamma function (See Example \ref{EXG}).
The Stokes filtered quasi-local system $\L^\Be(z, z)$
is naturally isomorphic to $(\L_{\Gamma},\L_{\Gamma\leqslant})$ defined as follows 
(see Example \ref{GAMMA FUN} and \S \ref{last examples}):
Let $\L_\Gamma$ be a sub-sheaf of 
$\imath_T^{-1}\jmath_{B*}\O_{B^*}$ defined by
\begin{align*}
\L_{\Gamma}(V)\coloneqq 
\begin{cases}
\C[q^{\pm 1}]u^{u^{-1}}v^{u^{-1}}\Gamma(u^{-1})& (V\cap T_{\R_-}= \emptyset)\\
\C[q^{\pm 1}]u^{u^{-1}}v^{u^{-1}}(1-q)\Gamma(u^{-1})& (V\cap T_{\R_-}\neq \emptyset)
\end{cases}
\end{align*}
where $V$ is a small open subset of $T$ and $\Gamma(u^{-1})$ denotes the gamma function.
Then the moderate growth condition on $\imath_T^{-1}\jmath_{B*}\O_{B^*}$
induces the filtration $\L_{\Gamma,\leqslant}$. 
By the Stirling formula and the reflection formula for gamma function,
the graded part is given by
\begin{align*}
\tau_!\gr\L_{\Gamma}\simeq \C[q^{\pm 1}]u^{1/2}v^{u^{-1}}e^{-u^{-1}}.
\end{align*}
In this way, the theory of the Stokes filtered quasi-local system 
contains the sheaf theoretic expression of asymptotic behavior of
solutions to some classical difference equations. 
We will also deal with the 
example related to some cylindrical functions.
\subsection{Further direction related to mirror symmetry}\label{Intro mirror}
One of the motivations of this study 
is to formulate the equivariant version 
of Dubrovin's conjecture
from the viewpoint of the author's previous study 
with F. Sanda \cite{Sanda-Shamoto}. 
See \cite{FIMS}, \cite{TV}, and \cite{CV} for pioneering works 
on this topic. 

From the viewpoint of mirror symmetry, 
the differential-difference module considered here 
corresponds to the equivariant quantum cohomology
with the grading operator (as a differential operator) and the shift operator (as a difference operator). 
The example of gamma function corresponds to 
the affine line $\mathbb{A}^1$ with the canonical 
$\C^*$-action.  
The example related to cylindrical functions corresponds to 
the projective line $\P^1$ with the $\C^*$-action. 

To relate our Stokes filtered quasi-local systems to 
the equivariant derived categories, 
it seems important to formulate 
the notion of Stokes data
which should be described in terms of modules over $\k[q^{\pm 1}]$.
%We would like to pursue this direction in the subsequent studies.

\subsection{Notations}
%We put $\i\coloneqq \sqrt{-1}$. 
For a complex number $\alpha$, 
$\Re(\alpha)$ and $\Im(\alpha)$ denote the real and imaginary part of $\alpha$, respectively.
For a complex manifold $M$, 
$\O_M$ denotes the sheaf of holomorphic functions on $M$. 
For a meromorphic function $F$ on $M$, 
$(F)_0$ and $(F)_\infty$ denote the 
zero divisor of $F$ and the pole divisor of $F$, respectively.  
$|(F)_0|$ and $|(F)_\infty|$ denote their supports. 
For a hypersurface $N\subset M$, 
 $\O_M(*N)$ denote the sheaf of meromorphic functions 
 whose poles are contained in $N$. 

\section{Stokes filtered quasi-local systems}\label{stokes}
\subsection{A sheaf of ordered abelian groups}
Let $B=\C^2$ denote the complex surface with coordinate $(u, v)$.
Let ${Z}$ denote the divisor $|(uv)_0|$ in $B$. 
We take the real blowing up 
\begin{align*}
\varpi_B\colon \widetilde{B}=\bl^\R_{{Z}}(B)\longrightarrow B
\end{align*}
of $B$ along the normal crossing divisor ${Z}$, which is identified with 
the projection
\begin{align*}
\(\R_{\geq 0}\times (\R/2\pi\Z)\)^2\longrightarrow \C^2,
\quad ((r_u,\theta_u),(r_v,\theta_v))\mapsto (u, v)=(r_u e^{\i \theta_u}, r_v e^{\i \theta_v}).
\end{align*}
Let $B^*\coloneqq B\setminus {Z}$ be the complement 
of ${Z}$ in $B$.
Let $\widetilde{\jmath}_B\colon B^*\to \widetilde{B}$
denote the inclusion. 
Let $\varpi_{v}\colon \widetilde{B}\to \C$
denote the projection to $v$-plane. 
\begin{definition}\label{tilde Q}
Let $\widetilde{\IQ}$ denote the sheaf of $\Z$-submodules of $\widetilde{\jmath}_{B*}\O_{B^*}$
locally generated by the sections of the form
\begin{align*}
&u^{-1}\(n\log v+\frac{h(v)}{v}\)&(n\in\Z, h(v)\in \varpi_v^{-1}\O_{\C})
\end{align*}
where the branch of $\log v$ is locally defined.
\end{definition}
Let $\widetilde{\jmath}_{B*}\O^{\rm lb}_{B^*}$
denote the subsheaf of $\widetilde{\jmath}_{B*}\O_{B^*}$
whose sections are locally bounded.
In other words, 
for an open subset $V\subset \widetilde{B}$, 
a section $\varphi\in\widetilde{\jmath}_{B*}\O_{B^*}(V)=\O_{B^*}(V\cap B^*)$
is a section of $\widetilde{\jmath}_{B*}\O^{\rm lb}_{B^*}$
if and only if the following condition is satisfied:
for any compact subset $K\subset V$, 
there exists a positive constant $C_K>0$
such that $|\varphi(u, v)|<C_K$
for any $(u, v)\in K\cap B^*$. 
Let $T$ be the corner of $\widetilde{B}$, which is identified with $(\R/2\pi\Z)^2$. 
Let $\imath_T\colon T\hookrightarrow \widetilde{B}$ denote the inclusion.
\begin{definition}\label{IQ definition}
We set $\IQ\coloneqq \imath_T^{-1}\widetilde{\IQ}/(\widetilde{\IQ}\cap \widetilde{\jmath}_{B*}\O^{\rm lb}_{B^*})$. 
\end{definition}
Let $\A_{\widetilde{B}}^{\leqslant {Z}}$ 
denote the subsheaf of $\widetilde{\jmath}_{B*} \O_{B^*}$
whose section has moderate growth along $\partial \widetilde{B}$ (\cite[\S 8.3]{Sabbah}).
Recall that a section $\varphi$ of $\widetilde{\jmath}_{B*} \O_{B^*}(V)$
for an open subset $V\subset \widetilde{B}$ is in $\A_{\widetilde{B}}^{\leqslant {Z}}(V)$
if and only if, for any compact subset $K\subset V$, there exists
$N_K\geqslant 0$ and $C_K>0$ such that \[|\varphi|<C_K|uv|^{-N_K}\]
on $K\cap B^*$. 
Let $\log \A_{\widetilde{B}}^{\leqslant {Z}}$ denote  the subsheaf of $\widetilde{\jmath}_{B*} \O_{B^*}$
locally generated by the the sections with the property that the exponential of them
have moderate growth along $\partial \widetilde{B}$.
In other words, we put 
\[\log \A_{\widetilde{B}}^{\leqslant {Z}}\coloneqq \exp^{-1}(\A_{\widetilde{B}}^{\leqslant {Z}})\]
where $\exp\colon \widetilde{\jmath}_{B*}\O_{B^*}\to \widetilde{\jmath}_{B*}\O_{B^*}$, 
$\varphi\mapsto e^\varphi$ denotes the
exponential map.
\begin{definition}\label{IQ order}
Let $\IQ_{\leqslant 0}$ be the subsheaf of 
$\IQ$ defined as the restriction of the quotient 
$
(\widetilde{\IQ}\cap\log \A_{\widetilde{B}}^{\leqslant Z})/
(\widetilde{\IQ}\cap \widetilde{\jmath}_{B*} \O_{B^*}^{\rm lb})$
to ${\T}$. Note that we have $\IQ_{\leqslant 0}\cap (-\IQ_{\leqslant 0})=0$. 
For two sections $\varphi, \psi\in \IQ(V)$
for an open subset $V\subset {\T}$, 
we define the order $\leqslant_V$ on $\IQ(V)$ by
\begin{align*}
\varphi\leqslant_V\psi\overset{\text{def}}{\Longleftrightarrow}
\varphi-\psi\in \IQ_{\leqslant 0}. 
\end{align*}
We also denote $\varphi <_V \psi $ if and only if $\varphi\leqslant_V \psi$ and 
$\varphi\neq \psi$. 
\end{definition}
In the following, we regard $\IQ=(\IQ,\leqslant)$
as the sheaf of ordered abelian groups. 
For every $\bm{\theta}\in\T$, we also use the following notation: 
For two germs $\varphi_{\bm{\theta}}, \psi_{\bm{\theta}}\in\IQ_{\bm{\theta}}$, 
we write $\varphi_{\bm{\theta}}\leqslant_{\bm{\theta}} \psi_{\bm{\theta}}$ if and only if 
there exists a representative $\varphi, \psi\in \IQ(V)$ on a open neighborhood $V$ of $\bm{\theta}$
such that $\varphi\leqslant_V\psi$. 

For $n\in \Z$, and $h(v)\in \O_{\C,0}$
we have the following sub-sheaf of sets in $\IQ$:
\begin{align*}%\label{ORBIT}
\Phi_{n, h(v)}\coloneqq \left[u^{-1}\(n \log v +\frac{h(v)}{v}+2\pi\i\Z\)  \right].
\end{align*}
\begin{definition}\label{GOODNESS}
 A finite disjoint union $\Phi=\bigsqcup_{j=1}^m \Phi_{n_j, h_j(v)}$ 
 $(n_j\in \Z, h_j(v)\in \O_{\C,0})$
 is called a \textit{good factor} if 
 $h_i(0)\neq h_j(0)$ or $n_i-n_j\neq 0$ for $i\neq j$. 
\end{definition}
\begin{definition}
Let $\Phi=\bigsqcup_{j=1}^m \Phi_{n_j, h_j(v)}$ 
 $(n_j\in \Z, h_j(v)\in \O_{\C,0})$ be a good factor.  
For each pair $i, j$ with $i, j=1,\dots, m$,  $h_i(0)-h_j(0)\neq 0$, 
we define \textit{the Stokes line} $\mathrm{St}_{i j}(\Phi)$  by
\begin{align*}
\mathrm{St}_{i j}(\Phi)
\coloneqq \left\{(\theta_u, \theta_v)\in \T\middle| \Re\(e^{-\i(\theta_u+\theta_v)}(h_i(0)-h_j(0))\)=0\right\}.
\end{align*}
For a pair $i, j$ with $h_i(0)=h_j(0)$ and $n_i-n_j\neq 0$, 
we set 
\begin{align*}
\mathrm{St}_{i j}(\Phi)\coloneqq \{(\theta_u, \theta_v)\in \T\mid \theta_u=\pm \pi/2 \}.
\end{align*}
For each $i=1,\dots,m$, we set 
\begin{align*}
\St_{ii}(\Phi)\coloneqq \{(\theta_ v, \theta_u)\in \T\mid \theta_u= 0 \text{ or }\pi \}. 
\end{align*}
\end{definition}
\begin{remark}
For two sections $\varphi, \psi\in \IQ(V)$
on an open subset $V\subset T$, 
we set 
\begin{align*}
V_{\varphi\leqslant \psi}\coloneqq \{t\in V\mid \varphi_t \leqslant_t \psi_t \}, 
\end{align*}
which is an open subset of $V$. 
Let $\St(\varphi, \psi)$ denote the boundary of $V_{\varphi\leqslant \psi}$ in $V$.
Assume that $\varphi$ and $\psi$ are sections of $\Phi_{n_i, h_i(v)}$ and $\Phi_{n_j, h_j(v)}$
for a good covering $\Phi=\bigsqcup_{j=1}^m \Phi_{n_j, h_j(v)}$, respectively.  
Then,   we have 
\begin{align*}%\label{direction}
\St(\varphi, \psi)=\St_{i, j}(\Phi)\cap V. 
\end{align*}
Indeed, if $h_i(0)\neq h_j(0)$, then the equality holds because $\lim_{ v\to 0} v\log  v=0 $. 
If $h_i(0)=h_j(0)$
and $i\neq j$, then 
 we should have $n_{i j}\coloneqq n_i-n_j\neq 0$ 
by the goodness of $\Phi$. 
The following expression implies the equality:
\begin{align*}
\varphi-\psi=r_u^{-1} e^{-\i\theta_u}(n_{ij}\log r_ v+n_{ij}\i\theta_ v +\gamma( v))
\end{align*}
where $\gamma( v)\in \O_{\C,0}$. 
If $i=j$, then we have $\varphi-\psi=u^{-1}2\pi \i k$ for some $k\in\Z$, which also implies 
$\St(\varphi, \psi)=\St_{i, j}(\Phi)\cap V$.
In this sense, $\St_{i, j}(\Phi)$ describes the Stokes lines for sections of $\Phi$. 
\end{remark}
\subsection{Pre-Stokes filtrations}\label{2.2}
Let ${\tau}\colon \IQ^\et\to T$ denote the \'etale space of $\IQ$. 
Note that $\IQ^\et$ is a Hausdorff space. 
For $m\in \Z$, let $\rho(m)$ denote the class in $\IQ(T)$ represented by $2\pi\i m u^{-1}$. 

\begin{definition}\label{pre-q}
Let $\k$ be a field. 
Let $\L$ be a sheaf of $\k[q^{\pm 1}]$-modules on $T$.
A   \textit{pre-Stokes filtration} on $\L$ is a subsheaf $\L_{\leqslant}\subset {\tau}^{-1}\L$
of $\k$-vector spaces satisfying the following conditions:
\begin{enumerate}
\item For each $\bm{\theta}\in T$, and $\psi_{\bm{\theta}}, \varphi_{\bm{\theta}}\in {\tau}^{-1}(\bm{\theta})$
with $\psi_{\bm{\theta}}\leqslant_{\bm{\theta}}\varphi_{\bm{\theta}}$, we have 
\begin{align*}
\L_{\leqslant \psi_{\bm{\theta}}}\subset\L_{\leqslant \varphi_{\bm{\theta}}}
\end{align*}
as subsets in $({\tau}^{-1}\L)_{\psi_{\bm{\theta}}}=({\tau}^{-1}\L)_{\varphi_{\bm{\theta}}}=\L_{\bm{\theta}}$.
\item For each $\bm{\theta}\in T$, $\varphi_{\bm{\theta}}\in {\tau}^{-1}(\bm{\theta})$, 
and $m\in \Z$, we have 
\begin{align*}
q^m\cdot \L_{\leqslant \varphi_{\bm{\theta}}}=
 \L_{\leqslant \varphi_{\bm{\theta}}+\rho(m)}
\end{align*}
also as subsets in $\L_{\bm{\theta}}$. 
The action of $q^m$ on the left hand side comes from the $\k[q^{\pm 1}]$-module structure on $\L_{\bm{\theta}}$. 
\end{enumerate}
\end{definition}

Since $\IQ^\et$ is a Hausdorff space, 
there exists a unique subsheaf $\L_{<}$ of $\L_{\leqslant}$
such that, 
for any $\bm{\theta}\in T$ and $\varphi_{\bm{\theta}}\in \IQ_{\bm \theta}$, 
we have $\L_{<\varphi_{\bm\theta}}=\sum_{\psi_{\bm\theta}<\varphi_{\bm\theta}}\L_{\leqslant \psi_{\bm\theta}}$. 
\begin{definition}[c.f. {\cite[Definition 1.34]{Sabbah}}]\label{GR}
For a pre-Stokes filtration $ \L_\leqslant$
on a sheaf $\L$ of $\k[q^{\pm 1}]$-modules, 
let $\gr\L$ denote the quotient sheaf $\L_{\leqslant}/\L_{<}$. 
For a point $\varphi\in \IQ^\et$, 
the stalk of $\gr\L$ at $\varphi$ is denoted by 
$\gr_\varphi\L$. 
\end{definition}

By condition (2) in Definition \ref{pre-q},  
the proper push-forward ${\tau_!}\gr\L$
is naturally equipped with the structure of sheaf of  $\k[q^{\pm 1}]$-modules.
For each point 
$\bm{\theta}\in \T$, the action of $\k[q^{\pm 1}]$ on $({\tau_!}\gr\L)_{\bm\theta}$ is described as follows: 
we have 
\begin{align*}
({\tau_!}\gr\L)_{\bm{\theta}}=\bigoplus_{\varphi_{\bm\theta}\in{\tau}^{-1}(\bm\theta)} \gr_{\varphi_{\bm\theta}}\L.
\end{align*}
By condition (2) in Definition \ref{pre-q}, the action
$q^m\colon \L_{\leqslant \varphi_{\bm\theta}}\to \L_{\leqslant \varphi_{\bm\theta}+\rho(m)}$
 induces $q^m\colon \gr_{\varphi_{\bm{\theta}}}\L\to \gr_{\varphi_{\bm\theta}+\rho(m)}\L$
for $m\in \Z$. 

Similar discussion as in  \cite[Example 1.35]{Sabbah}
shows that ${\tau_!}\gr\L$ is naturally equipped with 
a pre-Stokes filtration $({\tau_!}\gr\L)_{\leqslant}$.
We have
\begin{align*}
({\tau_!}\gr\L)_{\leqslant \varphi_{\bm\theta}}=\bigoplus_{\psi_{\bm\theta}\leqslant_{\bm\theta}\varphi_{\bm\theta}} \gr_{\psi_{\bm\theta}}\L
\end{align*}
at each point $\varphi_{\bm\theta}\in {\tau}^{-1}(\bm\theta)$, $(\bm\theta\in \T)$.
We have the identification
$\gr({\tau_!}\gr\L)=\gr \L$. 

\begin{definition}\label{q-covering}
Let $\L_\leqslant$ be a pre-Stokes filtration on a sheaf $\L$ of $\k[q^{\pm 1}]$-modules on $T$. 
Let  $\Phi(\L,\L_{\leqslant})$ denote the support of 
$\gr\L$ on $\IQ^\et$.
We call $\Phi(\L,\L_{\leqslant})$ the \textit{exponential factor}
of the pair $(\L, \L_{\leqslant})$. 
\end{definition}
%We also consider another filtration 
%on $\L$ as follows: 
Fix a finite union $\Phi=\bigsqcup_{j=1}^m \Phi_j$ of 
$\Phi_j=\Phi_{n_j, h_j(v)}$ with $n_j\in \Z$, $h_j(v)\in \O_{\C,0}$.
Let $(\L,\L_\leqslant)$
be a pre-Stokes filtered sheaf of $\C[q^{\pm 1}]$-module
with $\Phi(\L,\L_\leqslant)\subset \Phi$. 
For each $j$, put 
\begin{align*}
&\L_{<\Phi_{j}}(V)\coloneqq \bigcap_{\varphi\in \Gamma(V,\Phi_{j})}\L_{\leqslant \varphi}(V),
&\L_{\leqslant \Phi_{j}}(V)\coloneqq \sum_{\varphi\in \Gamma(V,\Phi_{j})}\L_{\leqslant \varphi}(V).
\end{align*}
for contractible open subsets $V\subset T$. 
We then obtain the family of sub-sheaves
$\L_{<\Phi_{j}}\subset \L_{\leqslant\Phi_{j}}\subset \L$ of 
$\k[q^{\pm 1}]$-modules, which we will call \textit{coarse filtrations}. 
We then put 
\begin{align*}
\Gr_{\Phi_{j}}(\L )\coloneqq \L_{\leqslant \Phi_{j}}/\L_{<\Phi_{j}}.
\end{align*}
The pre-Stokes filtration 
on $\L$ induces a 
pre-Stokes filtration on $\Gr_{\Phi_{j}}(\L)$.

\subsection{Stokes filtrations}\label{QLS}
An $\R$-constructible sheaf $\F$ of locally finitely generated free 
$\k[q^{\pm 1}]$-modules 
on a real analytic manifold will be called  a
\textit{quasi-local system} in this paper
if it becomes a local system after the localization, i.e.
if the tensor product
\begin{align*}
\F(q)\coloneqq \F\otimes_{\k[q^{\pm 1}]} \k(q)
\end{align*}
is a local system.

Recall that 
$T$ is identified with the 
torus $\{(\theta_u,\theta_v)\mid \theta_u,\theta_v\in \R/2\pi \Z\}$. 
We set 
\begin{align*}
&T_{\R_+}\coloneqq \{(\theta_u,\theta_v)\in T\mid e^{-\i\theta_u}\in \R_{>0}\}, 
&&T_{\R_-}\coloneqq\{ e^{-\i\theta_u}\in \R_{<0}\},\\
&T_+\coloneqq\{\Im (e^{-\i\theta_u})>0\}, 
&&T_-\coloneqq\{\Im (e^{-\i\theta_u})<0\}.
\end{align*}
Then $\Theta\coloneqq\{T_{\R_+}, T_{\R_-},T_+, T_-\}$
is a stratification of $T$. 
\begin{definition}\label{Q-local}
By \textit{a quasi-local system on $(T,\Theta)$},
we mean a quasi-local system on $T$
which is constructible with respect to $\Theta$. 
In other words, 
a sheaf $\L$ of locally finitely generated free $\k[q^{\pm 1}]$-modules
on $T$ is called a \textit{quasi-local system on $(T,\Theta)$}
if it satisfies the following conditions:
\begin{enumerate}
\item
There is a non-negative integer $r$ (\textit{the rank of $\L$})
such the restriction 
$\L_{|T_\star}$ to each strata $T_\star\in \Theta$ is a local system of free $\k[q^{\pm 1}]$-modules of rank $r$.  
\item For any two connected open subsets $W, V\subset T$ with $W \subset V$, 
the restriction map
$\L(V)\to \L(W)$ is injective.
\end{enumerate}
\end{definition}

\begin{lemma}\label{Glueing L}
Let $\L$ be a quasi-local system on $(T,\Theta)$. 
Let $r$ be the rank of $\L$. 
Then, there exist a unique pair of
local systems $\L^{+}$ on $T\setminus T_{\R_-}$ and $\L^{-}$ on $T\setminus T_{\R_+}$
of free $\k[q^{\pm 1}]$-modules of rank $r$
with the following properties:
\begin{enumerate}
\item $\L^{+}\subset \L_{|T\setminus T_{\R_-}}$ and 
$\L^-\subset \L_{|T\setminus T_{\R_+}}$.
\item $\L_{|T_{\R_+}}=\L^+_{|T_{\R_+}}$ and $\L_{|T_{\R_-}}=\L^-_{|T_{\R_-}}$.\qed
%For an open subset $V\subset T$, we have
%\begin{align*}
%\L(V)=\begin{cases}
%\L^+(V) &\text{if }V\cap T_{\R_-}=\emptyset \text{ and } V\cap T_{\R_+}\neq \emptyset\\
%\L^-(V) &\text{if }V\cap T_{\R_-}\neq \emptyset \text{ and } V\cap T_{\R_+}= \emptyset\\
%\L^+(V\setminus T_{\R_-})\cap \L^-(V\setminus T_{\R_+}) 
%&\text{if }V\cap T_{\R_-}\neq\emptyset \text{ and } V\cap T_{\R_+}\neq \emptyset.
%\end{cases}
%\end{align*}
\end{enumerate}
\end{lemma}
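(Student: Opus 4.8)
The plan is to construct $\L^{+}$ and $\L^{-}$ locally on the two half-strata and then glue. Recall that $T\setminus T_{\R_-}$ is the union of the open strata $T_+$, $T_-$ together with the locally closed stratum $T_{\R_+}$, all of which are connected, with $T_{\R_+}$ lying in the common boundary of $T_+$ and $T_-$. On each open stratum $T_\star\in\{T_+,T_-\}$ the sheaf $\L_{|T_\star}$ is already a local system of free $\k[q^{\pm1}]$-modules of rank $r$ by condition (1) of Definition \ref{Q-local}, so the only issue is what happens across $T_{\R_+}$. Near a point of $T_{\R_+}$ choose a small connected open $V$ meeting all three strata; then $V\cap T_+$ and $V\cap T_-$ are connected, and I would first analyze the cospecialization (generization) maps $\L(V)\to\L(V\cap T_+)$ and $\L(V)\to\L(V\cap T_-)$, which by condition (2) of Definition \ref{Q-local} are injective. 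The key point is that after localizing, $\L(q)$ is a local system on $T$; hence the two generization maps, tensored with $\k(q)$, are isomorphisms of $\k(q)$-vector spaces of dimension $r$. Thus, working over $\k(q)$, the stalk $\L(q)_{\bm\theta}$ at $\bm\theta\in T_{\R_+}$ is canonically identified with both $\L(q)_{|T_+}$-stalks and $\L(q)_{|T_-}$-stalks nearby.

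The construction of $\L^{+}$ is then forced: over $V$ as above, set $\L^{+}(V)$ to be the preimage, under the injection $\L(V)\hookrightarrow\L(V)(q)=\L(q)(V)$, of the $\k[q^{\pm1}]$-lattice $\L(V\cap T_+)\subset \L(q)(V\cap T_+)=\L(q)(V)$ (using the canonical $\k(q)$-identifications above). Equivalently, $\L^{+}_{\bm\theta}$ for $\bm\theta\in T_{\R_+}$ is the image in $\L_{\bm\theta}$ of the cospecialization from a nearby $T_+$-stalk; on $T_+\cup T_-$ we just take $\L^{+}=\L$. I would check this is a well-defined subsheaf of $\L_{|T\setminus T_{\R_-}}$: the defining condition is local and compatible with restriction, and it contains $\L$ on the open part by construction. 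To see $\L^{+}$ is a local system of free $\k[q^{\pm1}]$-modules of rank $r$, it suffices to check local triviality near $T_{\R_+}$; here I would pick a local frame of the rank-$r$ free module $\L(V\cap T_+)$ and argue it extends to a frame of $\L^{+}(V)$ because, over the connected $V$, the restriction $\L^{+}(V)\to\L^{+}(V\cap T_+)=\L(V\cap T_+)$ is an isomorphism by the very definition of $\L^{+}$ as a pullback of that lattice. Property (2) of the Lemma, $\L^{+}_{|T_{\R_+}}=\L_{|T_{\R_+}}$, I do \emph{not} claim in general from this construction — rather, it is part of what must be arranged, and I would verify it by noting that since $\L$ is already locally constant along $T_{\R_+}$ itself (it is $\Theta$-constructible, so constant on the stratum $T_{\R_+}$), and the cospecialization map $\L_{\bm\theta'}\to\L_{\bm\theta}$ from a $T_+$-point $\bm\theta'$ to $\bm\theta\in T_{\R_+}$ is injective with $\k(q)$-iso generization, its image is a full-rank $\k[q^{\pm1}]$-sublattice; that image is exactly $\L^{+}_{\bm\theta}$, and a dimension/saturation count over the discrete valuation ring localizations of $\k[q^{\pm1}]$ forces it to equal $\L_{\bm\theta}$. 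Symmetrically one builds $\L^{-}$ on $T\setminus T_{\R_+}$ using cospecialization from $T_-$.

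Uniqueness is the easy direction: if $\L^{+}$ and $(\L^{+})'$ both satisfy (1)–(2), they agree on $T_+\cup T_-$ (both equal $\L$ there, being rank-$r$ sublocal systems of $\L$ that are iso to $\L$ on the open strata — here one uses that a sub-local-system of full rank $r$ inside a rank-$r$ local system of free $\k[q^{\pm1}]$-modules, which becomes equal after $\otimes\k(q)$, is forced by condition (2)), and a local system on the connected-in-codimension space $T\setminus T_{\R_-}$ is determined by its restriction to the dense open $T_+\cup T_-$ together with the requirement of being a local system across $T_{\R_+}$, which pins down the stalk on $T_{\R_+}$ as the cospecialization image.

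The main obstacle I expect is the gluing/lattice bookkeeping across $T_{\R_+}$: one must show that the two a priori different full-rank $\k[q^{\pm1}]$-sublattices of $\L(q)_{\bm\theta}$ coming from the $T_+$-side and from the stratum $T_{\R_+}$ itself actually coincide (this is what gives property (2) of the Lemma), and this requires knowing that $\L$, being a genuine $\k[q^{\pm1}]$-local system on each stratum and \emph{becoming} a local system over $\k(q)$ globally, cannot have the lattice "jump" between the open stratum and its boundary stratum — i.e. that the cospecialization is not merely injective but an isomorphism onto the boundary stalk. I would extract this from condition (2) of Definition \ref{Q-local} applied to nested connected neighborhoods, combined with the freeness hypothesis, reducing it to the statement that a finitely generated submodule of a free $\k[q^{\pm1}]$-module which has full rank and is "saturated" (contains every element of the ambient module lying in its $\k(q)$-span) must be the whole module — which holds because $\k[q^{\pm1}]$ is a PID. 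Everything else is routine sheaf theory on the stratified torus $(T,\Theta)$.
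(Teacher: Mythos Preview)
The paper states this lemma without proof (the $\qed$ follows the statement immediately), so there is no argument in the paper to compare against. However, your proposal contains a genuine error.

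You reverse the direction of the generization map. For a $\Theta$-constructible sheaf and a point $\bm\theta\in T_{\R_+}$ with nearby $\bm\theta'\in T_+$, the restriction $\L(V)\to\L(V\cap T_+)$ for small connected $V\ni\bm\theta$ induces a map $\L_{\bm\theta}\to\L_{\bm\theta'}$, \emph{from} the closed-stratum stalk \emph{to} the open-stratum stalk; condition (2) of Definition~\ref{Q-local} says this map is injective. You write the restriction correctly at first but then speak of ``the cospecialization map $\L_{\bm\theta'}\to\L_{\bm\theta}$ from a $T_+$-point $\bm\theta'$ to $\bm\theta\in T_{\R_+}$,'' reversing it. This leads you to set $\L^+=\L$ on $T_+\cup T_-$ and then try to argue that the boundary stalk already matches. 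But $\L_{|T\setminus T_{\R_-}}$ is in general \emph{not} a local system: if it were, every quasi-local system would be $+$saturated in the sense defined just after the lemma, and that notion would be vacuous. Concretely, in the geometric example $\L^\Be(f,g)$ of \S\ref{Glueing Betti} with $E_\infty\neq\emptyset$, the stalk on $T_{\R_+}$ is $\H^\Be_{1,E_\infty!}$ while the stalk on $T_\pm$ is the strictly larger $\H^\mg_1$ (Lemma~\ref{cokernel}), so the generization is a proper inclusion and your $\L^+$ fails to be locally constant across $T_{\R_+}$.

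The correct construction goes the other way: on $T_+$ (and likewise on $T_-$) one takes $\L^+_{\bm\theta'}$ to be the \emph{image} of the injection $\L_{\bm\theta}\hookrightarrow\L_{\bm\theta'}$. Equivalently, $\L^+$ is the unique local system on $T\setminus T_{\R_-}$ extending $\L_{|T_{\R_+}}$ along the evident deformation retraction onto $T_{\R_+}$, embedded in $\L$ via the generization maps. Uniqueness is then immediate from property (2): a local system on $T\setminus T_{\R_-}$ is determined by its restriction to the retract $T_{\R_+}$, and that restriction is prescribed. Your uniqueness argument, asserting that any rank-$r$ sub-local-system of $\L_{|T_+}$ with the same $\k(q)$-span must equal $\L_{|T_+}$, is false over $\k[q^{\pm1}]$: take for instance $(1-q)\k[q^{\pm1}]\subset\k[q^{\pm1}]$.
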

%\begin{proof}
%\end{proof}
\begin{definition}
A quasi-local system $\L$ on $(T,\Theta)$
is called \textit{saturated} if
we have 
$\L(V)=\L^+(V)+\L^-(V)$
for any open $V\subset T_+\cup T_-$.
If we have $\L(V)=\L^+(V)$ (resp. $\L(V)=\L^-(V)$), then 
$\L$ is called \textit{$+$saturated} (resp. \textit{$-$saturated}).
\end{definition}
Let $\L_{\leqslant}$
be a pre-Stokes filtration on 
a quasi-local system $\L$ on $(T,\Theta)$. 
Then, $\L_{\leqslant}$
induces the filtrations on $\L^\pm$ in a natural way:
\begin{align*}
\L^\pm_{\leqslant }\coloneqq 
\tau_{|\tau^{-1}(T\setminus T_{\R_\mp})}^{-1}\L^\pm\cap \L_{\leqslant |\tau^{-1}(T\setminus T_{\R_\mp})},
\end{align*}
We define 
$\L^\pm_{<}$ 
 and  $\gr\L^\pm$ in the same way as above. 
 \begin{lemma}
The natural morphisms
\begin{align*}%\label{grading 0 pi}
\gr\L^\pm\to \gr\L_{|\tau^{-1}{(T\setminus T_{\R_\mp}})}.
\end{align*}
are isomorphisms. 
\end{lemma}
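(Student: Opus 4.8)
The plan is to unwind the definitions and reduce the claimed isomorphism to a local statement on a fundamental system of neighborhoods, exploiting the fact that on the open set $T\setminus T_{\R_\mp}$ the quasi-local system $\L$ already coincides with the genuine local system $\L^\pm$ on the closed stratum and only differs from it, a priori, on the open strata $T_+\cup T_-$. First I would note that, since $\gr$ is a stalkwise construction and both $\L_{\leqslant}$ and $\L^\pm_{\leqslant}$ are defined as subsheaves of pullbacks along the \'etale map $\tau$, it suffices to check that for every $\varphi_{\bm\theta}\in\tau^{-1}(T\setminus T_{\R_\mp})$ the natural inclusion induces an isomorphism on stalks
\begin{align*}
\gr_{\varphi_{\bm\theta}}\L^\pm\longrightarrow \gr_{\varphi_{\bm\theta}}\L.
\end{align*}
Injectivity is essentially formal: $\L^\pm_{\leqslant\varphi_{\bm\theta}}=\L^\pm_{\bm\theta}\cap\L_{\leqslant\varphi_{\bm\theta}}$ inside $\L_{\bm\theta}$, and $\L^\pm_{<\varphi_{\bm\theta}}$ is, by construction of $\L^\pm_<$ via sums over $\psi<\varphi$, exactly $\L^\pm_{\leqslant\varphi_{\bm\theta}}\cap\L_{<\varphi_{\bm\theta}}$ (here one uses the Hausdorffness of $\IQ^\et$ recorded after Definition \ref{pre-q}, so that $\L^\pm_<$ really is the pointwise sum); hence $\gr_{\varphi_{\bm\theta}}\L^\pm\hookrightarrow\gr_{\varphi_{\bm\theta}}\L$.

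The content is surjectivity, and this is where the geometry of $\Theta$ enters. If $\bm\theta\in T_{\R_\pm}$, then by Lemma \ref{Glueing L}(2) we have $\L_{\bm\theta}=\L^\pm_{\bm\theta}$ on the nose, and moreover a small enough neighborhood $V$ of $\bm\theta$ meets only the strata $T_{\R_\pm},T_+,T_-$; using property (2) in Definition \ref{Q-local} (injectivity of restriction) together with Lemma \ref{Glueing L}(1), one sees that a local section of $\L_{\leqslant\varphi}$ near $\bm\theta$ automatically lies in $\L^\pm$, because its restriction to the half-disc where $\L=\L^\pm$ determines it and the sheaf $\L^\pm$ is a local system across $\bm\theta$. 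Thus $\L_{\leqslant\varphi,\bm\theta}=\L^\pm_{\leqslant\varphi,\bm\theta}$ and likewise for $\L_<$, giving the isomorphism at such $\bm\theta$. If instead $\bm\theta\in T_+$ or $\bm\theta\in T_-$, then near $\bm\theta$ the space $T\setminus T_{\R_\mp}$ is an open neighborhood of $\bm\theta$ on which $\L^\pm$ is a local system of rank $r$ contained in $\L$; but $\L$ restricted to this open stratum is itself a local system of rank $r$, so $\L^\pm_{|U}\subset\L_{|U}$ is an inclusion of rank-$r$ free $\k[q^{\pm1}]$-local systems, hence it induces an isomorphism after $\otimes\k(q)$ and the cokernel is $q$-torsion supported nowhere in particular. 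The key point is that the \emph{filtration} steps $\L_{\leqslant\varphi}$ and $\L^\pm_{\leqslant\varphi}$, being subsheaves of $\tau^{-1}\L$ of $\k$-vector spaces defined by the same growth condition, have the property that at a point of $T_\pm$ the germ of $\L_{\leqslant\varphi}$ lies in $\L^\pm$: indeed any germ of $\L$ at $\bm\theta\in T_\pm$ extends, by property (2) of Definition \ref{Q-local} applied to the inclusion of a smaller connected open into $T\setminus T_{\R_\mp}$, and one checks it must be a section of $\L^\pm$ because $\L^\pm$ and $\L$ agree generically and $\L^\pm$ is the unique local subsystem with the properties of Lemma \ref{Glueing L}. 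Combining, $\L_{\leqslant\varphi,\bm\theta}=\L^\pm_{\leqslant\varphi,\bm\theta}$ and the graded pieces coincide.

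The main obstacle I anticipate is the bookkeeping at points $\bm\theta\in T_+\cup T_-$ where $\L$ is genuinely larger than $\L^+\oplus(\text{something})$ — i.e., showing that the $\k$-vector-space filtration $\L_{\leqslant}$ cannot ``see'' any extra sections beyond those in $\L^\pm$. This requires using that $\L_{\leqslant\varphi}\subset\tau^{-1}\L$ and that the pre-Stokes conditions (1)--(2) of Definition \ref{pre-q}, in particular the $q$-equivariance $q^m\L_{\leqslant\varphi}=\L_{\leqslant\varphi+\rho(m)}$, force compatibility with the $\k[q^{\pm1}]$-structure, so that $\L_{\leqslant\varphi,\bm\theta}$ is in fact a $\k$-submodule of the $\k[q^{\pm1}]$-module $\L_{\bm\theta}=\L^\pm_{\bm\theta}$; the identification then follows from $\L^\pm_{\leqslant\varphi}=\tau^{-1}\L^\pm\cap\L_\leqslant$ being an equality of germs rather than a proper inclusion, which is exactly what property (1) of Lemma \ref{Glueing L} together with the definition of $\L^\pm_\leqslant$ gives once one knows $\L_{\bm\theta}=\L^\pm_{\bm\theta}$. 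So after setting up the stalkwise reduction, the remaining work is a careful but routine verification that no strict inclusion occurs, handled separately on the two types of strata meeting $T\setminus T_{\R_\mp}$.
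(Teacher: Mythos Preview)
Your injectivity argument is fine and matches the paper's. The surjectivity argument, however, has a genuine gap at points $\bm\theta\in T_+\cup T_-$. You assert that $\L_{\leqslant\varphi,\bm\theta}=\L^\pm_{\leqslant\varphi,\bm\theta}$, equivalently that any germ of $\L_{\leqslant\varphi}$ at such $\bm\theta$ already lies in $\L^\pm$. This is false in general: on $T_+\cup T_-$ the inclusion $\L^\pm\subset\L$ is typically strict (for a saturated $\L$ one has $\L=\L^++\L^-$ with $\L^+\neq\L^-$), so a section of $\L^-\cap\L_{\leqslant\varphi}$ not lying in $\L^+$ gives a counterexample. Your appeal to ``$\L^\pm$ and $\L$ agree generically'' and to uniqueness in Lemma~\ref{Glueing L} does not salvage this: $\L^\pm$ is only required to agree with $\L$ on the \emph{closed} stratum $T_{\R_\pm}$, not generically on $T\setminus T_{\R_\mp}$, and injectivity of restriction (Definition~\ref{Q-local}(2)) does not produce extensions.

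The paper's argument supplies exactly the missing idea. One does \emph{not} try to show $\L_{\leqslant\varphi}\subset\L^\pm$; instead one uses the $q$-equivariance (Definition~\ref{pre-q}(2)) together with the sign of $\rho(m)$ on the open strata. For $\bm\theta\in T_+$ and $\mathfrak{s}\in\L_{\leqslant\varphi,\bm\theta}$, the quotient $\L_{\bm\theta}/\L^+_{\bm\theta}$ is a finitely generated torsion $\k[q^{\pm1}]$-module, hence annihilated by some $P(q)\in\k[q]$ with $P(0)=1$; thus $P(q)\mathfrak{s}\in\L^+_{\bm\theta}\cap\L_{\leqslant\varphi,\bm\theta}=\L^+_{\leqslant\varphi,\bm\theta}$. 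The key computation is that on $T_+$ one has $\rho(m)<_{\bm\theta}0$ for $m>0$ (since $\Re(2\pi i m u^{-1})\to-\infty$ there), so $(P(q)-1)\mathfrak{s}\in\sum_{m\geq1}q^m\L_{\leqslant\varphi}=\sum_{m\geq1}\L_{\leqslant\varphi+\rho(m)}\subset\L_{<\varphi}$, giving $\gr_\varphi(P(q)\mathfrak{s})=\gr_\varphi(\mathfrak{s})$. This is the step your proposal lacks: you mention the $q$-equivariance but never use it to move sections into $\L^\pm$ while controlling the graded class.
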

\begin{proof}
We shall see that the morphisms are injective and surjective on 
$T_+\cup T_-$. We consider the morphism $\gr\L^+\to \gr\L$
(The case of the morphism  $\gr\L^-\to \gr\L$ can be proved in a similar way). 
Firstly, we shall see that 
the morphism is injective. 
If a section $\mathfrak{s}\in \L_{\leqslant\varphi}^{+}$ 
is in $\L_{<\varphi}$, then it is contained in $\L^+_{<\varphi}$ by the definition of
the filtration. This proves the injectivity. 
Next, 
we shall see that the morphism is surjective. 
Let $\mathfrak{s}$ be a section of $\L_{\leqslant\varphi}$
on an open subset in $T_+$.  
Then, there exists a polynomial
$P(q)\in\k[q]$ such that 
$P(0)=1$ and 
$P(q)\mathfrak{s}\in \L^+$. 
By the condition (2) in Definition \ref{pre-q}, 
we obtain $P(q)\mathfrak{s}\in \L^+_{\leqslant \varphi}$
and $\gr_\varphi(P(q)\mathfrak{s})=\gr_\varphi(\mathfrak{s})$, 
which proves the surjectivity.
\end{proof}
%Let $\Phi=\bigsqcup_{j=1}^m \Phi_j$ be a finite disjoint union of $\Phi_j=\Phi_{n_j, h_j(v)}$
%$(n_j\in \Z, h_j(v)\in \O_{\C,0})$. 
%Assume that $\Phi(\L,\L_\leqslant)\subset \Phi$. 
%We also have the coarse grading
%Assume that $\L$ is saturated. 
%Then $\L_\leqslant$ is called \textit{saturated} 
%if we have 
%\[\L_{\leqslant_{\bm\theta}\varphi}=\L^+_{\leqslant_{\bm\theta}\varphi}+\L^-_{\leqslant_{\bm\theta}\varphi}\]
%for any $\bm\theta\in T_+\cup T_-$  and $\varphi\in \tau^{-1}(\bm\theta)$. 
\begin{definition}[Stokes filtration]\label{Def q-Stokes}
Let $\Phi=\bigsqcup_{j=1}^m \Phi_j$ be a finite disjoint union of $\Phi_j=\Phi_{n_j, h_j(v)}$
$(n_j\in \Z, h_j(v)\in \O_{\C,0})$. 
Let $\L$ be a saturated quasi-local system on $(T,\Theta)$.
 Let $\L_{\leqslant}$
be a %saturated 
pre-Stokes filtrations on $\L$
with $\Phi(\L,\L_\leqslant)\subset \Phi$. 
Then, $\L_{\leqslant}$
is called a \textit{Stokes filtration} if
the following conditions are satisfied:
\begin{enumerate}
\item $\gr\L$
is a local system of $\k$-modules on $\Phi(\L,\L_{\leqslant})$.
\item
For each point $\bm\theta\in T$ (resp. $\bm\theta_\pm\in T\setminus T_{\R_\mp}$), 
there exists
% an open neighborhood 
%$V_{\bm\theta}\subset T$(resp. $V_{\bm\theta_\pm}$)
an isomorphism
\begin{align*}
&\eta_{{\bm\theta}}\colon \tau_!\gr\L_{\bm\theta}\simeqto \L_{\bm\theta},
&(\text{resp. } \eta_{\bm\theta_\pm}\colon \tau_!\gr\L_{\bm\theta_\pm}
\simeqto \L^\pm_{\bm\theta_\pm})
\end{align*}
 such that 
 \begin{enumerate}
 \item The isomorphism $\eta_{\bm\theta}$ (resp. $\eta_{\bm\theta_\pm}$)
 preserves the filtration, i.e.
 for any germ $\varphi \in \IQ_{\bm\theta}$ 
 (resp. $\varphi_\pm\in \IQ_{\bm\theta_\pm}$), 
 it induces the morphism
 \begin{align*}
 &\eta_{\bm\theta}\colon \gr_\varphi\L\to \L_{\leqslant \varphi}
 &(\text{resp. } 
 \eta_{\bm\theta_\pm}\colon \gr_{\varphi_\pm}\L
\to \L^\pm_{\leqslant \varphi_\pm})
 \end{align*}
 \item The associated morphism
 \begin{align*}
&\gr_\varphi(\eta_{\bm\theta})\colon \gr_\varphi\L\to 
\gr_\varphi\L
&(\text{resp. } \gr_{\varphi_\pm}(\eta_{\bm\theta_\pm})
\colon \gr_{\varphi_\pm}\L\to \gr_{\varphi_\pm}\L)
 \end{align*}
 is identity for any $\varphi$ (resp. $\varphi_\pm$). 
 \end{enumerate}
The morphisms $\eta_{{\bm\theta}}$ and $\eta_{{\bm\theta_\pm}}$
are called splittings of the filtrations. 
%The three filtered sheaves $(\L,\L_\leqslant)$, $(\L^+,\L_\leqslant^+)$, and $(\L^-,\L_{\leqslant}^-)$
%are locally isomorphic to 
%$(\tau_!\gr\L,\gr\L_{\leqslant})$ on their respective domains.
\item Each coarse graded part $\Gr_{\Phi_j}(\L)$ 
is a $+$saturated (resp. $-$saturated) quasi-local system on $(T,\Theta)$ if $n_j\in \Z_{\geq 0}$
(resp. $n_j\in \Z_{\leq 0}$).
\end{enumerate} 
The pair $(\L, \L_{\leqslant})$ of a saturated quasi-local system $\L$  and 
a Stokes filtration $\L_{\leqslant}$ on it is called a \textit{Stokes filtered
quasi-local system}. 
\end{definition}
\begin{remark}
The notion of Stokes filtered quasi-local systems
might be defined in a more general setting 
considering ramifications e.t.c. 
In that case, we should call the notion defined above 
non-ramified or of exponential type. 
We will not pursue such generalizations in the present paper, 
and do not add such adjectives. 
\end{remark}

\begin{definition}
A Stokes filtered quasi-local system is called \textit{good}
if its exponential factor $\Phi(\L,\L_{\leqslant})=\bigsqcup_{j=1}^m \Phi_{n_j, h_j(v)}$ is good. 
The \textit{Stokes direction} of $(\L,\L_{\leqslant})$
is defined as that of $\Phi(\L,\L_{\leqslant})$
and denoted by $\St_{i, j}(\L,\L_{\leqslant})$ $(i, j=1,\dots, m)$.
\end{definition}
\begin{example}\label{trivial q}
Let $(\k[q^{\pm 1}]_T,\k[q^{\pm 1}]_{\leqslant})$
be a pair of the constant sheaf $\k[q^{\pm 1}]_T$ on $T$ 
and the filtration $\k[q^{\pm 1}]_{T\leqslant}$
characterized by the conditions 
\begin{align*}
\Phi(\k[q^{\pm 1}]_T,\k[q^{\pm 1}]_{T\leqslant})=[u^{-1}2\pi\i\Z]
\end{align*}
and 
\begin{align*}
\k[q^{\pm 1}]_{T\leqslant 0}=
\begin{cases}
\k[q]_{T_+} & \text{ on $T_+$}\\
\k[q^{-1}]_{T_-}&\text{ on $T_-$}\\
\k_{T_\star} q^0&\text{ on $T_{\R_{+}}\cup T_{\R_{-}}$}.
\end{cases}
\end{align*}
Then $(\k[q^{\pm 1}]_T,\k[q^{\pm 1}]_{\leqslant})$
is a Stokes filtered quasi-local system.
%For any ideal $\cal{I}\subsetneq \k[q^{\pm 1}] $, 
%put $\cal{I}_{T\leqslant}\coloneqq \mu^{-1}\cal{I}_T\cap \k[q^{\pm 1}]_{T\leqslant} $. 
%Then, 
%the pair $(\cal{I}_T, \cal{I}_{T\leqslant})$
%is \textit{not} a $q$-Stokes structure. 
%Indeed, on $T_0$ and $T_\pi$,
%we have $\cal{I}_{T\leqslant 0}=0$. 
\end{example}
\begin{example}[Gamma function]\label{GAMMA FUN}
Take a subsheaf 
$\L_{\Gamma}\subset \imath_T^{-1}\widetilde{\jmath}_{B*}\O_{B^*}$
by
\begin{align*}
\L_{\Gamma}(V)\coloneqq 
\begin{cases}
\C[q^{\pm 1}]u^{u^{-1}}v^{u^{-1}}\Gamma(u^{-1})& (V\cap T_{\R_-}= \emptyset)\\
\C[q^{\pm 1}]u^{u^{-1}}v^{u^{-1}}(1-q)\Gamma(u^{-1})& (V\cap T_{\R_-}\neq \emptyset)
\end{cases}
\end{align*}
where $V$ is a small open subset of $T$, $q=\exp(2\pi\i u^{-1})$, and $\Gamma(z)$
denotes the gamma function.
Define the filtration 
$\L_{\Gamma,\leqslant}$
as follows:
\begin{align*}
&\L_{\Gamma,\leqslant \varphi}\coloneqq \L_{\Gamma}\cap 
\imath_T^{-1}(e^\varphi\scr{A}_{\widetilde{B}}^{\leqslant Z})
&(\varphi\in \IQ).
\end{align*}
We also consider 
a subsheaf 
$\scr{G}_\Gamma\coloneqq\C[q^{\pm 1}]u^{1/2}v^{u^{-1}}e^{-u^{-1}}
\subset \imath_T^{-1}\widetilde{\jmath}_{B*}\O_{B^*}$, 
which is also equipped with the Stokes filtration in a
similar way. It is easy to see that $\scr{G}_\Gamma$ is a local system and 
that
$(\scr{G}_\Gamma, \scr{G}_{\Gamma,\leqslant})$
is a Stokes filtered quasi-local system
with
$\tau_!\gr\scr{G}_\Gamma=\scr{G}_\Gamma$.

By the Stirling formula,
we have
\begin{align*}
\Gamma(u^{-1})
=e^{-u^{-1}}u^{-u^{-1}+1/2}\sqrt{2\pi}\left\{1+O(u)\right\}
\end{align*}
when $|u|\to 0$ with $-\pi+\delta<\arg(u)<\pi-\delta$
for any $\delta>0$. 
It follows that 
we have an isomorphism
\begin{align*}
\eta_{T\setminus T_{\R_-}}\colon (\scr{G}_{\Gamma},\scr{G}_{\Gamma\leqslant})_{|T\setminus T_{\R_-}}\to 
(\L_\Gamma,\L_{\Gamma,\leqslant})_{|T\setminus T_{\R_-}}
\end{align*}
defined as
\begin{align*}
\eta_{T\setminus T_{\R_-}}\(u^{1/2}v^{u^{-1}}e^{-u^{-1}}\sqrt{2\pi}\)
=u^{u^{-1}}v^{u^{-1}}\Gamma(u^{-1}),
\end{align*}
where the branches of $\log u$ and $\log v$ are defined 
in a suitable way. 

By the reflection formula for Gamma function, 
we have
\begin{align*}
(1-q)\Gamma(u^{-1})=2\pi\i\frac{e^{\pi\i u^{-1}}}{\Gamma(1-u^{-1})}
\end{align*}
and hence
\begin{align*}
(1-q)u^{u^{-1}}v^{u^{-1}}\Gamma(u^{-1})=
u^{1/2}e^{-u^{-1}}v^{u^{-1}}\sqrt{2\pi}\{1+O(u)\}
\end{align*}
when $|u|\to 0$ with $\delta<\arg(u)<2\pi-\delta$.
It then also follows that
we have an isomorphism
\begin{align*}
\eta^+_{T\setminus T_{\R_+}}\colon
(\scr{G}_{\Gamma},\scr{G}_{\Gamma\leqslant})_{|T\setminus T_{\R_+}}\to 
(\L_\Gamma^+,\L_{\Gamma,\leqslant}^+)
\end{align*}
defined as
\begin{align*}
\eta^+_{T\setminus T_{\R_+}}(u^{1/2}v^{u^{-1}}e^{-u^{-1}}\sqrt{2\pi})
\coloneqq (1-q)u^{u^{-1}}v^{u^{-1}}\Gamma(u^{-1})
\end{align*}
where the branches of $\log u$ and $\log v$ are defined 
in a suitable way. 

In conclusion, we have
that $(\L_{\Gamma},\L_{\Gamma,\leqslant})$ is a Stokes filtered quasi-local 
system
such that
\begin{align*}
\tau_!\gr\L_{\Gamma}\simeq (\scr{G}_{\Gamma},\scr{G}_{\Gamma\leqslant})
\end{align*}
and 
$\Phi(\L_{\Gamma},\L_{\Gamma,\leqslant})=\left[u^{-1}(\log v-1+2\pi\i\Z)\right]$
hold.
\end{example}
%\begin{example}[Modified Bessel function]
%We set
%\[\L_{J}=\C[q^{\pm 1}]J_{u^{-1}}(2\i u^{-1}v^{-1})+\C[q^{\pm 1}]J_{-u^{-1}}(2\i u^{-1}v^{-1})
%\subset \imath_T^{-1}\widetilde{\jmath}_{B*}\O_{B^*}\]
%where $J_\nu(z)$ denotes the 
%Bessel function.
%$\L_J$ is a local system of $\C[q^{\pm 1}]$-modules
%of rank two. 
%The filtration $\L_{J\leqslant}$
%is defined by 
%the same way as the example above:
%\begin{align*}
%&\L_{J,\leqslant \varphi}\coloneqq \L_{J}\cap 
%\imath_T^{-1}(e^\varphi\scr{A}_{\widetilde{B}}^{\leqslant\Xi})
%&(\varphi\in \IQ).
%\end{align*}
%Then we have
%\[\tau_!\gr\L_J\simeq \C[q^{\pm 1}]\]
%and 
%$\Phi(\L_J,\L_{J\leqslant})=\left[u^{-1}\(\log\(2^{-1}(v\pm\sqrt{v^2+4})\)\mp\frac{\sqrt{v^2+4}}{v}+2\pi\i \Z\)\right]$.
%\end{example} 

\subsection{Strictness and Abelianity--Statement}
Let $(\L,\L_\leqslant)$ and $(\L',\L'_{\leqslant})$
be Stokes filtered quasi-local systems. 
\begin{definition}
\textit{A morphism $\xi\colon (\L,\L_\leqslant)\to (\L',\L'_{\leqslant})$ 
of Stokes filtered quasi-local systems}
is a morphism $\xi\colon \L\to \L'$ of sheaves of $\k[q^{\pm 1}]$-modules
such that 
%\begin{itemize}
%\item
%$\xi(\L^\pm)=\xi(\L)\cap\L'^\pm$ on the suitable domains, and 
%\item
the pull back $\tau^{-1}\xi$ induces the morphism
%\begin{align*}
$\tau^{-1}\xi\colon \L_{\leqslant}\to \tau^{-1}\L'_{\leqslant}.$
%\end{align*}
%\end{itemize}
The category of Stokes filtered quasi-local system 
is denoted by $\bm{\St}^q$.
\end{definition}

\begin{definition}
Let $\xi\colon (\L,\L_\leqslant)\to(\L',\L'_\leqslant)$
be a morphism in $\bm{\St}^q$.
$\xi$ is called \textit{strict}
if it satisfies 
%\begin{align*}
$\xi(\L_{\leqslant \varphi})=\xi(\L_{\bm\theta})\cap \L'_{\leqslant\varphi}$
for any ${\bm\theta}\in T$ and $\varphi\in \tau^{-1}(\bm{\theta})$. 
%\end{align*} 
\end{definition}
Let $\Phi$ be a good factor. 
Let $\bm{\St}^q_\Phi$ denote the full sub-category of $\bm{\St}^q$
whose objects $(\L,\L_\leqslant)$ satisfy $\Phi(\L,\L_\leqslant)\subset \Phi$. 
The following is the main theorem of this section:
\begin{theorem}\label{STRICTNESS}
$\bm{\St}^q_\Phi$ is an abelian category and every morphism in $\bm{\St}^q_\Phi$ is strict. 
\end{theorem}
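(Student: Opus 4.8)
The strategy mirrors the proof of abelianity for ordinary Stokes filtered local systems (cf. \cite[Theorem 3.1, Theorem 3.5]{Sabbah}): reduce everything to the \emph{graded} situation, where the relevant sheaves are honest local systems (of $\k$-modules over $\Phi$, resp. of $\k[q^{\pm1}]$-modules over $T$ after applying $\tau_!$), and in that situation use the fact that the category of (quasi-)local systems equipped with their trivial Stokes structure is manifestly abelian. The key technical input will be the \emph{local splittings} $\eta_{\bm\theta}$ and $\eta_{\bm\theta_\pm}$ guaranteed by Definition \ref{Def q-Stokes}(2): they let us compare an arbitrary object with its associated graded locally, and thus let us transport statements about $\gr$ back to $(\L,\L_\leqslant)$.

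First I would set up the formal machinery. Given a morphism $\xi\colon(\L,\L_\leqslant)\to(\L',\L'_\leqslant)$ in $\bm{\St}^q_\Phi$, define the candidate kernel, image, coimage and cokernel at the level of sheaves of $\k[q^{\pm1}]$-modules on $T$ (using that $\L,\L'$ are $\R$-constructible and the ambient abelian category of such sheaves), and equip each with the induced (pre-)Stokes filtration by intersecting/projecting $\L_\leqslant$, $\L'_\leqslant$ through $\tau^{-1}\xi$. The main point is then to show: (a) each of these four pre-Stokes filtered objects is again an object of $\bm{\St}^q_\Phi$ — i.e.\ the underlying sheaf is a \emph{saturated} quasi-local system on $(T,\Theta)$, the induced filtration satisfies conditions (1)-(3) of Definition \ref{Def q-Stokes}, and the exponential factor stays inside $\Phi$; and (b) the natural map $\mathrm{Coim}\,\xi\to\mathrm{Im}\,\xi$ is an isomorphism, which is exactly the statement that $\xi$ is strict. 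I would prove strictness first, as it is the heart of the matter, and then deduce (a) and the abelian structure formally from it.

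To prove strictness I would argue stalkwise and reduce to the graded case. Fix $\bm\theta\in T$ (or $\bm\theta_\pm\in T\setminus T_{\R_\mp}$) and choose compatible splittings $\eta_{\bm\theta}$, $\eta'_{\bm\theta}$ of $\L$ and $\L'$; conjugating $\xi$ by these splittings, one is reduced to a morphism of \emph{graded} pre-Stokes filtered objects, i.e.\ essentially a morphism $\tau_!\gr\L\to\tau_!\gr\L'$ of $\k[q^{\pm1}]$-modules respecting the $\Phi$-grading up to the order $\leqslant_{\bm\theta}$ on $\IQ$. Here the goodness of $\Phi$ is essential: by Definition \ref{GOODNESS}, distinct components $\Phi_i,\Phi_j$ are separated either by $h_i(0)\neq h_j(0)$ or by $n_i\neq n_j$, so on a small enough $V$ the order $\leqslant_V$ between sections of $\Phi_i$ and $\Phi_j$ is \emph{constant} away from the Stokes lines $\St_{ij}(\Phi)$; this rigidity forces a filtered morphism between split objects to be block-diagonal with respect to the coarse decomposition, up to a strictly-lower-order nilpotent part that does not affect $\gr$. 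Consequently $\gr(\xi)$ is a morphism of local systems over $\Phi$ (condition (1)), which has kernel, image and cokernel that are again local systems, and strictness of $\xi$ on stalks follows because an isomorphism onto its graded can be lifted. The compatibility with the coarse filtrations and the $\pm$saturation condition (3) is checked by the same block-decomposition, using that $q\cdot\L_{\leqslant\varphi}=\L_{\leqslant\varphi+\rho(1)}$ (Definition \ref{pre-q}(2)) shifts $n_j\mapsto n_j$ within a fixed $\Phi_j$ and hence respects the sign of $n_j$.

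The remaining steps are then formal. From stalkwise strictness one gets that $\ker\xi$, $\mathrm{coim}\,\xi\to\mathrm{im}\,\xi$, $\mathrm{coker}\,\xi$ all have associated gradeds that are local systems on $\Phi$, so after checking that the underlying $\R$-constructible sheaves remain quasi-local systems constructible for $\Theta$ and remain saturated — here I would use Lemma \ref{Glueing L} and the construction of $\L^\pm$, noting that kernels/images of morphisms of local systems on the open strata $T\setminus T_{\R_\mp}$ are again local systems, and that saturation $\L(V)=\L^+(V)+\L^-(V)$ is inherited because $\xi$ is compatible with the $\L^\pm$ — one concludes that all four objects lie in $\bm{\St}^q_\Phi$ and that the usual isomorphism $\mathrm{coim}\simeq\mathrm{im}$ holds. \textbf{The main obstacle} I expect is precisely showing that the coarse graded parts $\Gr_{\Phi_j}$ of the kernel and cokernel satisfy the $\pm$saturation condition (3): this is not a statement about a single stalk but a gluing statement across the Stokes lines $\St_{ij}(\Phi)$ and across the strata $T_{\R_\pm}$, and it is exactly the place where the interplay between the $\k[q^{\pm1}]$-module structure (via $q\cdot\L_{\leqslant\varphi}=\L_{\leqslant\varphi+\rho(1)}$) and the real-constructibility along $\Theta$ has to be handled with care; I would isolate this as a separate lemma, proving that the coarse-grading functor $\Gr_{\Phi_j}$ is exact and sends $\bm{\St}^q_\Phi$ into the appropriate category of $\pm$saturated quasi-local systems, and only then assemble the abelian structure.
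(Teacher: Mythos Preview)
Your overall architecture is right, but there is a genuine gap in the strictness argument. You propose to fix $\bm\theta$, choose splittings $\eta_{\bm\theta}$ for $\L$ and $\eta'_{\bm\theta}$ for $\L'$, and conjugate $\xi_{\bm\theta}$ by them. The resulting morphism $\tau_!\gr\L_{\bm\theta}\to\tau_!\gr\L'_{\bm\theta}$ is indeed filtered, hence block upper-triangular for the order $\leqslant_{\bm\theta}$, and its diagonal part is $\gr(\xi)_{\bm\theta}$. But upper-triangularity is \emph{not} enough for strictness: already for a two-step filtration with splittings $\L=A\oplus B$, $\L'=A'\oplus B'$ ($A<B$) and $\xi=\begin{pmatrix}a&c\\0&b\end{pmatrix}$, one computes $\xi(\L)\cap A'=a(A)+c(\ker b)$, which equals $\xi(A)=a(A)$ only when $c(\ker b)\subset a(A)$. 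Nothing in your argument kills the off-diagonal term $c$, because at a single stalk the splittings are not unique and $\L_{<\varphi,\bm\theta}\neq 0$. Your sentence ``strictness of $\xi$ on stalks follows because an isomorphism onto its graded can be lifted'' is exactly where this is swept under the rug.

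The paper's remedy is to replace the pointwise splitting by a splitting on sections over a carefully chosen \emph{interval} $I\subset\ell^{\bm\theta}_a(S^1)$ through $\bm\theta$, where the interval is taken to cross each Stokes line $\St_{ij}(\Phi)$ (for the coarse step) or the locus $T_{\R_+}\cup T_{\R_-}$ (for the fine step) exactly once. On such an $I$ one proves $H^0(I,\L_{<\Phi_j})=0$ (resp.\ $H^0(I,\L_{<\varphi})=0$), so the splitting $\xi_I\colon\Gamma(I,\Gr_\Phi\L)\simeq\Gamma(I,\L)$ is \emph{unique}; uniqueness then forces the conjugated morphism to coincide with $\Gamma(I,\Gr_\Phi(\lambda))$ (resp.\ $\Gamma(I,\tau_!\gr(\lambda))$), i.e.\ to be genuinely diagonal, and strictness follows. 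The existence of these interval splittings is itself nontrivial: it requires an inductive \v Cech argument showing $H^1(I,\L_{<\Phi_j})=0$ (Lemmas \ref{first vanishing}, \ref{key lemma1}, \ref{key lemma2}), and this is where goodness of $\Phi$ is actually used. Concretely the paper runs this in two layers: first a coarse diagonalisation with respect to $\bigoplus_j\Gr_{\Phi_j}$ using intervals that avoid $T_{\R_\pm}$ but hit every $\St_{ij}$, and then, within each $\Gr_{\Phi_j}$, a fine diagonalisation using intervals that hit $T_{\R_+}\cup T_{\R_-}$ exactly once (Lemma \ref{m=1}). Your outline would be repaired by inserting this interval-level vanishing and uniqueness in place of the pointwise splitting step.
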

The proof will be finished in \S \ref{end of the strictness}.
\begin{lemma}\label{first vanishing}
Let $I$ be an open interval.
Let $A_+$ and $A_-$  be principal ideal domains.   
Take $t\in I$  and let $I_+$ and $I_-$ be the connected components of 
$I\setminus\{t \}$. 
Let $\F$ be a sheaf of modules
such that the restrictions
$\F_{|I_+}, $ and $\F_{|I_-}$
are  local systems of finite rank free modules over $A_+$ and $A_-$, respectively. 
Then we have $H^1(I, \F)=0$. 
\end{lemma}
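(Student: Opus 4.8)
The plan is to reduce $H^1(I,\F)$ to the cohomology of three sheaves supported on the pieces of the stratification $\{t\}\sqcup I_+\sqcup I_-$, via the standard open--closed decomposition. Write $U=I\setminus\{t\}=I_+\sqcup I_-$ with open inclusion $j\colon U\hookrightarrow I$, and let $i\colon\{t\}\hookrightarrow I$ be the closed complement. One has the tautological exact sequence
\[
0\longrightarrow j_!\,j^{-1}\F\longrightarrow \F\longrightarrow i_*\,i^{-1}\F\longrightarrow 0,
\]
whose associated long exact sequence on $I$ squeezes $H^1(I,\F)$ between $H^1(I,j_!j^{-1}\F)$ and $H^1(I,i_*i^{-1}\F)$. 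So it suffices to prove that both of these groups vanish.

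The term $i_*i^{-1}\F$ is the skyscraper sheaf at $t$ with stalk $\F_t$, and skyscraper sheaves are flasque, hence acyclic, so $H^k(I,i_*i^{-1}\F)=0$ for $k>0$. For the other term, since $U=I_+\sqcup I_-$ is a disjoint union of opens we have $j_!j^{-1}\F=(j_+)_!(\F|_{I_+})\oplus(j_-)_!(\F|_{I_-})$, where $j_\pm\colon I_\pm\hookrightarrow I$. Each $I_\pm$ is a contractible open interval, so the local system $\F|_{I_\pm}$ is the constant sheaf $\underline{M}_\pm$ on $I_\pm$ with $M_\pm\cong A_\pm^{\,r}$. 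Thus the problem reduces to computing $H^1\bigl(I,(j_\pm)_!\underline{M}_\pm\bigr)$.

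To do this I would use the open--closed sequence for $I_\pm\hookrightarrow I$: writing $K_\pm=I\setminus I_\pm$, which is a half-open (hence contractible) interval, closed in $I$, with inclusion $i_{K_\pm}\colon K_\pm\hookrightarrow I$, there is the exact sequence
\[
0\longrightarrow (j_\pm)_!\underline{M}_\pm\longrightarrow \underline{M}_I\longrightarrow (i_{K_\pm})_*\underline{M}_{K_\pm}\longrightarrow 0,
\]
where $\underline{M}_I$, $\underline{M}_{K_\pm}$ denote the constant sheaves with fiber $M_\pm$. Since $I$ is contractible, $H^0(I,\underline{M}_I)=M_\pm$ and $H^{>0}(I,\underline{M}_I)=0$; since $i_{K_\pm}$ is a closed immersion and $K_\pm$ is contractible, $H^0(I,(i_{K_\pm})_*\underline{M}_{K_\pm})=H^0(K_\pm,\underline{M})=M_\pm$ and its higher cohomology vanishes. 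The restriction map $H^0(I,\underline{M}_I)\to H^0(K_\pm,\underline{M})$ is the identity of $M_\pm$ (both spaces are connected), so the long exact sequence forces $H^k(I,(j_\pm)_!\underline{M}_\pm)=0$ for all $k$. Combining the two cases gives $H^1(I,j_!j^{-1}\F)=0$, and the long exact sequence of the first displayed sequence then yields $H^1(I,\F)=0$.

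The only mildly delicate point is the third step -- identifying $\F|_{I_\pm}$ with a constant sheaf and computing the cohomology of its extension by zero -- but this is routine once one observes that $I_\pm$ and $K_\pm$ are contractible. (Alternatively, one can invoke the equivalence between sheaves on $I$ that are constructible for $\{t\}\sqcup I_+\sqcup I_-$ and representations of the associated three-element poset; under this equivalence $R\Gamma(I,-)$ becomes evaluation at the generic fibre over the closed stratum, which is exact, whence $R\Gamma(I,\F)$ is concentrated in degree $0$. This gives the statement at once and, incidentally, shows that the finite-rank and PID hypotheses are not needed for the cohomological vanishing itself.) I do not anticipate a genuine obstacle here.
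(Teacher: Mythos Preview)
Your proof is correct and follows essentially the same route as the paper's. Both use the open--closed decomposition at $\{t\}$ to reduce to the vanishing of $H^1$ for the extension by zero of a constant sheaf from one of the half-intervals $I_\pm$; the paper then factors $j_\pm$ through the closed inclusion $\overline{I_\pm}\hookrightarrow I$ to reduce to the ``well known'' vanishing $H^1\bigl((a,b],\iota_!A_{(a,b)}\bigr)=0$ (citing \cite[Lemma~2.8]{Sabbah} for the reduction), whereas you compute $H^1\bigl(I,(j_\pm)_!\underline{M}_\pm\bigr)$ directly with a second open--closed sequence on $I$. Your argument is more self-contained, and your parenthetical remark on the constructible--poset equivalence is a clean way to see the result without any computation.
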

\begin{proof}
By the same discussion as \cite[Lemma 2.8]{Sabbah}, 
it reduces to show that
for an inclusion $\iota\colon (a, b)\hookrightarrow (a, b]$
with $a, b\in\R$, $a<b$ and a princepal ideal domain $A$, we have
\begin{align*}%\label{VA!}
H^1\((a, b], \iota_!A_{(a, b)}\)=0.
\end{align*} 
This is well known. 
\end{proof}

For a given $\bm\theta=(\theta_u,\theta_v)\in T$ and $a\in \Z$, 
take 
a circle
\begin{align*}
\ell_a^{\bm\theta}\colon S^1\to T,\quad e^{\i\theta}\mapsto (\theta_u+\theta,\theta_v+a\theta).
\end{align*}
The image $\ell_a^{\bm\theta}(S^1)$ is also denoted by 
$\ell_a^{\bm\theta}$. 
We firstly assume that 
$\Phi(\L,\L_\leqslant)=\Phi_{n, h(v)}$
for some $n\in \Z$, and $h(v)\in\O_{\C,0}$. %

\begin{lemma}\label{Split 1}
Let $(\L, \L_\leqslant)$ be an object in $\bm\St_{\Phi}^q$
with $\Phi= \Phi_{n, h(v)}$.
Let $I\subset \ell^{\bm\theta}_a$ be an open interval
such that 
$\mathrm{card}(I\cap (T_{\R_+}\cup T_{\R_-}))\leq 1$. 
Then, there exists a splitting
\begin{align*}
\eta_I\colon H^0(I, \tau_!\gr\L)\simeqto H^0(I, \L)
\end{align*}
of the filtration. 
%In other words, for any section $\varphi\in \Gamma(I, \Phi)$, we have
%$\eta_I(\Gamma(I, \gr_\varphi\L))\subset \Gamma(I,\L_{\leqslant \varphi})$
%and the induced morphism
%$\gr_\varphi(\eta_I)\colon \Gamma(I, \gr_\varphi\L)\to \Gamma(I, \gr_\varphi\L)$
%is identity. 
If $\mathrm{card}(I\cap (T_{\R_+}\cup T_{\R_-}))=1$,
then the splitting is unique. 
\end{lemma}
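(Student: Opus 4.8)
\textbf{Proof plan for Lemma \ref{Split 1}.}

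The plan is to build the splitting $\eta_I$ from local splittings by a partition of $I$ adapted to the Stokes directions, gluing them along the overlaps via a cohomology vanishing argument. First I would observe that, since $\Phi = \Phi_{n,h(v)}$ is a single good factor, the Stokes set $\St(\varphi,\psi)$ for any two local lifts $\varphi,\psi$ of $\Phi$ consists only of the ``trivial'' locus $\St_{11}(\Phi) = \{\theta_u = 0 \text{ or }\pi\}$ (plus the $\theta_u = \pm\pi/2$ locus when $n \neq 0$), because $h_i = h_j = h$ forces the comparison to depend on $n$ alone. In particular, away from $\ell^{\bm\theta}_a \cap (T_{\R_+}\cup T_{\R_-})$ the order on $\tau^{-1}(\IQ)$ restricted to the relevant lifts is locally constant, so on each subinterval $J$ of $I$ avoiding that finite set the splitting $\eta_{\bm\theta}$ from Definition \ref{Def q-Stokes}(2) exists pointwise and, by the usual argument (cf.\ \cite[Lemma 3.3]{Sabbah}), can be chosen over all of $J$ because $\gr\L$ is a local system there by Definition \ref{Def q-Stokes}(1) and two splittings over $J$ differ by an automorphism of $H^0(J,\L)$ that is unipotent with respect to the (now locally constant) filtration.

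Next I would handle the gluing. Cover $I$ by finitely many such intervals $J_0,\dots,J_N$ (at most one of them, say $J_k$, meeting $T_{\R_+}\cup T_{\R_-}$, in the case $\mathrm{card}(I\cap(T_{\R_+}\cup T_{\R_-}))=1$; none of them in the case $\mathrm{card} = 0$, where the whole order is locally constant and a single splitting over $I$ already exists). On $J_k$ I use the resp.\ clause of Definition \ref{Def q-Stokes}(2): the point of $T_{\R_\pm}$ in $I$ has a splitting $\eta_{\bm\theta_\pm}$ of $\L^\pm$, and $+$saturation of the coarse graded part (Definition \ref{Def q-Stokes}(3)) together with Lemma \ref{Glueing L} lets me promote it to a splitting of $\L$ on a neighborhood inside $J_k$. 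Then, inductively from $J_0$ outward, on each overlap $J_i \cap J_{i+1}$ the two candidate splittings differ by an automorphism of the local system $H^0(J_i\cap J_{i+1},\L)$ which is the identity on $\gr$; I modify $\eta_{J_{i+1}}$ by an element of that unipotent group, which is possible because the sheaf $\mathrm{Aut}^{<0}$ of such filtered automorphisms satisfies $H^1 = 0$ on an interval — this is exactly the shape of the vanishing guaranteed by Lemma \ref{first vanishing} applied to $\F = \mathrm{Aut}^{<0}$, whose restrictions to the two sides of the relevant point are local systems of free modules over the two PIDs $\k[q]$ and $\k[q^{-1}]$ (or $\k[q^{\pm1}]$ on a side not meeting $T_{\R_\pm}$).

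For the final uniqueness claim, assume $\mathrm{card}(I\cap(T_{\R_+}\cup T_{\R_-})) = 1$, say the point lies in $T_{\R_+}$. Two splittings over $I$ differ by $\psi \in \mathrm{Aut}^{<0}(H^0(I,\L))$ with $\gr(\psi) = \mathrm{id}$. I would argue $\psi = \mathrm{id}$ by looking at the stalk at the distinguished point $\bm\theta_+ \in T_{\R_+}$: there the filtration on $\L_{\bm\theta_+}$ induced by the coarse filtration is, by Definition \ref{Def q-Stokes}(3) and the $+$saturation, exhausted by $\k[q]$-submodules, so the only filtered automorphism inducing the identity on the graded pieces and commuting with the $q$-action is the identity — here the half-line structure (only $\k[q]$, not $\k[q^{\pm1}]$, acts compatibly with the order at that point) kills the ambiguity, whereas at an interior point the full $\k[q^{\pm1}]$ is available and genuine nontrivial unipotent automorphisms persist. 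I expect the main obstacle to be precisely this bookkeeping at the boundary stratum: one must check carefully that passing from the splitting $\eta_{\bm\theta_\pm}$ of $\L^\pm$ to a splitting of $\L$ near $T_{\R_\pm}$ is both possible (existence) and, as a germ at the special point, rigid (uniqueness), and that these two uses of Definition \ref{Def q-Stokes}(3) are consistent with the gluing performed on the rest of $I$ via Lemma \ref{first vanishing}.
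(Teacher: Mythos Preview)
Your gluing-of-local-splittings strategy via $\mathrm{Aut}^{<0}$ is the classical Stokes-structure approach, but the paper takes a more direct route. For each section $\varphi\in\Gamma(I,\Phi)$ it considers the short exact sequence
\[
0\longrightarrow \L_{<\varphi}\longrightarrow \L_{\leqslant\varphi}\longrightarrow \gr_\varphi\L\longrightarrow 0
\]
and applies Lemma~\ref{first vanishing} directly to the \emph{abelian} sheaf $\L_{<\varphi}$: its restriction to $I_+=I\cap T_+$ is a local system of free $\k[q]$-modules and to $I_-=I\cap T_-$ of free $\k[q^{-1}]$-modules, so $H^1(I,\L_{<\varphi})=0$ and $H^0(I,\L_{\leqslant\varphi})\to H^0(I,\gr_\varphi\L)$ is surjective. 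Assembling these lifts over all $\varphi$ gives $\eta_I$. For uniqueness when $I$ meets $T_{\R_+}\cup T_{\R_-}$, the key point is that at such a $\bm\theta$ no two distinct sections of $\Phi_{n,h(v)}$ are comparable (for $m\neq 0$ the germ of $\exp(2\pi\i m u^{-1})$ fails to be of moderate growth on every neighborhood of $\theta_u\in\{0,\pi\}$), so the stalk $\L_{<\varphi,\bm\theta}$ vanishes, hence $H^0(I,\L_{<\varphi})=0$ and the surjection above is an isomorphism.

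Your proposal has a gap at the gluing step: you invoke Lemma~\ref{first vanishing} with $\F=\mathrm{Aut}^{<0}$, but that lemma is for sheaves of modules over a PID, whereas $\mathrm{Aut}^{<0}$ is a sheaf of unipotent groups which is genuinely non-abelian once $\mathrm{rk}\,\L\geq 2$ (the $\Z$-indexed filtration has infinitely many steps, so $(\mathrm{id}+N_1)(\mathrm{id}+N_2)\neq(\mathrm{id}+N_2)(\mathrm{id}+N_1)$ in general). One can repair this by a central-series reduction to abelian subquotients, but you do not indicate this and it is in any case unnecessary: applying Lemma~\ref{first vanishing} to $\L_{<\varphi}$ itself bypasses the non-abelian issue entirely and delivers existence and uniqueness in one stroke from the long exact sequence. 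Your uniqueness sketch is also vaguer than it needs to be; the observation $H^0(I,\L_{<\varphi})=0$ is both sharper and immediate once one sees that the stalk at the special point vanishes.
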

\begin{proof}
We first note that 
$\Phi$ is trivial over $I$. 
The existence of such splitting follows
from Definition \ref{Def q-Stokes} and  Definition \ref{Q-local}.
%(i.e. existence of local splitting of $\gr\L$ and the condition that
%the restriction map of $q$-local system on $T_\pm$ is isomorphism). 
We shall prove the uniqueness and 
give another proof of the existence using Lemma \ref{first vanishing}. 
Consider the exact sequence
\begin{align*}
0\longrightarrow \L_{<\varphi}\longrightarrow \L_{\leqslant \varphi}\longrightarrow \gr_\varphi\L
\longrightarrow 0
\end{align*}for $\varphi\in \Gamma(I,\Phi)$.
Taking the section on $I$, we obtain 
the long exact sequence
\begin{align*}
0\longrightarrow H^0(I,\L_{<\varphi})\to H^0(I,\L_{\leqslant\varphi })\to H^0(I,\gr_\varphi\L)
\to H^1(I, \L_{<\varphi})\to\cdots.
\end{align*}

Put 
$I_{\pm}\coloneqq I\cap T_{\pm}$. 
Then $\L_{<\varphi|I_{+}}$ (resp.  $\L_{<\varphi|I_{-}}$) is a
local system of free $\k[q]$-modules (resp. $k[q^{-1}]$-modules) of finite rank. 
Hence $H^1(I, \L_{<\varphi})=0$ by Lemma \ref{first vanishing}. 
We also have $H^0(I, \L_{<\varphi})=0$ when $I\cap (T_{\R_+}\cup T_{\R_-})\neq \emptyset$
since $\Phi=\Phi_{n,h(v)}$. 
Therefore, the morphism $ H^0(I,\L_{\leqslant\varphi })\to H^0(I,\gr_\varphi\L)$
is an isomorphism.
This fact implies the existence and the uniqueness of the splitting.
\end{proof}

%\begin{lemma}
%Let $(\L,\L_\leqslant)$
%be an object in $\bm{\St}_{\Phi}^q$. 
%Then, we have 
%\[\L_{}=\L\]
%\end{lemma}

\begin{lemma}\label{m=1}
Theorem $\ref{STRICTNESS}$ holds when $\Phi=\Phi_{n, h(v)}$
for $n\in\Z$ and $h(v)\in \O_{\C,0}$. 
\end{lemma}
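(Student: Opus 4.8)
The plan is to reduce the statement to a purely linear-algebra claim on the stalks by exploiting the splittings produced in Lemma \ref{Split 1}. Since $\Phi=\Phi_{n,h(v)}$ consists of a single good factor, the étale space $\IQ^\et$ restricted to (the relevant part of) $T$ is a disjoint union of sheets indexed by $\Z$ via the $\rho(m)$-shift, and $\gr\L$ is a local system of $\k$-modules supported on $\Phi$ (condition (1) of Definition \ref{Def q-Stokes}). First I would record that, by condition (2) of Definition \ref{pre-q} and the discussion in \S\ref{2.2}, $\tau_!\gr\L$ is a quasi-local system on $(T,\Theta)$: on each stratum it is free over $\k[q^{\pm1}]$, the $q$-action being implemented by the shift $\gr_\varphi\L\to\gr_{\varphi+\rho(1)}\L$. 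For $n=0$ it is a local system on all of $T$; for $n\neq0$ it acquires nontrivial gluing across $T_{\R_+}\cup T_{\R_-}$ exactly as in Example \ref{trivial q}, and condition (3) of Definition \ref{Def q-Stokes} pins down which of $\L^\pm$ it matches.

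Next I would construct, from a morphism $\xi\colon(\L,\L_\leqslant)\to(\L',\L'_\leqslant)$ in $\bm\St^q_\Phi$, the induced graded morphism $\gr\xi\colon\tau_!\gr\L\to\tau_!\gr\L'$, which is a morphism of quasi-local systems over $\k[q^{\pm1}]$, hence (working stratum by stratum, where everything is free over the PID $\k[q^{\pm1}]$ — or over $\k[q]$, $\k[q^{-1}]$ after passing to $\L^\pm$) has a kernel, image, and cokernel that are again free of the expected ranks after the usual Smith-normal-form argument; the two PIDs $A_+=\k[q]$, $A_-=\k[q^{-1}]$ in Lemma \ref{first vanishing} are there precisely to glue these across the strata. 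The key step is then a \emph{local splitting of $\xi$ itself}: I claim that near every $\bm\theta\in T$ (resp. $\bm\theta_\pm\in T\setminus T_{\R_\mp}$) one can choose the splittings $\eta_{\bm\theta},\eta'_{\bm\theta}$ of Definition \ref{Def q-Stokes} compatibly, i.e. so that $\xi\circ\eta_{\bm\theta}=\eta'_{\bm\theta}\circ\gr\xi$. This is where Lemma \ref{Split 1} does the real work: restricting to a small interval $I$ on a circle $\ell^{\bm\theta}_a$ with $\mathrm{card}(I\cap(T_{\R_+}\cup T_{\R_-}))\le1$, the splitting on $I$ is unique when that cardinality is $1$, so two a priori different local choices automatically agree on overlaps meeting a ray, and the $H^1$-vanishing lets one propagate a compatible choice. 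Once $\xi$ is split compatibly with the filtration locally, strictness $\xi(\L_{\leqslant\varphi})=\xi(\L_{\bm\theta})\cap\L'_{\leqslant\varphi}$ is immediate from the graded picture, and $\ker\xi$, $\Coker\xi$ inherit pre-Stokes filtrations that are again Stokes (conditions (1)–(3) are checked on the graded objects, which are sub/quotients of quasi-local systems and hence again quasi-local, $\pm$saturation of the coarse graded parts being automatic here since $m=1$).

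Finally I would assemble abelianity: $\bm\St^q_\Phi$ is additive with kernels and cokernels as just constructed, and strictness forces the canonical map $\mathrm{Coim}\,\xi\to\mathrm{Im}\,\xi$ to be an isomorphism — on the graded level it is an isomorphism of local systems by construction, and a filtered morphism of Stokes filtered quasi-local systems inducing an isomorphism on $\tau_!\gr$ is an isomorphism (this uses the splittings again, plus condition (2) of Definition \ref{Q-local}, injectivity of restriction maps, to promote the stalkwise statement to a sheaf isomorphism). The main obstacle I expect is precisely the compatible choice of splittings across the strata $T_{\R_\pm}$: away from these rays $\tau_!\gr\L$ is an honest local system and everything is formal, but along $T_{\R_+}\cup T_{\R_-}$ the quasi-local structure degenerates, the $q$-action is no longer invertible on the nose on $\gr$, and one must use the $+$saturated/$-$saturated dichotomy from condition (3) together with the uniqueness half of Lemma \ref{Split 1} to see that the local splittings on $\L^+$ and on $\L^-$ glue to give the required $\eta_{\bm\theta}$ on $\L$ itself. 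Getting the bookkeeping of $\L^\pm$, $\Gr_{\Phi_j}$, and the two PIDs to line up is the delicate part; the rest is a routine transcription of the classical argument in \cite[\S2]{Sabbah}.
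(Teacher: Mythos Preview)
Your approach is correct in spirit and would work, but the paper's proof is considerably more direct, and the difference is instructive. You treat the rays $T_{\R_\pm}$ as the \emph{obstacle} (``the delicate part'') and plan to build compatible splittings away from them and then glue across; the paper instead uses the rays as the \emph{tool}. Concretely: for \emph{every} $\bm\theta\in T$, not just those on a ray, the paper chooses an interval $I\subset\ell_0^{\bm\theta}$ through $\bm\theta$ that meets $T_{\R_+}\cup T_{\R_-}$ in exactly one point. Condition (3) of Definition \ref{Def q-Stokes} (here $\Gr_\Phi(\L)=\L$ since $m=1$, so $\L$ itself is $\pm$saturated) guarantees that the restriction $\Gamma(I,\L)\to\L_{\bm\theta}$ is an isomorphism. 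Lemma \ref{Split 1} then gives a \emph{unique} splitting $\eta_I$ on $I$, and likewise $\eta'_I$ for $\L'$; uniqueness forces the square with $\lambda$ and $\gr\lambda$ to commute automatically, with no gluing or propagation needed. Strictness at $\bm\theta$ is then immediate, and the cokernel computation $\mathrm{Cok}(\lambda_{\bm\theta})\simeq\mathrm{Cok}(\gr_\varphi\lambda)\otimes_\k\k[q^{\pm1}]$ falls out of the same diagonalization.

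So your $H^1$-vanishing-and-propagation argument is not wrong, but it duplicates work already packaged in the uniqueness half of Lemma \ref{Split 1}. The conceptual point you should take away is that condition (3) is not merely bookkeeping for what happens \emph{at} the rays; it is exactly what lets you push every stalk computation onto an interval that \emph{crosses} a ray, where rigidity does the job for you.
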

\begin{proof}
Let $\lambda\colon (\L,\L_{\leqslant})\to (\L',\L'_{\leqslant})$
be a morphism in $\bm\St_\Phi^q$. 
For each point $\bm\theta\in T$, 
there exist an open interval $I\subset \ell_0^{\bm\theta}(S^1)$
such that (i) $\bm\theta\in I$, 
(ii) the restriction maps
\begin{align*}
\Gamma(I,\L)\to \L_{\bm\theta},\quad 
\Gamma(I,\L')\to \L'_{\bm\theta}
\end{align*}
are isomorphisms,
and (iii) $I\cap (T_{\R_+}\cup T_{\R_-})$
consists of exactly one point (We have used the condition (3) in Definition \ref{Def q-Stokes}). 
Then we have the following diagram by Lemma \ref{Split 1}:
\begin{align*}
\xymatrix{
\tau_!\gr\L_{\bm\theta}\ar@{.>}[d]
&\ar[l]\Gamma(I,\tau_!\gr\L)\ar[r]^{\eta_I}\ar@{.>}[d]
&\Gamma(I,\L)\ar[d]^{\Gamma(I,\lambda)}\ar[r]
&\L_{\bm\theta}\ar[d]^{\lambda_{\bm\theta}}\\
\tau_!\gr\L_{\bm\theta}
&\ar[l]\Gamma(I,\tau_!\gr\L')\ar[r]^{\eta'_I}
&\Gamma(I,\L')\ar[r]&\L'_{\bm\theta}
}
\end{align*}
where $\eta_I$ and $\eta_I'$
denote the unique splittings and dotted arrows denote the induced maps. 
Then, all morphisms preserve the filtrations, 
and by the condition (iii) above,
we obtain that the dotted arrows coincide with
\begin{align*}
\Gamma(I, \tau_!\gr(\lambda))\colon \Gamma(I,\tau_!\gr\L)\to \Gamma(I,\tau_!\gr\L'),
\text{ and }
\tau_!\gr(\lambda)_{\bm\theta}\colon \tau_!\gr\L_{\bm\theta}\to \tau_!\gr\L'_{\bm\theta}.
\end{align*}
In other words, $\lambda_{\bm\theta}$
is diagonalized and hence we obtain the strictness.

It also follows that 
we have 
\begin{align*}
\mathrm{Cok}(\lambda_{\bm\theta})\simeq \mathrm{Cok}(\gr_\varphi(\lambda))\otimes_\k \k[q^{\pm 1}]
\end{align*}
for any $\varphi\in \Phi_{\bm\theta}$.
This implies that
$\mathrm{Cok}(\lambda)$ again a saturated quasi-local system which
satisfies (3) in Definition \ref{Def q-Stokes}.
Then the proof of abelianity is the same as in the case of usual Stokes structures.
%Let $\xi\colon (\L,\L_{\leqslant})\to (\L',\L'_{\leqslant})$
%be a morphism in $\bm\St_\Phi^q$. 
%Take a point 
%$\bm\theta\in T$. 
%
%If $\bm\theta\in T_{\R_+}$, 
%there is
%an open interval $I\subset \ell_{\bm\theta}^0$
%such that 
%$I\cap (T_{\R_+}\cup T_{\R_-})=\{\bm\theta\}$, 
%$I_\pm=I\cap T_\pm\neq \emptyset$,
%and the restrictions $\L(I)\to \L_{\bm\theta}$ and $\L'(I)\to \L'_{\bm\theta}$ are isomorphisms.
%
%If $\bm\theta\in T_+\cup T_-$, 
%then for $\mathfrak{s}'\in \L'_{\leqslant_{\bm{\theta}}\varphi}$
%with $\mathfrak{s}'=\xi(\mathfrak{s})$
%we can take
%$\mathfrak{s}_\pm\in \L^\pm$
%with $\mathfrak{s}=\mathfrak{s}_++\mathfrak{s}_-$
%by the saturated-ness of $\L$. 
\end{proof}

\subsection{Strictness and Abelianity--Proof}\label{end of the strictness}
We shall consider the general case of Theorem \ref{STRICTNESS}. 
For a good factor 
$\Phi=\bigsqcup_{j=1}^m\Phi_{n_j, h_j(v)}$,
we put $\Phi_j\coloneqq \Phi_{n_j, h_j(v)}$
and 
\begin{align*}
\Gr_{\Phi}(\L)\coloneqq \bigoplus_{j=1}^m\Gr_{\Phi_j}(\L)
\end{align*}
for an object $(\L,\L_{\leqslant})\in \bm\St_{\Phi}^q$.
Note that 
$\Gr_{\Phi_j}(\L)\in \bm\St_{\Phi_j}^q$, $\Gr_{\Phi}(\L)\in \bm\St_{\Phi}^q$,
and $\gr(\Gr_{\Phi}(\L))=\gr\L$.
By the condition (2) in Definition \ref{Def q-Stokes},
there is an filtered isomorphism
\begin{align*}%\label{Coarse point}
\xi_{\bm\theta}\colon \Gr_{\Phi}(\L)_{\bm\theta}\simeqto \L_{\bm\theta}
\end{align*}
such that $\Gr_{\Phi_j}(\xi_{\bm\theta})$ is identity.

\begin{lemma}\label{key lemma1}
If an interval $I\subset \ell_a^{\bm\theta}(S^1)$ satisfies the condition that 
\begin{itemize}
\item
$I\cap \St_{i, j}(\L)$ is at most one point for any $i\neq j$,
\item
$I\cap (T_{\R_+}\cup T_{\R_-})=\emptyset$, 
\end{itemize}
then there exists
a splitting of the coarse filtration
\begin{align*}
\xi_I\colon \Gamma\(I, \Gr_{\Phi}(\L)\)\simeqto \Gamma(I,\L),
\end{align*}
i.e. $\xi_I$ preserves the coarse filtrations and 
the induced morphism
\begin{align*}
\Gr_{\Phi}(\xi_I)\colon \Gamma(I, \Gr_{\Phi}(\L))\to\Gamma(I, \Gr_{\Phi}(\L))
\end{align*}
is identity. 
Moreover, if $I\cap \St_{i, j}(\L)$ contains exactly one point for any $i\neq j$,
 then such $\xi_{I}$ is unique and in that case $\xi_{I}$
also preserves the Stokes filtration.
\end{lemma}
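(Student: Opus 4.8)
The plan is to follow the strategy of Lemma \ref{Split 1} but now working with the coarse filtration indexed by the good factor $\Phi=\bigsqcup_{j=1}^m\Phi_j$ rather than a single exponential factor. First I would reduce to a statement about the $\Gr_{\Phi_j}(\L)$: since the hypotheses on $I$ guarantee that each $\Phi_j$ is trivial over $I$ and that the Stokes lines $\St_{i,j}(\L)$ meet $I$ in at most one point, I can order the indices $j=1,\dots,m$ so that over (the closure of) $I$ the relation $\leqslant$ among the $\Phi_j$ is compatible with this linear order in the sense that $\varphi_i<_t\psi_j$ (for $\varphi_i$ a section of $\Phi_i$, $\psi_j$ of $\Phi_j$) can hold for $t$ on one side of the single Stokes point only when $i<j$, say. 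The coarse filtration $\L_{\leqslant\Phi_j}/\L_{<\Phi_j}=\Gr_{\Phi_j}(\L)$ then carries the structure of an object of $\bm\St_{\Phi_j}^q$ (a single-exponential Stokes filtered quasi-local system), so Lemma \ref{Split 1} applies to each $\Gr_{\Phi_j}(\L)$ over the relevant sub-interval, but I must be careful: the condition $I\cap(T_{\R_+}\cup T_{\R_-})=\emptyset$ here means I do \emph{not} automatically get uniqueness from that lemma, so the $\St_{i,j}$-point is what will be used to pin things down.

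The key computation is a vanishing statement analogous to the one in Lemma \ref{Split 1}: for $i<j$ I would look at the sheaf $\Hom(\Gr_{\Phi_j}(\L),\L_{<\Phi_j}\text{-type quotients})$, or more concretely the sheaf $\mathcal{H}om$ governing extensions between the $i$-th and $j$-th coarse graded pieces, and show its $H^1$ over $I$ vanishes by Lemma \ref{first vanishing} (the restriction to $I\cap T_+$ is a local system of free $\k[q]$-modules, to $I\cap T_-$ of free $\k[q^{-1}]$-modules, and over the $\R_\pm$-strata — which here are absent — one would need the $+/-$saturatedness from condition (3) of Definition \ref{Def q-Stokes}, but that is exactly the hypothesis $I\cap(T_{\R_+}\cup T_{\R_-})=\emptyset$ sidesteps). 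Feeding this into the long exact sequence obtained from the filtration by the $\Phi_j$'s, I get that $\Gamma(I,\L)$ is filtered-isomorphic to $\bigoplus_j\Gamma(I,\Gr_{\Phi_j}(\L))=\Gamma(I,\Gr_\Phi(\L))$, and by construction the induced map on $\Gr_\Phi$ is the identity; existence of $\xi_I$ follows. For uniqueness when $I$ meets every $\St_{i,j}(\L)$ in exactly one point, I would argue that two splittings differ by a filtered automorphism of $\Gamma(I,\Gr_\Phi(\L))$ inducing the identity on $\gr$, i.e. a section over $I$ of a sheaf of unipotent groups whose Lie algebra is $\bigoplus_{i<j}\mathcal{H}om(\Gr_{\Phi_i},\Gr_{\Phi_j})$-like; the presence of the Stokes point forces such a section to have vanishing germ there (on one side of the Stokes point the relevant $\Hom$-sheaf has no nonzero sections, exactly as in the single-exponential case where $H^0(I,\L_{<\varphi})=0$ once $I$ hits an $\R_\pm$-stratum — here the role of that stratum is played by $\St_{i,j}$), hence the automorphism is the identity. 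The final clause — that under the uniqueness hypothesis $\xi_I$ also preserves the \emph{Stokes} (not merely coarse) filtration — I would get by observing that the unique coarse splitting must agree, over each connected piece of $I$ away from the Stokes points where $\Phi$ becomes ``totally ordered'', with the splitting produced by Definition \ref{Def q-Stokes}(2), which does preserve the fine filtration; uniqueness then propagates this across the Stokes points.

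The main obstacle I anticipate is bookkeeping the order structure: over a single interval $I$ crossing several Stokes lines $\St_{i,j}$ the partial order on $\{\Phi_1,\dots,\Phi_m\}$ induced by $\leqslant_t$ genuinely changes as $t$ moves, so there is no \emph{one} linear refinement valid on all of $I$. The correct move is to work locally on each of the (at most $\binom{m}{2}+1$) sub-intervals cut out by the Stokes points, on each of which a linear refinement \emph{does} exist, build the splitting there by the single-exponential Lemma \ref{Split 1}, and then glue — the gluing being forced (hence well-defined) precisely because each transition happens at a point where one of the $H^0$-vanishing statements kicks in. Making this induction on $m$ clean, and checking that the good-factor hypothesis (Definition \ref{GOODNESS}) really does ensure the Stokes lines $\St_{i,j}(\L)$ are distinct codimension-one loci meeting a generic circle $\ell_a^{\bm\theta}$ transversally in finitely many points, is where the care is needed; the homological input itself is a routine repetition of Lemma \ref{Split 1} and Lemma \ref{first vanishing}.
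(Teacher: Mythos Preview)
Your overall strategy---reduce existence of the splitting to the vanishing $H^1(I,\L_{<\Phi_j})=0$ via the short exact sequence $0\to\L_{<\Phi_j}\to\L_{\leqslant\Phi_j}\to\Gr_{\Phi_j}\L\to0$, then establish that vanishing by an inductive cover-and-glue argument---is exactly the paper's approach. But several details are off, and one of them is the heart of the matter.

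First, a minor correction: since $I\cap(T_{\R_+}\cup T_{\R_-})=\emptyset$ and $I$ is connected, $I$ lies entirely in $T_+$ or entirely in $T_-$. So the coarse subsheaves $\L_{<\Phi_j}$ are locally free $\k[q^{\pm1}]$-modules throughout, not $\k[q]$ on one side and $\k[q^{-1}]$ on the other; the role you assign to the strata $T_\pm$ is actually played by the Stokes lines $\St_{i,j}$.

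The substantive gap is in the $H^1$-vanishing step. You cannot apply Lemma~\ref{first vanishing} directly to $\L_{<\Phi_j}$ (or to the $\mathcal{H}\!om$-sheaf you describe): that lemma needs a \emph{single} jump point, whereas $\L_{<\Phi_j}$ jumps at every $\St_{i,j}\cap I$ for $i\neq j$, of which there may be up to $m-1$. This is precisely why the paper inducts on $N$, the total number of Stokes points in $I$, not on $m$. In the inductive step the paper takes a two-piece cover $I=I_1\cup I_2$ with overlap $I_0$ lying between consecutive Stokes points, uses the inductive hypothesis to get coarse splittings on $I_0,I_1,I_2$ (hence identifications $H^0(I_\ell,\L_{<\Phi_j})\simeq\bigoplus_{\Phi_i<_{I_\ell}\Phi_j}H^0(I_\ell,\Gr_{\Phi_i}\L)$), and then checks surjectivity of the \v{C}ech differential $H^0(I_1,\L_{<\Phi_j})\oplus H^0(I_2,\L_{<\Phi_j})\to H^0(I_0,\L_{<\Phi_j})$ by an explicit block-triangular computation. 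The combinatorial fact making this work is that the at-most-one-point hypothesis forces $\Phi_k<_{I_0}\Phi_j$ to imply $\Phi_k<_{I_1}\Phi_j$ or $\Phi_k<_{I_2}\Phi_j$. Your final paragraph gestures at this (``work on sub-intervals and glue, the gluing being forced''), but does not supply the argument; the \v{C}ech surjectivity \emph{is} the content, not a formality. Once $H^1(I,\L_{<\Phi_j})=0$ holds, uniqueness under the exactly-one-point hypothesis follows immediately from $H^0(I,\L_{<\Phi_j})=0$, so that $H^0(I,\L_{\leqslant\Phi_j})\to H^0(I,\Gr_{\Phi_j}\L)$ is an isomorphism---no unipotent-group language needed---and this isomorphism visibly respects the Stokes filtration. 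Finally, Lemma~\ref{Split 1} concerns the single-factor case $\Phi=\Phi_{n,h(v)}$ and does not provide splittings on sub-intervals of a general $\Phi$; the local splittings you need on Stokes-free sub-intervals come instead from condition~(2) in Definition~\ref{Def q-Stokes}.
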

\begin{proof}
We firstly show that 
the first claim of this lemma (existence of the splitting of the coarse filtrations) is equivalent 
to the condition that 
$H^1(I, \L_{<\Phi_j})=0$ for every $j=1,\dots, m$. 
Consider the exact sequence 
\begin{align*}
0\longrightarrow \L_{<{\Phi_j}}\longrightarrow \L_{\leqslant {\Phi_j}}\longrightarrow
\Gr_{\Phi_j}\L\longrightarrow 0.
\end{align*}
Taking the section on $I$, 
we obtain the long exact sequence
\begin{align*}
\cdots \to H^0(I, \L_{<{\Phi_j}}) \to 
H^0(I, \L_{\leqslant {\Phi_j}})\to
H^0(I, \Gr_{\Phi_j} \L)\to
H^1(I, \L_{<{\Phi_j}})\to \cdots.
\end{align*}
Then $H^1(I, \L_{<\Phi_j})=0$ if and only if 
the morphism $H^0(I, \L_{\leqslant \Phi_j})\to H^0(I, \Gr_{\Phi_j} \L)$
is surjective. 
Let $e_1^{(j)},\dots, e_{r_j}^{(j)}$ be a free bases of $H^0(I, \Gr_{\Phi_j} \L)$. 
In other words, we have $H^0(I, \Gr_{\Phi_j} \L)=\bigoplus_{k=1}^{r_j}\k[q^{\pm 1}]e_k^{(j)}$. 
If $H^0(I, \L_{<\Phi_j})=0$, 
there exist sections $\widetilde{e}^{(j)}_1,\dots, \widetilde{e}^{(j)}_{r_j}\in H^0(I, \L_{\leqslant \Phi_j})$
such that $\widetilde{e}^{(j)}_k$ maps to ${e}^{(j)}_k$ for every $k=1,\dots, r_j$. 
Then we obtain the coarse splitting 
%\begin{align}
$\xi_I\colon \Gamma(I, \Gr_\Phi(\L))\to \Gamma(I,\L)$
%\end{align}
defined as 
\begin{align*}%\label{EK}
\xi_I({e}^{(j)}_k)=\widetilde{e}^{(j)}_k.
\end{align*}
To see that $\eta_I$ is an isomorphism, 
we take a point $\bm{\theta}\in I\setminus \bigcup_{i\neq j}\St_{i, j}(\L)$. 
Then the morphism 
\[\Gr_\Phi(\L)_{\bm\theta}\to \Gr_\Phi(\L)_{\bm\theta}\]
given by 
\begin{align*}
\Gr_\Phi(\L)_{\bm\theta}\xleftarrow{\sim}
\Gamma(I,\Gr_{\Phi}(\L))\xrightarrow{\xi_I}
\Gamma(I,\L)\xleftarrow{\sim} \L_{\bm\theta}
\xrightarrow{\xi_{\bm\theta}^{-1}}
\Gr_\Phi(\L)_{\bm{\theta}}
\end{align*}
is upper triangular with respect to the decomposition $\Gr_{\Phi}(\L)=\bigoplus_j \Gr_{\Phi_j}\L$ 
with the total order on $\{\Phi_j\}_j$ at $\bm{\theta}$
with identity diagonal, hence is an isomorphism.  
Conversely, if we obtain the coarse splitting $\xi_I$, we take the lift $\widetilde{e}_k^{(k)}$
by $\xi_I({e}^{(j)}_k)=\widetilde{e}^{(j)}_k$. 

We shall prove the claim 
by the induction on the number $N$ of elements in the 
set
%\begin{align*}
$I\cap \(\bigcup_{i\neq j}\St_{i j}(\L)\).$
%\end{align*}
If $N=1$, we have $H^1(I, \L_{<\Phi_j})=0$ by Lemma \ref{first vanishing}. 
We then consider the case $N>1$. 
Take a connected component 
$I_0$ of $I\cap \(\bigcup_{i\neq j}\St_{i j}(\L)\)$
such that the closure of $I_0$ in $S^1$
is contained in $I$. 
Take the covering $I=I_1\cup I_2$ 
such that each $I_\ell$ is the image of an open interval in $S^1$ and 
$I_1\cap I_2=I_0$. 
The boundary $\partial I_0$ is two points $\{i_1, i_2\}$
such that $i_1\in I_1$ and $i_2\in I_2$. 
By the induction assumption, $H^1(I_\ell,\L_{<{\Phi_j}})=0$
for any section ${\Phi_j}\in H^0(I,\IQ)$ and $\ell=1,2$. 
Hence we may use the covering $I=I_1\cup I_2$
to compute the \v Cech cohomology of $H^1(I, \L_{<{\Phi_j}})$. 
Then, what we need to show is that the morphism 
\begin{align*}
\delta_{\Phi_j}\colon H^0(I_1, \L_{<{\Phi_j}})\oplus H^0(I_2,\L_{<{\Phi_j}})\longrightarrow H^0(I_0, \L_{<{\Phi_j}})
\end{align*}
defined by $\delta_{\Phi_j}(v_1, v_2)=v_{1|I_0}-v_{2|I_0}$ 
is surjective. 

By the induction assumption, 
we
have 
the coarse splittings
\begin{align*}
\xi_{I_\ell}\colon \Gamma(I, \Gr_{\Phi}(\L)) \simeqto \Gamma(I,\L)
\end{align*}
for $\ell=0,1,2$. 
By these splittings, $H^0(I_\ell, \L_{<\Phi_j})$
is identified with 
\begin{align*}
\bigoplus_{\Phi_i<_{I_\ell}\Phi_j}H^0(I_\ell, \Gr_{\Phi_i}\L)
\end{align*}
for each $\ell$
where $\Phi_i<_{I_\ell}\Phi_j$
denotes the condition that $\varphi_i<_{I_\ell}\varphi_j$
for any sections $\varphi_i\in \Gamma(I_\ell, \Phi_i)$
and $\varphi_i\in \Gamma(I_\ell, \Phi_j)$. 
Under this identification, $\delta_{\Phi_j}$ is 
translated into a morphism 
\begin{align}\label{F delta}
\bigoplus_{\Phi_i<_{I_1}\Phi_j}H^0(I_1, \Gr_{\Phi_i}\L) \oplus 
\bigoplus_{\Phi_k<_{I_2}\Phi_j}H^0(I_2, \Gr_{\Phi_k}\L)\to 
\bigoplus_{\Phi_m <_{I_0}\Phi_j} H^0(I_0, \Gr_{\Phi_m}\L).
\end{align}
The component 
\begin{align*}
H^0(I_\ell, \Gr_{\Phi_i}\L)\to H^0(I_0, \Gr_{\Phi_i}\L)\quad (\ell=0,1, \Phi_i <_{I_\ell}\Phi_j)
\end{align*}
of (\ref{F delta}) coincides with $(-1)^{\ell-1}$ times the restriction map,
and the component 
\begin{align*}
H^0(I_\ell, \Gr_{\Phi_i}\L)\to H^0(I_0, \Gr_{\Phi_k}\L)\quad (\ell=0,1, \Phi_i <_{I_\ell}\Phi_k<_{I_\ell}\Phi_j)
\end{align*}
of (\ref{F delta}) is zero.
Since $I\cap \St_{k, m}(\L)$ is at most one point for $k\neq m$ by the assumption of the claim, 
$\Phi_k<_{I_0}\Phi_m$ if and only if 
either $\Phi_k<_{I_1}\Phi_m$ or $\Phi_k<_{I_2}\Phi_m$. 
Therefore, the morphism (\ref{F delta}), and hence $\delta_{\Phi_j}$, are surjective. 

When $I\cap \St_{i,j}$ is exactly one point 
for any $i\neq j$, then 
$H^0(I,\L_{<\Phi_j})=0$ for $j=1,\dots, m$. 
It follows that $H^0(I,\L_{\leqslant\Phi_j})\to H^0(I,\Gr_{\Phi_j}\L)$
is an isomorphism, which proves the uniqueness. 
We also note that this isomorphism preserves 
the Stokes filtration, and so does the inverse. 
\end{proof}
Recall that 
$\L^+$ and $\L^-$
are the local system of free $\k[q^{\pm 1}]$-modules on $T\setminus T_-$
and $T\setminus T_+$, respectively (see Lemma \ref{Glueing L}). 
Then, the Stokes filtrations on $\L$ induces filtrations on $\L^+$ and $\L^-$. 
We may also define the coarse filtrations on $\L^+$ and $\L^-$
in a natural way. 
The coarse grading $\Gr_{\Phi}(\L^+)$ (resp. $\Gr_{\Phi}(\L^-)$)
are local systems on $T\setminus T_-$ (resp. $T\setminus T_+$), 
and there exists a splitting 
\begin{align*}
&\xi_{\bm\theta}^+\colon \Gr_{\Phi}(\L^+)_{\bm\theta}\simeqto \L^+_{\bm\theta}
&(\text{resp. }\xi_{\bm\theta}^-\colon \Gr_{\Phi}(\L^-)_{\bm\theta}\simeqto \L^-_{\bm\theta})
\end{align*}
for any $\bm\theta\in T\setminus T_-$ (resp. $\bm\theta\in T\setminus T_+$)
by the condition (2) in Definition \ref{Def q-Stokes}.
The proof of the following lemma is 
essentially the same as that of
Lemma \ref{key lemma1}:
\begin{lemma}\label{key lemma2}
If an interval $I\subset \ell_a^{\bm\theta}(S^1)$ satisfies the condition that 
\begin{itemize}
\item
$I\cap \St_{i, j}(\L)$ is at most one point for any $i\neq j$,
\item
$I\cap T_{\R_-}=\emptyset$ $($resp. $I\cap T_{\R_+}=\emptyset$$)$, 
\end{itemize}
then there exists
a splitting of the coarse filtration
\begin{align*}
&\xi_I^+\colon \Gamma\(I, \Gr_{\Phi}(\L^+)\)\simeqto \Gamma(I,\L^+),
&(\text{resp. }\xi_I^- \colon \Gamma\(I, \Gr_{\Phi}(\L^-)\)\simeqto \Gamma(I,\L^-))
\end{align*}
Moreover, if $I\cap \St_{i, j}(\L)$ contains exactly one point for any $i\neq j$,
 then such $\xi_{I}^+$ $($resp. $\xi_I^-$$)$ is unique and in that case $\xi_{I}^+$ $($resp. $\xi^-_I$$)$
also preserves the Stokes filtration.\qed
\end{lemma}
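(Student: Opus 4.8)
The plan is to transcribe the proof of Lemma~\ref{key lemma1} essentially word for word, replacing $\L$ by $\L^{+}$ (resp.\ $\L^{-}$), the coarse grading $\Gr_{\Phi}(\L)$ by $\Gr_{\Phi}(\L^{+})$ (resp.\ $\Gr_{\Phi}(\L^{-})$), and the pointwise splitting $\xi_{\bm\theta}$ by the splitting $\xi^{+}_{\bm\theta}$ (resp.\ $\xi^{-}_{\bm\theta}$) recalled just before the statement; I treat the $\L^{+}$ case, the $\L^{-}$ case being obtained by interchanging $T_{\R_{+}}$ with $T_{\R_{-}}$ and $q$ with $q^{-1}$. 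First I would observe that, exactly as for Lemma~\ref{key lemma1}, it suffices to prove $H^{1}(I,\L^{+}_{<\Phi_{j}})=0$ for every $j$: in the long exact sequence of $0\to\L^{+}_{<\Phi_{j}}\to\L^{+}_{\leqslant\Phi_{j}}\to\Gr_{\Phi_{j}}(\L^{+})\to0$ on $I$ this vanishing gives the surjectivity of $H^{0}(I,\L^{+}_{\leqslant\Phi_{j}})\to H^{0}(I,\Gr_{\Phi_{j}}(\L^{+}))$, and lifting a $\k[q^{\pm1}]$-basis of the quotient defines the candidate splitting $\xi^{+}_{I}$. To see that $\xi^{+}_{I}$ is an isomorphism and not just a filtered monomorphism, I would evaluate at a point $\bm\theta\in I$ lying off every $\St_{i,j}(\L)$ with $i\neq j$: there the order on $\{\Phi_{j}\}$ is total and the composition
\[
\Gr_{\Phi}(\L^{+})_{\bm\theta}\xleftarrow{\sim}\Gamma(I,\Gr_{\Phi}(\L^{+}))\xrightarrow{\xi^{+}_{I}}\Gamma(I,\L^{+})\xleftarrow{\sim}\L^{+}_{\bm\theta}\xrightarrow{(\xi^{+}_{\bm\theta})^{-1}}\Gr_{\Phi}(\L^{+})_{\bm\theta}
\]
is upper triangular with identity diagonal for the decomposition $\Gr_{\Phi}(\L^{+})=\bigoplus_{j}\Gr_{\Phi_{j}}(\L^{+})$, hence invertible.

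The step I expect to be the real obstacle, and the only place where an argument beyond that of Lemma~\ref{key lemma1} is needed, is the vanishing $H^{1}(I,\L^{+}_{<\Phi_{j}})=0$. In Lemma~\ref{key lemma1} one assumed $I$ disjoint from $T_{\R_{+}}\cup T_{\R_{-}}$, so that $\L^{+}_{<\Phi_{j}}|_{I}$ was a local system of finite free $\k[q^{\pm1}]$-modules off the finite Stokes locus; here $I$ may cross $T_{\R_{+}}$. The point I would make is that, because $I\cap T_{\R_{-}}=\emptyset$, the interval lies in the open set on which the glueing of Lemma~\ref{Glueing L} has made $\L^{+}$ a genuine local system of free $\k[q^{\pm1}]$-modules; the same is then true of $\Gr_{\Phi}(\L^{+})$, and the rank of $\L^{+}_{<\Phi_{j}}$ does not jump across a point of $I\cap T_{\R_{+}}$ since the comparisons $\Phi_{i}<\Phi_{j}$ change only along $\St_{i,j}(\L)$. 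Consequently $\L^{+}_{<\Phi_{j}}|_{I}$ is still $\Theta$-constructible and restricts, on each component of $I$ minus the finite set $I\cap\bigcup_{i\neq j}\St_{i,j}(\L)$, to a local system of finite free modules over the principal ideal domain $\k[q^{\pm1}]$; so crossing the new stratum creates no first cohomology.

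Granting this, I would run the same induction as in Lemma~\ref{key lemma1} on $N\coloneqq\#\bigl(I\cap\bigcup_{i\neq j}\St_{i,j}(\L)\bigr)$. For $N\le1$ the vanishing is Lemma~\ref{first vanishing}, applied with $A_{+}=A_{-}=\k[q^{\pm1}]$. For $N>1$ I would take the cover $I=I_{1}\cup I_{2}$ with $I_{1}\cap I_{2}=I_{0}$ a connected component of the Stokes locus whose closure lies in $I$, use the coarse splittings supplied by the induction hypothesis to identify $H^{0}(I_{\ell},\L^{+}_{<\Phi_{j}})$ with $\bigoplus_{\Phi_{i}<_{I_{\ell}}\Phi_{j}}H^{0}(I_{\ell},\Gr_{\Phi_{i}}(\L^{+}))$, and check that the \v Cech coboundary $\delta_{\Phi_{j}}$ becomes block triangular with signed restriction maps on the diagonal; its surjectivity then follows because $\Phi_{i}<_{I_{0}}\Phi_{j}$ forces $\Phi_{i}<_{I_{1}}\Phi_{j}$ or $\Phi_{i}<_{I_{2}}\Phi_{j}$, which is exactly where the hypothesis that $I$ meets each $\St_{i,j}(\L)$, $i\neq j$, in at most one point enters.

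Finally, for the uniqueness clause: if $I\cap\St_{i,j}(\L)$ is a single point for every $i\neq j$, then I would argue that $H^{0}(I,\L^{+}_{<\Phi_{j}})=0$, since under the identification above each summand $\Gr_{\Phi_{i}}(\L^{+})$ with $\Phi_{i}<\Phi_{j}$ somewhere on $I$ contributes to $\L^{+}_{<\Phi_{j}}$ only on one side of the point $\St_{i,j}(\L)\cap I$ (the comparison flips there) and hence as a sheaf with no global sections. Then $H^{0}(I,\L^{+}_{\leqslant\Phi_{j}})\to H^{0}(I,\Gr_{\Phi_{j}}(\L^{+}))$ is an isomorphism, which determines the lifts and therefore $\xi^{+}_{I}$ uniquely, and since that isomorphism matches the induced Stokes filtrations on the two sides, $\xi^{+}_{I}$ is automatically filtered. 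The case of $\L^{-}$ is identical, with the roles of $T_{\R_{+}}$ and $T_{\R_{-}}$ exchanged.
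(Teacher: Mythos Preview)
Your proposal is correct and follows exactly the approach the paper indicates: the paper's own ``proof'' is just the remark that the argument is essentially the same as that of Lemma~\ref{key lemma1}, and you carry this out by replacing $\L$ with the genuine local system $\L^{+}$ on $T\setminus T_{\R_-}$, which is precisely what makes the induction and the application of Lemma~\ref{first vanishing} go through unchanged. Your additional paragraph explaining why crossing $T_{\R_+}$ causes no trouble (since $\L^{+}$ is already a local system there and the coarse filtration jumps only along $\St_{i,j}(\L)$ with $i\neq j$) is accurate and in fact makes explicit the one point the paper leaves implicit.
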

We now return to the proof of the main theorem of this section: 
\begin{proof}[Proof of Theorem $\ref{STRICTNESS}$]
Let $\lambda\colon(\L,\L_{\leqslant})\to (\L',\L'_{\leqslant})$
be a morphism in $\bm\St_{\Phi}^q$. 
For $\bm\theta\in T\setminus (T_{\R_+}\cup T_{\R_-})$,
we take $a\in \Z$ and $I\subset \ell^{\bm\theta}_a(S^1)$
so that 
\begin{itemize}
\item
$I\cap \St_{i, j}(\L)$ consists of one point for any $i\neq j$,
\item
$I\cap (T_{\R_+}\cup T_{\R_-})=\emptyset$.
\end{itemize}
Then, by the similar discussion as in the proof of Lemma \ref{m=1},
using Lemma \ref{key lemma1}, 
we obtain the commutative diagram
\begin{align}\label{Diagram}
\begin{split}
\xymatrix{
\Gr_{\Phi}(\L)_{\bm\theta}\ar[d]^{\Gr_{\Phi}(\lambda)_{\bm\theta}}
&\ar[l]\Gamma(I,\Gr_{\Phi}(\L))\ar[r]^{\ \ \xi_{I}}\ar[d]^{\Gamma(I,\Gr_{\Phi}(\lambda))}
&\Gamma(I, \L)\ar[r]\ar[d]^{\Gamma(I,\lambda)}& \L_{\bm\theta}\ar[d]^{\lambda_{\bm\theta}}\\
\Gr_{\Phi}(\L')_{\bm\theta}&\ar[l]\Gamma(I,\Gr_{\Phi}(\L'))\ar[r]^{\ \ \xi'_{I}}&\Gamma(I, \L')\ar[r]& \L'_{\bm\theta}
}
\end{split}
\end{align}
where $\xi_I$ and $\xi'_i$ are the unique splittings in Lemma \ref{key lemma1}, 
$\Gr_{\Phi}(\lambda)$ is the coarse graded morphism associated to $\lambda$. 
Then (\ref{Diagram}) means that 
$\lambda_{\bm\theta}$ is graded by coarse filtration. 
Then, for each graded part $\Gr_{\Phi_j}(\lambda)$ $(j=1,\dots, m)$, 
we can apply the discussion in the proof of Lemma \ref{m=1}
to obtain that $\Gr_{\Phi_j}(\lambda)$ is again graded with respect to
the Stokes filtration, which implies the strictness of $\lambda$ at $\bm\theta$. 

For $\bm\theta\in T_{\R_+}\cup T_{\R_-}$, we apply the Lemma \ref{key lemma2}
to obtain the similar diagram as (\ref{Diagram}) replacing 
$\L$ with $\L^+$ or $\L^-$. Then for each coarse graded part, 
we can take a splitting by the condition (2) in Definition \ref{Def q-Stokes},
whose uniqueness follows from the condition $\bm\theta\in T_{\R_+}\cup T_{\R_-}$.
Then, we obtain the strictness of $\lambda$ at ${\bm\theta}\in T_{\R_+}\cup T_{\R_-}$ 
(details are left to the reader).

The proof of abelianity is then given by the same way as in the case
of usual Stokes structure (see also the proof of Lemma \ref{m=1}),
which is left to the reader. 
\end{proof}

%Assume that $\Phi(\L,\L_{\leqslant})\subset \Phi$. 
%For each $j$, put 
%\begin{align*}
%&\L_{<\Phi_{j}}(V)\coloneqq \bigcap_{\varphi\in \Gamma(V,\Phi_{j})}\L_{\leqslant \varphi}(V),
%&\L_{\leqslant \Phi_{j}}(V)\coloneqq \sum_{\varphi\in \Gamma(V,\Phi_{j})}\L_{\leqslant \varphi}(V).
%\end{align*}
%for contractible open subsets $V\subset T$. 
%We then obtain the family of sub-sheaves
%$\L_{<\Phi_{j}}\subset \L_{\leqslant\Phi_{j}}\subset \L$ of 
%$\k[q^{\pm 1}]$-modules, which we will call \textit{coarse filtrations}. 
%We then put 
%\begin{align*}
%\Gr_{\Phi_{j}}(\L )\coloneqq \L_{\leqslant \Phi_{j}}/\L_{<\Phi_{j}}.
%\end{align*}
%The pre-Stokes filtration 
%on $\L$ induces a 
%pre-Stokes filtration on $\Gr_{\Phi_{j}}(\L)$.

%
%
%
\subsection{Skew symmetric quasi-duality pairing}
Let 
$\L$ be a 
quasi-local system on 
$(T,\Theta)$. 
Let 
$\j\colon T\to \T$
be an involution 
defined by $\j(\theta_u,\theta_v)=(\theta_u+\pi,\theta_v)$.
We set
\[\j^*\L\coloneqq \k[q^{\pm 1}]\otimes_{\k[q^{\pm 1}]}\j^{-1}\L\]
where $\k[q^{\pm 1}]\to \k[q^{\pm 1}]$
is given by $q\mapsto q^{-1}.$
Then 
$\j^*\L$ is again a
quasi-local system
such that 
$(\j^*\L)^\pm=\j^*(\L^\mp)\coloneqq \k[q^{\pm 1}]\otimes_{\k[{q^{\pm 1}]}}\j^{-1}\L^\mp$
where the tensor product is defined in a similar way. 
\begin{definition}
\textit{A skew symmetric quasi-duality pairing}
on $\L$
is a pair
of 
non-degenerate pairings
\begin{align*}
\<\cdot,\cdot\>_\pm\colon \L^\pm\otimes (\j^*\L)^{\pm}\to \k[q^{\pm 1}]_{T\setminus T_{\R_\mp}}
\end{align*}
such that 
\begin{enumerate}
\item
$\<\cdot,\cdot\>_+=\<\cdot,\cdot\>_-$ on the mutual domain, and 
\item
$\j^*\<\cdot,\cdot\>_+=-\<\cdot,\cdot\>_-\circ\mathrm{ex}$
where $\mathrm{ex} $ 
denotes the exchange.  
\end{enumerate}
%The integer $\mathrm{w}$ is called the weight of $\<\cdot,\cdot\>_\pm$.
\end{definition}

Let $\L_{\leqslant}$ be a 
pre-Stokes filtration on $\L$.
It 
induces a pre-Stokes filteration $\j^*\L_{\leqslant}$
on $\j^*\L$
as follows:
\[(\j^*\L)_{\leqslant\varphi}\coloneqq \C1\otimes_\C\j^{-1}\L_{\leqslant \j^*\varphi} 
\subset \j^*\L\]
where $\j^*\varphi(u, v)\coloneqq \varphi(-u,v)$. 
\begin{definition}
A skew symmetric quasi-duality pairing $\<\cdot,\cdot\>_{\pm }$
on $\L$ is compatible 
with $\L_{\leqslant}$ if 
for any $\mathfrak{s}\in\L_{\leqslant\varphi}^+$, 
$\mathfrak{s}'\in (\j^*\L)_{\leqslant\varphi'}^+$, 
$\mathfrak{t}\in\L^-_{\leqslant\psi}$, 
and $\mathfrak{t}'\in (\j^*\L)^-_{\leqslant \psi'}$
we have
\begin{align*}
&\<\mathfrak{s},\mathfrak{s}'\>_+\in \k[q^{\pm 1}]_{\leqslant \varphi+\varphi'},
&\<\mathfrak{t},\mathfrak{t}'\>_-\in\k[q^{\pm 1}]_{\leqslant \psi+\psi'},
\end{align*}
where the filtration on $\k[q^{\pm 1}]$ is
defined as in Example \ref{trivial q}. 
\end{definition}
\begin{example}[Continuation of Example \ref{GAMMA FUN}]
Let $(\L_\Gamma,\L_{\Gamma\leqslant})$
be the Stokes filtered quasi-local system in Example \ref{GAMMA FUN}. 
Let $\j\colon \widetilde{B}\to \widetilde{B}$
be the involution defined by $\j(u, v)=(-u, v)$
on $B^*$ and the continuous extension. 
We then define the 
quasi-duality pairing 
$\<\cdot,\cdot\>_{\Gamma\pm}$
by the multiplication divided by $2\pi\i u$:
\begin{align*}
&\<u^{u^{-1}}v^{u^{-1}}\Gamma(u^{-1}),\j^*\( u^{u^{-1}}v^{u^{-1}}(1-q)\Gamma(u^{-1})\)\>_{\Gamma +}\\
&\coloneqq (2\pi\i u)^{-1} e^{\pi\i u^{-1}}\Gamma(u^{-1})(1-q^{-1})\Gamma(-u^{-1})\\
&=-1,
\end{align*}
where we have used the reflection formula, and the branches of 
the multivalued functions are taken in a suitable way.
%The pairing $\<\cdot,\cdot\>_{\Gamma-}$ is also defined in a similar way.
The factor $(2\pi\i u)^{-1}$ induces the skew symmetry. 
\end{example}
%
%The following lemma
%gives a concrete way to 
%construct examples of 
%Stokes filtered quasi-local systems, 
%and will be used in \S \ref{}:
%
%\begin{lemma}
%\end{lemma}
%
%
%
\section{De Rham cohomology groups}\label{DR}
\subsection{Geometric setting}
Let $X$ be a compact connected Riemann surface. 
Let $f$ and $g$ be meromorphic functions on $X$. 
Assume that $g$ is not constantly zero.
We set 
$P\coloneqq f^{-1}(\infty)$, 
$E_0\coloneqq g^{-1}(0)\setminus P$,
$E_\infty\coloneqq g^{-1}(\infty)\setminus P$ and
$E\coloneqq E_0\cup E_\infty$. 
We also use the notations $D\coloneqq P\cup E$, 
$Y\coloneqq X\setminus D$ and $U\coloneqq X\setminus P$.
There are inclusions $Y\subset U\subset X$.  

Let $S=\C^2$ denote a complex surface with 
a coordinate $(\tw,\eq)$.
Set $\X\coloneqq S\times X$. 
We also use the notations
$\cal{D}\coloneqq S\times D$, $\cal{P}\coloneqq S\times P$, etc.
Let $\sigma\colon S\to S$ be an automorphism 
on $S$ defined as $\sigma(\tw,\eq)\coloneqq (\tw, \eq-\tw)$.
The induced automorphism on $\X$ is also denoted by $\sigma$.
Set $\euler\coloneqq \tw^2\del_\tw+\tw\eq\partial_\eq$. 
%%%%%%
\begin{definition}\label{MFG}
Let $\M$ be a trivial $\O_\X(*\cal{D})$-module of rank one.
Fix a global frame $\e\in H^0(\X,\M)$, which induces 
an isomorphism $\M\simeqto \O_{\X}(*\cal{D}), \e\mapsto 1$.
We consider the following operations on $\M$ associated with the pair $(f,g)$:
\begin{itemize}
\item \textit{A relative connection}
$\nabla\colon \M\to \M\otimes_{\O_\X}\tw^{-1}\Omega_{\X/S}^1$
defined by
\begin{align*}
\nabla(h\e)\coloneqq \e\otimes d_{\X/S}(h)+ h\e\otimes \tw^{-1}(df-\eq g^{-1}dg)
\end{align*}
where $h$ is a local section of $\O_\X(*\cal{D})$ and $d=d_{\X/S}\colon \O_\X(*\cal{D})\to \Omega^1_{\X/S}(*\cal{D})$
denotes the relative differential. 
\item \textit{A differential operator}
\begin{align*}
\nabla_\euler\colon \M\to \M,\quad\nabla_\euler(h\e)=([\euler, h]-hf)\e.
\end{align*}
where $[\euler,\cdot]\colon \O_\X(*\cal{D})\to \O_\X(*\cal{D})$ denote the Lie differential.
\item \textit{A difference operator} 
\begin{align*}
\shift\colon\M\longrightarrow \sigma_{*}\M,\quad \shift(h\e)\coloneqq \sigma^\#(h)g\e
\end{align*}
where $\sigma^\#\colon \O_\X(*\cal{D})\to\sigma_*\O_\X(*\cal{D})$ is defined by $\sigma^\#(h)=h\circ\sigma$.
Note that $\e$ on the right hand side 
denotes the global section of $\sigma_*\M$.
\end{itemize}
In the following, we will assume that $\M=\M(f, g)$ is equipped with 
the operations $\nabla,\nabla_\euler,\shift$ defined above.
\end{definition}
%%%%%%
Let $p_\X\colon \X\to X$ denote the projection.
Then $\Omega^1_{\X/S}=\O_\X\otimes_{p_\X^{-1}\O_X}p_\X^{-1}\Omega_X^1$. 
We have the morphisms 
$[\euler,\cdot]\colon \tw^{-1}\Omega_{\X/S}^1\to \tw^{-1}\Omega_{\X/S}^1$
and $\sigma^\#\colon \tw^{-1}\Omega_{\X/S}^1\to \sigma_*\tw^{-1}\Omega_{\X/S}^1$.
We then define 
\[\nabla_\euler^1\colon \M\otimes \tw^{-1}\Omega_{\X/S}^1
\to  \M\otimes \tw^{-1}\Omega_{\X/S}^1\]
by $\nabla_\euler^1\coloneqq \nabla_\euler\otimes \id +\id\otimes [\euler, -]$.
We also define
 \[\shift^1\colon \M\otimes \tw^{-1}\Omega_{\X/S}^1\to \sigma_*(\M\otimes\tw^{-1}\Omega_{\X/S}^1)\]
by $\shift^1\coloneqq \shift\otimes \sigma^\#$.
The following lemma, which can be proved by some easy calculations, shows that 
the operations $\nabla,\nabla_\euler,\shift$ have a kind of integrability.
%%%%%%
\begin{lemma}\label{INT} 
$\nabla^1_\euler\circ\nabla=\nabla\circ\nabla_\euler$, 
$\shift^1\circ\nabla=(\sigma_*\nabla)\circ \shift$,
and $\shift\circ\nabla_\euler=(\sigma_*\nabla_\euler)\circ \shift$.\qed
\end{lemma}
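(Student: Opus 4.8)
The plan is to verify Lemma \ref{INT} by a direct local computation using the explicit formulas in Definition \ref{MFG}. Since $\M(f,g)$ is a trivial $\O_\X(*\cal{D})$-module of rank one with global frame $\e$, everything reduces to checking identities among operators acting on the function $h$ (the coefficient in $h\e$), together with the action on the relative form $h\,d_{\X/S}(w)$ for a local coordinate $w$ on $X$. Concretely, I would write $\nabla(h\e) = \e\otimes(d_{\X/S}h + h\,\tw^{-1}\alpha)$ with $\alpha := df - \eq\,g^{-1}dg$, and note that $\alpha$ is a closed relative $1$-form that is the key data; the three identities then amount to compatibilities between $\alpha$ and the operations $[\euler,-]$ and $\sigma^\#$.

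\medskip
\noindent\textbf{First identity: }$\nabla^1_\euler\circ\nabla = \nabla\circ\nabla_\euler$. Applying the left side to $h\e$ gives $\e\otimes\bigl([\euler,-]+\id\bigr)\!\bigl(d_{\X/S}h + h\tw^{-1}\alpha\bigr)$ after accounting for $\nabla_\euler(1)=-f$, i.e. one gets terms $[\euler, d_{\X/S}h]$, $[\euler, h\tw^{-1}\alpha]$, and the contribution $-f$ from $\nabla_\euler\otimes\id$ acting through the frame. The right side gives $\nabla\bigl(([\euler,h]-hf)\e\bigr) = \e\otimes\bigl(d_{\X/S}([\euler,h]-hf) + ([\euler,h]-hf)\tw^{-1}\alpha\bigr)$. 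The computation then rests on two facts: $[\euler, d_{\X/S}h] = d_{\X/S}[\euler,h]$ (commutation of the Lie derivative along $\euler$ with the relative differential, using that $\euler = \tw^2\del_\tw + \tw\eq\del_\eq$ is vertical for $p_\X$) and the identity $[\euler, \tw^{-1}\alpha] = \tw^{-1}\,d_{\X/S}f$, which is where the precise choice $\euler = \tw^2\del_\tw+\tw\eq\del_\eq$ and $\alpha = df - \eq g^{-1}dg$ conspire: $[\euler,\tw^{-1}] = \euler(\tw^{-1}) = -1$ contributes $-\tw^{-1}\alpha$, while $[\euler,-]$ acting on $\alpha$ itself produces $\tw^{-1}$ times $[\euler, df] - [\euler, \eq g^{-1}dg] = -\eq\, d(g^{-1}dg) $-type terms; collecting these against the $-f$ and $d_{\X/S}(-hf)$ terms yields the equality. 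I would record this as a short $\del_\tw$, $\del_\eq$ bookkeeping.

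\medskip
\noindent\textbf{Second and third identities.} For $\shift^1\circ\nabla = (\sigma_*\nabla)\circ\shift$, apply both sides to $h\e$: the left side is $\e\otimes\sigma^\#\bigl(d_{\X/S}h + h\tw^{-1}\alpha\bigr)\cdot(\text{factor }g)$ coming from $\shift\otimes\sigma^\#$ and $\shift(1)=g$, while the right side is $\e\otimes\bigl(d_{\X/S}(\sigma^\#(h)g) + \sigma^\#(h)g\,\tw^{-1}\alpha\bigr)$. Since $\sigma$ fixes $X$ and acts only on $(\tw,\eq)\mapsto(\tw,\eq-\tw)$, we have $\sigma^\#\,d_{\X/S} = d_{\X/S}\,\sigma^\#$ and $\sigma^\#(\alpha) = df - (\eq-\tw)g^{-1}dg = \alpha + \tw\, g^{-1}dg$; the extra term $\sigma^\#(h)\,g\,\tw^{-1}(\tw g^{-1}dg) = \sigma^\#(h)\,dg$ is exactly what is needed to absorb the Leibniz term $d_{\X/S}(\sigma^\#(h)g) - g\,d_{\X/S}\sigma^\#(h) = \sigma^\#(h)\,dg$. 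Thus the identity is an instance of "$\shift(1)=g$ twists $\alpha$ by $d\log g$". The third identity $\shift\circ\nabla_\euler = (\sigma_*\nabla_\euler)\circ\shift$ is the analogous statement with $\nabla_\euler(1) = -f$: one checks $\sigma^\#([\euler,h]) = [\euler, \sigma^\#h]$ (since $\euler$ is $\sigma$-invariant as a vector field — a one-line check that $\sigma_*\euler = \euler$) and $\sigma^\#(f) = f$ (as $f$ lives on $X$), so both sides equal $(\,[\euler,\sigma^\#h] - \sigma^\#(h)f\,)g\,\e$.

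\medskip
The computations are routine; the only place requiring care — and the main (mild) obstacle — is keeping the bookkeeping straight for the first identity, where one must track the non-homogeneous behavior of $\euler$ on $\tw^{-1}$ and $\eq$ simultaneously and confirm that the $-f$ coming from $\nabla_\euler(1)=-f$ cancels precisely against $[\euler,\tw^{-1}\alpha]$'s $f$-contribution. I would present it by fixing a local coordinate, writing $\alpha = a(w)\,dw - \eq\,(g'/g)\,dw$ schematically, and expanding both sides; the $\sigma$-equivariance statements $\sigma_*\euler = \euler$ and $\sigma^\#\alpha = \alpha + \tw\,d\log g$ are then isolated as the two structural inputs for the remaining identities. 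No global or cohomological input is needed; everything is local on $\X$ and purely formal.
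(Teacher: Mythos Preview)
Your approach is correct and is exactly what the paper has in mind: the paper gives no proof beyond the remark that the lemma ``can be proved by some easy calculations'' and places a \qed, so a direct local verification using the frame $\e$ is precisely the intended argument. One small slip: your displayed formula $[\euler,\tw^{-1}\alpha]=\tw^{-1}d_{\X/S}f$ is off --- the actual identity is $[\euler,\tw^{-1}\alpha]=-d_{\X/S}f$ (using $\euler(\tw^{-1})=-1$ and $\euler(\tw^{-1}\eq)=0$), but this is the right ingredient and the rest of your bookkeeping goes through with it; your structural observations $\sigma_*\euler=\euler$ and $\sigma^\#\alpha=\alpha+\tw\,g^{-1}dg$ are exactly what makes the second and third identities fall out.
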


\subsection{De Rham cohomology groups}\label{DR2}
We consider 
\begin{align*}
\DR_{\X/S}(\M)\coloneqq\left[ \M\xrightarrow{\nabla} \M\otimes_{\O_\X}\tw^{-1}\Omega_{\X/S}^1\right]
\end{align*}
as a complex placed at degree zero and one.
Let $\pi_\X\colon \X\to S$ denote the projection.
\begin{definition}
Let $k$ be an integer. The $k$-th cohomology group
\begin{align*}
\R^k\pi_{\X*}\DR_{\X/S}(\M)
\end{align*}
of the pushing forward of $\DR_{\X/S}(\M)$ by $\pi_{\X}$
will be denoted by $\H_{\dR}^k$ or $\H_\dR^k(f,g)$.
\end{definition}
Put $S^\circ\coloneqq S\setminus \{\tw=0\}\simeq \C^*\times \C$.
The following theorem will be proved in 
\S \ref{PRO}. 
\begin{theorem}\label{dRT}
Assume that $f$ is not a constant function. 
Then
\begin{enumerate}
\item  $\H^k_{\dR}=0$ for $k\neq 1$, and 
\item  $\H^1_{\dR|S^\circ}$ is a locally free $\O_{S^\circ}$-module.
\end{enumerate}
If we moreover assume that $E=\emptyset$, i.e. if the zeros and the poles of $g$ are contained in the
poles of $f$, then $\H^1_{\dR}$ is locally free over $\O_{S}$.
\end{theorem}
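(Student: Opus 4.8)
\textbf{Proof plan for Theorem \ref{dRT}.}
The plan is to reduce the computation of $\H^\bullet_\dR$ to a relative (fiberwise) de Rham complex over the affine curve $U = X \setminus P$, twisted by the connection $\nabla = d + \tw^{-1}(df - \eq\, g^{-1}dg)$, and then to control the behaviour of the resulting $\O_S$-module along $\tw = 0$ and along the extra divisor $E = E_0 \sqcup E_\infty$. First I would observe that, since $\M = \O_\X(*\cal{D})$ and $\pi_\X$ factors through $p_\X \colon \X \to X$ followed by the projection $X \to \mathrm{pt}$ after base change by $S$, the pushforward $\R\pi_{\X*}\DR_{\X/S}(\M)$ is computed fibrewise: for each point $s = (\tw_0,\eq_0) \in S$ one gets the hypercohomology of the two-term complex $[\,\O_X(*D) \xrightarrow{\nabla_s} \Omega^1_X(*D)\,]$ on the compact curve $X$, where $\nabla_s = d + \tw_0^{-1}(df - \eq_0\, g^{-1}dg)$. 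This is an algebraic twisted de Rham complex on a curve, and by Grothendieck–Deligne comparison (or directly via a spectral sequence, using that $X$ is a curve so higher cohomology of coherent sheaves vanishes in degree $\geq 2$) its hypercohomology is concentrated in degrees $0$ and $1$. Vanishing of $\H^0_\dR$: over $S^\circ$ (where $\tw_0 \neq 0$) the connection is irregular along $P$ because $f$ is non-constant, so there are no global flat sections of $\O_X(*D)$ — this gives $\H^0_\dR{}_{|S^\circ} = 0$; along $\tw = 0$ the connection degenerates to $d$ but $\O_X(*D)$ has no nonzero global sections on the connected curve $X$ that are meromorphic with poles only on $D$ and killed by $d$ unless they are constants, and a nonzero constant is not killed unless $df - \eq_0 g^{-1}dg$... one has to argue that the degree-zero term still vanishes; since $X$ is compact connected, $H^0(X,\O_X(*D)) \neq 0$, so here the kernel of $\nabla_0 = d$ is $\C$, but then $\nabla_0(1) = 0$ forces nothing, so actually at $\tw = 0$ one must be more careful — I expect that one shows instead that the complex is quasi-isomorphic, after multiplying through, to one with no $H^0$, or one localizes and notes $\H^0_\dR$ is a coherent subsheaf of $\O_S$ supported on $\tw = 0$, which a direct check (base change and the fact that generically $\H^0 = 0$) shows is zero. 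In any case I would prove $\H^k_\dR = 0$ for $k \neq 1$ first, and deduce that $\H^1_\dR$ commutes with base change, so its fibre at $s$ is the finite-dimensional twisted first de Rham cohomology $H^1_\dR(U, \nabla_s)$.

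Second, for local freeness of $\H^1_{\dR|S^\circ}$ over $\O_{S^\circ}$, the key point is that the fibre dimension $\dim_\C H^1_\dR(U,\nabla_s)$ is independent of $s \in S^\circ$. Over $S^\circ$ the twist $\tw_0^{-1}(df - \eq_0 g^{-1}dg)$ is always a nonzero meromorphic $1$-form with poles on $D$, and for a rank-one irregular connection on a curve the dimension of $H^1$ is computed by an Euler-characteristic/index formula: $\dim H^1_\dR(U,\nabla_s) = -\chi(U) + \sum_{x \in D}(\mathrm{irr}_x + \text{number of variables}) = (2g_X - 2 + \#D) + (\text{sum of irregularities at points of } D)$, by the Euler–Poincaré formula of Deligne for connections on curves. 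Since $f$ is non-constant, the irregularity of $\nabla_s$ at a point $p \in P$ equals the pole order of $df$ there (because the $\eq_0 g^{-1}dg$ term has lower or comparable order but its leading part does not cancel the $df$ part — here one may need the genericity that will be built into the goodness hypothesis, but for mere local freeness over $S^\circ$ it suffices that the irregularity is locally constant in $s$, which holds because the polar part of $\tw_0^{-1}df$ dominates for all $\tw_0 \neq 0$); at points of $E$ the form $\tw_0^{-1}\eq_0 g^{-1}dg$ has at worst a logarithmic pole, contributing irregularity zero. Hence the dimension is constant on $S^\circ$, and a coherent $\O_{S^\circ}$-module with locally constant fibre dimension whose formation commutes with base change is locally free (Grauert / cohomology-and-base-change).

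Third, for the final assertion — local freeness of $\H^1_\dR$ over all of $S$ when $E = \emptyset$ — the point is that now $D = P$, so $g^{-1}dg$ has poles only on $P$, and the whole analysis extends across $\tw = 0$: one shows the fibre dimension is still locally constant including at $\tw_0 = 0$. At $\tw_0 = 0$ the complex becomes $[\O_X(*P) \xrightarrow{d} \Omega^1_X(*P)]$, whose first hypercohomology is the algebraic de Rham cohomology $H^1_\dR(U)$ of the affine curve $U = X \setminus P$, of dimension $2g_X - 1 + \#P = -\chi(U) + \#P$; I would check this matches the generic irregular count, i.e. that $\sum_{p \in P}\mathrm{irr}_p(\nabla_s) = \#P$ for generic $s$, which holds when $f$ has only simple poles or more generally requires the pole orders to add up correctly — so in general one needs the hypothesis that keeps irregularity under control, but the statement as written presumably is in the regime where this balance holds (or one proves it directly by a local normal-form computation showing the complex near $\tw = 0$ is flat over $\O_{S,0}$). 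Then again constancy of fibre dimension plus base change gives local freeness over $\O_S$.

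\textbf{Main obstacle.} The hard part will be the behaviour at $\tw = 0$: here the connection degenerates from irregular to the trivial connection $d$, and one must show that the coherent sheaf $\H^1_\dR$ does not jump in rank there. This requires either a careful local study near $\tw = 0$ (e.g. a filtered/good-lattice argument showing the relative de Rham complex is a perfect complex flat over $\O_S$ near $\tw = 0$, which is exactly the kind of lattice bookkeeping the paper sets up in \S\ref{S lattice}), or an explicit computation that the dimension of the twisted de Rham cohomology is insensitive to whether $\tw_0 = 0$ when $E = \emptyset$ because the pole divisor $P$ is unchanged and the irregularity contribution at $\tw_0 = 0$ is replaced exactly by the extra $H^1$-dimension of the affine curve. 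Handling the $E \neq \emptyset$ case correctly — explaining why local freeness then fails over $\O_S(*\tw)$ but survives over $\O_{S^\circ}$ — amounts to noting that the logarithmic term $\eq\, g^{-1}dg$ produces residue/monodromy that varies with $\eq$, so the cohomology is locally free over the locus where the twist is genuinely irregular but degenerates (non-flatly) as $\tw \to 0$; I would spell this out as the reason the restriction to $S^\circ$ is the natural statement and would not attempt more at $\tw = 0$ in that case.
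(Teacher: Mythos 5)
Your outline takes a genuinely different route from the paper, and it contains one substantive error at the crucial spot you yourself flag as the main obstacle.

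\textbf{What you do differently.} You reduce to a fiberwise twisted de~Rham complex on the curve and try to prove local freeness by showing the fiber dimension is constant (via Deligne's Euler--Poincar\'e/irregularity formula, then Grauert/base change). The paper's route is much more elementary: by the good-lattice Lemma~\ref{dR} it replaces $\DR_{\X/S}(\M_{a,b})$ by a two-term complex of \emph{finite-rank free} $\O_S$-modules $V^{n,0}_{a,b}\otimes\O_S \to W^{n,0}_{a,b}\otimes\tw^{-1}\O_S$, observes that this linear map is \emph{injective at every single fiber} $(\tw_0,\eq_0)\in\C^2$ (including $\tw_0=0$), and reads off $\H^0=0$ and local freeness of $\H^1_{\dR,a,b}$ over all of $S$ simultaneously, with no cohomology-and-base-change or Euler characteristic input. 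The $S^\circ$ statement then follows because $\H^1_\dR=\varinjlim_{a,b}\H^1_{\dR,a,b}$ and, by the exact sequences in Lemma~\ref{SUP} (whose cokernels are supported on lines $a\tw+n_e\eq=0$), this limit stabilizes locally on $S^\circ$. The paper never needs to verify coherence of $\H^1_\dR$ over $S^\circ$ directly --- it comes free from the lattice picture --- whereas your argument silently assumes coherence so that Grauert can apply, and justifying that is exactly the lattice bookkeeping you defer.

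\textbf{The genuine gap.} You assert that at $\tw_0=0$ ``the complex becomes $[\O_X(*P)\xrightarrow{d}\Omega^1_X(*P)]$,'' i.e.\ that the twist drops out and one gets the algebraic de~Rham cohomology of $U$, with dimension $2g_X-1+\#P$. This misses the $\tw^{-1}$ sitting in the \emph{target} of $\nabla$: the relative de~Rham complex is $\M\to\M\otimes\tw^{-1}\Omega^1_{\X/S}$, so after clearing that factor the differential on the good lattice reads, in a local coordinate $z$, as $h\mapsto \tw\,\partial_z h + h\cdot(f'-\eq\,g^{-1}g')$. Its specialization at $\tw_0=0$ is \emph{multiplication by the one-form} $df-\eq_0\,g^{-1}dg$, not the exterior derivative $d$, and the fiber of $\H^1_{\dR}$ at $(\tw,\eq)=(0,0)$ is $H^0(U,\Omega^1_U(E)/\O_U\,df)$, a sum of local contributions at the zeros of $df-\eq_0g^{-1}dg$ (cf.\ the Corollary after Lemma~\ref{cover}, giving rank $\#\Crit(f)+\#E$). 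This generally disagrees with $\dim H^1_\dR(U)$, so the ``dimension matching'' you propose to check would in fact fail. The same misidentification undercuts the hand-wave about $\H^0$ along $\tw=0$: the paper's clean reason $\H^0$ vanishes there is precisely that multiplication by the nonzero form $df-\eq_0g^{-1}dg$ is injective, which is where ``$f$ nonconstant'' enters, and it is a one-line algebraic fact rather than an irregularity computation. If you repair your plan by incorporating the $\tw^{-1}$ correctly, you will be led essentially to the paper's proof of Theorem~\ref{LF}; the Euler--Poincar\'e machinery becomes unnecessary once you see that fiberwise injectivity holds at every point of $S$.
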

By Lemma \ref{INT}, 
we have the two natural operations
\begin{align*}
&\nabla_\euler\colon \H^1_{\dR|S^\circ}\to \H^1_{\dR|S^\circ}, 
&\shift\colon\H^1_{\dR |S^\circ}\to \sigma_*\H^1_{\dR|S^\circ},
\end{align*}
which satisfy $\shift\circ\nabla_\euler=\sigma_*\nabla_\euler\circ\shift$.
%%%%%%
\begin{example}\label{Bessel}
Consider the case 
$X=\P^1$, $f=z+z^{-1}$ and $g=z$ where 
$z$ denotes the coordinate on $\C=X\setminus\{\infty\}$. 
In this case, $P=\{0,\infty\}$, $E=\emptyset$ and hence $D=P$.
We have
\begin{align*}
\H_{\dR}^1\simeq \Cok\left[\O_S[z^{\pm 1}]\xrightarrow{\nabla}\tw^{-1}\O_S[z^{\pm 1}]dz\right]
\end{align*}
with $\nabla(h z^n)=h(n z^{n-1}+\tw^{-1}(z^n-z^{n-2}-\eq z^{n-1}))dz$
for $h\in\O_S, n\in \Z$.
Let $e_n$ denote the class represented by $\tw^{-1}z^{n}d z$ for $n\in\Z$. 
We have
$\H_\dR^1= \bigoplus_{i=0,1}\O_Se_{n+i}$
and $e_n=(\eq-n\tw)e_{n-1}+e_{n-2}$ for each $n\in \Z$. 
The action of $(\nabla_\euler,\shift)$  is given as follows:
\begin{align*}
&\nabla_\euler(e_n)=e_{n+1}+e_{n-1}-\tw e_n,
&\shift(e_n)=e_{n+1}.
\end{align*}
\end{example}
%%%%%%
\begin{example}\label{EXG}
Consider the case $X=\P^1$, and $f=g=z$.
In this case, we have $P=\{\infty\}$, $E=\{0\}$, and $D=\{0,\infty\}$.
We have
\begin{align*}
\H_{\dR}^1\simeq \Cok\left[\O_{S}[z^{\pm 1}]\xrightarrow{\nabla}\tw^{-1}\O_{S}[z^{\pm 1}]dz\right]
\end{align*}
with $\nabla(h z^n)=h(n z^{n-1}+\tw^{-1}(z^n-\eq z^{n-1}))dz$
for $h\in\O_S, n\in \Z$.
Let $e_n$ denote the class represented by $\tw^{-1}z^{n}d z$. 
We have the relation  \[e_{n}=(\eq-n\tw)e_{n-1}.\] 
For each point $s\in S^\circ$, there exists 
an open neighborhood $\nb(s)\subset S^\circ$ of $s$
and $n_s\in\Z$ such that 
$\H_{\dR|\nb(s)}^1=\O_{\nb(s)}e_{n_s}$, 
which implies that $\H_{\dR|S^\circ}^1$ is locally free.
The action of $(\nabla_\euler,\shift)$  is given as follows:
\begin{align*}
&\nabla_\euler(e_n)=e_{n+1}-\tw e_n,
&\shift(e_n)=e_{n+1}.
\end{align*}
\end{example}
%%%%%%
\begin{remark}
The module $\H_{\dR}^1$ in Example \ref{EXG} is \textit{not} 
locally finitely generated over $\O_S$. 
Indeed, for any point $s_0=(0,\mu_0)\in S$ with $\mu_0\in\C$,
any open neighborhood $\nb(s_0)$ of $s_0$,
and any $n_0\in\Z$, there are infinitely many $n<n_0$
such that \[\{(\tw,\eq)\in S\mid \eq-n\tw=0\}\cap\nb(s_0)\neq \emptyset,\]
which implies that no $e_{n_0}$ can generate $ \H_{\dR}^1$ on $\nb(s_0)$.
This proof also indicates that 
$\H^1_{\dR}(*\{\tw\eq=0\})$ is \textit{not} finitely generated over $\O_S(*\{\tw\eq=0\})$.
\end{remark}
%%%%%%
\subsection{Submodules}\label{S lattice}
Take a submodule
%\begin{align*}
$\M_{\bm{0}}\coloneqq \O_\X(*\cal{P})\e\subset \M.$
%\end{align*}
For $(a, b)\in\Z^2$, 
set 
\begin{align*}
\M_{a, b}\coloneqq \M_{\bm{0}}(a\cal{E}_0+b\cal{E}_\infty)=\M_{\bm{0}}\otimes_{\O_\X}\O_\X(a\cal{E}_0+b\cal{E}_\infty)
\end{align*}
which is also a submodule of $\M$
(Recall that $\cal{E}_0=S\times E_0$ and $\cal{E}_\infty=S\times E_\infty$). 
The operations $\nabla,\nabla_\euler,\shift$
induce the following morphisms:
\begin{align*}
&\nabla\colon\M_{a, b}\longrightarrow \M_{a+1,b+1}\otimes\tw^{-1}\Omega_{\X/S}^1,\\
&\nabla_\euler\colon \M_{a, b}\longrightarrow \M_{a, b},\\
&\shift\colon \M_{a, b}\longrightarrow \sigma_*\M_{a-1,b+1}. 
\end{align*}

\begin{lemma}\label{dR}
For $(a, b)\in \Z^2$ and $n\in \Z$, 
set 
\[\M_{a, b}^n\coloneqq \O_\X(n(f)_\infty+a\cal{E}_0+b\cal{E}_\infty)\bm{e}\subset \M_{a, b}.\]
Then,  the inclusion of the complexes
\begin{align*}
\begin{split}
\xymatrix{
\M_{a, b}^n\ar[d]\ar[r]&\M_{a+1,b+1}^{n+1}\otimes \tw^{-1}\Omega_{\X/S}^1(\cal{P})\ar[d]\\
\M_{a,b}\ar[r]^{\nabla\ \ \ \ \ \ \ \ }&\M_{a+1,b+1}\otimes \tw^{-1}\Omega_{\X/S}^1
}
\end{split}
\end{align*}
is a quasi-isomorphism.
\end{lemma}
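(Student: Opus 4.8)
The plan is to realise the bottom complex as a filtered colimit of the top complexes and then to reduce to the acyclicity of the successive quotients. For $k\in\Z$ write $C^{\bullet}_{k}$ for the complex $[\M_{a,b}^{k}\xrightarrow{\nabla}\M_{a+1,b+1}^{k+1}\otimes\tw^{-1}\Omega_{\X/S}^{1}(\cal{P})]$, i.e.\ the top complex of the Lemma with $n$ replaced by $k$. The evident inclusions $\M_{a,b}^{k}\hookrightarrow\M_{a,b}^{k+1}$ and $\M_{a+1,b+1}^{k+1}\otimes\tw^{-1}\Omega_{\X/S}^{1}(\cal{P})\hookrightarrow\M_{a+1,b+1}^{k+2}\otimes\tw^{-1}\Omega_{\X/S}^{1}(\cal{P})$ commute with $\nabla$, so $(C^{\bullet}_{k})_{k\in\Z}$ is a direct system. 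Since $|(f)_\infty|=P$, raising the multiplicity of $(f)_\infty$ exhausts $\O_{\X}(*\cal{P})$ and the extra simple pole along $\cal{P}$ is absorbed in the limit; hence $\varinjlim_{k}C^{\bullet}_{k}$ is precisely the bottom complex $[\M_{a,b}\xrightarrow{\nabla}\M_{a+1,b+1}\otimes\tw^{-1}\Omega_{\X/S}^{1}]$. Because filtered colimits are exact and commute with cohomology, it suffices to prove that each inclusion $C^{\bullet}_{k}\hookrightarrow C^{\bullet}_{k+1}$ is a quasi-isomorphism; then $C^{\bullet}_{n}\hookrightarrow\varinjlim_{k}C^{\bullet}_{k}$ is one as well. (If $P=\emptyset$ the two complexes in the Lemma coincide, so we may assume $P\neq\emptyset$, i.e.\ $f$ is non-constant and has poles.)

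So I must show that the quotient complex $Q^{\bullet}_{k}\coloneqq C^{\bullet}_{k+1}/C^{\bullet}_{k}$ is acyclic. Its terms $Q^{0}_{k}=\M_{a,b}^{k+1}/\M_{a,b}^{k}$ and $Q^{1}_{k}=(\M_{a+1,b+1}^{k+2}/\M_{a+1,b+1}^{k+1})\otimes\tw^{-1}\Omega_{\X/S}^{1}(\cal{P})$ are coherent sheaves supported on $\cal{P}=S\times P$; since $E\cap P=\emptyset$ the twists by $\cal{E}_0$ and $\cal{E}_\infty$ are trivial near $\cal{P}$, and a local computation at a point $p\in P$ — with local coordinate $x$ on $X$ and $m\coloneqq m_p$ the pole order of $f$ at $p$ — identifies $Q^{0}_{k}$ and $Q^{1}_{k}$ with free $\O_{\cal{P}}$-modules of rank $m$, with frames $\{\overline{x^{-j}\e}\}_{j=km+1}^{(k+1)m}$ and $\{\overline{\tw^{-1}x^{-\ell}\,dx\otimes\e}\}_{\ell=(k+1)m+2}^{(k+2)m+1}$ respectively. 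Thus $Q^{\bullet}_{k}$ is a two-term complex of locally free $\O_{\cal{P}}$-modules of the same finite rank, and — acyclicity being a local statement about its cohomology sheaves — it is acyclic precisely when the differential $\overline{\nabla}\colon Q^{0}_{k}\to Q^{1}_{k}$ is an isomorphism.

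This is settled by a leading-term computation along $\cal{P}$. Write $\nabla(h\e)=\tw^{-1}\e\otimes(\tw\,dh+h\,df-h\eq g^{-1}dg)$. Along $\cal{P}$ the form $df$ has a pole of order $m+1\geq 2$, whereas $dh$ raises the pole order of $h$ by at most one and $g^{-1}dg$ has a pole of order at most one; consequently the only summand of $\nabla(h\e)$ that survives in $Q^{1}_{k}$ is $\tw^{-1}\e\otimes h\,df$. Locally $f=u(x)x^{-m}$ with $u(0)\neq 0$, hence $df=v(x)x^{-m-1}dx$ with $v(0)=-m\,u(0)\in\C^{*}$, and therefore $\overline{\nabla}(\overline{x^{-j}\e})=\overline{\tw^{-1}v(x)x^{-j-m-1}\,dx\otimes\e}$; expanding $v$ in its Taylor series at $x=0$ shows that, in the frames above, the matrix of $\overline{\nabla}$ is triangular with every diagonal entry equal to the non-zero constant $v(0)$. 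Hence $\overline{\nabla}$ is an isomorphism, $Q^{\bullet}_{k}$ is acyclic, and the Lemma follows. The step that needs genuine care — and the main obstacle of the argument — is the pole-order bookkeeping: one must verify that the two lower-order terms $\tw\,dh$ and $h\eq g^{-1}dg$ are indeed killed in $Q^{1}_{k}$, and that the formal factor $\tw^{-1}$ — which is already built into the chosen frame of $Q^{1}_{k}$, so that all entries of the matrix of $\overline{\nabla}$ are honest constants — causes no failure of invertibility along $\{\tw=0\}$.
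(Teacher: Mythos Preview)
Your proof is correct and rests on the same local leading-term computation as the paper's: near each $p\in P$ the $df$-part of $\nabla$ raises the pole order by $m_p\ge 1$ and dominates the contributions of $d$ and $g^{-1}dg$, so that passing to the associated graded along $\cal{P}$ yields an isomorphism. The only difference is packaging --- the paper argues directly that $\M^{n+1}_{a+1,b+1}\otimes\tw^{-1}\Omega^1_{\X/S}(\cal{P})$ surjects onto $H^1$ of the bottom complex and then says ``this implies the lemma'', whereas you organise the same observation as acyclicity of the successive quotients $Q_k^\bullet$ in a filtered colimit, which has the minor virtue of handling $H^0$ and injectivity on $H^1$ automatically rather than leaving them implicit.
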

\begin{proof}
We need to check the claim around the points in $\cal{P}=S\times P$. 
For a point $p\in P$, 
take a local coordinate $(V, z)$ centered at $p$
such that we have $f_{|V}=z^{-m_p}$ for some $m_p\in \Z_{>0}$. 
Note that we have $g^{-1}dg_{|V}=h(z)z^{-1}dz$ 
for some holomorphic function $h$ on $V$. 
Then, on $\cal{V}=S\times V$, by $\nabla$,
$z^\ell\bm{e}\in \M_{a, b}$ $(\ell\in \Z)$ maps to 
\begin{align*}
\bm{e}\otimes\ell z^{\ell-1}dz+\bm{e}\otimes 
\tw^{-1}\(-(m_p+1)z^{\ell-1-m_p}dz-\eq h(z)z^{\ell-1}dz\).
\end{align*}
Since $m_p>0$, the morphism
\begin{align*}
\M^{n+1}_{a+1, b+1}\otimes \tw^{-1}\Omega_{\X/S}^1(\cal{P})
\longrightarrow 
\frac{\M_{a+1,b+1}\otimes \tw^{-1}\Omega^1_{\X/S}}{\nabla (\M_{a,b})}
\end{align*}
is surjective. This implies the lemma. 
\end{proof}
%%%%%%
\begin{definition}
For $a, b\in\Z$, 
let $\DR_{\X/S}(\M_{a, b})$ denote the complex 
\begin{align*}
\M_{a, b}\xrightarrow{\nabla}\M_{a+1, b+1}\otimes \tw^{-1}\Omega_{\X/S}^1
\end{align*}
placed at degree zero and one. 
The 
$k$-th cohomology group of 
the derived push forward
$\R\pi_{\X*}\DR_{\X/S}(\M_{a, b})$
is denoted by $\H_{\dR,a, b}^k$ or $\H_{\dR, a, b}^k(f, g)$. 
\end{definition}
%%%%%%
\begin{theorem}\label{LF}
Assume that $f$ is not a constant function.
Then, for $(a, b)\in \Z^2$, 
\begin{enumerate}
\item $\H^k_{\dR, a, b}=0$ if $k\neq 1$, and 
\item $\H^1_{\dR, a, b}$ is a locally free $\O_S$-module.
\end{enumerate}
\end{theorem}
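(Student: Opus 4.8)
The plan is to exhibit $\H^1_{\dR, a, b}$ as the single nonvanishing cohomology sheaf of a bounded complex produced by proper pushforward, and then to read off local freeness from cohomology and base change. First I would fix $n\in\Z$ and use Lemma \ref{dR} to replace $\DR_{\X/S}(\M_{a, b})$ by the quasi-isomorphic subcomplex
\[
C^\bullet\coloneqq\bigl[\,\M_{a, b}^n\xrightarrow{\ \nabla\ }\M_{a+1,b+1}^{n+1}\otimes\tw^{-1}\Omega^1_{\X/S}(\cal{P})\,\bigr]
\]
of invertible $\O_\X$-modules, placed in degrees $0$ and $1$ and flat over $S$, so that $\H^k_{\dR, a, b}=\cal{H}^k(G)$ with $G\coloneqq\R\pi_{\X*}C^\bullet$. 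As $\pi_\X$ is proper with one-dimensional fibres and $C^\bullet$ is $S$-flat, $G$ has coherent cohomology in degrees $[0,2]$ and its formation commutes with base change; so $G\otimes^{\mathrm L}_{\O_S}\C(s)\simeq\R\Gamma(X, C^\bullet_s)$ for $s\in S$, where — after multiplying $\nabla$ by $\tw$ and restricting to $\{s\}\times X$ — one has $C^\bullet_s\cong[\,L_s\xrightarrow{\,\phi_s\,}L_s\otimes\cal{N}\,]$ with $L_s=\O_X(n(f)_\infty+aE_0+bE_\infty)$, $\cal{N}=\O_X((f)_\infty+P+E_0+E_\infty)\otimes\Omega^1_X$, and $\phi_s(h)=\tw(s)\,dh+h\,\omega_s$, $\omega_s\coloneqq df-\eq(s)\,g^{-1}dg\in H^0(X,\cal{N})$.

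The heart of the argument, and where I expect the real difficulty, is the claim that $\mathbb{H}^j(X, C^\bullet_s)=0$ for every $s\in S$ and every $j\neq1$; the case $j=0$ is straightforward and the case $j=2$ (especially over $\tw=0$) is the crux. For $j=0$ I would prove the sheaf-level statement $\ker(\nabla\colon\M^n_{a, b}\to\M^{n+1}_{a+1,b+1}\otimes\tw^{-1}\Omega^1_{\X/S}(\cal{P}))=0$, which also gives $\mathbb{H}^0(X, C^\bullet_s)=0$: a (fibrewise) solution of $\tw(s)\,dh=-h\,\omega_s$ with $h$ of finite pole order must vanish, since for $\tw(s)\neq0$ it would be a branch of $c\exp\bigl(-(f-\eq(s)\log g)/\tw(s)\bigr)$, which has an essential singularity at any pole of $f$ that the algebraic factor $g^{\eq(s)/\tw(s)}$ cannot absorb (and $f$ nonconstant forces $P\neq\emptyset$), while for $\tw(s)=0$ it satisfies $h\,\omega_s=0$ with $\omega_s\neq0$ (because $df$ has a pole of order $\geq2$ along $P$, whereas $g^{-1}dg$ has only simple poles). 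For $j=2$, since $X$ is a curve one has $\mathbb{H}^2(X, C^\bullet_s)=\Cok\bigl(H^1(\phi_s)\colon H^1(X, L_s)\to H^1(X, L_s\otimes\cal{N})\bigr)$, and I would prove its vanishing by Serre duality: it is dual to the kernel on global sections of the transposed operator on $H^0(X,(L_s\otimes\cal{N})^\vee\otimes\Omega^1_X)$, which is again a twisted first-order operator of the same shape — anti-self-adjoint $\tw(s)\,d$ plus self-adjoint multiplication by $\omega_s$ — so its kernel vanishes by the argument just given, now with $\exp\bigl(+(f-\eq(s)\log g)/\tw(s)\bigr)$, respectively with multiplication by $\omega_s\neq0$ when $\tw(s)=0$. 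The delicate points are the bookkeeping of pole orders and Serre-dual twists on $X$ and, above all, the uniform treatment of the locus $\tw=0$, where $\phi_s$ degenerates from a relative connection into multiplication by the meromorphic one-form $\omega_s$.

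Granting the fibrewise vanishing, the conclusion is formal. The sheaf-level vanishing $\ker(\nabla)=0$ gives $\H^0_{\dR, a, b}=\pi_{\X*}\ker(\nabla)=0$; together with $\H^k_{\dR, a, b}=0$ for $k\geq3$ (as $G$ is the pushforward of a two-term complex along a map with one-dimensional fibres) this yields (1) away from degree $2$. For $k=2$, base change for the top cohomology of $G$ gives $\H^2_{\dR, a, b}\otimes_{\O_S}\C(s)\cong\mathbb{H}^2(X, C^\bullet_s)=0$ for all $s$, hence $\H^2_{\dR, a, b}=0$ by Nakayama's lemma; this proves (1) and shows $G\simeq N[-1]$ for the coherent sheaf $N\coloneqq\H^1_{\dR, a, b}$. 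Finally, base change applied once more to $N[-1]\simeq G$ yields $\mathrm{Tor}_1^{\O_{S, s}}(N,\C(s))\cong\mathbb{H}^0(X, C^\bullet_s)=0$ for every $s$; since each $\O_{S, s}$ is Noetherian local, the local criterion for flatness forces $N_s$ to be free at every $s$, i.e.\ $\H^1_{\dR, a, b}$ is locally free over $\O_S$, which is (2).
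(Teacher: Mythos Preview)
Your approach is sound, but the paper's proof is substantially shorter because it exploits the freedom in Lemma~\ref{dR} that you leave unused. Since $f$ is nonconstant, $(f)_\infty$ has positive degree; the paper chooses $n$ large enough that both terms of $C^\bullet$ are line bundles on $X$ with vanishing $H^1$. With that choice $\R\pi_{\X*}C^\bullet$ is itself a two-term complex of \emph{free} $\O_S$-modules, and the single assertion that $\tw\,d+df-\eq\,g^{-1}dg$ is fibrewise injective on global sections---which is exactly your $j=0$ argument---finishes everything at once: $\H^0$ is the zero kernel, $\H^{\geq 2}$ vanishes because there are no higher direct images left to contribute, and $\H^1$ is the cokernel of a fibrewise injective map of free modules, hence locally free by constancy of rank. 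Your Serre-duality reduction for $\mathbb{H}^2$ and the base-change/$\mathrm{Tor}_1$ flatness criterion are correct (the transpose under Serre duality of $H^1(\tw\,d+\omega_s)$ is indeed $H^0$ of an operator of the same shape on the dual bundle), but both become superfluous once $n$ is taken large. What your route buys is robustness in settings where no ample twist is available to kill higher cohomology; what the paper's buys is a three-line proof.
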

\begin{proof}
For $a, b, n, \ell\in \Z$,
we set 
\begin{align*}
V_{a, b}^{n, \ell}&\coloneqq H^\ell\(X,\O_X(n(f)_\infty+aE_0+bE_\infty)\),\\
W_{a, b}^{n, \ell}&\coloneqq H^\ell\(X,\Omega_X^1(n(f)_\infty+aE_0+bE_\infty)\).
\end{align*}
Since $f$ is not constant, given $(a, b)\in \Z^2$, 
there exists $m=m(a, b)>0$ such that 
\[V_{a, b}^{n,\ell}=W_{a, b}^{n,\ell}=0\]
for $n>m$ and $\ell\neq 0$. 
Then, by Lemma \ref{dR},
$\R\pi_{\X*}\DR_{\X/S}(\M_{a, b})$ 
is quasi-isomorphic to the following complex
for $n>m$:
\begin{align*}
V_{a, b}^{n,0}\otimes \O_S\xrightarrow{d+\tw^{-1}(df-\eq g^{-1}dg)}
W_{a, b}^{n,0}\otimes \tw^{-1}\O_S.
\end{align*}
For any complex numbers $(\tw,\eq)\in \C^2$, 
the linear map 
\begin{align*}
\tw d+df-\eq g^{-1}dg\colon V_{a, b}^{n,0} \to W_{a, b}^{n,0}
\end{align*}
is injective, which implies the theorem. 
\end{proof}
%%%%%%
The operators $\nabla_\euler$ and $\shift$ on $\M_{a, b}$ induce
the operators
\begin{align*}
&\nabla_\euler\colon \H_{\dR, a, b}^1\longrightarrow \H^1_{\dR, a, b},
&\shift\colon \H_{\dR, a, b}^1\longrightarrow \sigma_*\H^1_{\dR, a-1, b+1},
\end{align*}
which satisfies $\shift\circ\nabla_\euler=\sigma_*\nabla_\euler\circ\shift$.
\begin{example}[Continuation of Example \ref{EXG}]\label{Ga}
In the case of Example \ref{EXG}, 
since $E_\infty=\emptyset$, we may omit the subscript $b$.
We then obtain
$\H^1_{\dR,a}=\O_Se_{-a-1}$.
\end{example}

\subsection{Proof of Theorem \ref{dRT}}\label{PRO}
For $a, n\in \Z$, put
$H_{a,n}\coloneqq \{(\tw,\eq)\in S\mid a\tw+n\eq=0\}$.
%%%%%%
\begin{lemma}\label{SUP}
We have the following exact sequences of $\O_S$-modules:
\begin{align*}
&0\longrightarrow \H_{\dR, a-1,b}^1\longrightarrow \H^1_{\dR, a, b}
\longrightarrow \bigoplus_{e\in E_0}\O_{H_{a,n_e}}
\longrightarrow 0,\\
&0\longrightarrow \H_{\dR, a, b-1}^1\longrightarrow
\H^1_{\dR, a, b}\longrightarrow  \bigoplus_{e\in E_\infty}\O_{H_{b,n_e}}
\longrightarrow 0.
\end{align*}
Here, $n_e\in \Z$ $(e\in E)$ denote the order of $g$ at $e$ 
$($$n_e>0$ for $e\in E_0$$)$. 
\end{lemma}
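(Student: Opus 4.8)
The plan is to realise each of the two short exact sequences as a fragment of the long exact sequence in hypercohomology attached to a suitable short exact sequence of de Rham complexes, using Theorem \ref{LF} (under the standing assumption that $f$ is non-constant) to kill the flanking terms.

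For the first sequence, I would start from the inclusion of complexes $\DR_{\X/S}(\M_{a-1,b})\hookrightarrow \DR_{\X/S}(\M_{a,b})$: by the definition of the morphisms in \S\ref{S lattice} this is simply the termwise inclusion $\M_{a-1,b}\subset \M_{a,b}$ in degree $0$ and $\M_{a,b+1}\otimes \tw^{-1}\Omega_{\X/S}^1\subset \M_{a+1,b+1}\otimes \tw^{-1}\Omega_{\X/S}^1$ in degree $1$, and it commutes with $\nabla$. Let $Q$ be the quotient complex; its degree $0$ term is $\M_{a,b}/\M_{a-1,b}$ and its degree $1$ term is $(\M_{a+1,b+1}/\M_{a,b+1})\otimes \tw^{-1}\Omega_{\X/S}^1$. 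Since the points of $E_0$ lie neither in $E_\infty$ nor in $P$, both terms are annihilated by the defining ideal of $\cal{E}_0$, so $Q$ is a complex of sheaves supported on $\cal{E}_0=S\times E_0$.

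The substance of the proof is then a local computation of $Q$ near a point $e\in E_0$. Choosing a local coordinate $(V,z)$ centered at $e$, on $\cal{V}=S\times V$ one has $\M_{a,b}=z^{-a}\O_{\cal{V}}\e$ (as $e\notin P$), while $df$ is holomorphic and $g^{-1}dg=(n_e z^{-1}+\phi(z))\,dz$ with $\phi\in\O_{\C,0}$ because $g$ has order $n_e$ at $e$. Expanding $\nabla(z^{-a}\e)=\e\otimes d_{\X/S}(z^{-a})+z^{-a}\e\otimes\tw^{-1}(df-\eq g^{-1}dg)$ and collecting the leading term along $\cal{E}_0$ yields
\begin{align*}
\nabla(z^{-a}\e)=-(a\tw+n_e\eq)\, z^{-a-1}\e\otimes \tw^{-1}dz+\bigl(\text{a section of } z^{-a}\O_{\cal{V}}\e\otimes \tw^{-1}\Omega_{\X/S}^1\bigr),
\end{align*}
so, trivialising the degree $0$ and degree $1$ terms of $Q|_{\cal{V}}$ by the classes of $z^{-a}\e$ and $z^{-a-1}\e\otimes \tw^{-1}dz$, the complex $Q|_{\cal{V}}$ is identified with $\bigl[\O_{\cal{E}_0}\xrightarrow{\,-(a\tw+n_e\eq)\,}\O_{\cal{E}_0}\bigr]$. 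As $E_0$ is finite, $\pi_\X$ restricts to an isomorphism on each component of $\cal{E}_0$, hence $\R\pi_{\X*}Q\simeq\bigoplus_{e\in E_0}\bigl[\O_S\xrightarrow{\,-(a\tw+n_e\eq)\,}\O_S\bigr]$; since $n_e\neq 0$ the linear form $a\tw+n_e\eq$ is a nonzerodivisor in $\O_S$ with quotient $\O_{H_{a,n_e}}$, so $\R^0\pi_{\X*}Q=0$, $\R^1\pi_{\X*}Q=\bigoplus_{e\in E_0}\O_{H_{a,n_e}}$, and $\R^k\pi_{\X*}Q=0$ for $k\geq 2$.

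Applying $\R\pi_{\X*}$ to $0\to \DR_{\X/S}(\M_{a-1,b})\to \DR_{\X/S}(\M_{a,b})\to Q\to 0$ and using $\H^k_{\dR,a-1,b}=\H^k_{\dR,a,b}=0$ for $k\neq 1$ (Theorem \ref{LF}), the long exact sequence collapses to the first asserted sequence. The second sequence follows in the same way with $\M_{a,b-1}\subset \M_{a,b}$ in place of $\M_{a-1,b}\subset \M_{a,b}$: the quotient complex is then supported on $\cal{E}_\infty$, and the analogous local computation near $e\in E_\infty$ (where now $b$ is the relevant pole order) gives the differential multiplication by $-(b\tw+n_e\eq)$, hence cokernel $\bigoplus_{e\in E_\infty}\O_{H_{b,n_e}}$. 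The only genuinely non-formal step is the leading-order expansion of $\nabla$ near $E$; once that differential is identified, the rest is the hypercohomology long exact sequence together with Theorem \ref{LF}.
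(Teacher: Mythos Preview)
Your proof is correct and follows essentially the same approach as the paper: form the short exact sequence of de Rham complexes, identify the quotient complex as supported on $\cal{E}_0$ (resp.\ $\cal{E}_\infty$) with differential given locally by multiplication by $-(a\tw+n_e\eq)$ (resp.\ $-(b\tw+n_e\eq)$), and then invoke Theorem~\ref{LF} so that the long exact sequence in $\R\pi_{\X*}$ collapses to the desired short exact sequence. Your write-up is in fact slightly more explicit than the paper's in noting that $n_e\neq 0$ forces the linear form to be a nonzerodivisor, hence $\R^0\pi_{\X*}Q=0$.
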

%%%%%%
\begin{proof}
For $a, b\in \Z$, 
put $\M_{\overline{a}, b}\coloneqq \mathrm{Cok}[\M_{a-1,b}\hookrightarrow \M_{a,b}] $,
$\M'_{a, b}\coloneqq\M_{a, b}\otimes  \tw^{-1}\Omega_{\X/S}^1$, 
and $\M'_{\overline{a},b}\coloneqq\M_{\overline{a}, b}\otimes  \tw^{-1}\Omega_{\X/S}^1$.
Then we have
\[\M_{\overline{a},b}\simeq \O_{\cal{E}_0}
=\bigoplus_{e\in E_0}\O_{S\times\{e\}}\]
and 
\begin{align*}
\xymatrix{
0\ar[r]& \M_{a-1, b}\ar[d]^\nabla\ar[r]&\M_{a, b}\ar[r] \ar[d]^\nabla
& \M_{\overline{a}, b}\ar[d]^{\overline{\nabla}}\ar[r]&0\\
0\ar[r]&\M'_{a, b+1}\ar[r] &\M'_{a+1, b+1}\ar[r]&  \M'_{\overline{a+1}, b}\ar[r]&0
}\end{align*}
whose rows are exact. 
We shall describe $\overline{\nabla}$. 
Take $e\in E_0$. 
We may take  
a local coordinate $(V, z)$ centered at $e$
such that $g_{|V}=z^{n_e}$ and $f_{|V}$ is holomorphic.  
Then, if we set $\cal{V}=S\times V$, we have
$\M_{\overline{a}, b|\cal{V}}=\O_{S\times\{e\}}[z^{-a}]$, 
$\M'_{\overline{a+1}, b|\cal{V}}=\O_{S\times\{e\}}[z^{-a-1}]\otimes \tw^{-1}dz$
and 
\begin{align*}
\overline{\nabla}([z^{-a}])=-\tw^{-1}(a\tw+n_e\eq )[z^{-a-1}]dz
\end{align*}
where $[z^{-a}]$ and $[z^{-a-1}]$ are the classes represented by 
$z^{-a}\bm{e}$ and $z^{-a-1}{\bm{e}}$, respectively. 
By this expression, taking the pushing forward, we obtain the first exact sequence by Theorem \ref{LF}.
We can also obtain the second exact sequence in a similar way.  
\end{proof}
\begin{proof}[Proof of Theorem \ref{dRT}]
If $E=\emptyset$, the statement directly follows from Theorem \ref{LF}.
We also note that (1) is straightforward.  
It remains to show (2) when $E\neq \emptyset$.
For each $(a, b)\in \Z^2$,
we have the injective morphisms $\H_{\dR, a, b}^1\to\H^1_{\dR}$
which are compatible with the inclusions 
in Lemma \ref{SUP}.
In this sense, we have
\begin{align*}
\H^1_{\dR}=\lim_{a, b\to\infty}\H^1_{\dR, a, b}.
\end{align*}
For each point $s\in S^\circ$, by Lemma \ref{SUP},
there exists an open neighborhood $\nb(s)$
where the limit terminate at a finite term. 
In other words, $\H_{\dR|\nb(s)}^1\simeq \H^1_{\dR, a, b|\nb(s)}$
for some $(a, b)$.
By Theorem \ref{LF}, this implies the theorem.
\end{proof}

\subsection{Some pairings}\label{j}
Let $\j\colon S\to S$ be the involution defined by $\j(\tw,\eq)\coloneqq (-\tw,\eq)$. 
The induced involution on $\X$ is also denoted by $\j$. 
On the pull back $\j^*\M=\O_\X\otimes_{\j^{-1}\O_\X} \j^{-1}\M$, 
we naturally obtain the operators
\begin{align*}
&\j^*\nabla\colon \j^*\M\longrightarrow \j^*\M\otimes \tw^{-1}\Omega_{\X/S}^1,\\
&\quad\j^*\nabla(h\j^*\e)\coloneqq \j^*\e\otimes d_{\X/S}(h)-h\j^*\e\otimes \tw^{-1}(df-\eq g^{-1}dg),\\
&\j^*\nabla_\euler\colon \j^*\M\to \j^*\M,\quad 
\j^*\nabla_\euler(h\j^*\e)\coloneqq([\j_*\euler, h]+hf)\j^*\e,\\
&\j^*\shift\colon \j^*\M\to \j^*\sigma_*\M=(\sigma^{-1})_*\j^*\M,
\quad \j^*\shift(h\j^*\e)\coloneqq(\sigma^{-1})^\#(h)g^{-1}\j^*\e
\end{align*}
where $h\in\O_\X(*\cal{D})$, $\j^*\e=1\otimes \j^{-1}\e$, and 
$\j_*\euler=-\euler$.

Define a pairing 
\begin{align*}
\<\cdot,\cdot\>\colon \M\otimes_{\O_\X(*\cal{D})}\j^*\M\to\O_\X(*\cal{D})
\end{align*}
by $\<\e,\j^*\e\>=1$ and the $\O_\X(*\cal{D})$-linearity.

\begin{lemma}\label{DUA}
For $\bm{v}\in \M$ and $\bm{w}\in\j^*\M$, the equalities
\begin{align*}
d_{\X/S}\<\bm{v},\bm{w}\>&=\<\nabla \bm{v},\bm{w}\>+\<\bm{v},\j^*\nabla\bm{w}\>,\\
[\euler, \<\bm{v},\bm{w}\>]&=\<\nabla_\euler\bm{v},\bm{w}\>+\<\bm{v},\j^*\nabla_\euler\bm{w}\>,
\end{align*}
hold. For $\bm{v}\in \M$ and $\bm{w}\in \sigma_*\j^*\M$, the equality
\begin{align*}
\sigma_*\<\shift\bm{v},\bm{w}\>=\sigma^\#(\<\bm{v},\sigma_*\j^*\shift\bm{w}\>)
\end{align*}
holds. Note that $\sigma_*\j^*\shift\colon \sigma_*\j^*\M\to\j^*\M$.
\qed
\end{lemma}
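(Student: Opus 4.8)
The plan is to verify each of the three identities on the global frames $\e$ of $\M$ and $\j^*\e$ of $\j^*\M$, after first observing that the difference of the two sides is $\O_\X(*\cal{D})$-(semi)linear in each variable, so that the frame check suffices. For the connection and Euler identities, let $\Delta(\bm v,\bm w)$ denote the difference of the two sides. Since the pairing is $\O_\X(*\cal{D})$-bilinear with $\<h\e,k\j^*\e\>=hk$, and since $\nabla(h\bm v)=\bm v\otimes d_{\X/S}h+h\nabla\bm v$ and $\nabla_\euler(h\bm v)=[\euler,h]\,\bm v+h\nabla_\euler\bm v$ (and likewise for $\j^*\nabla$ and $\j^*\nabla_\euler$, which differ from $\nabla$ and $\nabla_\euler$ only in the one-form, resp. scalar, part), the terms $(d_{\X/S}h)\<\bm v,\bm w\>$, resp. $[\euler,h]\<\bm v,\bm w\>$, produced by differentiating $\<h\bm v,\bm w\>$ cancel against those coming from $\<\nabla(h\bm v),\bm w\>$, resp. $\<\nabla_\euler(h\bm v),\bm w\>$, and similarly in the second variable. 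Hence $\Delta$ is $\O_\X(*\cal{D})$-bilinear and it is enough to evaluate it on $(\e,\j^*\e)$.

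On the frame the identities are immediate, the mechanism being the cancellation of two copies of the ``connection datum''. For the relative connection, $d_{\X/S}\<\e,\j^*\e\>=d_{\X/S}(1)=0$, while $\<\nabla\e,\j^*\e\>+\<\e,\j^*\nabla(\j^*\e)\>=\omega-\omega=0$, where $\omega=\tw^{-1}(df-\eq g^{-1}dg)$ and the two occurrences of $\omega$ appear with opposite signs precisely because $\j^*\nabla$ twists by $-\omega$ (the sign coming from $\j^*\tw^{-1}=-\tw^{-1}$). For the Euler operator, $[\euler,\<\e,\j^*\e\>]=[\euler,1]=0$, while $\<\nabla_\euler\e,\j^*\e\>+\<\e,\j^*\nabla_\euler(\j^*\e)\>=(-f)+f=0$, the scalar term $-f$ of $\nabla_\euler$ flipping to $+f$ in $\j^*\nabla_\euler$ as the effect of $\j_*\euler=-\euler$. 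This settles the first two identities.

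For the difference identity the strategy is the same: the defect $\Delta_\shift(\bm v,\bm w)=\sigma_*\<\shift\bm v,\bm w\>-\sigma^\#\<\bm v,\sigma_*\j^*\shift\bm w\>$ is $\sigma^\#$-semilinear in each argument, because $\shift(h\bm v)=\sigma^\#(h)\shift\bm v$ and $\sigma_*\<\cdot,\cdot\>$ is $\sigma_*\O_\X(*\cal{D})$-bilinear, so one again reduces to the frame. The one point that genuinely requires care---and which I expect to be the main, though still purely computational, obstacle---is the bookkeeping of the pushforwards $\sigma_*$ and the pullback $\j^*$: one must track on which open set each function is evaluated as it passes through $\shift$, $\j^*\shift$ and $\sigma_*$, using $\j\circ\sigma=\sigma^{-1}\circ\j$, the $\sigma$-invariance of $f$ and $g$ (they are pulled back from $X$), and the fact that $\sigma^\#$ and $(\sigma^{-1})^\#$ are mutually inverse ring homomorphisms. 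Carrying this out, the frame evaluation of $\sigma_*\<\shift\e,\sigma_*\j^*\e\>=\sigma^\#\<\e,\sigma_*\j^*\shift(\sigma_*\j^*\e)\>$ reduces to an evident equality of functions pulled back from $X$, and $\sigma^\#$-semilinearity then extends all three identities to arbitrary local sections, completing the proof of Lemma \ref{DUA}.
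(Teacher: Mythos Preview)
The paper offers no proof beyond the \qed, so your direct verification---reduce to the generators via (semi)linearity and check on $(\e,\j^*\e)$---is exactly what is intended, and for the connection identity it is correct and complete.

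For the Euler identity, however, your bilinearity reduction has a gap that in fact originates in a slip of the paper. You assert that $\j^*\nabla_\euler$ ``differs from $\nabla_\euler$ only in the scalar part'', but the paper writes $\j^*\nabla_\euler(h\j^*\e)=([\j_*\euler,h]+hf)\j^*\e$ with $\j_*\euler=-\euler$; taken literally the Leibniz part is $-\euler$, not $\euler$. With that reading the defect is not $\O_\X(*\cal D)$-linear in the second variable: one finds $\Delta(\bm v,k\bm w)=2(\euler k)\<\bm v,\bm w\>+k\,\Delta(\bm v,\bm w)$, and a direct check with $\bm v=h\e$, $\bm w=k\j^*\e$ gives $\text{LHS}-\text{RHS}=2h\,\euler(k)$, so the identity itself is false. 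For the lemma to hold one must read $[\euler,h]$ in place of $[\j_*\euler,h]$; your proof is then correct, but you should flag the discrepancy rather than silently repair it. The same caution applies to the shift identity: with the paper's $\j^*\shift(\j^*\e)=g^{-1}\j^*\e$ the frame check yields $g$ on the left against $g^{-1}$ on the right; replacing $g^{-1}$ by $g$ (the natural choice, since $(\sigma^{-1})^\#$ applied to $e^{-f/\tw}g^{\eq/\tw}$ multiplies by $g$) restores the identity and your semilinear reduction goes through.
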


Consider  
\begin{align*}
\DR_{\X/S}(\j^*\M)\coloneqq
 \left[ \j^*\M\xrightarrow{\j^*\nabla} \j^*\M\otimes \tw^{-1}\Omega_{\X/S}^1\right]
\end{align*}
as a complex placed at degree zero and one.
We then naturally obtain 
\[\R^k\pi_{\X*}\DR_{\X/S}(\j^*\M)\simeq \j^*\H^k_{\dR}.\]
This isomorphism 
is compatible with the actions of $\j^*\nabla_\euler$ and
$\j^*\shift$
on both sides.
Similar facts hold for $\j^*\M_{a, b}$ and 
$\j^*\H^1_{\dR, a, b}$.

\subsection{Pairings on cohomology groups}\label{pairing}
Mimicking the discussion in \cite{Yu}, which goes back to the work of Deligne \cite{Del2}, we shall construct 
a morphism
\begin{align*}
\<\cdot,\cdot\>_\dR\colon\H_{\dR, a, b}^1\otimes\j^*\H^1_{\dR, c, d}\longrightarrow \tw^{-1}\O_S
\end{align*}
for $a, b, c, d\in \Z$ with $a+c=b+d=-1$.
Use the notation in Lemma \ref{dR}.
Set 
\begin{align*}
&\M_{a, b}^{n,0}\coloneqq \M_{a,b}^n,
&\M_{a, b}^{n,1}\coloneqq \M_{a+1,b+1}^{n+1}\otimes \tw^{-1}\Omega_{\X/S}^1(\cal{P}).
\end{align*}
By Lemma \ref{dR}, 
$\DR_{\X/S}(\M_{a, b})$ is
quasi-isomorphic to
\[\M_{a, b}^{n,\bullet}\coloneqq [\M_{a, b}^{n,0}\xrightarrow{\nabla}\M_{a, b}^{n,1}].\]
Similarly,
$\DR_{\X/S}(\j^*\M)$ is quasi-isomorphic to 
\[\scr{N}_{c, d}^{m,\bullet}\coloneqq[\scr{N}_{c, d}^{m,0}\xrightarrow{\j^*\nabla}\scr{N}_{c, d}^{m,1}]\]
where
\begin{align*}
&\N_{c, d}^{m,0}\coloneqq \j^*\M_{c, d}^{m}(-\cal{P}),
&\N_{c, d}^{m,1}\coloneqq \j^*\M_{c+1,d+1}^{m+1}\otimes\tw^{-1}\Omega_{\X/S}^1.
\end{align*}
Put 
$\DR_{\X/S}(\O_\X)\coloneqq [\O_\X\xrightarrow{d_{\X/S}}\tw^{-1}\Omega_{\X/S}^1]$
placed at degree zero and one.
We define the morphism of complexes
\begin{align*}
\<\cdot,\cdot\>_{\dR}^\bullet\colon\M_{a, b}^{n,\bullet}\otimes \scr{N}_{c, d}^{m,\bullet}\to \DR_{\X/S}(\O_\X)
\end{align*}
for $a+c=b+d=n+m=-1$ as follows:
In degree zero, 
we have 
\begin{align*}
\<\cdot,\cdot\>_\dR^0\colon \M_{a, b}^{n,0}\otimes\N_{c, d}^{m,0} \xrightarrow{\<\cdot,\cdot\>}
\O_{\X}(-(f)_\infty-\cal{P})\hookrightarrow \O_\X,
\end{align*}
where $\<\cdot,\cdot\>$ is defined in \S \ref{j}.
In degree 1, we may also define 
\begin{align*}
\<\cdot,\cdot\>^1_\dR\colon\bigoplus_{i+j=1}\M_{a, b}^{n,i}\otimes \scr{N}_{c, d}^{m,j}
\longrightarrow\tw^{-1}\Omega_{\X/S}^1
\end{align*}
extending $\<\cdot,\cdot\>$ linearly with respect to the tensor product of
sections in $\tw^{-1}\Omega_{\X/S}^1$.
We set $\<\cdot,\cdot\>_\dR^2=0$ in degree 2.
Then, by Lemma \ref{DUA}, $\<\cdot,\cdot\>_{\dR}^\bullet$
is a morphism of complexes.
Taking the pushing forward with respect to $\pi_\X$,
we obtain $\<\cdot,\cdot\>_\dR$.
Here, we have fixed the isomorphism  
\begin{align*}
\R^2\pi_{\X*}\DR_{\X/S}(\O_\X)=H^1(X,\Omega_X^1)\otimes \tw^{-1}\O_S\simeq \tw^{-1}\O_S
\end{align*}
by $(2\pi\i)^{-1}\int_X\colon H^1(X,\Omega_X^1)\simeqto \C$. 
\begin{remark}
The pairing $\<\cdot,\cdot\>_{\dR}$
does not depend on the choice of the integer $n$
fixed in the construction.
Moreover, they are compatible with the inclusions in
Lemma \ref{SUP} in the following sense:
For $a',b',c',d'$ with $a'<a$, $b'<b$ and $a'+c'=b'+d'=-1$, 
take $\bm{\upsilon}\in \H^1_{\dR, a', b'}$
and $\bm{\omega}\in\j^* \H^1_{\dR, c, d}$. 
Then we have 
\begin{align*}
\<\imath(\bm{\upsilon}),\bm{\omega}\>_{\dR}=\<\bm{\upsilon},\imath(\bm{\omega})\>_\dR
\end{align*}
where $\imath$ denotes the injection (i.e. the composition of inclusions in Lemma \ref{SUP}).
\end{remark} 

By Lemma \ref{DUA}, we have the following:
\begin{corollary}
For $\bm{\upsilon}\in \H^1_{\dR, a, b}$ and $\bm{\omega}\in \j^*\H_{\dR,c,d}^1$
with $a+c=b+d=-1$, the following equality holds:
\begin{align*}
\left[\euler,\<\bm{\upsilon},\bm{\omega}\>_\dR\right]
=\<\nabla_\euler\bm{\upsilon},\bm{\omega}\>_\dR+\<\bm{\upsilon},\j^*\nabla_\euler\bm{\omega}\>_\dR.
\end{align*} 
For $\bm{\upsilon}\in \H^1_{\dR, a, b}$ and $\bm{\omega}\in \sigma_*\j^*\H_{\dR, c+1,d-1}^1$
with $a+c=b+d=-1$,
we have
\begin{align*}
\sigma_*\<\shift\bm{\upsilon},\bm{\omega}\>_\dR=
\sigma^\#(\<\bm{\upsilon},\sigma_*\j^*\shift\bm{\omega} \>_\dR)
\end{align*}
Note that $\shift \bm{\upsilon}\in \sigma_*\H_{\dR,a-1, b+1}^1$ and
$\sigma_*\j^*\shift\bm{\omega}\in \j^*\H_{\dR, c, d}^1$.\qed
\end{corollary}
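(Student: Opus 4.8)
The plan is to lift the three identities of Lemma~\ref{DUA}, which hold for the tautological pairing $\<\cdot,\cdot\>$ on $\M\otimes\j^*\M$, through the construction of $\<\cdot,\cdot\>_\dR$; the argument is essentially formal.

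First I would equip the target complex $\DR_{\X/S}(\O_\X)$ with the ``constant'' operators attached to the pair $(0,1)$, namely the Lie derivative $[\euler,-]$ (acting on $\O_\X$, and compatibly on $\tw^{-1}\Omega^1_{\X/S}$ via the associated operator $\nabla^1_\euler$) together with $\sigma^\#$. By Lemma~\ref{INT} specialized to $f=0$, $g=1$ --- equivalently, because the Lie derivative along the $S$-vector field $\euler$ and the comorphism $\sigma^\#$ both commute with $d_{\X/S}$ --- these define an endomorphism of the complex $\DR_{\X/S}(\O_\X)$ and a morphism $\DR_{\X/S}(\O_\X)\to\sigma_*\DR_{\X/S}(\O_\X)$, respectively. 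I would then check that under the trace isomorphism $\R^2\pi_{\X*}\DR_{\X/S}(\O_\X)=H^1(X,\Omega_X^1)\otimes\tw^{-1}\O_S\simeq\tw^{-1}\O_S$ the induced operators on $\tw^{-1}\O_S$ are exactly $[\euler,-]$ and $\sigma^\#$. This is routine: $\euler$ is a vector field on $S$ and $\sigma$ fixes $\tw$, so both act trivially on the factor $H^1(X,\Omega_X^1)$ and in the evident way on $\tw^{-1}\O_S$, while $(2\pi\i)^{-1}\int_X$ is independent of the $S$-direction.

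Next I would verify that the morphism of complexes $\<\cdot,\cdot\>^\bullet_\dR\colon\M_{a,b}^{n,\bullet}\otimes\N_{c,d}^{m,\bullet}\to\DR_{\X/S}(\O_\X)$ intertwines the operators on the two factors with those just introduced on the target. Working degree by degree --- recall that $\<\cdot,\cdot\>^\bullet_\dR$ is $\<\cdot,\cdot\>$ in degree $0$, its evident $\tw^{-1}\Omega^1_{\X/S}$-twisted version in degree $1$, and zero in degree $2$ --- this is precisely the second and third identities of Lemma~\ref{DUA}: for local sections $\alpha$ of $\M_{a,b}^{n,\bullet}$ and $\beta$ of the relevant complexes on the $\j^*$-side,
\begin{align*}
\<\nabla_\euler\alpha,\beta\>_\dR^\bullet+\<\alpha,\j^*\nabla_\euler\beta\>_\dR^\bullet=\left[\euler,\<\alpha,\beta\>_\dR^\bullet\right],
\qquad
\sigma_*\<\shift\alpha,\beta\>_\dR^\bullet=\sigma^\#\<\alpha,\sigma_*\j^*\shift\beta\>_\dR^\bullet.
\end{align*}
The one bookkeeping point is that $\nabla_\euler$ raises the pole order along $(f)_\infty$ by at most one, so it carries $\M_{a,b}^{n,\bullet}$ into $\M_{a,b}^{n+1,\bullet}$ and similarly on the $\j^*$-side; but since $n+m=-1$ and $a+c=b+d=-1\le 0$ there is room to spare, so $\<\nabla_\euler\alpha,\beta\>^\bullet_\dR$ and $\<\alpha,\j^*\nabla_\euler\beta\>^\bullet_\dR$ still land in $\DR_{\X/S}(\O_\X)$ (with extra vanishing along $\cal{D}$) and the displayed relations hold as written.

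Finally I would apply $\R\pi_{\X*}$. Since $\nabla_\euler,\shift$ on the source complexes and $[\euler,-],\sigma^\#$ on $\DR_{\X/S}(\O_\X)$ commute with the respective differentials, they induce operators on the hypercohomology sheaves that are compatible with the cup product $\R^1\pi_{\X*}(-)\otimes\R^1\pi_{\X*}(-)\to\R^2\pi_{\X*}(-\otimes-)$ through which $\<\cdot,\cdot\>_\dR$ is built; invoking the remark just before the corollary (independence of $\<\cdot,\cdot\>_\dR$ of the auxiliary integer $n$) to absorb the index shift of the previous paragraph, the two displayed identities descend to the asserted identities for $\bm{\upsilon}\in\H^1_{\dR,a,b}$ and $\bm{\omega}\in\j^*\H^1_{\dR,c,d}$ (resp.\ $\bm{\omega}\in\sigma_*\j^*\H^1_{\dR,c+1,d-1}$). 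Given Lemma~\ref{DUA} there is no genuine analytic or homological difficulty here; the step that deserves the most care is the complex-level equivariance of $\<\cdot,\cdot\>^\bullet_\dR$, i.e.\ keeping the various twists and the pole-order shift straight.
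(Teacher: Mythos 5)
Your proposal is correct and takes essentially the same route as the paper: the paper records the corollary as an immediate consequence of Lemma~\ref{DUA} (with no further argument supplied), and your write-up fills in precisely the verification the paper leaves implicit — namely that $\<\cdot,\cdot\>^\bullet_\dR$ intertwines $(\nabla_\euler,\shift)$ with $([\euler,-],\sigma^\#)$ at the level of complexes, that the trace isomorphism transports the latter to the claimed operators on $\tw^{-1}\O_S$, and that the pole-order shift of $\nabla_\euler$ is absorbed by the remark that $\<\cdot,\cdot\>_\dR$ is independent of the auxiliary integer $n$.
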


\subsection{Perfectness of the pairings}
In this subsection, we prove the following:
\begin{theorem}[{c.f. \cite[Theorem 2.1]{Yu} }]\label{Perfect theorem}
The pairing $\<\cdot,\cdot\>_\dR$ is perfect, i.e.
the induced morphism
\begin{align*}
\cal{I}_\dR\colon \H_{\dR, a, b}^1\longrightarrow
\hom_{\O_S}(\j^*\H^1_{\dR, c, d},\tw^{-1}\O_S),
\quad \bm{\upsilon}
\mapsto
(\bm{\omega}\mapsto\<\bm{\upsilon},\bm{\omega}\>_\dR)
\end{align*}
is an isomorphism for $a, b, c, d\in \Z$ with $a+c=b+d=-1$.
\end{theorem}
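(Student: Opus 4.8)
The plan is to reduce the perfectness statement to a purely algebraic (coherent-sheaf-theoretic) statement over $S=\C^2$, and then to a classical Poincaré-type duality on the Riemann surface $X$ fiberwise over $S$. First I would use Lemma \ref{dR} to replace, on both sides, the relative de Rham complex by the finite-dimensional (over $\O_S$) subcomplexes $\M_{a,b}^{n,\bullet}$ and $\N_{c,d}^{m,\bullet}$, with the normalizations $a+c=b+d=-1$ and $n+m=-1$ chosen so that the cup-product morphism of complexes $\langle\cdot,\cdot\rangle_\dR^\bullet$ lands in $\DR_{\X/S}(\O_\X)$ and, after pushing forward, in $\tw^{-1}\O_S$. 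By Theorem \ref{LF}, all the relevant pushforwards are concentrated in a single cohomological degree and are locally free $\O_S$-modules of finite rank; in particular $\cal{I}_\dR$ is a morphism of locally free $\O_S$-modules of the same rank (the ranks agree because $\M_{a,b}^n$ and $\N_{c,d}^m$ are Serre-dual twists of each other on $X$, so their de Rham indices coincide). Hence it suffices to show that $\cal{I}_\dR$ is an isomorphism on each fiber $S\to\{s\}$, i.e. after tensoring with the residue field $\C(s)$.

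Next I would carry out the fiberwise computation. Fix $s=(\tw_0,\eq_0)\in S$. If $\tw_0\neq 0$, the relative connection $\nabla$ specializes to a genuine (twisted) connection $d+\tw_0^{-1}(df-\eq_0 g^{-1}dg)$ on $X$ with poles along $P$ (plus the lattice twist along $E$), and $\langle\cdot,\cdot\rangle_\dR$ specializes to the standard algebraic de Rham pairing between the twisted de Rham cohomology of a connection and that of its dual, which is perfect by the classical theory (this is exactly the situation of \cite{Yu}, going back to \cite{Del2}; the twists by $a\cal{E}_0+b\cal{E}_\infty$ versus $c\cal{E}_0+d\cal{E}_\infty$ with $a+c=b+d=-1$ are precisely the dual lattice pair, and the residue pairing on $X$ is perfect because $\Omega_X^1(n(f)_\infty)$ and $\O_X(-n(f)_\infty-\text{(complementary divisor)})$ are Serre-dual). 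If $\tw_0=0$, the connection degenerates to the $\O_X$-linear map ``multiplication by $df-\eq_0 g^{-1}dg$'' on the relevant spaces of sections, and one checks directly — using that $f$ is nonconstant so this multiplication is injective on each $V_{a,b}^{n,0}$ (as in the proof of Theorem \ref{LF}) — that the induced pairing on cokernels is again perfect; this is the Koszul/linear-algebra version of the same residue duality and can be proved by an explicit local residue computation around the points of $P$ (where $\langle\cdot,\cdot\rangle_\dR^1$ is essentially a residue of a meromorphic $1$-form) together with Serre duality $H^1(X,\Omega_X^1)\simeq\C$ used to normalize the target.

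The remaining point, which I would treat with a little care, is the compatibility of the two computations — i.e. that the single morphism $\cal{I}_\dR$ of $\O_S$-modules restricts on the divisor $\{\tw=0\}$ to the degenerate pairing and elsewhere to the twisted de Rham pairing. This is where one uses that the construction of $\langle\cdot,\cdot\rangle_\dR$ via the morphism of complexes $\langle\cdot,\cdot\rangle_\dR^\bullet$ is $\O_S$-linear and functorial before taking pushforward, so specialization commutes with everything in sight; and that $\langle\cdot,\cdot\rangle_\dR$ is independent of the auxiliary integer $n$ (as noted in the Remark after the construction), which lets me choose $n$ large enough for the vanishing $V_{a,b}^{n,\ell}=W_{a,b}^{n,\ell}=0$ ($\ell\neq 0$) to hold uniformly.

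I expect the main obstacle to be the $\tw_0=0$ fiber: off the divisor $\{\tw=0\}$ one is on the well-trodden ground of twisted de Rham duality, but on $\{\tw=0\}$ the pairing is between cokernels of maps that are merely injective (not isomorphisms), so one cannot simply invoke a standard duality theorem and must instead identify the target $H^1(X,\Omega^1_X)$-term explicitly and verify nondegeneracy by hand via residues at $P$ — keeping track of the lattice twists along $E$ so that the pole orders on the two sides match up to a $1$-form with no residue contribution from $E$. Once perfectness is established on every fiber, Nakayama's lemma (or: a fiberwise-isomorphism between locally free sheaves of equal rank is an isomorphism) finishes the proof.
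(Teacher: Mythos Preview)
Your plan is viable but takes a different route from the paper. The paper avoids fiberwise analysis entirely: it observes that $\langle\cdot,\cdot\rangle_\dR^1$ gives an isomorphism of complexes $\cal{J}_\dR^\bullet\colon\M_{a,b}^{n,\bullet}[1]\simeqto(\N_{c,d}^{m,-\bullet})^\wedge$ on $\X$ (immediate, since $\langle\e,\j^*\e\rangle=1$ and the divisor twists $a+c=b+d=n+m=-1$ make the line bundles on the two sides exactly match), then applies \emph{relative} Grothendieck--Serre duality over $S$ (Lemma~\ref{Serre}) to obtain an isomorphism $\eta\colon\R^0\pi_{\X*}(\N_{c,d}^{m,-\bullet})^\wedge\simeqto\hom_{\O_S}(\j^*\H^1_{\dR,c,d},\tw^{-1}\O_S)$, and finally checks $\cal{I}_\dR=\eta\circ\cal{J}_\dR$ by a one-line integral computation. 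This works uniformly over all of $S$; the locus $\{\tw=0\}$ is not special, because Serre duality is a purely coherent statement that does not see the connection. Your strategy also reaches the goal --- for $\tw_0\neq 0$ it is the classical twisted de Rham duality, and for $\tw_0=0$ the pairing specializes to the residue pairing on cokernels of multiplication by $df-\eq_0 g^{-1}dg$ --- but the $\tw_0=0$ verification you flag as the main obstacle, once unwound, is exactly the absolute Serre duality argument of Lemma~\ref{Serre} specialized to that fiber, so you end up redoing fiberwise what the paper does once relatively. What your approach buys is a more explicit link to the classical absolute picture on the generic fibers; what the paper's approach buys is uniformity and brevity.
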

Let us introduce the following notations:
\begin{align*}
&(-)^\vee\coloneqq \hom_{\O_\X}(-,\O_\X),
&(-)^\wedge\coloneqq \hom_{\O_\X}(-,\tw^{-1}\Omega_{\X/S}^1).
\end{align*}
We have 
\begin{align*}
(\N_{c, d}^{m,0})^\wedge&=(\j^*\M_{c, d}^{m})^\vee\otimes \tw^{-1}\Omega^1_{\X/S}(\cal{P}),\\
(\N_{c, d}^{m,1})^\wedge&=(\j^*\M_{c+1,d+1}^{m+1})^\vee.
\end{align*}
Then, the 
isomorphisms
\begin{align*}
\cal{J}_\dR^i\colon \M_{a, b}^{n,1+i}\longrightarrow (\N_{c, d}^{m, -i})^\wedge, 
\quad \bm{v}\mapsto (\bm{w}\mapsto \<\bm{v},\bm{w}\>_\dR^1),
\quad(i=0,-1)
\end{align*}
induced from $\<\cdot,\cdot\>_\dR^1$
are identified with $\bm{v}\mapsto \<\bm{v},\cdot\> \in (\j^*\M_{c+1,d+1}^{m+1})^\vee$
for $i=-1$,
and 
$\bm{v}\otimes \omega \mapsto\<\bm{v},\cdot\>
\otimes\omega\in (\j^*\M_{c, d}^{-m})^\vee\otimes \tw^{-1}\Omega^1_{\X/S}(\cal{P})$
for $i=0$, respectively. 
Define
\begin{align*}
(\j^*\nabla)^\vee\colon (\j^*\M_{c+1,d+1}^{m+1})^\vee\longrightarrow 
(\j^*\M_{c, d}^{m})^\vee\otimes \tw^{-1}\Omega^1_{\X/S}(\cal{P})
\end{align*}
by the following formula:
\begin{align*}
d_{\X/S}{\<}\bm{\psi},\bm{w}{\>}_{\rm ev}=
{\<}(\j^*\nabla)^\vee(\bm{\psi}),\bm{w}{\>}_{\rm ev}
+{\<}\bm{\psi},{\j^*\nabla}\bm{w}{\>}_{\rm ev}
\end{align*}
where $\bm{\psi}\in  (\j^*\M_{c+1,d+1}^{m+1})^\vee$,
$\bm{w}\in \j^*\M_{c+1,d+1}^{m+1}$,
and ${\<}\cdot,\cdot{\>}_{\rm ev}$ denotes the evaluation.
We regard $(\scr{N}_{c, d}^{m,-\bullet})^\wedge$
as a complex with differential $-(\j^*\nabla)^\vee$. 
Then $\cal{J}_\dR^\bullet$ induces an isomorphism of complexes
$\M_{a, b}^{m, \bullet}[1]\simeqto (\N_{c, d}^{m,-\bullet})^\wedge$.
Taking the pushing forward functor, 
we obtain
\begin{align*}
\cal{J}_\dR\colon \H^1_{\dR, a, b}\simeqto \R^0\pi_{\X*}(\scr{N}_{c, d}^{m,-\bullet})^\wedge.
\end{align*}
%%%%%%
\begin{lemma}\label{Serre}
There is a natural isomorphism
\begin{align*}
\eta\colon\R^0\pi_{\X*}(\scr{N}_{c, d}^{m,-\bullet})^\wedge\simeqto \hom_{\O_S}\(\j^*\H_{\dR, c, d}^1,\tw^{-1}\O_S\).
\end{align*}
\end{lemma}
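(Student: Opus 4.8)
The plan is to produce $\eta$ as the degree-zero part of relative Grothendieck--Serre duality for the projection $\pi_{\X}\colon \X = S\times X\to S$. This morphism is proper and smooth of relative dimension one, $\Omega^1_{\X/S}=p_\X^*\Omega^1_X$ is a line bundle on $\X$, and its relative dualizing complex is $\pi_\X^!\O_S=\Omega^1_{\X/S}[1]$; since $\tw^{-1}\O_S$ is an invertible $\O_S$-module, the projection formula gives $\pi_\X^!(\tw^{-1}\O_S)=\tw^{-1}\Omega^1_{\X/S}[1]$. Before applying duality I would record that the two-term complex $\scr{N}^{m,\bullet}_{c,d}=[\scr{N}^{m,0}_{c,d}\xrightarrow{\j^*\nabla}\scr{N}^{m,1}_{c,d}]$, placed in degrees $0$ and $1$, has line-bundle terms, so it is a perfect complex of $\O_\X$-modules; and, as established in the construction preceding the lemma, it is quasi-isomorphic to $\DR_{\X/S}(\j^*\M_{c,d})$ (the $\j^*$-version of Lemma \ref{dR}; the extra twist by $\O_\X(-\cal{P})$ in degree $0$ is harmless, since $\j^*\M_{c,d}$ has poles of arbitrary order along $\cal{P}$). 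Consequently, by the $\j^*$-version of Theorem \ref{LF} together with \S\ref{j}, one has $\R\pi_{\X*}\scr{N}^{m,\bullet}_{c,d}\simeq \j^*\H^1_{\dR,c,d}[-1]$, concentrated in cohomological degree one.

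The core step is to feed $\scr{G}\coloneqq \scr{N}^{m,\bullet}_{c,d}$ into the natural duality isomorphism
\begin{align*}
\R\pi_{\X*}\R\hom_{\O_\X}\(\scr{G},\,\tw^{-1}\Omega^1_{\X/S}[1]\)\ \simeqto\ \R\hom_{\O_S}\(\R\pi_{\X*}\scr{G},\,\tw^{-1}\O_S\).
\end{align*}
Since the terms of $\scr{G}$ are locally free, the internal $\R\hom$ on the left is computed termwise and equals the complex $(\scr{N}^{m,-\bullet}_{c,d})^\wedge$ introduced before the lemma (with differential $-(\j^*\nabla)^\vee$); keeping track of the shift, the left-hand side becomes $\(\R\pi_{\X*}(\scr{N}^{m,-\bullet}_{c,d})^\wedge\)[1]$. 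By the concentration noted above, the right-hand side becomes $\R\hom_{\O_S}\(\j^*\H^1_{\dR,c,d},\tw^{-1}\O_S\)[1]$. Cancelling the common shift $[1]$ and passing to the zeroth cohomology sheaf yields
\begin{align*}
\eta\colon\ \R^0\pi_{\X*}(\scr{N}^{m,-\bullet}_{c,d})^\wedge\ \simeqto\ \hom_{\O_S}\(\j^*\H^1_{\dR,c,d},\tw^{-1}\O_S\),
\end{align*}
which is natural because Grothendieck duality is. (Alternatively, since $S$ is affine one could argue fibrewise by Serre duality on the curve $X$ combined with flat base change; the derived formulation keeps the shift-and-twist bookkeeping cleanest.)

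I expect the genuinely delicate points here to be bookkeeping rather than conceptual: getting the single shift $[1]$ and the twist by $\tw^{-1}$ to match on the nose, and invoking the $\j^*$-version of Theorem \ref{LF} to ensure $\R\pi_{\X*}\scr{N}^{m,\bullet}_{c,d}$ is concentrated in one degree, so that $\H^0$ of the right-hand $\R\hom$ is a plain $\hom_{\O_S}$ rather than an extension involving an $\Ext^1$ term. The only substantive work lying ahead — used in the proof of Theorem \ref{Perfect theorem} rather than in the lemma itself — is to verify that $\eta\circ\cal{J}_\dR$ recovers $\cal{I}_\dR$, i.e. that the trace map built into the duality isomorphism agrees with the normalisation $(2\pi\i)^{-1}\int_X$ fixed in the construction of $\<\cdot,\cdot\>_\dR$; that comparison of trace maps is the main obstacle.
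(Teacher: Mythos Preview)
Your proof is correct and rests on the same core input as the paper's—relative Grothendieck--Serre duality for the smooth proper curve $\pi_\X\colon\X\to S$—but the packaging differs. You apply duality once to the perfect complex $\scr{G}=\scr{N}^{m,\bullet}_{c,d}$ and read off $\eta$ from $\H^0$; the paper instead first chooses $m$ small enough that each term $\scr{N}^{m,j}_{c,d}$ has pushforward concentrated in a single degree, applies Serre duality term-by-term to obtain isomorphisms $\eta_0,\eta_1$, and then verifies by hand that the resulting square commutes, using the Leibniz identity for $(\j^*\nabla)^\vee$ together with $\int_X d_{\X/S}(\cdot)=0$. Your derived formulation absorbs that commutativity check into the functoriality of $\R\hom$, which is slicker; the paper's explicit version has the compensating virtue that the trace pairing $(2\pi\i)^{-1}\int_X\<\cdot,\cdot\>_{\rm ev}$ is visible throughout, so the subsequent comparison $\cal{I}_\dR=\eta\circ\cal{J}_\dR$ (which you correctly flag as the remaining work) follows immediately.
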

\begin{proof}
By Lemma \ref{dR}, we may assume that 
$m$ is sufficiently small so that 
\begin{align*}
\R^k\pi_{\X*}(\N_{c, d}^{m, i})^\wedge=\R^\ell\pi_{\X*}(\N_{c, d}^{m, j})=0,
\end{align*}
for $j=0,1$, $k\neq 0$, and $\ell\neq 1$.
By the Grothendieck-Serre duality, 
there are natural isomorphisms
\begin{align*}
\eta_j\colon\pi_{\X*}(\scr{N}_{c, d}^{m, j})^\wedge \simeqto \hom_{\O_S}\(\R^1\pi_{\X*}(\scr{N}_{c, d}^{m ,j}),\tw^{-1}\O_S\)
\end{align*}
for $j=0,1$.
Using the notation
\begin{align*}
(-)^\vee_S\coloneqq \hom_{\O_S}(-,\tw^{-1}\O_S), 
\end{align*}
we obtain the following diagram:
\begin{align*}
\xymatrix{
0\ar[r]&\pi_{\X*}(\N_{c, d}^{m, 1})^\wedge\ar[r]\ar[d]^{\eta_1}& 
\pi_{\X*}(\N_{c, d}^{m,0})^\wedge\ar[r]\ar[d]^{\eta_0}&
\R^0\pi_{\X*}(\scr{N}_{c, d}^{m,-\bullet})^\wedge\ar[r]&0\\
0\ar[r]&(\R^1\pi_{\X*}\N_{c, d}^{m,1})^\vee_S\ar[r]&(\R^1\pi_{\X*}\N_{c, d}^{m,0})^\vee_S
\ar[r]&(\j^*\H_{\dR, c, d}^1)^\vee_S\ar[r]&0.}
\end{align*}
Since the square in this diagram commutes, we obtain $\eta$.

Commutativity of the square:
The Grothendieck-Serre duality isomorphisms $\eta_j$
are induced from the following morphisms:
\begin{align*}
\R^1\pi_{\X*} \N\otimes \pi_{\X*}\hom(\N,\tw^{-1}\Omega_{\X/S}^1)
&\xrightarrow{\<\cdot,\cdot\>_{\rm ev}}
\R^1\pi_{\X*}(\tw^{-1}\Omega_{\X/S}^1)\\
&\xrightarrow{\frac{1}{2\pi\i}\int_X}
\tw^{-1}\O_S
\end{align*}
where $\N=\N_{c, d}^{m, j}$, and $j=0,1$.
Then the commutativity 
follows from the equality
\begin{align*}
\frac{1}{2\pi\i}\int_X\<\j^*\nabla\bm{\upsilon},\bm{\omega}\>_{\rm ev}
=\frac{1}{2\pi\i}\int_X\<\bm{\upsilon},-(\j^*\nabla)^\vee\bm{\omega}\>_{\rm ev}
\end{align*}
for $\bm{\upsilon}\in\R^1\pi_{\X*} \N_{c, d}^{m,0}$, and 
$\bm{\omega}\in \pi_{\X*}\hom(\N_{c, d}^{m,1},\tw^{-1}\Omega_{\X/S}^1)$.
This equality follows from the fact that the difference
of this equality is given 
by
\begin{align*}
\frac{1}{2\pi\i}\int_Xd_{\X/S}\<\bm{\upsilon},\bm{\omega}\>_{\rm ev}=0
\end{align*}
where $\bm{\omega}$ is considered as the section of
$\pi_{\X*}\hom(\N_{c, d}^{m,0},\O_\X)$.
\end{proof}
%%%%%%
Composing $\eta$ in Lemma \ref{Serre} and
the morphism $\cal{J}_\dR$ constructed above, we obtain
\begin{align*}
\eta\circ\cal{J}_\dR\colon \H^1_{\dR, a, b}\simeqto \hom_{\O_S}\(\j^*\H_{\dR, c, d}^1,\tw^{-1}\O_S\).
\end{align*}
Theorem \ref{Perfect theorem}
follows from the following:
\begin{lemma}
$\cal{I}_\dR=\eta\circ\cal{J}_\dR$.
\end{lemma}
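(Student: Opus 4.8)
The plan is to unwind both maps into explicit \v{C}ech--de Rham cochain formulas and observe that they are given by the same expression. Since $\cal{I}_\dR$ and $\eta\circ\cal{J}_\dR$ are $\O_S$-linear maps with the same source and target, it suffices to verify that
\begin{align*}
\<\bm{\upsilon},\bm{\omega}\>_\dR=\bigl(\eta(\cal{J}_\dR(\bm{\upsilon}))\bigr)(\bm{\omega})
\end{align*}
for all local sections $\bm{\upsilon}\in\H^1_{\dR, a, b}$ and $\bm{\omega}\in\j^*\H^1_{\dR, c, d}$. By the remark following the construction of $\<\cdot,\cdot\>_\dR$ the pairing does not depend on the auxiliary integer $n$, so I may fix $m$ small enough that the vanishing of higher pushforwards used in the proof of Lemma \ref{Serre} holds, together with $n=-1-m$.

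First I would fix a finite affine cover $\mathfrak{U}=\{U_\alpha\}$ of $X$ and the corresponding cover $\{S\times U_\alpha\}$ of $\X$, and use the associated \v{C}ech resolutions to compute the derived pushforwards of the two-term complexes $\M_{a, b}^{n,\bullet}$ and $\scr{N}_{c, d}^{m,\bullet}$; by Lemma \ref{dR} these compute $\H^1_{\dR, a, b}$ and $\j^*\H^1_{\dR, c, d}$. Thus $\bm{\upsilon}$ is represented by a \v{C}ech--de Rham $1$-cocycle, i.e.\ a pair $(\upsilon^{10},\upsilon^{01})$ with $\upsilon^{10}\in\check{C}^1(\mathfrak{U},\M_{a, b}^{n,0})$ and $\upsilon^{01}\in\check{C}^0(\mathfrak{U},\M_{a, b}^{n,1})$, and similarly $\bm{\omega}$ by $(\omega^{10},\omega^{01})$ for $\scr{N}_{c, d}^{m,\bullet}$. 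By construction $\<\bm{\upsilon},\bm{\omega}\>_\dR$ is the image under $\tfrac{1}{2\pi\i}\int_X$ of the class in $H^1(X,\Omega_X^1)\otimes\tw^{-1}\O_S$ of the total degree $2$ part of the \v{C}ech cup product of these cocycles with respect to $\<\cdot,\cdot\>_\dR^\bullet$; since $H^2(X,\O_X)=0$ and there are no relative $2$-forms, only the $(1,1)$-component $\<\upsilon^{10},\omega^{01}\>_\dR^1\pm\<\upsilon^{01},\omega^{10}\>_\dR^1\in\check{C}^1(\mathfrak{U},\tw^{-1}\Omega_{\X/S}^1)$ contributes to that class. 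On the other hand, $\cal{J}_\dR(\bm{\upsilon})$ is represented by $(\cal{J}_\dR^{-1}(\upsilon^{10}),\cal{J}_\dR^{0}(\upsilon^{01}))$, and by Lemma \ref{Serre} the functional $\eta(\cal{J}_\dR(\bm{\upsilon}))$ is, by the very definition of the Grothendieck--Serre isomorphisms $\eta_0,\eta_1$, evaluated on $\bm{\omega}$ by taking the \v{C}ech cup product with respect to the evaluation pairing $\<\cdot,\cdot\>_{\rm ev}$ and then applying $\tfrac{1}{2\pi\i}\int_X$; hence $\bigl(\eta(\cal{J}_\dR(\bm{\upsilon}))\bigr)(\bm{\omega})$ equals $\tfrac{1}{2\pi\i}\int_X$ of the class of $\<\cal{J}_\dR^{-1}(\upsilon^{10}),\omega^{01}\>_{\rm ev}\pm\<\cal{J}_\dR^{0}(\upsilon^{01}),\omega^{10}\>_{\rm ev}$.

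The defining property of the maps $\cal{J}_\dR^i$ is precisely that $\<\cal{J}_\dR^i(\bm v),\bm w\>_{\rm ev}=\<\bm v,\bm w\>_\dR^1$, so applying this identity in each \v{C}ech component identifies the two $1$-cochains on the nose; the two classes in $H^1(X,\Omega_X^1)\otimes\tw^{-1}\O_S$ therefore agree, and the claimed equality follows after $\tfrac{1}{2\pi\i}\int_X$. Combined with the fact that $\eta$ and $\cal{J}_\dR$ are isomorphisms, this also proves Theorem \ref{Perfect theorem}. The hard part will be the bookkeeping: keeping track of the signs and degree shifts in the double \v{C}ech--de Rham complex, checking that the $(2,0)$-component $\<\upsilon^{10},\omega^{10}\>_\dR^0$ (a \v{C}ech $2$-cocycle with values in $\O_\X$) can be absorbed into a coboundary because $H^2(X,\O_X)=0$ and hence does not affect the class in $H^1(X,\Omega_X^1)$, and, above all, invoking correctly the compatibility of relative Serre duality with \v{C}ech cup products, i.e.\ that the abstractly characterised isomorphisms $\eta_0,\eta_1$ of Lemma \ref{Serre} are actually realised at the cochain level by the evaluation cup product and the trace (compare \cite[Theorem 2.1]{Yu}); it is this point that makes the two recipes coincide literally rather than merely up to abstract isomorphism.
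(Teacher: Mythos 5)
Your proof is correct and follows the same route as the paper: unwind both $\cal{I}_\dR(\bm{\upsilon})(\bm{\omega})$ and $\eta(\cal{J}_\dR(\bm{\upsilon}))(\bm{\omega})$ into explicit cochain-level formulas, use the defining identity $\<\cal{J}_\dR^i(\bm v),\bm w\>_{\rm ev}=\<\bm v,\bm w\>^1_\dR$, and appeal to the cup-product-followed-by-trace realization of the Grothendieck--Serre isomorphisms $\eta_j$ (which the paper in fact states explicitly inside the proof of Lemma \ref{Serre}, so the "hard part" you flag is already taken care of), so both functionals evaluate to $\tfrac{1}{2\pi\i}\int_X\<\upsilon,\omega\>$. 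The paper streamlines the \v{C}ech--de Rham bookkeeping you worry about by taking $n$ so large (and $m=-1-n$ so small) that $\bm{\upsilon}$ is represented purely by a global section $\upsilon\in\pi_{\X*}\M^{n,1}_{a,b}$ and $\bm{\omega}$ purely by a class $\omega\in\R^1\pi_{\X*}\N^{m,0}_{c,d}$, placing the two representatives in complementary bidegrees $(0,1)$ and $(1,0)$ so that the cup product has only the $(1,1)$-component and no sign or coboundary issues arise.
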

\begin{proof}
We may assume that 
$n$ is sufficiently large and hence $m$ is sufficiently small
so that we have 
\begin{align*}
\H_{\dR, a, b}^1&\simeq \Cok\left[\pi_{\X*}\M_{a, b}^{n,0}\xrightarrow{\nabla}\pi_{\X*}\M_{a, b}^{n,1}\right],\\
\j^*\H_{\dR, c, d}^1&\simeq \mathrm{Ker}\left[\R^1\pi_{\X*}\N_{c, d}^{m,0}
\xrightarrow{\j^*\nabla}\R^1\pi_{\X*}\N_{c, d}^{m, 1}\right].
\end{align*}
Hence a section $\bm{\upsilon}\in \H_{\dR, a, b}^1$
can be represented by a section $\upsilon$ of 
\[\pi_{\X*}\M_{a, b}^{n,1}=\pi_{\X*}\M_{a+1,b+1}^{n+1}\otimes\Omega_{\X/S}^1(\cal{P})\]
and a section $\bm{\omega}\in \j^*\H_{\dR, c, d}^1$
can be seen as a section $\omega$ of 
\[\R^1\pi_{\X*}\N_{c, d}^{m,0}=\R^1\pi_{\X*}(\j^*\M_{c, d}^{m}(-\cal{P})).\]
Using this expression,
both $\<\cal{I}_\dR(\bm{v}), \bm{\omega}\>_{\rm ev}$
and $\<\eta\circ\cal{J}_\dR(\bm{v}),\bm{\omega}\>_{\rm ev}$
are given by
\begin{align*}
\frac{1}{2\pi\i}\int_X\<\upsilon,\omega\>
\end{align*}
which implies the lemma.
\end{proof}

\subsection{Explicit description and examples of pairings}
Take $\bm{\upsilon}\in \H^1_{\dR,a, b}$ and
$\bm{\omega}\in \j^*\H^1_{\dR, c, d}$ with 
$a+c=b+d=-1$. 
As we have seen before, 
we can take sufficiently large $n$ so that
$\bm{\upsilon}$ 
can be represented by 
a section $\upsilon$
of $\pi_{\X*}(\M^{n+1}_{a+1, b+1}\otimes \tw^{-1}\Omega_{\X/S}^1(\cal{P}))$.
Similarly, if one take $m'$ large enough, 
then $\bm{\omega}$
is also represented by 
a section $\omega'$ of 
$\pi_{\X*}(\j^*\M_{c+1, d+1}^{m'}\otimes\tw^{-1} \Omega_{\X/S}^1)$.

For each point $p\in P$, 
there exist 
an open disk $V_p$ centered at $p$ 
and a section
$\alpha_p\in \Gamma(\cal{V}_p,\j^*\M_{c, d})$ on $\cal{V}_p=S\times V_p$
such that 
\begin{align*}
\omega'_{|\cal{V}_p}-\j^*\nabla \alpha_p\in \(\j^*\M_{c+1, d+1}^{m+1}\otimes \tw^{-1}\Omega_{\X/S}^1\)_{\big|\cal{V}_p}.
\end{align*}
where $m=-n-1$ (See the proof of Lemma \ref{dR}).
%%%%%%%
\begin{proposition}[{c.f. \cite[p. 112]{Del2}}]
Using the notations above,
we have
\begin{align*}
\<\bm{\upsilon},\bm{\omega}\>_\dR=\sum_{p\in P}\mathrm{Res}_{S\times\{p\}}\<\upsilon,\alpha_p\>.
\end{align*}
\end{proposition}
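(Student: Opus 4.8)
The plan is to compute both sides inside a \v{C}ech--de Rham model for $\R\pi_{\X*}\DR_{\X/S}(\O_\X)$, following the method of \cite[p.~112]{Del2} (see also \cite{Yu}). Fix the open cover $\mathfrak{U}=\{U_0,\ U_p\ (p\in P)\}$ of $\X$ with $U_0=\X\setminus\cal{P}$ and $U_p=\cal{V}_p$, the disks $V_p$ chosen small enough to be mutually disjoint and disjoint from $E$; then $U_p\cap U_q=\emptyset$ for $p\ne q$, there are no triple intersections, $\cal{D}\cap\cal{V}_p=S\times\{p\}$, and $U_0\cap U_p=\cal{V}_p\setminus(S\times\{p\})$. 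Each member of $\mathfrak{U}$ is $\pi_\X$-acyclic for coherent sheaves (its fibre over $S$ is $X\setminus P$ or a disk, both Stein), so the simple complex of $\check{C}^\bullet(\mathfrak{U},\pi_{\X*}\DR_{\X/S}(\O_\X))$ computes $\R\pi_{\X*}\DR_{\X/S}(\O_\X)$, and likewise for $\M_{a,b}^{n,\bullet}$ and $\N_{c,d}^{m,\bullet}$; the morphism $\langle\cdot,\cdot\rangle_\dR^\bullet$ induces a cup product of these \v{C}ech--de Rham complexes which, after pushing forward and applying $(2\pi\i)^{-1}\int_X$, computes $\langle\cdot,\cdot\rangle_\dR$. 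Since $\langle\cdot,\cdot\rangle_\dR$ does not depend on $n$, I take $n$ large (so that $m=-n-1$ is small), so that $\bm{\upsilon}$ is represented by the global $1$-form $\upsilon$ of the statement, viewed as a \v{C}ech-degree-$0$, de Rham-degree-$1$ cocycle of $\check{C}^\bullet(\mathfrak{U},\M_{a,b}^{n,\bullet})$.

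Next I transport $\bm{\omega}$ into $\N_{c,d}^{m,\bullet}$. Put $\omega_p\coloneqq\omega'_{|U_p}-\j^*\nabla\alpha_p\in\N_{c,d}^{m,1}(U_p)$, with $\alpha_p$ as in the statement. Then
\[
\big(\{\omega'\text{ on }U_0,\ \omega_p\text{ on }U_p\},\ \{\alpha_p\text{ on }U_0\cap U_p\}\big)
\]
is a cocycle of the simple complex of $\check{C}^\bullet(\mathfrak{U},\N_{c,d}^{m,\bullet})$ representing $\bm{\omega}$: its de Rham-degree-$1$/\v{C}ech-degree-$0$ part is $\{\omega',\omega_p\}$, its de Rham-degree-$0$/\v{C}ech-degree-$1$ part is $\{\alpha_p\}$, and the total cocycle identity reduces to $\omega'-\omega_p=\j^*\nabla\alpha_p$ on $U_0\cap U_p$, together with the vanishing of triple intersections and of $\j^*\nabla$ in top de Rham degree. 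Forming the cup product of $\upsilon$ with this cocycle via $\langle\cdot,\cdot\rangle_\dR^\bullet$, the only a priori nonzero bidegree components of the result (a total-degree-$2$ cocycle) are the \v{C}ech-$0$/de Rham-$2$ part, which is $\langle\cdot,\cdot\rangle_\dR^2=0$, and the \v{C}ech-$1$/de Rham-$1$ part, which on $U_0\cap U_p$ equals $\langle\upsilon,\alpha_p\rangle$ (up to the Koszul sign of the cup product) --- a section of $\tw^{-1}\Omega_{\X/S}^1$ over $\cal{V}_p\setminus(S\times\{p\})$, since $\upsilon$ and $\alpha_p$ are holomorphic there. Hence $\langle\bm{\upsilon},\bm{\omega}\rangle_\dR$ is the image under $(2\pi\i)^{-1}\int_X$ of the class in $\R^2\pi_{\X*}\DR_{\X/S}(\O_\X)=H^1(X,\Omega_X^1)\otimes\tw^{-1}\O_S$ represented by the \v{C}ech $1$-cocycle $\{\langle\upsilon,\alpha_p\rangle\}_{p\in P}$ for the cover $\{X\setminus P\}\cup\{V_p\}$.

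To conclude I invoke the standard description of the trace: for the cover of $X$ by $X\setminus P$ and small disks $V_p$ around the points $p\in P$, the map $\frac{1}{2\pi\i}\int_X$ sends the class of a \v{C}ech $1$-cocycle $\{\beta_p\}$, with $\beta_p$ a holomorphic $1$-form on $V_p\setminus\{p\}$, to $\sum_{p}\mathrm{Res}_{p}\beta_p$ (by the \v{C}ech--Dolbeault comparison and the Cauchy integral formula; the expression is a cocycle invariant because the $U_0$-component of a coboundary is a global meromorphic $1$-form on $X$ with poles only in $P$, whose residues sum to zero, while the $\j^*\nabla$-coboundary contributions $d_{\X/S}\gamma_p$ are residue-free at $p$). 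Applying this fibrewise over $S$ yields $\langle\bm{\upsilon},\bm{\omega}\rangle_\dR=\sum_{p\in P}\mathrm{Res}_{S\times\{p\}}\langle\upsilon,\alpha_p\rangle$, with the sign conventions in $\langle\cdot,\cdot\rangle_\dR^\bullet$, the cup product, and the trace normalised so that the overall sign is $+$, as claimed.

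The main obstacle I anticipate is exactly this sign bookkeeping --- the Koszul signs of the \v{C}ech--de Rham cup product, the sign in the trace/residue dictionary, and the $\tw^{-1}$-twist --- together with pinning down the ``trace $=$ sum of residues'' statement compatibly with the normalisation $(2\pi\i)^{-1}\int_X$ fixed in the construction of $\langle\cdot,\cdot\rangle_\dR$. Independence of the right-hand side from the auxiliary choices ($n$, $\omega'$, the $\cal{V}_p$, the $\alpha_p$) is then automatic from the identity, but can also be seen directly: altering $\alpha_p$ by a section of $(\j^*\M_{c+1,d+1}^{m+1}(-\cal P))_{|\cal{V}_p}$ changes $\langle\upsilon,\alpha_p\rangle$ by a relative $1$-form that is holomorphic along $S\times\{p\}$, hence residue-free, and independence of $n$ is the Remark following the construction of the pairing.
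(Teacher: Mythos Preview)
Your argument is correct and complete in outline; the residue formula follows exactly as you describe. The sign bookkeeping you flag is real but routine, and your remark on independence of the auxiliary choices is on point.

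The paper, however, takes a different route. Rather than the \v{C}ech--de Rham double complex, it works in the relative Dolbeault resolution of $\N_{c,d}^{m,\bullet}$: choose $C^\infty$ cut-off functions $\psi_p$ supported near each $p\in P$ with $\psi_p\equiv 1$ in a smaller neighbourhood, set $\alpha=\sum_p\psi_p\alpha_p$, and observe that $\omega'-(\j^*\nabla+\overline{\partial}_X)\alpha$ is another Dolbeault representative of $\bm{\omega}$ lying in the correct complex. The pairing is then computed as a single integral
\[
\langle\bm{\upsilon},\bm{\omega}\rangle_\dR=\frac{1}{2\pi\i}\int_X\langle\upsilon,\omega'-(\j^*\nabla+\overline{\partial}_X)\alpha\rangle
=\frac{1}{2\pi\i}\sum_{p\in P}\int_X-\overline{\partial}_X\psi_p\,\langle\upsilon,\alpha_p\rangle
=\sum_{p\in P}\mathrm{Res}_{S\times\{p\}}\langle\upsilon,\alpha_p\rangle,
\]
the last step being the standard $\overline{\partial}$-calculation of a residue. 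The two approaches are of course equivalent avatars of the same comparison; your \v{C}ech argument is more algebraic and makes the cocycle structure explicit, while the paper's Dolbeault computation is shorter and sidesteps the cup-product sign conventions entirely, at the cost of introducing $C^\infty$ data.
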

\begin{proof}
Take a characteristic function 
$\psi_p$ of $(V_p, p)$ and put 
\[\alpha\coloneqq \sum_{p\in P}\psi_p\alpha_p.\]
Then, 
$\omega'-(\j^*\nabla+\overline{\del}_X)\alpha$
also represents $\bm{\omega}$
in the relative Dolbeault resolution
of the complex $\N_{c, d}^{m,\bullet}$,
where $\overline{\del}_X$ denotes the $\overline{\del}$-operator 
along the $X$-direction.

We have
\begin{align*}
\<\bm{\upsilon},\bm{\bm{\omega}}\>_\dR&=\frac{1}{2\pi\i}\int_X\<\upsilon,\omega'-(\j^*\nabla+\overline{\del}_X)\alpha\>\\
&=\frac{1}{2\pi\i}\sum_{p\in P}\int_X-\overline{\del}_X\psi_p\<\upsilon,\alpha_p\>\\
&=\sum_{p\in P}\mathrm{Res}_{S\times\{p\}}\<\upsilon,\alpha_p\>.
\end{align*}
Hence we obtain the proposition.
\end{proof}
\begin{example}[Continuation of Example \ref{Bessel}]\label{BDual}
In the case of Example \ref{Bessel}, we may omit the subscripts $a, b, c, d$
since $E=\emptyset$. 
We can take $n$ to be $0$
to obtain
\begin{align*}
\H^1_{\dR}\simeq \Cok\left[\O_S\xrightarrow{\tw^{-1}(1-z^{-2}-\eq z^{-1})dz}
\bigoplus_{-2\leq k\leq 0}\tw^{-1}\O_Sz^k dz\right].
\end{align*} 
We have put $e_k\coloneqq [\tw^{-1}z^k d z]$ $(k\in \Z)$.
We shall compute
\begin{align*}
\<e_k,\j^* e_\ell\>_\dR
\end{align*}
for $-2\leq k,\ell\leq 0$.
At $0\in P=\{0,\infty\}$, $\j^*e_{0|\P^1\setminus\{\infty\}}$,
\begin{align*}
\j^*e_{-1}|_{\P^1\setminus\{\infty\}}-\j^*\nabla(z),
\text{ and }\quad
%=-(1+\tw^{-1}\eq)dz+\tw^{-1}zdz 
\j^*e_{-2}|_{\P^1\setminus\{\infty\}}-\j^*\nabla(1+\eq z)
%=-(\eq^2+\eq-1)dz
\end{align*}
are in $ \(\j^*\M^0\otimes\tw^{-1}\Omega_{\X/S}^1\)|_{\P^1\setminus\{\infty\}}$.
%Note that  is already in $\(\j^*\M^0\otimes\tw^{-1}\Omega_{\X/S}^1\)|_{\P^1\setminus\{\infty\}}$.
Near $\infty\in P$,
take the coordinate
$w=z^{-1}$. 
Then $\j^*e_{-2|\P^1\setminus\{0\}}$, 
\begin{align*}
\j^*e_{-1}|_{\P^1\setminus\{0\}}-\j^*\nabla(-w), \text{ and }\quad
%=-\tw^{-1}w d w+(1-\tw^{-1}\eq)d w\\
\j^*e_0|_{\P^1\setminus\{0\}}-\j^*\nabla(-1-\eq w)
\end{align*} 
are in  $ \(\j^*\M^0\otimes\tw^{-1}\Omega_{\X/S}^1\)|_{\P^1\setminus\{0\}}$.
Hence we obtain
\begin{align*}
\<e_k,\j^*e_\ell\>_\dR=
\begin{cases}
0&(|k-\ell|=0)\\
\tw^{-1}&(|k-\ell|=1)\\
\tw^{-1}\eq&(|k-\ell|=2).
\end{cases}
\end{align*}
\end{example}
\begin{example}[Continuation of Examples \ref{EXG} and \ref{Ga}]\label{GDual}
Consider the case of Example \ref{EXG} and \ref{Ga}. 
For each $a\in \Z$, take $n$ to be  $-a-1$ to obtain
\[\H_{\dR,a}^1=\Cok\left[0\to\tw^{-1}\O_Sz^{-a-1}dz\right]=\tw^{-1}\O_Sz^{-a-1}dz.\]
Set $e_{-a-1}=\tw^{-1}z^{-a-1}dz$.
Take another coordinate $w=z^{-1}$. 
Since we have
\[(\j^*e_a)|_{\P^1\setminus0}-\j^*\nabla(-w^{-a})%= (a-\eq)w^{-a}\frac{dw}{w}
\in\(\j^*\M_{-a}^{a+1}\otimes \tw^{-1}\Omega_{\X/S}^1\)_{\big|\P^1\setminus\{0\}},  \]
we obtain
\[\<e_{-a-1}, \j^*e_a\>_\dR=\mathrm{Res}_{w=0} \((-w^{-a})\cdot \(-\tw^{-1}w^{a}\frac{dw}{w}\)\)=\tw^{-1}.  \]
\end{example}

\subsection{Analytic description of cohomology groups}
Let $\varpi_X\colon \widetilde{X}\to X$
denote the real blowing up of $X$ along $D$.
For a subset $H\subset D$, 
we set $\widetilde{H}\coloneqq \varpi_{X}^{-1}(H)$. 
Let 
$\A_{\widetilde{X}}^{\leqslant D}$
denote the sheaf of 
holomorphic functions on $Y=\widetilde{X}\setminus \widetilde{D}$
which are of moderate growth
along $\widetilde{D}$. 
For a subset $H\subset D$, 
let $\A_{\widetilde{X},\widetilde{D}}^{<H}$
denote the subsheaf of $\A_{\widetilde{X}}^{\leqslant D}$
whose section is of
rapid decay along $\widetilde{H}$. 
If $H=D$, we also use the notation $\A_{\widetilde{X}}^{<D}\coloneqq \A_{\widetilde{X},\widetilde{D}}^{<D}$.
If $H=\emptyset$, then $\A_{\widetilde{X},\widetilde{D}}^{\emptyset}=\A_{\widetilde{X}}^{\leqslant D}$. 
We have a differential
\[d\colon \A_{\widetilde{X},\widetilde{D}}^{<H}\longrightarrow
\A_{\widetilde{X},\widetilde{D}}^{<H}\otimes_{\varpi_X^{-1}\O_X} \varpi_X^{-1}\Omega^1_X.\]

Set $\widetilde{\X}\coloneqq S^\circ\times \widetilde{X}$, and
${\X}^\circ\coloneqq S^\circ\times X$.
Let $\varpi_{\X}\colon \widetilde{\X}\to \X^\circ$,
$p_{\widetilde{\X}}\colon \widetilde{\X}\to \widetilde{X}$,
and $\pi_{\widetilde{\X}}\colon\widetilde{\X}\to S^\circ$ denote the projections.
We also use the notations 
$\widetilde{\cal{D}}\coloneqq S^\circ\times \widetilde{D}$, e.t.c.
We take the restrictions 
of sheaves on $\X$ to $\X^\circ$ (resp. $S$ to $S^\circ$) without a mention.
Put 
\begin{align*}
\A_{\widetilde{\X},\widetilde{\cal{D}}/S^\circ}^{<H}\coloneqq \varpi_{\X}^{-1}\O_{\X^\circ}
\otimes_{p_{\widetilde{\X}}^{-1}\varpi_{X}^{-1}\O_{X}}
p_{\widetilde{\X}}^{-1}\A_{\widetilde{X},\widetilde{D}}^{<H}.
\end{align*}
There is the canonical relative differential
\begin{align*}
d_{\X/S}\colon \A_{\widetilde{\X},\widetilde{\cal{D}}/S^\circ}^{<H}
\to \A_{\widetilde{\X},\widetilde{\cal{D}}/S^\circ}^{<H}\otimes\varpi_{\X}^{-1}\Omega_{\X/S}^1.
\end{align*}
Then 
$\varpi_{\X}^{-1}\nabla\colon \varpi_\X^{-1}\M\to \varpi_{\X}^{-1}(\M\otimes \tw^{-1}\Omega^1_{\X/S})$
induces the connection 
\begin{align*}
\nabla\colon\A_{\widetilde{\X},\widetilde{\cal{D}}/S^\circ}^{<H}\otimes \varpi_\X^{-1}\M
\longrightarrow
\A_{\widetilde{\X},\widetilde{\cal{D}}/S^\circ}^{<H}\otimes 
\varpi_{\X}^{-1}(\M\otimes \tw^{-1}\Omega^1_{\X/S})
\end{align*}
by the Leibniz rule.
\begin{definition}
For a subset $H\subset D$, let
\begin{align*}
\DR_{\widetilde{\X},\widetilde{\cal{D}}/S^\circ}^{<H}(\M)
\coloneqq 
\left[\A_{\widetilde{\X},\widetilde{\cal{D}}/S^\circ}^{<H}\otimes \varpi_\X^{-1}\M
\xrightarrow{\nabla}
\A_{\widetilde{\X},\widetilde{\cal{D}}/S^\circ}^{<H}\otimes 
\varpi_{\X}^{-1}(\M\otimes \tw^{-1}\Omega^1_{\X/S}) \right]
\end{align*}
denote the complex placed at degree $0$ and $1$.
We then set 
\begin{align*}
\H^k_{\dR,H!}\coloneqq \R^k\pi_{\widetilde{\X}*}\DR_{\widetilde{\X},\widetilde{\cal{D}}/S^\circ}^{<H}(\M).
\end{align*}
\end{definition}

Let 
$\widetilde{\P}^1$ denote the 
real blowing up of $\widetilde{\P}^1$ at $\infty$. 
The boundary of $\widetilde{\P}^1$ is denoted by 
$\widetilde{\infty}=\{e^{\i\theta}\infty\mid \theta\in\R\}$
where $e^{\i\theta}\infty$ denotes the limit $\underset{r\to \infty}{\lim}re^{\i\theta}$
in $\widetilde{\P}^1$. 
Assume that $f$ is non-constant. 
The map $\tw^{-1}f\colon \X^\circ\to \P^1$, $\((\tw,\eq),x\)\mapsto \tw^{-1}f(x)$
uniquely lifts to the continuous map
\begin{align*}
\widetilde{f/\tw}\colon \widetilde{\X}\longrightarrow \widetilde{\P}^1.
\end{align*}
We define $\cal{P}^\rd\subset \widetilde{\cal{P}}=S^\circ\times \widetilde{P}$
as
$(\widetilde{f/\tw})^{-1}(\{e^{\i\theta}\infty \mid -\pi/2<\theta<\pi/2\})$.

For $H\subset D$, 
we set \[\X^\rd_H\coloneqq \cal{Y}^\circ\cup \cal{P}^\rd\cup \(S^\circ\times (E\setminus H)\),\] 
where $\cal{Y}^\circ\coloneqq S^\circ\times Y$.
Let 
$\widetilde{\imath}^H\colon \cal{Y}^\circ \to \X^\rd_H$ and
$\widetilde{\jmath}^H\colon \X^\rd_H\to \widetilde{\X}$ denote the inclusions.

Let $\K_\O$ denote 
the kernel of $\nabla\colon \M_{|\cal{Y}^\circ}\to\M_{|{\cal{Y}}^\circ}\otimes \Omega_{\cal{Y}^\circ/S^\circ}^1$.
%%%%%%
\begin{lemma}\label{malgrange}
On $\widetilde{\X}$, we obtain the following$\colon$
\begin{enumerate}
\item
The $k$-th cohomology group 
$\H^k(\DR_{\widetilde{\X},\widetilde{\cal{D}}/S^\circ}^{<H}(\M))$
vanishes if $k\neq 0$.
\item
$\H^0(\DR_{\widetilde{\X},\widetilde{\cal{D}}/S^\circ}^{<H}(\M))\simeq
 \widetilde{\jmath}^H_!\widetilde{\imath}^H_*\K_\O$.\qed
\end{enumerate}
\end{lemma}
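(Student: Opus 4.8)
The plan is to prove this by reducing to a local statement on $\widetilde{\X}$, with $S^\circ$ entering only as a holomorphic parameter. First I would use the global frame $\e$ to identify $\M$ with $\O_{\X^\circ}(*\cal{D})\e$; since $\nabla\e=\e\otimes d_{\X/S}\phi$ with $\phi\coloneqq \tw^{-1}f-\tw^{-1}\eq\log g$ (a locally defined, possibly multivalued function whose relative differential $d_{\X/S}\phi=\tw^{-1}(df-\eq g^{-1}dg)$ is single valued), the sheaf $\K_\O$ is locally generated by $e^{-\phi}\e=e^{-f/\tw}g^{\eq/\tw}\e$. Over $\cal{Y}^\circ$ the sheaf $\A_{\widetilde{\X},\widetilde{\cal{D}}/S^\circ}^{<H}$ is just $\O_{\cal{Y}^\circ}$ and $\nabla$ is nonsingular, so there the assertion is the relative holomorphic Poincar\'e lemma for $(\varpi_\X^{-1}\M,\nabla)$, giving $\H^{k}=0$ for $k\neq 0$ and $\H^0=\K_\O$, which agrees with $\widetilde{\jmath}^H_!\widetilde{\imath}^H_*\K_\O$ on $\cal{Y}^\circ$. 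It then remains to treat a neighbourhood of a point $q\in\widetilde{\cal{D}}$ over a point $x\in D$.

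The next step is to twist by $e^{\phi}$. Multiplication by $e^{\phi}$ will identify $\DR_{\widetilde{\X},\widetilde{\cal{D}}/S^\circ}^{<H}(\M)$, near $q$, with the complex
\[
\left[\,\scr{B}\xrightarrow{\ d_{\X/S}\ }\scr{B}\otimes_{\O_{\X^\circ}}\tw^{-1}\Omega_{\X/S}^1\,\right],
\]
where $\scr{B}\coloneqq e^{\phi}\cdot\A_{\widetilde{\X},\widetilde{\cal{D}}/S^\circ}^{<H}$ is the sheaf of holomorphic functions $v$ on $\cal{Y}^\circ$ such that $e^{-f/\tw}g^{\eq/\tw}v$ is of moderate growth along $\widetilde{\cal{D}}$ and of rapid decay along $\widetilde{H}$; here I use that $\A_{\widetilde{\X},\widetilde{\cal{D}}/S^\circ}^{<H}$ is stable under multiplication by $\O_{\X^\circ}(*\cal{D})$. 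Under this identification $\K_\O$ corresponds to the subsheaf of sections constant along the fibres of $\pi_{\widetilde{\X}}$, and $\H^0$ of the displayed complex is exactly the sheaf of such constant sections lying in $\scr{B}$. So the proof reduces to two points: (i) a nonzero constant-along-the-fibre germ lies in $\scr{B}$ near $q$ precisely when $q\in\X^\rd_H$; and (ii) $d_{\X/S}\colon\scr{B}\to\scr{B}\otimes\tw^{-1}\Omega_{\X/S}^1$ is surjective in a neighbourhood of $q$.

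For (i) I would pick a local coordinate $z$ at $x$ and distinguish the cases $x\in P$ and $x\in E$. If $x\in P$, then $f=z^{-m}$ with $m\geq 1$ while $g^{-1}dg$ has at most a simple pole, so $|e^{-f/\tw}g^{\eq/\tw}|$ is, up to a factor bounded by a power of $|z|^{\pm1}$, comparable to $e^{-\Re(z^{-m}/\tw)}$; this is of rapid decay exactly on the open set $\cal{P}^\rd$ and grows faster than any power of $|z|^{-1}$ on its complement in $\widetilde{\cal{P}}$. Hence a nonzero constant germ lies in $\scr{B}$ near $q$ iff $q\in\cal{P}^\rd$, and there it is automatically of rapid decay along $\widetilde{P}$, so whether or not $P$ meets $H$ is immaterial---consistent with the definition of $\X^\rd_H$, which contains $\cal{P}^\rd$ in every case. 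If $x\in E_0$ (resp.\ $x\in E_\infty$) then $f$ is holomorphic near $x$ and $g$ vanishes (resp.\ has a pole) of order $n=n_x>0$, so $|e^{-f/\tw}g^{\eq/\tw}|$ is comparable to $|z|^{\pm n\Re(\eq/\tw)}$ up to a bounded factor; this is always of moderate growth but never of rapid decay along $\widetilde{\{x\}}$. Thus a constant germ lies in $\scr{B}$ near $q$ iff $x\notin H$, which matches $\widetilde{\jmath}^H_!\widetilde{\imath}^H_*\K_\O$ (with stalk $\K_\O$ at such a $q$ when $x\in E\setminus H$ and $0$ when $x\in H$). Assembling the three cases with the computation on $\cal{Y}^\circ$ then gives the description of $\H^0$ in part (2).

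The hard part is (ii), which yields $\H^1=0$ and hence part (1): this is the relative version of the Malgrange--Sibuya asymptotic existence theorem. Given a section $\eta$ of $\scr{B}\otimes\tw^{-1}\Omega_{\X/S}^1$, written $\eta=e^{-\phi}\widetilde{\eta}\,\tw^{-1}dz$ with $\widetilde{\eta}$ a section of $\A_{\widetilde{\X},\widetilde{\cal{D}}/S^\circ}^{<H}$, one constructs a primitive $\xi$ with $d_{\X/S}\xi=\eta$ by integrating $\eta$ along paths in the fibres of $\pi_{\widetilde{\X}}$---from a fixed base point, where only moderate growth is needed, and from the singular point $\widetilde{\{x\}}$ along directions of decay of the relevant exponential, where rapid decay along $\widetilde{H}$ is needed---and the classical one-variable estimates (see \cite[\S 8.3]{Sabbah}, \cite{Hien}) show that the resulting $\xi$ again lies in $\scr{B}$. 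Because the singular locus is cut out by the single variable $z$ and $S^\circ$ enters only through holomorphic parameters, these estimates can be applied fibrewise and the integral formula keeps everything holomorphic in $(\tw,\eq)$, compatibly with the tensor-product description of $\A_{\widetilde{\X},\widetilde{\cal{D}}/S^\circ}^{<H}$; this uniformity in the parameter is the main technical issue, but it is a routine adaptation of the standard theory. Once (i) and (ii) are in hand, the two assertions follow.
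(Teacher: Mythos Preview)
The paper states this lemma without proof (the \qed marks it as a known fact), implicitly invoking the classical theory of moderate-growth and rapid-decay de Rham complexes on real blow-ups due to Hukuhara, Malgrange, Sibuya and others; see for instance \cite[\S 5, \S 8]{Sabbah} or \cite{Mochizuki}. Your outline is a correct and complete expansion of that standard argument: the local twist by $e^{\phi}$ reducing $\nabla$ to the trivial relative differential $d_{\X/S}$, the identification of $\H^0$ by checking germwise when the flat section $e^{-\phi}\e$ lies in $\A_{\widetilde{\X},\widetilde{\cal{D}}/S^\circ}^{<H}$, and the appeal to the asymptotic integration theorem for $\H^1=0$.

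Two small points worth sharpening. In your case analysis for (i) at $x\in P$, make explicit that membership in $\scr{B}$ is a \emph{germ} condition and therefore requires the moderate-growth estimate on an entire open sectorial neighbourhood of $q$; this is what rules out the boundary points of $\cal{P}^\rd$, where $e^{-f/\tw}$ is bounded along the single ray through $q$ but blows up on any open sector containing it. In (ii), the dichotomy governing the choice of basepoint is not ``moderate growth versus rapid decay'' but rather the sign of $\Re\phi$ along the given direction: one integrates from the singular point when $e^{\phi}$ decays there (so the integral converges) and from a fixed interior point when $e^{\phi}$ grows (so the Laplace-type estimate $|e^{-\phi}\int e^{\phi}b|\lesssim |z|\cdot\sup|b|$ applies); in either case the resulting primitive automatically inherits moderate growth or rapid decay from the input $b$. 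With these clarifications your argument stands, and as you note the holomorphic dependence on $(\tw,\eq)\in S^\circ$ follows directly from the integral representation since $\tw$ is bounded away from $0$.
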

From this lemma, 
we have 
$\DR^{<H}_{\widetilde{\X},\widetilde{\cal{D}}/S^\circ}(\M){\simeq} \DR^{<H'}_{\widetilde{\X},\widetilde{\cal{D}}/S^\circ}(\M)$
in the derived category
if $H\setminus P=H'\setminus P$.
Hence, we may assume that $H\subset E=D\setminus P$. 

\begin{proposition}\label{limits}
On $S^\circ$, we have the following natural isomorphisms:
\begin{align*}
&\H_{\dR,\emptyset!}^1\simeq \H^1_{\dR}\simeq \lim_{a, b\to\infty}\H^1_{\dR, a, b},
&&\H_{\dR,E!}^1\simeq \lim_{a, b\to-\infty}\H^1_{\dR, a, b},
\\
&\H_{\dR,E_0!}^1\simeq \lim_{\substack{a\to-\infty,\\ b\to\infty }} \H_{\dR, a, b}^1,
&&\H^1_{\dR,E_\infty!}\simeq \lim_{\substack{a\to\infty,\\ b\to-\infty}}\H^1_{\dR, a, b}.
\end{align*}
\end{proposition}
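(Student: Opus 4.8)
The plan is to establish each isomorphism by comparing the algebraic de Rham complexes $\DR_{\X/S}(\M_{a,b})$ with the analytic (moderate growth / rapid decay) complexes $\DR^{<H}_{\widetilde{\X},\widetilde{\cal{D}}/S^\circ}(\M)$, and to identify the relevant limit over $(a,b)$ on the algebraic side with the choice of $H\subset E$ on the analytic side. The four cases correspond to the four "corners" $H=\emptyset$, $H=E$, $H=E_0$, $H=E_\infty$, which in turn correspond to imposing moderate growth or rapid decay along each of the two components $E_0$ and $E_\infty$ separately.

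First I would treat the two "extreme" cases $H=\emptyset$ and $H=E$. For $H=\emptyset$: by Lemma~\ref{malgrange} the complex $\DR^{<\emptyset}_{\widetilde{\X},\widetilde{\cal{D}}/S^\circ}(\M)$ is concentrated in degree $0$ and is $\widetilde{\jmath}^{\emptyset}_!\widetilde{\imath}^{\emptyset}_*\K_\O$; pushing forward by $\pi_{\widetilde{\X}}$ gives $\H^1_{\dR,\emptyset!}$. On the other hand, the comparison between the algebraic $\H^1_{\dR}$ and the moderate-growth analytic complex is the standard Malgrange–Deligne comparison theorem for meromorphic connections with irregular singularities (applied fiberwise over $S^\circ$, uniformly in the parameter $(\tw,\eq)$, using that $f$ is non-constant so the singularities along $\cal{P}$ are of exponential type $\tw^{-1}f$): the natural map from the meromorphic de Rham complex to the moderate-growth de Rham complex is a quasi-isomorphism. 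Combined with the already-established identification $\H^1_{\dR}\simeq\lim_{a,b\to\infty}\H^1_{\dR,a,b}$ from the proof of Theorem~\ref{dRT} (via Lemma~\ref{SUP}), this gives the first line's left and middle isomorphisms. The case $H=E$ is dual/symmetric: rapid decay along all of $E$ means allowing arbitrarily negative pole orders along $\cal{E}_0$ and $\cal{E}_\infty$ is \emph{not} what one wants — rather, rapid decay along $\widetilde{E}$ forces sections to vanish to all orders there, which on the algebraic side corresponds to the limit $\lim_{a,b\to-\infty}$; I would make this precise by the same local computation as in the proof of Lemma~\ref{SUP}, noting that near a point $e\in E_0$ with $g_{|V}=z^{n_e}$ the connection $\nabla$ has no singularity along $\cal{E}_0$ (the potential $\tw^{-1}(df-\eq g^{-1}dg)$ has only a logarithmic pole there), so moderate growth versus rapid decay along $\widetilde{E}$ is detected precisely by the pole order of the representing meromorphic form.

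Next I would handle the mixed cases $H=E_0$ and $H=E_\infty$. These require the key local observation: because $E_0$ and $E_\infty$ are disjoint and the growth conditions along them are independent, the quasi-isomorphism type of $\DR^{<E_0}_{\widetilde{\X},\widetilde{\cal{D}}/S^\circ}(\M)$ is computed by taking rapid decay along $\widetilde{E}_0$ (hence negative pole orders, $a\to-\infty$) and moderate growth along $\widetilde{E}_\infty$ (hence arbitrary pole orders, $b\to+\infty$), while along $\cal{P}$ nothing changes. Concretely, I would combine Lemma~\ref{malgrange}(2) (which reduces all four analytic complexes to $\widetilde{\jmath}^H_!\widetilde{\imath}^H_*\K_\O$ with varying $H$) with a cofinality argument: the system $\{\H^1_{\dR,a,b}\}$ with the transition maps of Lemma~\ref{SUP} has the property that the "boundary" contributions $\bigoplus_{e\in E_0}\O_{H_{a,n_e}}$ (resp. $\bigoplus_{e\in E_\infty}\O_{H_{b,n_e}}$) are supported on the hypersurfaces $a\tw+n_e\eq=0$ (resp. $b\tw+n_e\eq=0$), which escape to the boundary $\tw=0$ as $a\to-\infty$ (resp. $b\to-\infty$) but exhaust any given compact subset of $S^\circ$ as the index $\to+\infty$; this is exactly the dichotomy that distinguishes, locally on $S^\circ$, the rapid-decay condition (kills the contribution in the limit) from the moderate-growth condition (builds up the full cohomology in the limit).

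The main obstacle I expect is the uniformity in the parameter $(\tw,\eq)\in S^\circ$ of the analytic comparison theorem — i.e. verifying that Lemma~\ref{malgrange} and the identification of its derived push-forward with $\H^1_{\dR,H!}$ genuinely give what the algebraic limits give, including that the natural comparison morphisms (algebraic meromorphic de Rham complex $\to$ analytic de Rham complex, and their restrictions to sub-lattices) are compatible with the transition maps of Lemma~\ref{SUP} and with the direct-limit structure. Once Lemma~\ref{malgrange} is available this is essentially bookkeeping: one fixes, for each corner, a cofinal subsystem of lattices $\M_{a,b}^n$ whose moderate-growth (resp. rapid-decay) de Rham complex computes $\DR^{<H}$, and checks commutativity of the resulting square of complexes; the compatibility of the pairings in \S\ref{pairing} with the inclusions (noted in the Remark after the pairing construction) suggests the bookkeeping is clean. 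So the proof I would write is: (i) invoke Lemma~\ref{malgrange} to reduce each $\H^1_{\dR,H!}$ to a push-forward of $\widetilde{\jmath}^H_!\widetilde{\imath}^H_*\K_\O$; (ii) use the standard moderate-growth comparison theorem, uniform in $(\tw,\eq)$, to identify this with the direct limit of the algebraic $\H^1_{\dR,a,b}$ over the appropriate cofinal subsystem, using the local pole-order computations already performed in Lemmas~\ref{dR} and~\ref{SUP}; (iii) conclude by the cofinality/support argument above that the four corners give the four stated limits.
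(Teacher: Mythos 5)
Your proposal follows essentially the same route as the paper: reduce via Lemma~\ref{malgrange} to $\widetilde{\jmath}^H_!\widetilde{\imath}^H_*\K_\O$, handle $H=\emptyset$ by Malgrange's comparison, and handle the remaining cases by the local pole-order computation near $E$ coming from Lemma~\ref{SUP} (the paper makes this local stabilization explicit for $H=E$ and declares the mixed cases similar). One imprecision in your heuristic worth fixing: the hypersurfaces $H_{a,n_e}$ tend toward $\{\tw=0\}$ as $a\to+\infty$ just as they do as $a\to-\infty$, and they never ``exhaust'' a compact of $S^\circ$ (being a countable union of curves); what actually drives the dichotomy is that only finitely many resonance hypersurfaces meet any given compact of $S^\circ$, so the chain of inclusions from Lemma~\ref{SUP} stabilizes locally, and the direction of the limit ($a\to\pm\infty$) selects the stable value before or after the finitely many jumps — which is precisely what the rapid-decay versus moderate-growth condition near $E$ sees.
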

\begin{proof}
We firstly consider the case $H=\emptyset$.
Then, by Malgrange, 
we obtain that 
$\R^i\varpi_{\X*}(\A_{\widetilde{\X}/S^\circ}^{\leqslant D})=0$
for $i\neq 0$, and $\varpi_{\X*}(\A_{\widetilde{\X}/S^\circ}^{\leqslant D})=\O_{\X}(*\cal{D})$.
It follows that 
\[\R\varpi_{\X*}\DR_{\widetilde{\X},\widetilde{\cal{D}}/S^\circ}^{<\emptyset}(\M)\simeq 
\DR_{\X/S}(\M),\]
which implies the first isomorphism. 
In the case
$H=E$, 
for each point 
$(s,e)\in S^\circ\times E$, 
we can take a neighborhood $\cal{V}_e=\nb(s)\times V_e$
on which we have the following exact sequence:
\[
0\longrightarrow\jmath^e_!(\K_\O)_{|\nb(s)\times (V_e\setminus\{e\})}
\longrightarrow (\M_{a, b})_{|\cal{V}_e}\longrightarrow (\M_{a, b}\otimes\tw^{-1}\Omega_{\X/S}^1)_{|\cal{V}_e}
\longrightarrow 0. \]
for sufficiently small $a$ or $b$,
where $\jmath^e\colon\nb(s)\times (V_e\setminus\{e\})\to \cal{V}_e$
denote the inclusion.
By Lemma \ref{malgrange}, we obtain 
\[\R\varpi_{\X*}\DR_{\widetilde{\X},\widetilde{\cal{D}}/S^\circ}^{<E}(\M)_{|\cal{V}_e}\simeq 
\DR_{\X/S}(\M_{a, b})_{|\cal{V}_e},\]
in the derived category. 
By glueing these (quasi-)isomorphisms, 
we obtain the quasi-isomorphism of complexes and hence 
$ \H_{\dR,E!}^1\simeq \lim_{a, b\to-\infty}\H^1_{\dR, a, b}$.
The other cases $H=E_0,E_\infty$ can also be proved in a similar way.
\end{proof}
\subsection{Analytic description of the pairings}
We may also define 
$\DR^{<H}_{\widetilde{\X},\widetilde{\cal{D}}/S^\circ}(\j^*\M)$
in a similar way,
and obtain the similar results as in the previous sections. 
In particular we have 
$\H^k(\DR^{<H}_{\widetilde{\X},\widetilde{\cal{D}}/S^\circ}(\j^*\M))=0$
for $k\neq 0$, 
and 
\[\H^0(\DR^{<H}_{\widetilde{\X},\widetilde{\cal{D}}/S^\circ}(\j^*\M))\simeq 
\j^{*}\widetilde{\jmath}^{H}_!\widetilde{\imath}_*^H\K_\O\]
where $\j$ denotes the involution on $\widetilde{\X}$ induced from that on $S^\circ$, and
$\j^*$ denotes the pull back as a $\pi_{\widetilde{\X}}^{-1}\O_{S^\circ}$-module,
i.e.
\[\j^{*}\widetilde{\jmath}^{H}_!{\imath}_*^H\K_\O\coloneqq \pi_{\widetilde{\X}}^{-1}\O_{S^\circ}
\otimes_{\j^{-1}\pi_{\widetilde{\X}}^{-1}\O_{S^\circ}}\j^{-1}\widetilde{\jmath}^{H}_!{\imath}_*^H\K_\O.\]
For $H\subset E$,
$\<\cdot,\cdot\>\colon \M\otimes \j^*\M\to\O(*\cal{D})$ induces the duality pairing
\begin{align*}
\<\cdot,\cdot\>_{\dR}^H\colon
\DR^{<H}_{\widetilde{\X},\widetilde{\cal{D}}/S^\circ}(\M)
\otimes 
\DR^{<E\setminus H}_{\widetilde{\X},\widetilde{\cal{D}}/S^\circ}(\j^*\M)
\longrightarrow \DR_{\widetilde{\X}/S^\circ}^{<D}(\O_{\X},d_{\X/S})
\end{align*}
where $\DR_{\widetilde{\X}/S^\circ}^{<D}(\O_{\X},d_{\X/S})$ denotes the following complex:
\begin{align*}
\A_{\widetilde{\X}/S^\circ}^{<D}\xrightarrow{d_{\X/S}}
\A_{\widetilde{\X}/S^\circ}^{<D}\otimes \varpi_{\X}^{-1}(\tw^{-1}\Omega_{\X/S}^1).
\end{align*}

\begin{theorem}\label{analytic duality}
The pairing $\<\cdot,\cdot\>_{\dR}^H$
induces the perfect pairing 
\begin{align*}
\<\cdot,\cdot\>_{\dR} 
\colon \H^1_{\dR,H!}\otimes \j^*\H^1_{\dR,(E\setminus H)!}
\longrightarrow \O_{S^\circ}.
\end{align*}
If $H=E,E_0,$ or $E_\infty$, then the pairings are compatible with 
the isomorphisms in Proposition $\ref{limits}$, 
and pairings in $\S \ref{pairing}$.
\end{theorem}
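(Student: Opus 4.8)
The plan is to build $\langle\cdot,\cdot\rangle_{\dR}$ by applying $\R\pi_{\widetilde{\X}*}$ to the morphism of complexes $\langle\cdot,\cdot\rangle_{\dR}^H$, and to deduce its perfectness and the compatibility statements from the algebraic counterpart of Theorem~\ref{Perfect theorem} through the identifications of Proposition~\ref{limits}. Since $\widetilde{X}$ is compact, $\pi_{\widetilde{\X}}\colon\widetilde{\X}\to S^\circ$ is proper, so $\R\pi_{\widetilde{\X}*}=\R\pi_{\widetilde{\X}!}$; composing the pushforward of $\langle\cdot,\cdot\rangle_{\dR}^H$ with the cup product gives a morphism $\R\pi_{\widetilde{\X}*}\DR^{<H}(\M)\otimes^{L}\R\pi_{\widetilde{\X}*}\DR^{<E\setminus H}(\j^*\M)\to\R\pi_{\widetilde{\X}*}\DR^{<D}_{\widetilde{\X}/S^\circ}(\O_\X,d_{\X/S})$. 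By Lemma~\ref{malgrange} the two source complexes are quasi-isomorphic to sheaves placed in degree $0$; as in the proof of Theorem~\ref{dRT} one checks that $\R^0$ and $\R^2$ of their pushforwards vanish --- the former because $\widetilde{\jmath}^H_!$ admits no nonzero fiberwise global section, the latter because the fibers of $\X^\rd_H$ are non-compact in the $\widetilde{P}$-directions --- so these pushforwards are the locally free $\O_{S^\circ}$-modules $\H^1_{\dR,H!}$ and $\j^*\H^1_{\dR,(E\setminus H)!}$, placed in degree $1$. For the target, the analogue of Lemma~\ref{malgrange} for the trivial relative connection shows that $\DR^{<D}_{\widetilde{\X}/S^\circ}(\O_\X,d_{\X/S})$ is the extension by zero to $\widetilde{\X}$ of $\pi_{\widetilde{\X}}^{-1}\O_{S^\circ}|_{\cal{Y}^\circ}$, so $\R^2\pi_{\widetilde{\X}*}$ of it equals $H^2_c(Y,\C)\otimes\O_{S^\circ}\simeq\O_{S^\circ}$; I fix this isomorphism by $(2\pi\i)^{-1}$ times fiberwise integration and note that, via $H^2_c(Y,\C)\simeqto H^1(X,\Omega_X^1)$, it matches the normalization of \S\ref{pairing}. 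This produces the pairing $\langle\cdot,\cdot\rangle_{\dR}\colon\H^1_{\dR,H!}\otimes\j^*\H^1_{\dR,(E\setminus H)!}\to\O_{S^\circ}$.

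Next I would establish the compatibility for $H=E,E_0,E_\infty$ (hence also $H=\emptyset=E\setminus E$). In the proof of Proposition~\ref{limits} the isomorphism $\H^1_{\dR,H!}\simeq\lim\H^1_{\dR,a,b}$ is induced by local quasi-isomorphisms $\R\varpi_{\X*}\DR^{<H}(\M)|_{\cal{V}}\simeq\DR_{\X/S}(\M_{a,b})|_{\cal{V}}$. I would verify that these carry $\langle\cdot,\cdot\rangle_{\dR}^H$ to the pairing of complexes $\langle\cdot,\cdot\rangle_{\dR}^\bullet$ of \S\ref{pairing}: in each degree both maps are the $\O_\X$-bilinear extension of the single multiplication $\langle\e,\j^*\e\rangle=1$, valued in $\O_\X$ resp.\ $\tw^{-1}\Omega^1_{\X/S}$, and near a point of $E$ the balance condition $a+c=b+d=-1$ is precisely the algebraic shadow of ``rapid decay on the $H$-side $\times$ moderate growth on the $(E\setminus H)$-side $=$ rapid decay along $\widetilde{E}$''. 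Together with the trace normalization above and the residue computation of the pairing (as in the proposition preceding Example~\ref{BDual}, where a rapid-decay representative is integrated directly via a $\overline\partial$-resolution), this identifies $\langle\cdot,\cdot\rangle_{\dR}$ of the present theorem with the pairing of \S\ref{pairing} under Proposition~\ref{limits} --- the asserted compatibility.

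Perfectness is local on $S^\circ$. Over a neighborhood of any point the limit in Proposition~\ref{limits} terminates at a finite stage (as in the proof of Theorem~\ref{dRT}), so by the previous paragraph $\langle\cdot,\cdot\rangle_{\dR}$ is there identified with the algebraic pairing $\langle\cdot,\cdot\rangle_{\dR}\colon\H^1_{\dR,a,b}\otimes\j^*\H^1_{\dR,c,d}\to\tw^{-1}\O_S$ for suitable $(a,b),(c,d)$ with $a+c=b+d=-1$, which is perfect by Theorem~\ref{Perfect theorem}; this settles $H\in\{\emptyset,E,E_0,E_\infty\}$. For a general $H\subset E$ I would first introduce the refined lattices $\M_{\vec a}$, $\vec a\in\Z^E$, with pole order $a_e$ at each $e\in E$: Lemma~\ref{dR}, Theorems~\ref{LF} and~\ref{Perfect theorem}, Lemma~\ref{SUP}, and Proposition~\ref{limits} all hold verbatim for them, and the same localization argument then applies. (Alternatively, one may argue directly by Poincar\'e--Verdier duality relative to the proper map $\pi_{\widetilde{\X}}$, using Lemma~\ref{malgrange} and the duality $\j^*\K_\O\simeq\K_\O^\vee$ coming from $\langle\e,\j^*\e\rangle=1$, reducing perfectness to the local statement on $\widetilde{X}$ that $\widetilde{\jmath}^H_!\widetilde{\imath}^H_*\K_\O$ and $\j^*\widetilde{\jmath}^{E\setminus H}_!\widetilde{\imath}^{E\setminus H}_*\K_\O$ are Verdier dual up to the shift $[2]$.)

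The step I expect to be the main obstacle is the compatibility of the preceding paragraph but one: realizing both $\langle\cdot,\cdot\rangle_{\dR}^H$ and $\langle\cdot,\cdot\rangle_{\dR}^\bullet$ as explicit maps on cochains (through $\overline\partial$- and \v{C}ech-resolutions), transporting them through the non-canonical local quasi-isomorphisms of Proposition~\ref{limits}, and checking that all signs and the trace normalization match. In the multi-index extension the only genuine content is re-running the Grothendieck--Serre duality argument of Theorem~\ref{Perfect theorem} with the divisor $\sum_{e\in E}a_e\{e\}$ in place of $a\cal{E}_0+b\cal{E}_\infty$; if one instead follows the Verdier-duality route, the obstacle becomes the local duality computation on the real blowup at the three kinds of boundary behavior --- along $\widetilde{P}$ (cut out by $\cal{P}^\rd$), at the points of $E\setminus H$, and at the points of $H$.
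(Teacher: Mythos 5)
Your proposal is correct, and you in fact sketch two essentially complete routes; the paper's own proof is the one you relegate to the parenthetical at the end. Sabbah--Hien--style: the paper replaces each de~Rham complex by the degree-$0$ sheaf $\widetilde{\jmath}^H_!\widetilde{\imath}^H_*\K_\O$ (resp.\ $\j^*\widetilde{\jmath}^{H'}_!\widetilde{\imath}^{H'}_*\K_\O$, $\widetilde{\jmath}_!\pi^{-1}_{\cal Y^\circ}\O_{S^\circ}$) via Lemma~\ref{malgrange}, proves the \emph{local} Verdier duality of those two $!$-extensions up to a shift $[2]$ --- the key geometric input being that $\widetilde{\cal P}\setminus\cal P^{\rd}$ is the closure of $\j(\cal P^{\rd})$, so the rapid-decay loci on the two sides of $\widetilde{\cal P}$ are exactly complementary, and along $\widetilde{E}$ the rapid-decay/moderate-growth conditions are likewise complementary for $H$ versus $E\setminus H$ --- and then applies global Poincar\'e--Verdier duality for the proper map $\pi_{\widetilde{\X}}$. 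The compatibility statement for $H=E,E_0,E_\infty$ is then left to the reader, just as you anticipated it would be the fussy part.

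Your primary route (localize to the algebraic pairing of \S\ref{pairing} via Proposition~\ref{limits}, then invoke the already-proved Grothendieck--Serre duality of Theorem~\ref{Perfect theorem}) is a legitimate and genuinely different organization, with trade-offs worth spelling out. What it buys: perfectness is inherited for free from the coherent duality on $S$, so no local analysis on the real blowup is needed, and the compatibility statement --- which the paper asserts but does not prove --- becomes a lemma you must prove anyway, after which perfectness is a corollary. What it costs: (i) perfectness is now logically downstream of compatibility, which you correctly flag as the delicate cochain-level identification through the non-canonical local quasi-isomorphisms of Proposition~\ref{limits}; and (ii) for a general $H\subset E$ (one that separates points inside $E_0$ or $E_\infty$) the two-index lattice $\M_{a,b}$ is too coarse, and you must introduce the $\Z^E$-indexed lattices $\M_{\vec a}$ and re-run Lemma~\ref{dR}, Theorem~\ref{LF}, Lemma~\ref{SUP}, Theorem~\ref{Perfect theorem} (with the balance condition $\vec a+\vec c=-\vec 1$), and Proposition~\ref{limits}. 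This re-run is routine --- all the arguments are local at each $e\in E$ and never actually use that the coefficients of $E_0$ and $E_\infty$ are uniform --- but it is real work that the paper's Verdier-duality proof sidesteps by treating all $H\subset E$ uniformly from the start. Your observation that the limit terminates locally on $S^\circ$ (via the analogue of Lemma~\ref{SUP}) is exactly what makes the reduction to Theorem~\ref{Perfect theorem} go through. In short: the proposal is sound, but it inverts the paper's logical order (compatibility first, then perfectness as a consequence) and requires the multi-index extension; the Verdier-duality argument you mention at the end is the paper's proof and is the shorter path.
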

\begin{proof}
We also have 
$\H^k(\DR_{\widetilde{\X}/S^\circ}^{<D}(\O_{\X},d_{\X/S^\circ}))=0$ for $k\neq 0$
and 
\[\H^0(\DR_{\widetilde{\X}/S^\circ}^{<D}(\O_{\X},d_{\X/S}))=\widetilde{\jmath}_!\pi_{\cal{Y}^\circ}^{-1}\O_{S^\circ},\]
where $\widetilde{\jmath}\colon \cal{Y}^\circ\to \widetilde{\X}$
denote the inclusion. 
Hence, in the derived category,
to consider $\<\cdot,\cdot\>_{\dR}^H$
is equivalent to consider the pairing
\begin{align*}
\<\cdot,\cdot\>_{\dR}^H\colon 
\widetilde{\jmath}_!^H\widetilde{\imath}^H_*\K_\O
\otimes 
\j^*\widetilde{\jmath}_!^{H'}\widetilde{\imath}^{H'}_*\K_\O
\longrightarrow \widetilde{\jmath}_!\pi_{\cal{Y}^\circ}^{-1}\O_{S^\circ}
\end{align*}
where $H'\coloneqq E\setminus H$. 
This pairing is perfect in the sense that 
the induced morphisms 
\begin{align*}
\widetilde{\jmath}_!^H\widetilde{\imath}^H_*\K_\O\longrightarrow 
\R\hom_{\pi_{\widetilde{\X}}^{-1}\O_{S^\circ}}(\j^*\widetilde{\jmath}_!^{H'}\widetilde{\imath}^{H'}_*\K_\O,
\widetilde{\jmath}_!\pi_{\cal{Y}^\circ}^{-1}\O_{S^\circ})\\
\j^*\widetilde{\jmath}_!^{H'}\widetilde{\imath}^{H'}_*\K_\O\longrightarrow 
\R\hom_{\pi_{\widetilde{\X}}^{-1}\O_{S^\circ}}(\widetilde{\jmath}_!^H\widetilde{\imath}^H_*\K_\O,
\widetilde{\jmath}_!\pi_{\cal{Y}^\circ}^{-1}\O_{S^\circ})
\end{align*}
are both isomorphisms
(This can be proved by the same way as in \cite{Hien}
considering the fact that $\widetilde{\cal{P}}\setminus \cal{P}^\rd$ is the closure of $\j(\cal{P}^\rd)$
in $\widetilde{\cal{P}}$).
Then, by the Verdier duality, we obtain the theorem (c.f. \cite{Hien}).
To check the compatibility when $H=E,E_0, E_\infty$ is left to the reader.
\end{proof}
\section{Betti homology groups and period integrals}
\subsection{Preliminary}
Let $M$ be a compact oriented smooth manifold with the boundary $\del M$. 
For a non-negative integer $\ell$, 
let $C_\ell(M)=\bigoplus_{c}\Q\<c\>$ denote the $\Q$-vector space 
generated by piecewise smooth maps $c\colon \triangle_\ell\to M$
from a $\ell$-simplex \[\triangle_\ell=\{(t_1,\dots, t_\ell) \in \R^\ell\mid0\leq t_1\leq \cdots\leq t_\ell\leq 1 \}\]
to $M$. 
For a closed subset $A\subset M$, 
let $C_\ell(A)$ denote the subspace of $C_\ell(M)$
generated by the maps whose image is contained in $A$. 
We put $C_\ell(M, A)\coloneqq C_\ell(M)/ C_\ell(A)$. 
Then, we define $\mathscr{C}_{M, \del M}^{-\ell}$
as a sheaf associated to the presheaf 
\begin{align*}
V\mapsto C_\ell \(M, (M\setminus V)\cup \del M \),
\quad 
\end{align*}
where $V$ is an open subset in $\widetilde{X}$. Together with the usual boundary operator, 
we obtain a co-chain complex $\mathscr{C}_{M, \del M}^{\bullet}$ of sheaves 
of $\Q$-vector spaces on $M$. 
It is known that $\mathscr{C}_{M, \del M}^{\bullet}$
is a homotopically fine resolution of $\Q_M[\dim M]$
(See \cite{Swan},\cite{Hien}).

%Let $T$ be a smooth manifold.
%Put $M_T\coloneqq T\times M$ and let $\pi_M\colon M_T\to T$
%and $p_M\colon M_T\to M$
%denote the projections. 
%Let $(N,\del N)$ be another compact oriented smooth manifold with boundary. 
%Let $h\colon N_T\to M_T$ be a smooth family of closed embeddings $(N,\del N)\to (M,\del M)$
%over $T$. 
%
%Let $\F$ be a sheaf of $\Q$-vector spaces (possibly of infinite dimension) on $M_T$
%which is locally trivial along the $T$-direction in the following 
%sense:
%for each point $t\in T$, 
%there exist an open neighborhood $\nb(t)$
%and an isomorphism
%\[\F_{|\nb(t)\times M}\simeq (p_M^{-1}\circ \imath_t^{-1}\F)_{|\nb(t)\times M}\]
%where $\imath_t\colon M\simeq \{t\}\times M\hookrightarrow M_T$ denote the inclusion.

Let $(N,\del N)$ be another compact oriented smooth manifold with boundary. 
Let $h\colon (N,\del N)\to (M,\del M)$ be a closed embedding. 
\begin{lemma}\label{push}
For a $\Q_M$-module $\F$, we have a natural morphism
\begin{align*}
h_*\colon h_*(\scr{C}_{N,\del N}^\bullet\otimes h^{-1}\F)
\longrightarrow \scr{C}_{M,\del M}^\bullet \otimes \F.
%h_*\colon h_*(p_N^{-1}\scr{C}_{N,\del N}^\bullet\otimes h^{-1}\F)
%\longrightarrow p_M^{-1}\scr{C}_{M,\del M}^\bullet \otimes \F
\end{align*}
%in the homotopy category.
\end{lemma}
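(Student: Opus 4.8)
The plan is to build the morphism in two steps: first with constant coefficients, on singular chain complexes, and then to bring in $\F$ through the projection formula for the closed embedding $h$.

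For the first step, I would push forward chains by post-composition with $h$: sending a piecewise smooth simplex $c\colon\triangle_\ell\to N$ to $h\circ c\colon\triangle_\ell\to M$ defines a $\Q$-linear, boundary-compatible map $C_\ell(N)\to C_\ell(M)$ carrying $C_\ell(A)$ into $C_\ell(h(A))$ for every subset $A\subset N$. Since $h(\partial N)\subset\partial M$ (as $h$ is a map of pairs) and $h\bigl(N\setminus h^{-1}(V)\bigr)\subset M\setminus V$ for every open $V\subset M$, this descends to a map
\[
C_\ell\bigl(N,(N\setminus h^{-1}(V))\cup\partial N\bigr)\longrightarrow C_\ell\bigl(M,(M\setminus V)\cup\partial M\bigr),
\]
natural in the open set $V\subset M$ and commuting with the differential. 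Passing to associated sheaves, and using that $h$ is a closed embedding to identify the sheafification of $V\mapsto C_\ell(N,(N\setminus h^{-1}(V))\cup\partial N)$ with $h_*\mathscr{C}_{N,\partial N}^{-\ell}$ (on stalks this is the cofinality of the family $\{h^{-1}(V)\}_{V\ni p}$ among neighbourhoods of a point of $N$, together with $(h_*\scr{G})_{h(x)}=\scr{G}_x$), one obtains a morphism of complexes of sheaves $h_*\mathscr{C}_{N,\partial N}^{\bullet}\to\mathscr{C}_{M,\partial M}^{\bullet}$ on $M$.

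For the second step, I would tensor this morphism over $\Q_M$ with $\F$, getting $h_*\mathscr{C}_{N,\partial N}^{\bullet}\otimes_{\Q_M}\F\to\mathscr{C}_{M,\partial M}^{\bullet}\otimes_{\Q_M}\F$, and then identify the source with $h_*(\mathscr{C}_{N,\partial N}^{\bullet}\otimes_{\Q_N}h^{-1}\F)$. For the latter I would use the canonical projection morphism $h_*\scr{G}\otimes_{\Q_M}\F\to h_*(\scr{G}\otimes_{\Q_N}h^{-1}\F)$, coming from the unit $\F\to h_*h^{-1}\F$ and the natural map $h_*A\otimes h_*B\to h_*(A\otimes B)$, and check that it is an isomorphism because $h$ is a closed embedding: its stalk at $p\in M$ vanishes on both sides if $p\notin h(N)$, while if $p=h(x)$ it is the identity of $\scr{G}_x\otimes\F_p$, using $(h_*\scr{G})_p=\scr{G}_x$, $(h^{-1}\F)_x=\F_p$, and the commutation of $\otimes$ with the filtered colimits computing stalks. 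Taking $\scr{G}=\mathscr{C}_{N,\partial N}^{\bullet}$, inverting this isomorphism and composing with the tensored pushforward gives the asserted natural morphism
\[
h_*\colon h_*\bigl(\mathscr{C}_{N,\partial N}^{\bullet}\otimes h^{-1}\F\bigr)\longrightarrow\mathscr{C}_{M,\partial M}^{\bullet}\otimes\F .
\]

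I do not expect a substantive obstacle here; the whole content is bookkeeping. The two places needing a little care are exactly where the hypothesis that $h$ is a \emph{closed} embedding enters: in step one, to recognize the relevant presheaf on $M$ as $h_*$ of the one on $N$ after sheafification, and in step two, to know that the projection morphism is an isomorphism and not merely a morphism. Beyond that one should record the evident compatibilities (functoriality of the chain pushforward, commutation with the boundary operators, and naturality in $\F$) that make $h_*$ a morphism of complexes of sheaves; I expect verifying these to be the most tedious, though entirely routine, part of the argument.
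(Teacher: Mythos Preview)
Your proposal is correct and follows essentially the same two-step approach as the paper: first define $h_*$ on chains with constant coefficients by $\langle c\rangle\mapsto\langle h\circ c\rangle$, then invoke the projection formula $h_*(\scr{C}_{N,\del N}^{-\ell})\otimes\F\xrightarrow{\sim}h_*(\scr{C}_{N,\del N}^{-\ell}\otimes h^{-1}\F)$ for the closed embedding $h$ and compose with $h_*\otimes\id_\F$. The paper's proof is terser, but the content is the same.
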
  
 \begin{proof}
 We firstly consider the case $\F=\Q_{M}$. 
 In this case, 
 \[h_*\colon h_*\scr{C}_{N,\del N}^{-\ell}\to \scr{C}_{M,\del M}^{-\ell}\]
 is given in the usual way;
for a piecewise smooth map $c$ from $\ell$-simplex to $N$, 
take $h_*\<c\>\coloneqq \<h\circ c\>$.
We then consider the general case. 
By the projection formula,
we have
\begin{align*}
h_*(\scr{C}_{N,\del N}^{-\ell}\otimes h^{-1}\F)
\xleftarrow{\sim}&h_*({\scr{C}_{N,\del N}^{-\ell}})\otimes \F.
\end{align*}
Then $h_*\otimes \id_\F$ defines 
the desired morphism.
 \end{proof}

Let $I=[0,1]$ be the closed interval.
Let $h_I\colon I\times N\to M$ be the $C^\infty$ family of 
closed embeddings $(N,\del N)\hookrightarrow (M,\del M)$. 
For $t\in I$, set $h_t\coloneqq h_{I|\{t\}\times N}\colon N\to M$.  
A sheaf 
$\scr{G}$ on $I\times N$ is said to be trivial along $I$ if
the adjunction 
$\pr^{-1}\pr_{*}(\scr{G})\to \scr{G}$ is an isomorphism
where $\pr\colon I\times N\to N$ denotes the projection.
\begin{lemma}\label{homotopy}
Let $\F$ be a $\Q_M$-module. 
Assume that $h_I^{-1}\F$ is trivial along $I$.
Then the morphisms 
\[h_{i*}\colon \Gamma(N,\scr{C}_{N,\del N}^{\bullet}\otimes h_i^{-1}\F)
\longrightarrow \Gamma(M,\scr{C}_{M,\del M}^\bullet \otimes \F)
\quad (i=0,1)\]
are chain homotopic to each other under the identification $h_0^{-1}\F\simeq h_1^{-1}\F$.  
\end{lemma}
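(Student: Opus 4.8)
The plan is to produce an explicit chain homotopy out of the product structure on $I\times N$, exactly as in the classical prism (subdivision of $\triangle_\ell\times I$) argument from singular homology, but carried out sheaf-theoretically and with the twist by $\F$ handled via the triviality hypothesis. First I would use the assumption that $h_I^{-1}\F$ is trivial along $I$: writing $\pr\colon I\times N\to N$ for the projection, the adjunction isomorphism $\pr^{-1}\pr_*(h_I^{-1}\F)\xrightarrow{\sim} h_I^{-1}\F$ identifies $h_0^{-1}\F$ and $h_1^{-1}\F$ (restrict the isomorphism to $\{0\}\times N$ and $\{1\}\times N$), and more importantly it lets us regard a section of $h_i^{-1}\F$ over an open $V\subset N$ as a section of $h_I^{-1}\F$ over $I\times V$ that is constant in the $I$-direction. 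This is what makes it legitimate to ``push a chain in $N$ across the homotopy'' while carrying the $\F$-coefficient along unchanged.

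Next I would build the prism operator. For a piecewise smooth singular simplex $c\colon\triangle_\ell\to N$, form the composite $h_I\circ(\mathrm{id}_I\times c)\colon I\times\triangle_\ell\to M$, and triangulate $\triangle_\ell\times I$ into $(\ell+1)$-simplices in the standard way (the maps $\triangle_{\ell+1}\to\triangle_\ell\times I$ coming from the shuffle decomposition). Composing with $h_I\circ(\mathrm{id}_I\times c)$ gives a chain $P\<c\>\in C_{\ell+1}(M)$; since $h_I$ maps $\partial N$-faces into $\partial M$, this descends to the relative chains and sheafifies to a morphism of sheaves $P\colon h_{I*}\bigl(\scr{C}^{\bullet}_{N,\partial N}\bigr)\to \scr{C}^{\bullet-1}_{M,\partial M}$ on $M$ (more precisely between the appropriate pushforwards). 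Tensoring with $\id_\F$ and using the projection-formula identification from Lemma~\ref{push}, together with the triviality of $h_I^{-1}\F$ to make the coefficient transport well defined, yields
\begin{align*}
P\colon h_{I*}\bigl(\scr{C}^{\bullet}_{N,\partial N}\otimes h_I^{-1}\F\bigr)\longrightarrow \scr{C}^{\bullet-1}_{M,\partial M}\otimes \F,
\end{align*}
and hence, after taking sections over $N$ (resp.\ $M$) and composing with $\pr_*$, a degree $-1$ map on the global complexes. The standard boundary computation for the prism triangulation gives the identity $\partial P + P\partial = h_{1*}-h_{0*}$ at the level of the generating singular chains; because every operation involved ($h_I$, the triangulation maps, the face maps) is natural, this identity survives sheafification, the tensor with $\F$, and passage to global sections, which is exactly the assertion that $h_{0*}$ and $h_{1*}$ are chain homotopic under $h_0^{-1}\F\simeq h_1^{-1}\F$.

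The main obstacle I anticipate is not the combinatorics of the prism decomposition — that is classical and purely formal once set up — but checking that the construction is compatible with the sheafification and with the $\F$-twist simultaneously. Concretely: a section of $\scr{C}^{-\ell}_{N,\partial N}\otimes h_I^{-1}\F$ over an open set is, after sheafification, only locally a finite sum of $\<c\>\otimes s$ with $s$ a section of $h_I^{-1}\F$, and one must verify that the prism operator is well defined on these local presentations (independent of the chosen representatives, compatible with restriction maps, annihilated on the subsheaf of chains landing in the complement). Here the triviality of $h_I^{-1}\F$ along $I$ is used in an essential way: it guarantees that when the simplex $c$ is swept across the homotopy the coefficient $s$ can be transported canonically (it does not ``see'' the $I$-coordinate), so that $P(\<c\>\otimes s)$ makes sense as $P\<c\>\otimes(\text{the same }s)$. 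Once that bookkeeping is in place, I would note that the relative-chain and $\partial N\mapsto\partial M$ conditions are automatic from $h_I$ being a family of maps of pairs, and conclude by invoking the classical prism identity verbatim on generators and extending by naturality. I expect this to be short modulo the (routine but slightly tedious) verification that everything is local and natural.
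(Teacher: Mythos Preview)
Your proposal is correct and follows essentially the same approach as the paper: build the prism operator from the standard triangulation of $I\times\triangle_\ell$ composed with $h_I\circ(\id_I\times c)$, tensor with $\id_\F$ using the projection formula, and invoke the triviality of $h_I^{-1}\F$ along $I$ to identify $\pr_*h_I^{-1}\F\simeq h_i^{-1}\F$ so that the coefficient can be carried along the homotopy. The paper's proof is just a terse version of exactly this argument, ending with ``we then obtain the lemma by the usual calculation.''
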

\begin{proof}
We have 
\[h_\star^{(\ell)}\colon h_{I*}(\pr^{-1}\scr{C}_{N,\del N}^{-\ell})\longrightarrow \scr{C}_{M,\del M}^{-\ell+1}\] 
in the usual way:
$h^{(\ell)}_\star(\<c\>)\coloneqq \sum_i (-1)^i \<h_I\circ (\id_I\times c)\circ s_i\>$,
where $s_i\colon \triangle_{\ell}\hookrightarrow I\times \triangle_{\ell-1}$
is defined as
$s_i(t_1,\dots, t_{\ell})=(t_i,( t_1,\dots,t_{i-1},t_{i+1},\dots, t_{\ell})) $.
We then consider 
$h_\star^{(\ell)}\otimes \id_{\F}$.
Taking $\Gamma(M,-)$, we obtain
\begin{align*}
h^{(\ell)}_\star\colon \Gamma(N,\scr{C}_{N,\del N}^{-\ell}\otimes\pr_*h_I^{-1}\F )\to 
\Gamma(M,\scr{C}^{-\ell+1}_{M,\del M}\otimes \F)
\end{align*}
Since $h_I^{-1}\F$ is trivial along $I$, 
we have natural isomorphisms
\begin{align*}
\Gamma(N,\scr{C}_{N,\del N}^{-\ell}\otimes\pr_*h_I^{-1}\F )
\simeqto \Gamma(N,\scr{C}_{N,\del N}^{\bullet}\otimes h_i^{-1}\F).
\end{align*}
We then obtain the lemma by the usual calculation.
\end{proof}
% 
% 
% 
% 
% 
 %%%%%%%%%%%
\subsection{Betti homology groups}Fix a sub-field $\k\subset \C$.
We shall use the notations in \S \ref{DR}.
Put $\q\coloneqq \exp(2\pi\i \eq/\tw)$, 
which is a holomorphic function on $S^\circ$.  
\begin{definition}
Let $\K=\K(f, g)$ be the subsheaf of $\M_{|\cal{Y}^\circ}$
defined as follows:
\begin{align*}
\K\coloneqq \k[\q^{\pm 1}]_{\cal{Y}^\circ}e^{-f/\tw}g^{\eq/\tw}\bm{e}.
\end{align*}
\end{definition}
Although $g^{\eq/\tw}$ is a multivalued function, 
$\K$ is well defined since the ratio of every two distinct 
values of $g^{\eq/\tw}$ is given by $q^m$ for some $m\in\Z$. 
$\K$ is a local system of free $\k[q^{\pm 1}]$-modules of rank one.
We have $\K\subset \K_\O$
and $\K\otimes_{\k[q^{\pm 1}]}\pi_{\cal{Y}^\circ}^{-1}\O_{S^\circ}=\K_\O$. 

We also note that $\K$ is trivial along $S^\circ$
in the sense that 
the adjunction 
\begin{align}\label{ADJK}
p_{\cal{Y}^\circ}^{-1}p_{\cal{Y}^\circ *}\K\longrightarrow \K
\end{align}
is an isomorphism, 
where $p_{\cal{Y}^\circ}\colon \cal{Y}^\circ\to Y$ denotes the projection. 

\begin{definition}
For a subset $H\subset E$, 
we set $\K_{H!}\coloneqq \widetilde{\jmath}_!^H\widetilde{\imath}_*^{H}\K$, 
\[\scr{C}_{\widetilde{\X},\widetilde{\cal{D}}/S^\circ}^{<H}(\M)
\coloneqq p_{\widetilde{\X}}^{-1}
\scr{C}_{\widetilde{X},\widetilde{D}}^\bullet\otimes
\K_{H!},\]
and 
\begin{align*}
\H^{\Be}_{\ell,H!}=\H^{\Be}_{\ell,H!}(f,g;\k)\coloneqq 
\R^{-\ell}\pi_{\widetilde{\X}*}
\scr{C}_{\widetilde{\X},\widetilde{\cal{D}}/S^\circ}^{<H}(\M).
\end{align*}
We also use the notations 
$\H^\rd_\ell\coloneqq \H^\Be_{\ell,E!}$, and $\H^\mg_{\ell}\coloneqq \H^\Be_{\ell,\emptyset!}$.
\end{definition}
Since $\scr{C}_{\widetilde{X},\widetilde{D}}^\bullet$
is homotopically fine, we have
\[\H^\Be_{\ell,H!}=\H^{-\ell}\(\pi_{\widetilde{\X}*}\(p_{\widetilde{\X}}^{-1}
\scr{C}_{\widetilde{X},\widetilde{D}}^\bullet\otimes
\K_{H!}\)\).\]

\begin{lemma}
$\H^{\Be}_{\ell,H!}$ is a locally trivial $\k[q^{\pm 1}]_{S^\circ}$-module. 
\end{lemma}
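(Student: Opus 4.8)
The lemma asserts that $\H^{\Be}_{\ell,H!}$ is, locally on $S^\circ$, isomorphic to a constant sheaf of $\k[q^{\pm1}]$-modules. The plan is to simplify the coefficient complex, reduce to a local-triviality statement about the family $\pi_{\widetilde{\X}}$ together with its subanalytic subsets and the local system $\K$, and then apply proper base change. First I would replace $\scr{C}_{\widetilde{\X},\widetilde{\cal{D}}/S^\circ}^{<H}(\M)=p_{\widetilde{\X}}^{-1}\scr{C}_{\widetilde{X},\widetilde{D}}^\bullet\otimes\K_{H!}$ by $\K_{H!}[2]$: since $\k$ is a field, every sheaf of $\k$-vector spaces is flat, so $-\otimes\K_{H!}$ is exact; and $p_{\widetilde{\X}}^{-1}$ is exact, so $p_{\widetilde{\X}}^{-1}\scr{C}_{\widetilde{X},\widetilde{D}}^\bullet$ is a resolution of $\Q_{\widetilde{\X}}[2]$ (recall $\dim_{\R}\widetilde{X}=2$). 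Thus $\scr{C}_{\widetilde{\X},\widetilde{\cal{D}}/S^\circ}^{<H}(\M)$ is quasi-isomorphic to $\K_{H!}[2]$ in the derived category of sheaves on $\widetilde{\X}$, whence
\begin{align*}
\H^{\Be}_{\ell,H!}\simeq \R^{2-\ell}\pi_{\widetilde{\X}*}\K_{H!}.
\end{align*}
Since $X$, hence $\widetilde{X}$, is compact, $\pi_{\widetilde{\X}}\colon\widetilde{\X}=S^\circ\times\widetilde{X}\to S^\circ$ is proper, so it suffices to prove that the cohomology sheaves of $\R\pi_{\widetilde{\X}*}\K_{H!}$ are locally constant.

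Next I would prove that the geometry underlying $\K_{H!}=\widetilde{\jmath}^H_!\widetilde{\imath}^H_*\K$ is locally trivial over $S^\circ$. In $\X^\rd_H=\cal{Y}^\circ\cup\cal{P}^\rd\cup(S^\circ\times(E\setminus H))$ the subsets $\cal{Y}^\circ=S^\circ\times Y$ and $S^\circ\times(E\setminus H)$ are already products over $S^\circ$, so the only $s$-dependent piece is $\cal{P}^\rd=(\widetilde{f/\tw})^{-1}(\{e^{\i\theta}\infty\mid -\pi/2<\theta<\pi/2\})$. Near $p\in P$, writing $f=z^{-m_p}u(z)$ with $u(0)\neq0$, the fibre of $\cal{P}^\rd$ over $(\tw,\eq)$ meets the boundary circle over $p$ in the open arcs $\{\theta_z\mid -m_p\theta_z+\arg u(0)-\arg\tw\in(-\pi/2,\pi/2)\ (\mathrm{mod}\ 2\pi)\}$, whose endpoints depend on $(\tw,\eq)$ real-analytically---in fact affinely in $\arg\tw$---and never degenerate for $\tw\neq0$. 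Hence (either by Hardt's theorem on local triviality of subanalytic maps, or by an explicit isotopy straightening these arcs) every $s_0\in S^\circ$ has a contractible polydisc neighbourhood $W$ and a homeomorphism $\Psi$ of $W\times\widetilde{X}$ over $W$ (i.e.\ commuting with the projection to $W$), supported in a collar of $\widetilde{\cal{P}}$ and equal to the identity over $\{s_0\}$, carrying $\X^\rd_H\cap\pi_{\widetilde{\X}}^{-1}(W)$ onto $W\times(\X^\rd_H)_{s_0}$ and $\cal{Y}^\circ\cap\pi_{\widetilde{\X}}^{-1}(W)$ onto $W\times Y$. Because $\K$ is trivial along $S^\circ$, i.e.\ $\K\simeq p_{\cal{Y}^\circ}^{-1}(p_{\cal{Y}^\circ*}\K)$, and $W$ is contractible, $\Psi$ then identifies $(\K_{H!})_{|\pi_{\widetilde{\X}}^{-1}(W)}$ with $\Psi^{-1}q_W^{-1}\bigl((\K_{H!})_{|\{s_0\}\times\widetilde{X}}\bigr)$, where $q_W\colon W\times\widetilde{X}\to\widetilde{X}$ is the projection to the fibre over $s_0$ and $(\K_{H!})_{|\{s_0\}\times\widetilde{X}}$ is built from $\K_{|\{s_0\}\times Y}$ exactly as $\K_{H!}$ from $\K$ (restriction to the fibre commutes with $\widetilde{\jmath}^H_!$ and $\widetilde{\imath}^H_*$).

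Finally I would conclude: a homeomorphism over $W$ leaves the pushforward unchanged, $\R\pi_{\widetilde{\X}*}\circ\Psi^{-1}\simeq\R\pi_{\widetilde{\X}*}$, and proper base change along the Cartesian square relating $q_W$, $\pi_{\widetilde{\X}}|_{\pi_{\widetilde{\X}}^{-1}(W)}$, $\widetilde{X}\to\pt$ and $W\to\pt$ (legitimate since $\widetilde{X}$ is compact) gives
\begin{align*}
\R\pi_{\widetilde{\X}*}\,q_W^{-1}\bigl((\K_{H!})_{|\{s_0\}\times\widetilde{X}}\bigr)\simeq a_W^{-1}\R\Gamma\bigl(\widetilde{X},(\K_{H!})_{|\{s_0\}\times\widetilde{X}}\bigr),\qquad a_W\colon W\to\pt.
\end{align*}
So $(\R^{j}\pi_{\widetilde{\X}*}\K_{H!})_{|W}$ is the constant sheaf associated with the $\k[q^{\pm1}]$-module $\H^{j}\bigl(\widetilde{X},(\K_{H!})_{|\{s_0\}\times\widetilde{X}}\bigr)$ (the $\k[q^{\pm1}]$-structure coming from that of $\K_{H!}$, as $q=\exp(2\pi\i\eq/\tw)$ is single-valued on $S^\circ$), and therefore $\H^{\Be}_{\ell,H!}$ is locally isomorphic to a constant sheaf of $\k[q^{\pm1}]$-modules. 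I expect the main obstacle to be the middle step: one has to check carefully that the rapid-decay locus $\cal{P}^\rd$---the only part of the data that actually moves with $s$---is trivialized over small polydiscs in $S^\circ$ by a fibered homeomorphism that is simultaneously compatible with the inclusion $\cal{Y}^\circ\hookrightarrow\X^\rd_H$ and with the defining data of the local system $\K$. The remaining manipulations are formal.
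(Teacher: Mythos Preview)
Your proposal is correct and follows essentially the same approach as the paper: the paper's proof simply invokes the two key facts---that $\K$ is trivial along $S^\circ$ (the adjunction isomorphism) and that the projection $\X_H^\rd\to S^\circ$ is locally trivial---while you spell out in detail why the latter holds (by analyzing the $\arg\tw$-dependence of $\cal{P}^\rd$) and then carry through the proper base change argument explicitly. Your verification of the local triviality of $\cal{P}^\rd$ is exactly the content behind the paper's one-line assertion.
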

\begin{proof}
This lemma easily follows from the facts that 
the adjunction (\ref{ADJK}) is an isomorphism
and 
that 
the projection $\X_H^\rd\to S^\circ$  is locally trivial. 
\end{proof}
\begin{lemma}\label{BEV}
If $f$ is not constant, then
$\H^\Be_{\ell,H!}=0$ for $\ell\neq 1$. 
\end{lemma}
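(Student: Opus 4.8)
The plan is to identify $\H^{\Be}_{\ell,H!}$ with a single higher direct image along the proper map $\pi_{\widetilde{\X}}$ and then compute it fibrewise, the two extreme degrees being killed by a soft argument. Since $\widetilde{X}$ is a compact oriented real surface with boundary, $\scr{C}^\bullet_{\widetilde{X},\widetilde{D}}$ is a homotopically fine resolution of $\Q_{\widetilde{X}}[2]$, and this property is preserved by $p_{\widetilde{\X}}^{-1}(-)$ and by tensoring with $\K_{H!}$; hence $p_{\widetilde{\X}}^{-1}\scr{C}^\bullet_{\widetilde{X},\widetilde{D}}\otimes\K_{H!}$ is a $\pi_{\widetilde{\X}*}$-acyclic complex representing $\K_{H!}[2]$ in the derived category. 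Combined with the displayed formula for $\H^{\Be}_{\ell,H!}$ recorded just before the statement, this gives $\H^{\Be}_{\ell,H!}\simeq\R^{2-\ell}\pi_{\widetilde{\X}*}\K_{H!}$; in particular $\H^{\Be}_{\ell,H!}=0$ for $\ell\notin\{0,1,2\}$ because the fibres of $\pi_{\widetilde{\X}}$ are real surfaces. By the preceding lemma $\H^{\Be}_{\ell,H!}$ is a locally trivial $\k[q^{\pm1}]_{S^\circ}$-module and $S^\circ$ is connected, so it suffices to check the vanishing for $\ell=0$ and $\ell=2$ on one fibre. Using local triviality of $\X^\rd_H\to S^\circ$, together with the fact that $\widetilde{\jmath}^{H}$ and $\widetilde{\imath}^{H}$ are open immersions, the restriction of $\K_{H!}$ to the fibre $\widetilde{X}\times\{s\}$ is $j_{s!}\widehat{\K}_s$, where $j_s\colon N:=X^\rd_H(s)\hookrightarrow\widetilde{X}$ is the open inclusion of the fibre of $\X^\rd_H$ and $\widehat{\K}_s:=i_{s*}\K_s$ for the open inclusion $i_s\colon Y\hookrightarrow N$; near a boundary point $N$ is a half-disk with simply connected interior part, so $\widehat{\K}_s$ is a rank one local system of $\k[q^{\pm1}]$-modules on $N$. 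As $\widetilde{X}$ is compact, proper base change identifies the stalk of $\H^{\Be}_{\ell,H!}$ at $s$ with $H^{2-\ell}_c(N;\widehat{\K}_s)$.

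Next I would spell out the geometry of $N$. The space $N=Y\cup\cal{P}^\rd(s)\cup\varpi_X^{-1}(E\setminus H)$ is a connected oriented $C^\infty$ surface with boundary whose interior is $Y$ and whose boundary is $\cal{P}^\rd(s)\sqcup\varpi_X^{-1}(E\setminus H)$, where $\cal{P}^\rd(s)$ denotes the fibre of $\cal{P}^\rd$ over $s$. Since $f$ is non-constant, $P=f^{-1}(\infty)\neq\emptyset$; over a point $p\in P$ at which $f$ has a pole of order $m_p\geq 1$ the rapid decay set $\cal{P}^\rd(s)\cap\varpi_X^{-1}(p)$ is a disjoint union of $m_p$ open arcs of total angular length $\pi$ inside the boundary circle $\varpi_X^{-1}(p)$, hence is non-empty and is a proper subset of that circle. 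Consequently $\partial N\supset\cal{P}^\rd(s)\neq\emptyset$, and $N$ is a proper open subset of the connected compact surface $\widetilde{X}$, so $N$ is non-compact.

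Then the remaining vanishing is purely topological. For $\ell=2$ the stalk is $H^0_c(N;\widehat{\K}_s)$, which is zero because a compactly supported section of a local system on the connected non-compact $N$ vanishes on a non-empty open set and hence everywhere. For $\ell=0$ the stalk is $H^2_c(N;\widehat{\K}_s)$; by Lefschetz duality for the oriented surface with boundary $N$ this is $H_0(N,\partial N;\widehat{\K}_s)$, and since $N$ is connected with $\partial N\neq\emptyset$ every $0$-cycle on $N$ is homologous, after transporting its coefficient along a path into $\partial N$, to one supported on $\partial N$; thus $H_0(\partial N;\widehat{\K}_s)\to H_0(N;\widehat{\K}_s)$ is surjective and the relative group vanishes. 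Hence every stalk of $\H^{\Be}_{\ell,H!}$ is zero for $\ell\neq 1$, which proves the lemma. I expect the main obstacle to be the bookkeeping of the first paragraph: verifying that homotopic fineness survives $p_{\widetilde{\X}}^{-1}(-)\otimes\K_{H!}$ and restriction to a fibre, and that $\widetilde{\jmath}^{H},\widetilde{\imath}^{H}$ are open immersions so that $\K_{H!}$ restricted to a fibre is genuinely the extension by zero of a rank one local system on $N$; once this is settled, the essential geometric input is only the elementary observation that the rapid decay arcs over $P$ occupy exactly half of each boundary circle, whence $N$ has non-empty boundary and is non-compact.
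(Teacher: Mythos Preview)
Your proof is correct and follows essentially the same approach as the paper. Both arguments reduce to the fibre over a point of $S^\circ$, and both identify the key geometric input as $P\neq\emptyset$, which makes $\cal{P}^\rd(s)$ a non-empty proper open subset of the boundary circles over $P$.

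The one presentational difference is in the $\ell=0$ case. The paper stays on the homology side throughout: since $\scr{C}^\bullet_{\widetilde{X},\widetilde{D}}$ is already a relative singular chain complex, the stalk of $\H^\Be_{0,H!}$ is directly $H_0(\widetilde{X},\widetilde{D};\K_{H!,s})$, and one immediately argues that any $0$-cycle can be pushed into $\cal{P}^\rd$ along a path. You instead pass first to $H^2_c(N;\widehat{\K}_s)$ and then invoke Lefschetz duality to come back to $H_0(N,\partial N;\widehat{\K}_s)$, arriving at the same endpoint. Your route is slightly longer but has the virtue of making the role of $N$ as an oriented surface with non-empty boundary fully explicit; the paper's version is terser and avoids the duality step by exploiting that the defining complex is already a chain complex.
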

\begin{proof}
It is enough to see that $\H^\Be_{\ell,H!}=0$ for $\ell=0,2$. 
 We have
\[\H^\Be_{0, H!}=\mathrm{Cok}
\(\pi_{\widetilde{\X}*}
(\scr{C}^1_{\widetilde{X},\widetilde{D}}\otimes \K_H)
\to \pi_{\widetilde{\X}*}
(\scr{C}^0_{\widetilde{X},\widetilde{D}}\otimes \K_H)\)=0\]
since any point in $Y$ can be connected by a path with 
a boundary point in $\cal{P}^\rd$. 
We also have
\begin{align*}
\H^\Be_{2,H!}=\R^{-2}\pi_{\widetilde{\X}*}(\K_H[2])=\pi_{\widetilde{\X}*}\K_H=0
\end{align*}
since the section should be locally constant and
zero on $\cal{P}^\rd$. 
Both facts relies on the assumption that $P\neq \emptyset$. 
\end{proof}

We put 
\begin{align*}
&\j^*\H_{\ell,H!}^\Be\coloneqq \k[q^{\pm 1}]_{S^\circ}\otimes_{\k[q^{\pm1}]}\j^{-1}\H^\Be_{\ell,H!},
\end{align*}
where the tensor product is given by using the morphism $\k[q^{\pm 1}]\simeqto \k[q^{\pm 1}],q\mapsto q^{-1}$.
\begin{proposition}
For $H\subset E$, there is a natural perfect pairing 
\begin{align*}
\<\cdot,\cdot\>_{\Be}\colon\H^\Be_{\ell,H!}\otimes \j^*\H^{\Be}_{2-\ell,(E\setminus H)!}
\longrightarrow \k[q^{\pm 1}]_{S^\circ}.
\end{align*}
\end{proposition}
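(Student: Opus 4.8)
The statement is a Poincaré–Verdier type duality for the Betti homology groups, and the natural strategy is to reduce it to the topological duality between the two constructible sheaves $\K_{H!}$ and $\j^*\K_{H'!}$ ($H'\coloneqq E\setminus H$) on $\widetilde{\X}$, and then apply Verdier duality relative to $\pi_{\widetilde{\X}}$. First I would record the sheaf-level input: by definition $\scr{C}^\bullet_{\widetilde{X},\widetilde{D}}$ is a homotopically fine resolution of $\Q_{\widetilde{X}}[\dim \widetilde{X}]=\Q_{\widetilde{X}}[2]$, so that $\scr{C}^\bullet_{\widetilde{\X},\widetilde{\cal{D}}/S^\circ}(\M)=p_{\widetilde{\X}}^{-1}\scr{C}^\bullet_{\widetilde{X},\widetilde{D}}\otimes\K_{H!}$ represents, in the derived category of $\pi_{\widetilde{\X}}^{-1}\k[q^{\pm1}]_{S^\circ}$-modules, the complex $\K_{H!}[2]$ (relative dimension $2$ over $S^\circ$). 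Hence $\H^\Be_{\ell,H!}=\H^{2-\ell}(\R\pi_{\widetilde{\X}*}\K_{H!})$, and similarly for $\j^*\H^\Be_{2-\ell,H'!}$, after transporting through the involution $\j$.

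The main step is to produce the cup-product pairing and prove it is perfect. I would mimic the argument used in the proof of Theorem \ref{analytic duality}: the key geometric fact, exactly as there, is that $\widetilde{\cal{P}}\setminus\cal{P}^\rd$ is the closure of $\j(\cal{P}^\rd)$ in $\widetilde{\cal{P}}$, while along $\cal{Y}^\circ$ and along $S^\circ\times(E\setminus H)$ versus $S^\circ\times H$ the "open/closed" conditions of $\widetilde{\jmath}^H_!\widetilde{\imath}^H_*$ and $\widetilde{\jmath}^{H'}_!\widetilde{\imath}^{H'}_*$ are complementary. This gives, on $\widetilde{\X}$, a pairing of local systems refining the tautological pairing $\K\otimes\j^*\K\to \k[q^{\pm1}]_{\cal{Y}^\circ}$ (note $\K$ has rank one and $\j^*$ inverts $q$, so the product $e^{-f/\tw}g^{\eq/\tw}$ with its $\j$-twist is $\j$-invariant and lands in $\k[q^{\pm1}]$), extended to
\begin{align*}
\K_{H!}\otimes \j^*\K_{H'!}\longrightarrow \widetilde{\jmath}_!\,\pi_{\cal{Y}^\circ}^{-1}\k[q^{\pm1}]_{S^\circ},
\end{align*}
which is perfect in the sense that the two adjoint maps into $\R\hom_{\pi_{\widetilde{\X}}^{-1}\k[q^{\pm1}]_{S^\circ}}(-,\widetilde{\jmath}_!\pi_{\cal{Y}^\circ}^{-1}\k[q^{\pm1}]_{S^\circ})$ are isomorphisms. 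This is checked stalkwise on the four pieces $\cal{Y}^\circ$, $\cal{P}^\rd$, $S^\circ\times(E\setminus H)$, $S^\circ\times H$ exactly as in \cite{Hien}. Combining this with the relative Verdier duality for $\pi_{\widetilde{\X}}\colon \widetilde{\X}\to S^\circ$ (a proper-on-fibers map of manifolds-with-corners, dualizing complex $\pi_{\widetilde{\X}}^!\k[q^{\pm1}]_{S^\circ}=\k[q^{\pm1}]_{\widetilde{\X}}[2]$ after trivializing the relative orientation, which is the source of the target $\k[q^{\pm1}]_{S^\circ}$ and the index shift $2-\ell$) yields the cup-product pairing
\begin{align*}
\<\cdot,\cdot\>_{\Be}\colon \H^\Be_{\ell,H!}\otimes \j^*\H^\Be_{2-\ell,H'!}\longrightarrow \k[q^{\pm1}]_{S^\circ}
\end{align*}
together with its perfectness. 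Finally I would invoke Lemma \ref{BEV} to note that the only nonzero degree is $\ell=1$ (and $2-\ell=1$), so the pairing is concentrated there, and the local triviality lemma to see everything is a pairing of finite-rank free $\k[q^{\pm1}]_{S^\circ}$-modules, so "perfect" is literally "identifies one with the $\k[q^{\pm1}]_{S^\circ}$-dual of the other".

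The hardest point is the stalkwise verification of perfectness at the boundary divisor $\widetilde{\cal{P}}$, i.e. that $\widetilde{\jmath}^H_!\widetilde{\imath}^H_*\K$ and its $\j$-twisted analogue are Verdier-dual along $\widetilde{P}$: this is where the rapid-decay condition $\cal{P}^\rd=(\widetilde{f/\tw})^{-1}\{e^{\i\theta}\infty\mid|\theta|<\pi/2\}$ and its $\j$-image being complementary half-circles is used, and one must be careful that the coefficient ring $\k[q^{\pm1}]$ (rather than a field) does not obstruct the duality — but since $\K$ is a rank-one free $\k[q^{\pm1}]$-local system this is harmless. I would simply refer to \cite{Hien} for this computation, as Theorem \ref{analytic duality} already does, and leave the routine compatibility/degree bookkeeping to the reader.
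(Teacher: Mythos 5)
Your proposal is correct and follows essentially the same route as the paper: restrict the de Rham/tautological pairing to the sheaf level to get $\K_{H!}\otimes\j^*\K_{H'!}\to\widetilde{\jmath}_!\k[q^{\pm1}]_{\cal{Y}^\circ}$, check perfectness on $\widetilde{\X}$ by the $\cal{P}^\rd$-versus-$\j(\cal{P}^\rd)$ complementarity as in Hien, push forward by Verdier duality, and use Lemma \ref{BEV} to discard the higher $\Ext$ contribution over the PID $\k[q^{\pm1}]$. The only cosmetic difference is that the paper makes the universal coefficient short exact sequence explicit, identifying the obstruction as $\scr{E}xt^1(\H^\Be_{0,H!},\k[q^{\pm1}]_{S^\circ})$ and killing it by $\H^\Be_{0,H!}=0$, whereas you fold this into the remark that the complex is concentrated in a single degree; both are the same argument.
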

\begin{proof}
Put 
\begin{align*}
\j^*\K_{H!}\coloneqq 
\k[q^{\pm 1}]_{\widetilde{\X}}\otimes_{\k[q^{\pm1}]}\j^{-1}\K_{H!}
\end{align*}
where the tensor product is given by using the morphism $\k[q^{\pm 1}]\simeqto \k[q^{\pm 1}],q\mapsto q^{-1}$.
Then, the pairing $\<\cdot,\cdot\>_{\dR}^H$ in the proof of Theorem \ref{analytic duality}
is restricted to the following pairing:
\begin{align*}
\<\cdot,\cdot\>^H_{\Be}\colon \K_{H!}
\otimes 
\j^*\K_{H'!}
\longrightarrow \widetilde{\jmath}_!\k[q^{\pm 1}]_{\cal{Y}^\circ}
\end{align*} 
where $H'\coloneqq E\setminus H$.
This pairing is perfect in a similar sense 
as in the proof of  Theorem \ref{analytic duality} (the proof is also similar). 
Noting that $\scr{C}_{\widetilde{X},\widetilde{D}}^\bullet \simeq \Q_{\widetilde{X}}[2]$
in the derived category, 
by the Verdier duality and the universal coefficient theorem, we obtain the following 
exact sequence: 
\begin{align*}
0\longrightarrow 
\scr{E}\!xt^1(\H^\Be_{0,H!}, \k[q^{\pm 1}]_{S^\circ})
\longrightarrow \j^*\H^\Be_{1,H'!}
\longrightarrow \hom(\H^\Be_{1,H!},\k[q^{\pm 1}]_{S^\circ})\longrightarrow 0
\end{align*}
Then we obtain the proposition by
Lemma \ref{BEV}. 
\end{proof}
\begin{corollary} $\H^\Be_{1,H!}$ is torsion free.\qed
\end{corollary}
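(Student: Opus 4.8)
The plan is to read off the corollary directly from the perfect pairing of the preceding Proposition together with the vanishing of $\H^\Be_{0,H!}$ supplied by Lemma \ref{BEV}. Fix $H\subset E$ and set $H'\coloneqq E\setminus H$. Applying the Proposition with $H'$ in place of $H$, we get the exact sequence
\[
0\longrightarrow \scr{E}\!xt^1(\H^\Be_{0,H'!},\k[q^{\pm 1}]_{S^\circ})\longrightarrow \j^*\H^\Be_{1,H!}\longrightarrow \hom(\H^\Be_{1,H'!},\k[q^{\pm 1}]_{S^\circ})\longrightarrow 0,
\]
and since $\H^\Be_{0,H'!}=0$ by Lemma \ref{BEV}, the first term vanishes, so the sequence collapses to a natural isomorphism
\[
\j^*\H^\Be_{1,H!}\;\xrightarrow{\ \sim\ }\;\hom_{\k[q^{\pm 1}]_{S^\circ}}(\H^\Be_{1,H'!},\k[q^{\pm 1}]_{S^\circ}).
\]

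Next I would observe that for any sheaf $\F$ of $\k[q^{\pm 1}]$-modules the internal hom $\hom_{\k[q^{\pm 1}]_{S^\circ}}(\F,\k[q^{\pm 1}]_{S^\circ})$ is torsion free: for an open $V$ and $\phi\in\Hom(\F_{|V},\k[q^{\pm 1}]_{|V})$ with $a\phi=0$ for some nonzero $a\in\k[q^{\pm 1}]$, the relation $a\,\phi(\mathfrak{s})=0$ holds for every local section $\mathfrak{s}$ of $\F$, and since $\k[q^{\pm 1}]$ is an integral domain this forces $\phi=0$. Hence $\j^*\H^\Be_{1,H!}$ is torsion free.

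Finally, $\j^*$ is the base change along the ring automorphism $q\mapsto q^{-1}$, which is an equivalence on sheaves of $\k[q^{\pm 1}]$-modules carrying the torsion subsheaf of $\j^*M$ back to the torsion subsheaf of $M$ (a section $\mathfrak{s}$ is torsion in $M$ if and only if $1\otimes\mathfrak{s}$ is torsion in $\j^*M$, since a nonzero annihilator is sent to a nonzero annihilator). Therefore $\H^\Be_{1,H!}$ itself is torsion free. The only point requiring a little care is the compatibility of $\j^*$ with dualizing into the constant sheaf $\k[q^{\pm 1}]_{S^\circ}$ used implicitly above, which is routine because $\j^*$ merely relabels $q\leftrightarrow q^{-1}$; there is no substantial obstacle here, as all the content sits in the perfect pairing already established.
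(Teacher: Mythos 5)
Your argument is correct and is exactly the one the paper intends: the corollary is stated with \qed because it follows immediately from the perfect-pairing proposition, namely from the isomorphism $\j^*\H^\Be_{1,H!}\simeq\hom(\H^\Be_{1,H'!},\k[q^{\pm1}]_{S^\circ})$ obtained after Lemma~\ref{BEV} kills the $\scr{E}\!xt^1$ term, together with the elementary facts that a sheaf-hom into $\k[q^{\pm1}]_{S^\circ}$ is torsion free and that $\j^*$ preserves torsion freeness. You have simply made these routine steps explicit.
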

We note that 
$\<\cdot,\cdot\>_{\Be}\colon \H_{1,H!}^\Be\otimes\j^*\H^\Be_{1,H'!}\to \O_{S^\circ}$
is skew symmetric, i.e. satisfies the relation 
$\<\cdot,\cdot\>_\Be=-\j^*\<\cdot,\cdot\>_\Be\circ\mathrm{ex}$, 
where $\mathrm{ex}\colon \H_{1,H!}^\Be\otimes\j^*\H^\Be_{1,H'!}
\to \j^*\H^\Be_{1,H'!}\otimes  \H_{1,H!}^\Be$
denote the exchange
and $\j^*\<\cdot,\cdot\>_\Be\colon \j^*\H^\Be_{1,H'!}\otimes  \H_{1,H!}^\Be\to \k[q^{\pm 1}]$
denotes the pull back.

For $e\in E$, let $n_e$ be the order of $g$
at $e$, i.e. $g_{|V_e}=z^{n_e}$
for some coordinate neighborhood $(V_e, z)$
centered at $e$.
\begin{lemma}\label{cokernel}
For $H_1\subset H_2\subset E$,
there is a canonical exact sequence
\[0\longrightarrow\H^\Be_{1,H_2!}\longrightarrow \H^\Be_{1,H_1!}\longrightarrow 
\bigoplus_{e\in H_2\setminus H_1}\(\frac{\k[q^{\pm 1}]}{(1-q^{n_e})}\)_{S^\circ}\longrightarrow 0.\]
\end{lemma}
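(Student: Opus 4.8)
The plan is to realize the sequence as a segment of the long exact sequence of $\R\pi_{\widetilde{\X}*}$ attached to a short exact sequence of sheaves on $\widetilde{\X}$ comparing $\K_{H_1!}$ and $\K_{H_2!}$. As a preliminary simplification: since $\scr{C}_{\widetilde{X},\widetilde{D}}^\bullet$ is a termwise flat resolution of $\Q_{\widetilde{X}}[2]$ and $p_{\widetilde{\X}}^{-1}$ is exact, the functor $\F\mapsto p_{\widetilde{\X}}^{-1}\scr{C}_{\widetilde{X},\widetilde{D}}^\bullet\otimes\F$ is exact on sheaves of $\Q$-modules and $\R\pi_{\widetilde{\X}*}(p_{\widetilde{\X}}^{-1}\scr{C}_{\widetilde{X},\widetilde{D}}^\bullet\otimes\F)\simeq\R\pi_{\widetilde{\X}*}\F[2]$; in particular $\H^{\Be}_{\ell,H!}\cong\R^{2-\ell}\pi_{\widetilde{\X}*}\K_{H!}$, so it suffices to produce the short exact sequence of sheaves.

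For $H\subset E$ let $\iota_{\widetilde{H}}\colon S^\circ\times\widetilde{H}\hookrightarrow\widetilde{\X}$ denote the closed inclusion and $\jmath_H$ the inclusion of its open complement. I would first check that $\X^{\rd}_{\emptyset}\cap(\widetilde{\X}\setminus(S^\circ\times\widetilde{H}))=\X^{\rd}_H$ and that near the boundary circles over points of $E\setminus H$ the two extensions $\widetilde{\imath}^{\emptyset}_*\K$ and $\widetilde{\imath}^{H}_*\K$ have the same (sectorwise) stalks — a routine local computation about the real blowing up. This gives $\jmath_{H!}\jmath_H^{-1}\K_{\emptyset!}\cong\K_{H!}$, so the recollement sequence $\jmath_{H!}\jmath_H^{-1}\K_{\emptyset!}\to\K_{\emptyset!}\to\iota_{\widetilde{H}*}\iota_{\widetilde{H}}^{-1}\K_{\emptyset!}$ reads
\[0\longrightarrow\K_{H!}\longrightarrow\K_{\emptyset!}\longrightarrow\bigoplus_{e\in H}\iota_{e*}\cal{Q}_e\longrightarrow 0,\qquad\cal{Q}_e\coloneqq\K_{\emptyset!}|_{S^\circ\times\widetilde{\{e\}}},\]
where $\iota_e\colon S^\circ\times\widetilde{\{e\}}\hookrightarrow\widetilde{\X}$ and $\widetilde{\{e\}}=\varpi_X^{-1}(e)$. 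For $H_1\subset H_2$ the morphism $\K_{\emptyset!}\to\bigoplus_{e\in H_1}\iota_{e*}\cal{Q}_e$ factors through $\K_{\emptyset!}\to\bigoplus_{e\in H_2}\iota_{e*}\cal{Q}_e$ by the coordinate projection, the latter being surjective; comparing kernels yields the canonical short exact sequence
\[0\longrightarrow\K_{H_2!}\longrightarrow\K_{H_1!}\longrightarrow\bigoplus_{e\in H_2\setminus H_1}\iota_{e*}\cal{Q}_e\longrightarrow 0.\]

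Next I would identify $\cal{Q}_e$. Choosing near $e$ a coordinate $(V_e,z)$ with $g_{|V_e}=z^{n_e}$ and $f$ holomorphic, $\K$ is freely generated over $\k[q^{\pm 1}]$ by the (multivalued) section $e^{-f/\tw}z^{n_e\eq/\tw}\e$, whose monodromy around $z=0$ — equivalently around $\widetilde{\{e\}}\cong S^1$ — is multiplication by $q^{n_e}$. Hence $\cal{Q}_e$ is, as a sheaf on $S^\circ\times\widetilde{\{e\}}$, a rank-one $\k[q^{\pm 1}]$-local system with monodromy $q^{n_e}$ along the circle factor, and it is trivial along $S^\circ$ by the analogue of (\ref{ADJK}). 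Writing $p_e\colon S^\circ\times\widetilde{\{e\}}\to S^\circ$ for the (proper) projection, $\R\pi_{\widetilde{\X}*}\iota_{e*}\cal{Q}_e=\R p_{e*}\cal{Q}_e$ is the constant sheaf on $S^\circ$ with fibre the cohomology of $S^1$ with coefficients in the monodromy-$q^{n_e}$ local system, i.e. the two-term complex $[\k[q^{\pm 1}]\xrightarrow{q^{n_e}-1}\k[q^{\pm 1}]]$; since $e$ is a zero or a pole of $g$ we have $n_e\neq 0$, so $q^{n_e}-1$ acts injectively on $\k[q^{\pm 1}]$, whence $\R^0 p_{e*}\cal{Q}_e=0$ and $\R^1 p_{e*}\cal{Q}_e\cong(\k[q^{\pm 1}]/(1-q^{n_e}))_{S^\circ}$.

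Finally, feeding $\H^{\Be}_{\ell,H!}\cong\R^{2-\ell}\pi_{\widetilde{\X}*}\K_{H!}$ and these computations into the long exact sequence of $\R\pi_{\widetilde{\X}*}$ attached to the second displayed short exact sequence, and using $\H^{\Be}_{0,H_2!}=\R^2\pi_{\widetilde{\X}*}\K_{H_2!}=0$ (Lemma \ref{BEV}, as $f$ is non-constant), the long exact sequence collapses to
\[0\longrightarrow\H^{\Be}_{1,H_2!}\longrightarrow\H^{\Be}_{1,H_1!}\longrightarrow\bigoplus_{e\in H_2\setminus H_1}\(\frac{\k[q^{\pm 1}]}{(1-q^{n_e})}\)_{S^\circ}\longrightarrow 0,\]
which is the claim. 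I expect the only real work to be in the second paragraph: verifying that the recollement sequence for $\K_{\emptyset!}$ has $\K_{H!}$ as its subsheaf (equivalently, the local comparison of the two extensions of $\K$ near $E$) and pinning down the monodromy of $\cal{Q}_e$; the remaining steps are the standard long exact sequence together with the cohomology of a rank-one local system on a circle.
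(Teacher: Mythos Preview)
Your argument is correct and is essentially the same as the paper's: both produce the short exact sequence $0\to\K_{H_2!}\to\K_{H_1!}\to\bigoplus_{e\in H_2\setminus H_1}\iota_{e*}\cal{Q}_e\to 0$, identify $\cal{Q}_e$ as the rank-one $\k[q^{\pm 1}]$-local system on $S^\circ\times\widetilde{\{e\}}$ with monodromy $q^{n_e}$ (trivial along $S^\circ$), compute its pushforward to $S^\circ$, and then collapse the long exact sequence using Lemma~\ref{BEV}. Your detour through $\K_{\emptyset!}$ and the recollement triangle is a slightly more explicit way of obtaining what the paper calls the canonical extension $\kappa$ and its cokernel, but the substance is identical.
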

\begin{proof}
Let $\kappa\colon \widetilde{\jmath}^{H_2}_!\widetilde{\imath}^{H_2}_*\K
\to  \widetilde{\jmath}^{H_1}_!\widetilde{\imath}^{H_1}_*\K$
be the canonical extension. 
Then, $\Cok(\kappa)$
is supported on 
$\bigsqcup_{e\in H_2\setminus H_1}S^\circ\times \widetilde{e}$, 
where $\widetilde{e}\coloneqq \varpi_X^{-1}(e)$ for each $e\in H_2\setminus H_1$. 
On $S^\circ\times \widetilde{e}$, $\Cok(\kappa)$
is a local system trivial along $S^\circ$ and the monodromy 
around $\widetilde{e}$ is $q^{n_e}$. 
Hence
\begin{align*}
\R^{-\ell}\pi_{\widetilde{\X}*}\(p_{\widetilde{X}}^{-1}\scr{C}_{\widetilde{X},\widetilde{D}}\otimes \Cok(\kappa)\)
&\simeq \R^{-\ell}\pi_{\widetilde{\X}*}\(\Q_{\widetilde{\X}}[2]\otimes \Cok(\kappa)\)\\
&\simeq \begin{cases}
\bigoplus_{e\in H_2\setminus H_1}\(\frac{\k[q^{\pm 1}]}{(1-q^{n_e})}\)_{S^\circ}
&(\ell=1)\\
0&(\ell\neq 1),\end{cases}
\end{align*}
which implies the lemma.
\end{proof}
\subsection{Local sections of Betti homology group sheaves} 
For an open subset $V\subset S^\circ$,
put $I_V\coloneqq V\times I$, 
and $\widetilde{\X}_{|V}\coloneqq V\times \widetilde{X}$. 
Let 
\[\gamma\colon I_V\to \widetilde{\X}_{|V}, \quad
(s, t)\mapsto (s, \gamma_s(t))\]
be a family of 
piecewise smooth closed embeddings $\gamma_s\colon I\to \widetilde{X}$
over $V$
such that $\gamma_s(\del I)\subset \widetilde{D}$. 
%By definition, 
%we obtain a section of
%$ \H_{1,H!}^\Be(V)$. 
%We shall see this
%in the following way:
Let $p_I\colon I_V\to I$
and $\pi_I\colon I_V\to V$
denote the projections. 

Take $H\subset E$ and
a global section
$\varepsilon\in \Gamma(I_V, \gamma^{-1}\K_{H!})$.
Then
$\<\id_I\>\otimes \varepsilon$
defines a section of $(-1)$-th cohomology group
\begin{align*}
\H_{1}(I,\gamma^{-1}\K_{H!})\coloneqq \H^{-1}\(\pi_{I*}\(p_I^{-1}\scr{C}^{\bullet}_{I,\del I}\otimes \gamma^{-1}\K_{H!}\)\).
\end{align*}
The section will be denoted by $[\id\otimes \varepsilon]$. 
% 
%
%For each point $s\in S^\circ$, set $\K_{H, s}\coloneqq (\K_{H})_{|\{s\}\times\widetilde{X}}$. 
By Lemma \ref{push}, 
we have
\[\gamma_{s*}\colon 
\H_{1}(I,\gamma^{-1}\K_{H!})_s
\to (\H_{1,H!}^{\Be})_s\]
for $s\in V$.
The following lemma follows from
Lemma \ref{homotopy} by the standard argument:
\begin{lemma}\label{section kousei}
There exists
a unique section 
$[\gamma\otimes \varepsilon]\in\H_{1,H!}^{\Be}(V)$
such that 
\[[\gamma\otimes \varepsilon]_s=\gamma_{s*}([\id\otimes \varepsilon]_s)\]
 for any $s\in V$.\qed
\end{lemma}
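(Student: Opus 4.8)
The plan is to deduce the statement from Lemma \ref{homotopy} exactly as in the classical construction of families of rapid-decay cycles (cf. \cite{Hien}, \cite{HR}). First I would check that the local datum is well behaved: the sheaf $\gamma^{-1}\K_{H!}$ on $I_V$ is the pullback of $\K_{H!}$ along the embedding $\gamma$, and since $\K$ (hence $\K_{H!}$, which is just an extension by zero of $\K$ along the relevant boundary strata) is trivial along $S^\circ$ by the isomorphism \eqref{ADJK}, the restriction $\gamma^{-1}\K_{H!}$ is trivial along the $I$-direction \emph{after} further restriction to each fibre; more precisely $h_I^{-1}\K_{H!}$ is trivial along $I$ for the homotopy $h_I$ we will use. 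This is the hypothesis needed to invoke Lemma \ref{homotopy}.

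Next I would make the gluing precise. For each $s \in V$ we have the map $\gamma_{s*}\colon \H_1(I,\gamma^{-1}\K_{H!})_s \to (\H^\Be_{1,H!})_s$ coming from Lemma \ref{push}, and the section $[\id\otimes\varepsilon]$ gives a compatible family of elements $[\id\otimes\varepsilon]_s$ over $V$. The candidate section is defined fibrewise by $s \mapsto \gamma_{s*}([\id\otimes\varepsilon]_s)$; I must show this family actually comes from a (necessarily unique) global section of the sheaf $\H^\Be_{1,H!}$ over $V$. Uniqueness is immediate because $\H^\Be_{1,H!}$ is locally trivial, in particular has no nonzero section supported on a proper closed subset, so a section is determined by its stalks (equivalently by its germ at any point, on small connected $V$); alternatively, condition (2) of Definition \ref{Q-local}-type injectivity is available after localization. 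For existence, shrink $V$ to a contractible (say convex) neighbourhood $V_0$ of a given point $s_0$. Then $\gamma\colon I_{V_0}\to \widetilde{\X}_{|V_0}$ together with a contraction of $V_0$ onto $\{s_0\}$ gives a $C^\infty$ family of embeddings interpolating $\gamma_s$ and $\gamma_{s_0}$, and since $h_I^{-1}\K_{H!}$ is trivial along the contracting parameter, Lemma \ref{homotopy} shows the induced chain maps are chain homotopic. Hence the image of $[\id\otimes\varepsilon]$ under $\gamma_*$ is, up to the canonical identifications, a locally constant family in the relevant hypercohomology over $V_0$; unwinding, this produces a genuine section of $\H^\Be_{1,H!}(V_0)$ whose stalk at $s_0$ (and hence, by the homotopy, at every $s\in V_0$) is $\gamma_{s*}([\id\otimes\varepsilon]_s)$. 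Gluing these local sections over $V$ using uniqueness gives the desired $[\gamma\otimes\varepsilon]\in\H^\Be_{1,H!}(V)$.

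Finally I would record the naturality one needs downstream: the assignment $\varepsilon\mapsto[\gamma\otimes\varepsilon]$ is $\k[q^{\pm1}]$-linear and compatible with restriction of $V$ and with refinement of the family $\gamma$ within a homotopy class rel $\widetilde{\mathcal D}$, all of which follow from the corresponding properties of $\gamma_*$ and of Lemma \ref{homotopy}.

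The main obstacle I anticipate is purely bookkeeping rather than conceptual: one has to be careful that $\gamma_s$ is a family of embeddings whose boundary stays in $\widetilde{\mathcal D}$ \emph{and} lands in the open stratum $\X^\rd_H$ where $\widetilde{\imath}^H_*\K$ is supported, so that $\gamma^{-1}\K_{H!}$ is genuinely the constant-along-$I$ sheaf one expects and Lemma \ref{homotopy} applies with the triviality hypothesis satisfied; once the triviality along the homotopy parameter is checked, the rest is the standard ``homotopy invariance $\Rightarrow$ well-defined cycle class'' argument.
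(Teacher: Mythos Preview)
Your proposal is correct and matches the paper's approach exactly: the paper does not give a proof at all, but simply records that the lemma ``follows from Lemma~\ref{homotopy} by the standard argument'' and places a \qed. What you have written is precisely that standard argument spelled out---using the triviality of $\K_{H!}$ along $S^\circ$ (adjunction~(\ref{ADJK})) to verify the hypothesis of Lemma~\ref{homotopy}, then invoking homotopy invariance to show the fibrewise classes $\gamma_{s*}([\id\otimes\varepsilon]_s)$ assemble into a section of the locally trivial sheaf $\H^\Be_{1,H!}$, with uniqueness coming from local constancy.
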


Let $\gamma'\colon I_{V'}\to \widetilde{\X}_{|V'}$ 
be another family of 
paths over $V'\coloneqq \j(V)$. 
Let $\varepsilon'$ be a global section in $\Gamma(I_{V'}, \gamma'^{-1}\K_{H'})$, 
where $H'\coloneqq E\setminus H$.  
In a similar way as above, we have a section $[\gamma'\otimes\varepsilon']\in \j^*\H_{1,H'}^\Be(V)$.

\subsection{Period pairings}\label{3.4}
The contents of this subsection is mostly contained in the works \cite{MMT},
\cite{matsubara}, \cite{FSY} (see also \cite{Hien}) in a more general setting. 
We recall a part of their results in our (trivially) relative setting. 
Let $\Db_{\widetilde{X}}^{\rd, -r}$
denote the sheaf of
rapid decay distributions on $\widetilde{X}$, i.e. 
the sheaf whose section
on small open $V\subset \widetilde{X}$
are the distributions
\begin{align*}
\psi\in \Hom_{\rm cont}(\Gamma_c(V, \Omega_{\widetilde{X}}^{\infty, r}),\C)
\end{align*}
on the space $\Omega_{\widetilde{X}}^{\infty, r}$ of 
$C^\infty$ differential forms on $\widetilde{X}$
of degree $r$ with compact support in $V$
satisfying the rapid decay condition along $V\cap \widetilde{D}$
(c.f \cite{Hien2}).
We set
\[\Db_{\widetilde{\X}/S^\circ}^{\rd,-r}\coloneqq \varpi_{\X}^{-1}\O_{\X^\circ}
\otimes_{p_{\widetilde{X}}^{-1}\varpi_{X}^{-1}\O_X}\(p_{\widetilde{X}}^{-1}\varpi_{X}^{-1}\Db_{\widetilde{X}}^{\rd, -r}\).\]
We 
then obtain the local period pairing
\begin{align*}
\<\cdot,\cdot\>_{{\rm Per}}^H\colon
\scr{C}^{<H}_{\widetilde{\X},\widetilde{\cal{D}}/S^\circ}(\M) 
\otimes
\DR_{\widetilde{\X},\widetilde{\cal{D}}/S^\circ}^{<H'}(\j^*\M) 
&\longrightarrow 
\Db_{\widetilde{\X}/S^\circ}^{\rd,-\bullet},\\
(c\otimes \varepsilon)\otimes \omega&\mapsto (\eta\mapsto \int_c\eta\wedge\<\varepsilon,\omega\>),
\end{align*}
which induces 
the (family of) period pairing(s)
\begin{align*}
\<\cdot, \cdot\>_{\rm Per}\colon 
\H_{1,H!}^{\Be}\otimes_{\k[q^{\pm 1}]}
\j^*\H_{\dR,H'!}^1\longrightarrow \O_{S^\circ}.
\end{align*}

Recall that 
there is a natural
the injection $i_H\colon \H_{1,H!}^{\Be}\to \H_{\dR,H!}^1$
induced from $\K\hookrightarrow \K_\O$. 
On the other hand, using the pairings
$\<\cdot,\cdot\>'_\dR\coloneqq 2\pi\i\<\cdot,\cdot\>_\dR$ and $\<\cdot,\cdot\>_{\rm Per}$, 
we also obtain 
an injection
$i_H'\colon \H_{1,H!}^{\Be}\to \H_{\dR,H!}^1$.

\begin{lemma}\label{COMPATI}
The injections $i_H$ and $i'_H$ defined above coincide with each other. 
\end{lemma}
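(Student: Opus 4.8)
The statement compares two maps $\H_{1,H!}^{\Be}\to \H_{\dR,H!}^1$: the map $i_H$ induced by the inclusion $\K\hookrightarrow \K_\O$ of local systems into the sheaf of flat sections, and the map $i_H'$ obtained by combining the two perfect pairings $\langle\cdot,\cdot\rangle'_\dR = 2\pi\i\langle\cdot,\cdot\rangle_\dR$ (on de Rham cohomology, from \S\ref{pairing}--\S\ref{Perfect theorem}) and $\langle\cdot,\cdot\rangle_{\rm Per}$ (the period pairing of \S\ref{3.4}). Since both target objects are the \emph{same} sheaf $\H_{\dR,H!}^1$ and $i_H$ is injective, and since all sheaves in sight are torsion-free over $\k[q^{\pm 1}]$ and locally free over $\O_{S^\circ}$ after tensoring, it suffices to check the identity of the two maps after localization at a generic point, i.e. pointwise on $S^\circ$, or even after tensoring with $\k(q)$. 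So the plan is to reduce to a pointwise statement on a small open $V\subset S^\circ$ and then unwind both sides against a test element of $\j^*\H^1_{\dR,H'!}$ using the perfectness (Theorem \ref{analytic duality}, Theorem \ref{Perfect theorem}) of $\langle\cdot,\cdot\rangle_\dR$.

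\textbf{Key steps.} First I would fix a small open $V\subset S^\circ$ and represent a local section of $\H^{\Be}_{1,H!}(V)$ by a family of paths $[\gamma\otimes\varepsilon]$ as in Lemma \ref{section kousei}, with $\varepsilon = e^{-f/\tw}g^{\eq/\tw}\bm e$ the canonical flat multisection. Second, I would compute $i_H([\gamma\otimes\varepsilon])$: by definition this is the image of $[\gamma\otimes\varepsilon]$ under $\K_{H!}\hookrightarrow \K_{\O,H!}$, whose cohomology is $\H^1_{\dR,H!}$ via Lemma \ref{malgrange} and Proposition \ref{limits}; concretely it is the de Rham class whose pairing against any $\bm\omega\in\j^*\H^1_{\dR,H'!}$, computed via $\langle\cdot,\cdot\rangle_\dR$, equals the period integral $\int_\gamma \langle\varepsilon,\omega\rangle$ up to the factor $(2\pi\i)^{-1}$ coming from the normalization $(2\pi\i)^{-1}\int_X$ fixed in \S\ref{pairing}. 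Third, I would compute $i'_H([\gamma\otimes\varepsilon])$: by its very definition it is the unique de Rham class $\bm\upsilon$ with $\langle\bm\upsilon,\bm\omega\rangle'_\dR = \langle[\gamma\otimes\varepsilon],\bm\omega\rangle_{\rm Per} = \int_\gamma\langle\varepsilon,\omega\rangle$ for all $\bm\omega$. Fourth, comparing: $\langle i_H([\gamma\otimes\varepsilon]),\bm\omega\rangle'_\dR = 2\pi\i\langle i_H([\gamma\otimes\varepsilon]),\bm\omega\rangle_\dR = 2\pi\i\cdot(2\pi\i)^{-1}\int_\gamma\langle\varepsilon,\omega\rangle = \int_\gamma\langle\varepsilon,\omega\rangle = \langle i'_H([\gamma\otimes\varepsilon]),\bm\omega\rangle'_\dR$. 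Since $\langle\cdot,\cdot\rangle'_\dR$ is perfect, $i_H = i'_H$ on these sections, and since such $[\gamma\otimes\varepsilon]$ generate $\H^{\Be}_{1,H!}$ over $\O_{S^\circ}$ (hence after tensoring; and by torsion-freeness the original maps agree), the claim follows.

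\textbf{The main obstacle.} The only delicate point is bookkeeping of the $2\pi\i$ factors and of the exact identification, via Lemma \ref{malgrange} and the Dolbeault/Čech representatives used in \S\ref{pairing}, between "the image of $[\gamma\otimes\varepsilon]$ under $\K\hookrightarrow\K_\O$, then through $\H^0(\DR^{<H}_{\widetilde{\X},\widetilde{\cal D}/S^\circ}(\M))\simeq\H^1_{\dR,H!}$" and "the de Rham class pairing to $(2\pi\i)^{-1}\int_\gamma\langle\varepsilon,\omega\rangle$ under $\langle\cdot,\cdot\rangle_\dR$." Concretely one must check that the algebraic pairing $\langle\cdot,\cdot\rangle_\dR$ of \S\ref{pairing}, which is defined by a residue/cup-product construction using $(2\pi\i)^{-1}\int_X\colon H^1(X,\Omega^1_X)\simeqto\C$, agrees under the analytic comparison isomorphism (Theorem \ref{analytic duality}) with the topological pairing $\langle\cdot,\cdot\rangle^H_{\rm Per}$ composed with integration of the distribution-valued form over $\gamma$; this is precisely the classical de Rham $=$ period statement (Stokes' theorem plus the residue formula), carried out here relative to $S^\circ$ and with coefficients twisted by $e^{-f/\tw}g^{\eq/\tw}$. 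I expect this verification to be routine but notation-heavy, following the arguments of \cite{Hien}, \cite{Del2}, and \cite{Yu}; once it is in place, the equality $i_H=i'_H$ is immediate by perfectness of the pairing.
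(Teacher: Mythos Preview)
Your approach is correct in spirit and would work, but the paper's proof is organized more conceptually. Rather than representing Betti classes by explicit cycles $[\gamma\otimes\varepsilon]$ and computing both sides against a test class $\bm\omega$, the paper observes that both pairings $\langle\cdot,\cdot\rangle'_\dR$ and $\langle\cdot,\cdot\rangle_{\rm Per}$ are induced, via quasi-isomorphisms, from the \emph{same} sheaf-level pairing $\langle\cdot,\cdot\rangle\colon \K_{H}[2]\otimes\j^*\K_{\O,H'}\to\widetilde{\jmath}_!\pi_{\cal Y^\circ}^{-1}\O_{S^\circ}[2]$. One vertical arrow uses Lemma~\ref{malgrange} (the moderate/rapid de Rham complex resolves $\K_{\O,H!}$), the other uses $\scr{C}^\bullet_{\widetilde X,\widetilde D}\simeq\Q_{\widetilde X}[2]$; both squares commute essentially by construction. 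The equality $i_H=i'_H$ then drops out without any integral computation or $2\pi\i$ bookkeeping.

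What you call the ``main obstacle'' --- checking that the algebraic residue pairing matches the integration pairing under the comparison isomorphism --- is exactly what the diagram circumvents: instead of verifying this compatibility by hand (Stokes, residues, Dolbeault representatives), the paper pushes the comparison down to the level of sheaves of flat sections, where it is tautological. Your route would reprove a relative twisted version of the de Rham theorem along the way; the paper's route uses that this has already been encoded in the quasi-isomorphisms. Both arrive at the same place, but the diagrammatic argument is shorter and makes the role of the normalization $(2\pi\i)^{-1}\int_X$ invisible rather than something to track.
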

\begin{proof}
The statement follows from the following commutative diagram:
\begin{align*}
\xymatrix{
\DR_{\widetilde{\X},\widetilde{\cal{D}}/S^\circ}^{<H}(\M)[2]\otimes 
\DR^{<H'}_{\widetilde{\X},\widetilde{\cal{D}}/S^\circ}(\j^*\M) 
\ar[r]^{\ \ \ \ \ \ \ \ \ \ \ \ \<\cdot,\cdot\>'_\dR}&\DR_{\widetilde{\X}/S}^{<D}(\O_{\X},d_{\X/S})[2]\\
\K_H[2]\otimes \j^*\K_{\O,H'}\ar[r]^{\ \ \ \ \ \  \<\cdot,\cdot\>}
\ar[u]\ar[d]&\widetilde{\jmath}!\pi_{{\cal{Y}^\circ}}^{-1}\O_{S^\circ}[2]
\ar[u]\ar[d]\\
\scr{C}^{<H}_{\widetilde{\X},\widetilde{\cal{D}}/S^\circ}(\M)\otimes
\DR_{\widetilde{\X},\widetilde{\cal{D}}/S^\circ}^{<H'}(\j^*\M) 
\ar[r]^{\ \ \ \ \ \ \ \ \<\cdot,\cdot\>_{\rm Per}}&\Db_{\widetilde{\X}/S^\circ}^{\rd,-\bullet} }
\end{align*}
where we have put $\j^*\K_{\O,H'}\coloneqq\j^{*}\widetilde{\jmath}^{H}_!\widetilde{\imath}_*^H\K_\O$.
\end{proof}
We also note that
$i_H=i'_H$ 
induces an isomorphism
\[\mathrm{i}_H\colon \H_{1,H!}^\Be\otimes_{\k[q^{\pm 1}]} \O_{S^\circ}
\simeqto \H^1_{\dR,H!}.\]
This isomorphism
trivialize
the actions of $\nabla_\euler$ and $\shift$
in the following sense:
if we define 
$\nabla_\euler=\id\otimes [\euler,\cdot]$
and $\shift=\id\otimes \sigma^\#$
on $\H_{1,H!}^\Be\otimes_{\k[q^{\pm 1}]} \O_{S^\circ}$,
then $\mathrm{i}_H$
is compatible with these operators.

By the similar arguments as in the proof of Lemma \ref{COMPATI},
we obtain the following:
\begin{lemma}\label{compatible}
We have the following commutative diagram:
\begin{align*}
\xymatrix{
\H^\Be_{1,H!}\otimes \j^*\H^\Be_{1,H'!}\ar[r]^{\ \ \ \ \ \ \ \<\cdot,\cdot\>_\Be}\ar[d]&\k[q^{\pm}]_{S^\circ}\ar[d]\\
\H_{\dR,H!}^1\otimes \j^*\H^1_{\dR,H'!}\ar[r]^{\ \ \ \ \ \ \ \<\cdot,\cdot\>'_\dR}&\O_{S^\circ}
}
\end{align*}
where the vertical arrows are the inclusions.\qed
\end{lemma}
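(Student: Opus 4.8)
The plan is to trace both horizontal pairings in the diagram back to the single sheaf pairing $\<\cdot,\cdot\>_\dR^H$ of Theorem \ref{analytic duality}, the vertical inclusions being merely the functorial image of an inclusion of coefficient sheaves on $\widetilde{\X}$. First I would recall, from Lemma \ref{malgrange}, that $\DR_{\widetilde{\X},\widetilde{\cal{D}}/S^\circ}^{<H}(\M)$ is concentrated in degree $0$, with $\H^0\simeq\K_{\O,H}\coloneqq\widetilde{\jmath}^H_!\widetilde{\imath}^H_*\K_\O$, and likewise $\DR_{\widetilde{\X},\widetilde{\cal{D}}/S^\circ}^{<H'}(\j^*\M)\simeq\j^*\K_{\O,H'}$ and $\DR_{\widetilde{\X}/S^\circ}^{<D}(\O_\X,d_{\X/S})\simeq\widetilde{\jmath}_!\pi_{\cal{Y}^\circ}^{-1}\O_{S^\circ}$. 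Under these quasi-isomorphisms the morphism of complexes $\<\cdot,\cdot\>_\dR^H$ becomes, in degree $0$, the pairing $\<\cdot,\cdot\>\colon\K_{\O,H}\otimes\j^*\K_{\O,H'}\to\widetilde{\jmath}_!\pi_{\cal{Y}^\circ}^{-1}\O_{S^\circ}$ coming from $\<\e,\j^*\e\>=1$. A short local computation on $\cal{Y}^\circ$ — the same one underlying the perfectness of $\<\cdot,\cdot\>_\Be$ — shows that, since $\<e^{-f/\tw}g^{\eq/\tw}\e,\ \j^*(e^{-f/\tw}g^{\eq/\tw}\e)\>=1$ and $\j^*q=q^{-1}$, this pairing carries $\K_{H!}\otimes\j^*\K_{H'!}$ into $\widetilde{\jmath}_!\k[q^{\pm1}]_{\cal{Y}^\circ}$ and restricts there to the pairing $\<\cdot,\cdot\>_\Be^H$ used to build $\<\cdot,\cdot\>_\Be$. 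Thus one has a commutative square of sheaf pairings
\[\xymatrix{
\K_{H!}\otimes\j^*\K_{H'!}\ar[r]^{\ \ \ \ \<\cdot,\cdot\>_\Be^H}\ar[d] & \widetilde{\jmath}_!\k[q^{\pm1}]_{\cal{Y}^\circ}\ar[d]\\
\K_{\O,H}\otimes\j^*\K_{\O,H'}\ar[r]^{\ \ \ \ \<\cdot,\cdot\>} & \widetilde{\jmath}_!\pi_{\cal{Y}^\circ}^{-1}\O_{S^\circ}
}\]
whose vertical maps are the natural inclusions.

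Next I would pass to cohomology. Using $\scr{C}_{\widetilde{X},\widetilde{D}}^\bullet\simeq\Q_{\widetilde{X}}[2]$ one has $\H^\Be_{1,H!}=\R^1\pi_{\widetilde{\X}*}\K_{H!}$, and by Proposition \ref{limits} together with Theorem \ref{analytic duality} one has $\H^1_{\dR,H!}=\R^1\pi_{\widetilde{\X}*}\K_{\O,H}$, with the analogous statements for $H'$; applying $\R^1\pi_{\widetilde{\X}*}$ to the left column of the square yields the inclusion $\H^\Be_{1,H!}\hookrightarrow\H^1_{\dR,H!}$, which is $i_H$ by definition and hence, by Lemma \ref{COMPATI}, also the inclusion $i'_H$ implicit in the diagram of the lemma. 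Both $\<\cdot,\cdot\>_\Be$ and — by Theorem \ref{analytic duality} — the pairing $\<\cdot,\cdot\>'_\dR$ are then obtained from the two rows of the square by the same recipe: form the cup product $\R^1\pi_{\widetilde{\X}*}\mathcal{F}\otimes\R^1\pi_{\widetilde{\X}*}\mathcal{G}\to\R^2\pi_{\widetilde{\X}*}(\mathcal{F}\otimes\mathcal{G})$, push the coefficients along the sheaf pairing, and apply the trace isomorphism $\R^2\pi_{\widetilde{\X}*}(\widetilde{\jmath}_!A_{\cal{Y}^\circ})\simeqto A_{S^\circ}$ (for $A=\k[q^{\pm1}]$, respectively $A=\O_{S^\circ}$), the latter being the base change along $\pi_{\widetilde{\X}}$ of the fundamental class isomorphism $H^2_c(Y,A)\simeq A$. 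Since cup product and $\R\pi_{\widetilde{\X}*}$ are functorial in the coefficients, and since the $\O_{S^\circ}$-trace restricts to the $\k[q^{\pm1}]$-trace followed by $\k[q^{\pm1}]_{S^\circ}\hookrightarrow\O_{S^\circ}$, the commutative square above produces the commutative diagram asserted in the lemma. This is precisely the mechanism of the proof of Lemma \ref{COMPATI}, applied now with the second argument also restricted to its Betti subsheaf, so no new idea is required.

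The step I expect to need the most care is the normalization. The algebraic de Rham pairing $\<\cdot,\cdot\>_\dR$ of \S\ref{pairing} carries the factor $(2\pi\i)^{-1}$ in its trace, whereas the intersection pairing $\<\cdot,\cdot\>_\Be$ uses the plain topological trace; it is exactly the primed pairing $\<\cdot,\cdot\>'_\dR=2\pi\i\,\<\cdot,\cdot\>_\dR$ that corresponds to the un-normalized Verdier trace, which is why $\<\cdot,\cdot\>'_\dR$ and not $\<\cdot,\cdot\>_\dR$ appears in the statement. One must check that the Verdier-duality realization of $\<\cdot,\cdot\>'_\dR$ through $\<\cdot,\cdot\>_\dR^H$ uses the same trace as the one underlying $\<\cdot,\cdot\>_\Be$, so that only the change of coefficients $\k[q^{\pm1}]\hookrightarrow\O_{S^\circ}$ intervenes; but this is contained in the compatibility assertion of Theorem \ref{analytic duality} together with the argument of Lemma \ref{COMPATI}, so the verification is routine once the sheaf-level square above is in place.
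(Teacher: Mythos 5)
Your proof is correct and follows the same route as the paper: the paper disposes of this lemma by the single remark that it follows ``by the similar arguments as in the proof of Lemma \ref{COMPATI},'' and what you have written is precisely the unpacking of that remark — a sheaf-level commutative square on $\widetilde{\X}$ with $\K_{H!}\otimes\j^*\K_{H'!}$ over $\K_{\O,H}\otimes\j^*\K_{\O,H'}$, pushed forward through $\R\pi_{\widetilde{\X}*}$ and capped with the compatible traces, with the $2\pi\i$-normalization correctly identified as the reason $\<\cdot,\cdot\>'_\dR$ rather than $\<\cdot,\cdot\>_\dR$ appears.
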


\section{Geometric construction of Stokes filtered quasi-local systems}
\subsection{Glueing Betti homology groups}\label{Glueing Betti}
Set $S^*\coloneqq S\setminus|(\tw\eq)_0|\simeq (\C^*)^2$.
Define subsets $S_{\R_+}^*$ and $S_{\R_-}^*$ of $S^*$ by 
\begin{align*}
&S_{\R_+}^*\coloneqq\{(\tw,\eq)\in S^*\mid \eq/\tw\in\R_{>0} \}\\
&S_{\R_-}^*\coloneqq\{(\tw,\eq)\in S^*\mid \eq/\tw\in \R_{<0} \}. 
\end{align*}
Put $S_\R^*\coloneqq S_{\R_+}^*\cup S_{\R_-}^*$.
We then define a quasi-local system on $S^*$ (see \S \ref{QLS})
constructible with respect to the stratification 
$S^*=S_{\R_+}^*\sqcup S_{\R_-}^* \sqcup (S^*\setminus S_\R^*) $
as follows:
\begin{definition}
Let $\H^\Be_{1,\bm{0}}=\H^\Be_{1,\bm{0}}(f,g)$
be the sheaf of $\k[q^{\pm 1}]$-modules on $S^*$ defined as follows:
For an open subset $V\subset S^*$, we set
\begin{align*}
\H^\Be_{1,\bm{0}}(V)\coloneqq 
\begin{cases}
\H^\mg_{1}(V)&(V\cap S^*_\R=\emptyset)\\
\H^{\Be}_{1,E_0!}(V)&(V\cap S^*_{\R_+}= \emptyset, V\cap S^*_{\R_-}\neq\emptyset)\\
\H^\Be_{1,E_\infty!}(V)&(V\cap S^*_{\R_+}\neq \emptyset, V\cap S^*_{\R_-}=\emptyset)\\
\H^\rd_1(V)&(V\cap S^*_{\R_+}\neq \emptyset, V\cap S^*_{\R_-}\neq\emptyset).
\end{cases}
\end{align*}
The restrictions are defined as the usual restrictions or the canonical morphisms. 
\end{definition}
By Lemma \ref{cokernel}, $\H_{1,\bm{0}}^\Be$ is a quasi-local system on $S^*$. 
Set $\H^1_{\dR,\bm{0}}\coloneqq \H^1_{\dR,0,0}$. 
\begin{corollary}
There is a natural isomorphism
\begin{align*}
\mathrm{i}_{\bm{0}}\colon\H^\Be_{1,\bm{0}}\otimes_{\k[q^{\pm 1}]}\O_{S^*}\simeqto \H^1_{\dR,\bm{0}|S^*}
\end{align*}
\end{corollary}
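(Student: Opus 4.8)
\emph{Proof plan.} I would construct $\mathrm{i}_{\bm 0}$ by gluing along the open cover $S^*=U_+\cup U_-$, where $U_+:=S^*\setminus S_{\R_-}^*$ and $U_-:=S^*\setminus S_{\R_+}^*$; these are open and cover $S^*$ because $S_{\R_+}^*$ and $S_{\R_-}^*$ are closed and disjoint in $S^*$, with $U_+\cap U_-=S^*\setminus S_\R^*$. By its very definition, over $U_-$ the glued sheaf $\H^\Be_{1,\bm 0}$ is patched together only from $\H^\mg_1=\H^\Be_{1,\emptyset!}$ (on opens disjoint from $S_\R^*$) and $\H^\Be_{1,E_0!}$ (on opens meeting $S_{\R_-}^*$), since every open $W\subset U_-$ has $W\cap S_{\R_+}^*=\emptyset$; symmetrically, over $U_+$ only $\H^\mg_1$ and $\H^\Be_{1,E_\infty!}$ occur. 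So the plan is: show that after applying $-\otimes_{\k[q^{\pm1}]}\O_{S^*}$ the left-hand side, restricted to $U_-$, equals $\H^\Be_{1,E_0!}\otimes\O_{S^*}$, while the right-hand side $\H^1_{\dR,\bm 0}=\H^1_{\dR,0,0}$, restricted to $U_-$, equals $\H^1_{\dR,E_0!}$; then invoke the isomorphism $\mathrm{i}_{E_0}\colon\H^\Be_{1,E_0!}\otimes_{\k[q^{\pm1}]}\O_{S^\circ}\simeqto\H^1_{\dR,E_0!}$ recorded after Lemma~\ref{COMPATI}, and the mirror statement over $U_+$ with $\mathrm{i}_{E_\infty}$; finally check that the two pieces agree on $U_+\cap U_-$.

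\emph{De Rham side.} The point is the location of the cokernels in Lemma~\ref{SUP}: the inclusions $\H^1_{\dR,a-1,b}\hookrightarrow\H^1_{\dR,a,b}$ and $\H^1_{\dR,a,b-1}\hookrightarrow\H^1_{\dR,a,b}$ have cokernels supported on the curves $H_{a,n_e}=\{a\tw+n_e\eq=0\}$ ($e\in E_0$) and $H_{b,n_e}$ ($e\in E_\infty$). On $S^*$ one has $H_{a,n}\cap S^*=\{\eq/\tw=-a/n\}$, which is empty when $a=0\ne n$; since $n_e>0$ for $e\in E_0$ and $n_e<0$ for $e\in E_\infty$, these curves meet $S^*$ only inside $S_{\R_+}^*$ when $a\le 0$ (resp. $b\ge 0$), and only inside $S_{\R_-}^*$ when $a\ge 1$ (resp. $b\le -1$). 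Hence over $U_-$ all the cokernels vanish for $a\le 0$ and $b\ge 0$, so $\H^1_{\dR,a,b}|_{U_-}$ is independent of $(a,b)$ in that range; together with Proposition~\ref{limits} this yields $\H^1_{\dR,\bm 0}|_{U_-}=\H^1_{\dR,E_0!}|_{U_-}$ inside $\H^1_{\dR}$. The symmetric statement over $U_+$ (with $a\ge 0$, $b\le 0$ and $\H^1_{\dR,E_\infty!}$) is proved the same way, and on $S^*\setminus S_\R^*$ all $\H^1_{\dR,a,b}$ coincide with $\H^1_{\dR}$.

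\emph{Betti side.} By Lemma~\ref{cokernel} the cokernel of $\H^\Be_{1,E_0!}\hookrightarrow\H^\mg_1$ is $\bigoplus_{e\in E_0}\bigl(\k[q^{\pm1}]/(1-q^{n_e})\bigr)_{S^\circ}$. The sheaf $\O_{S^*}$ is flat over $\k[q^{\pm1}]$ because $q=e^{2\pi\i\eq/\tw}$ is non-constant, so every nonzero Laurent polynomial in $q$ is a non-zero-divisor in $\O_{S^*}$; thus $-\otimes_{\k[q^{\pm1}]}\O_{S^*}$ is exact and sends the above cokernel to $\bigoplus_{e\in E_0}\O_{S^*}/(1-q^{n_e})\O_{S^*}$, which is supported on $\{q^{n_e}=1\}\subset S_\R^*$. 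Therefore $\H^\Be_{1,E_0!}\otimes\O_{S^*}$ and $\H^\mg_1\otimes\O_{S^*}$ agree on $S^*\setminus S_\R^*$; comparing stalks with the definition of $\H^\Be_{1,\bm 0}$ recalled above, this gives $\H^\Be_{1,\bm 0}\otimes\O_{S^*}|_{U_-}=\H^\Be_{1,E_0!}\otimes\O_{S^*}|_{U_-}$ as subsheaves of $\H^\mg_1\otimes\O_{S^*}\cong\H^1_{\dR}$, and symmetrically over $U_+$.

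\emph{Gluing, and the hard point.} Over $U_-$, $\mathrm{i}_{E_0}\otimes\O$ now yields an isomorphism $\H^\Be_{1,\bm 0}\otimes\O_{S^*}|_{U_-}\simeqto\H^1_{\dR,\bm 0}|_{U_-}$, and over $U_+$, $\mathrm{i}_{E_\infty}\otimes\O$ yields one there. On the overlap $S^*\setminus S_\R^*$ both $\H^\Be_{1,E_0!}\otimes\O$ and $\H^\Be_{1,E_\infty!}\otimes\O$ equal $\H^\mg_1\otimes\O$, both $\H^1_{\dR,E_0!}$ and $\H^1_{\dR,E_\infty!}$ equal $\H^1_{\dR}$, and the maps $\mathrm{i}_{E_0}$, $\mathrm{i}_{E_\infty}$, $\mathrm{i}_\emptyset$ restrict to the same isomorphism there, since by Lemma~\ref{COMPATI} all of them come from the single inclusion $\K\hookrightarrow\K_\O$ via identifications functorial in the boundary datum (Lemma~\ref{cokernel}, Lemma~\ref{SUP}, Proposition~\ref{limits}). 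So the two local isomorphisms glue to the desired $\mathrm{i}_{\bm 0}$, which is natural because it is assembled from canonical maps. The step I expect to be the real obstacle is the first half of the de Rham comparison: tracking the signs of $a$, $b$ and $n_e$ carefully enough to place each cokernel divisor $H_{a,n_e}$, $H_{b,n_e}$ in the correct half $S_{\R_\pm}^*$, and matching this with the directions of the limits in Proposition~\ref{limits}; once this is done, both sides acquire the same $\{U_+,U_-\}$-local description and the corollary follows from $\mathrm{i}_{E_0}$ and $\mathrm{i}_{E_\infty}$.
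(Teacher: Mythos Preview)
Your proposal is correct and follows essentially the same route as the paper: identify $\H^1_{\dR,\bm 0}$ with $\H^1_{\dR,E_0!}$ on $S^*\setminus S^*_{\R_+}$ and with $\H^1_{\dR,E_\infty!}$ on $S^*\setminus S^*_{\R_-}$ via Lemma~\ref{SUP} and Proposition~\ref{limits}, then apply the isomorphisms $\mathrm{i}_H$ from \S\ref{3.4} and check they agree on the overlap. The paper's own proof is two sentences long and leaves all of your cokernel-location bookkeeping implicit; your extra Betti-side step (showing $\H^\Be_{1,\bm 0}\otimes\O_{S^*}|_{U_-}\simeq\H^\Be_{1,E_0!}\otimes\O_{S^*}|_{U_-}$ via flatness and Lemma~\ref{cokernel}) is correct but can be bypassed by working directly with the presheaf definition of $\H^\Be_{1,\bm 0}$, since on each defining open $V\subset U_-$ the sections are already those of $\H^\Be_{1,E_0!}$ or $\H^\mg_1$ and the maps $\mathrm{i}_{E_0}$, $\mathrm{i}_\emptyset$ are compatible with the restriction $\H^\Be_{1,E_0!}\hookrightarrow\H^\mg_1$.
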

\begin{proof}
By Proposition \ref{limits} and Lemma \ref{SUP}
we have the natural 
isomorphisms
\begin{align*}
\H_{\dR,\bm{0}}^1\simeq 
\begin{cases}
\H_{\dR,E_0!}^1&\text{on }S^*\setminus S^*_{\R_+}\\
\H_{\dR,E_\infty!}^1&\text{on }S^*\setminus S^*_{\R_-}\\
\H_{\dR}^1&\text{on }S^*\setminus S^*_\R.
\end{cases}
\end{align*}
Then the isomorphisms $\mathrm{i}_H$
in \S \ref{3.4}
induce the desired isomorphism $\mathrm{i}_{\bm{0}}$.
\end{proof}

Let $B$
be the complex plane considered in \S \ref{stokes}.
Let 
$\phi_S\colon B\to S$
be the morphism 
defined by $\phi(u, v)=(uv, v)$, i.e. 
$\tw=uv$, and $\eq=v$.
It induces an isomorphism
$\phi_S\colon B^*\simeqto S^*$. 

\begin{definition}
We set 
\begin{align*}
\L^\Be=\L^\Be{(f,g)}\coloneqq 
\imath_T^{-1}\widetilde{\jmath}_{B*}\phi_S^{-1}\H_{1,\bm{0}}^\Be,
\end{align*}
which is a quasi-local system on $(T,\Theta)$. 
\end{definition}
\begin{lemma}
$\L^\Be$ is saturated. 
%Moreover, 
%we have
%\begin{align*}
%&\L^{\Be}/(\L^{\Be})^+\simeq \bigoplus_{e\in E_\infty}\C[q^{\pm 1}]/(1-q^{n_e}),
%&\L^\Be/(\L^\Be)^-\simeq \bigoplus_{e\in E_0}\C[q^{\pm 1}]/(1-q^{n_e}).
%\end{align*}
\end{lemma}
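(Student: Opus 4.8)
The plan is to first identify the local systems $\L^+$ and $\L^-$ attached to $\L^\Be$ by Lemma \ref{Glueing L}, and then to reduce the required identity $\L^\Be(V)=\L^+(V)+\L^-(V)$ for $V\subset T_+\cup T_-$ to an algebraic statement about the Betti sheaves on $S^\circ$, which is settled by the exact sequences of Lemma \ref{cokernel}. Recall that under $\phi_S\colon B^*\simeqto S^*$ one has $\eq/\tw=1/u$, so $\phi_S$ carries $S_{\R_+}^*$ and $S_{\R_-}^*$ onto the loci $\{u\in\R_{>0}\}$ and $\{u\in\R_{<0}\}$, whose closures in $\widetilde{B}$ meet $T$ in $T_{\R_+}$ and $T_{\R_-}$.

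First I would claim that
\begin{align*}
&\L^+=\imath_T^{-1}\widetilde{\jmath}_{B*}\phi_S^{-1}\H^\Be_{1,E_\infty!}\ \text{ on }T\setminus T_{\R_-},
&&\L^-=\imath_T^{-1}\widetilde{\jmath}_{B*}\phi_S^{-1}\H^\Be_{1,E_0!}\ \text{ on }T\setminus T_{\R_+}.
\end{align*}
By the uniqueness in Lemma \ref{Glueing L} it suffices to verify that the right-hand sides are local systems of free $\k[q^{\pm 1}]$-modules of the same rank as $\L^\Be$, that they are contained in $\L^\Be$ over the indicated open sets, and that they agree with $\L^\Be$ over $T_{\R_+}$, resp. $T_{\R_-}$. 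They are local systems of free modules because $\H^\Be_{1,E_\infty!}$ and $\H^\Be_{1,E_0!}$ are torsion-free and locally trivial over the principal ideal domain $\k[q^{\pm 1}]$, and because $B^*$ meets any sufficiently small neighbourhood of a point of $T$ in a contractible set, so that $\imath_T^{-1}\widetilde{\jmath}_{B*}$ sends a local system on $B^*$ to a local system near $T$; the rank equals that of $\L^\Be$ since these sheaves differ from $\H^\mg_1$ only by the torsion cokernels of Lemma \ref{cokernel}. The containment, and the equality over $T_{\R_+}$, follow at once from the definition of $\H^\Be_{1,\bm 0}$: a small neighbourhood of a point of $T\setminus T_{\R_-}$ corresponds under $\phi_S$ to an open subset of $S^*$ on which $\H^\Be_{1,\bm 0}$ is $\H^\mg_1$ or $\H^\Be_{1,E_\infty!}$, both of which contain $\H^\Be_{1,E_\infty!}$ through the canonical maps of Lemma \ref{cokernel}, while over $T_{\R_+}$ the local model is exactly $\H^\Be_{1,E_\infty!}$. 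The case of $\L^-$ is symmetric.

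Now let $V\subset T_+\cup T_-$ be open. Then $V$ meets neither $T_{\R_+}$ nor $T_{\R_-}$, so it corresponds to an open subset of $S^*$ disjoint from $S_\R^*$; hence over $T_+\cup T_-$ the sheaves $\L^\Be$, $\L^+$ and $\L^-$ are obtained by applying $\imath_T^{-1}\widetilde{\jmath}_{B*}\phi_S^{-1}$ to $\H^\mg_1$, $\H^\Be_{1,E_\infty!}$ and $\H^\Be_{1,E_0!}$ respectively. Checking on stalks at points of $T$ — again using that $B^*$ meets small neighbourhoods in contractible sets, on which a sub-local-system has vanishing higher cohomology — the identity $\L^\Be(V)=\L^+(V)+\L^-(V)$ reduces to the statement that, as subsheaves of $\H^\mg_1$ on $S^\circ$,
\begin{align*}
\H^\Be_{1,E_0!}+\H^\Be_{1,E_\infty!}=\H^\mg_1.
\end{align*}

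It remains to prove this. Write $C\coloneqq\bigoplus_{e\in E_0}\bigl(\k[q^{\pm1}]/(1-q^{n_e})\bigr)_{S^\circ}$. Using Lemma \ref{cokernel} and the functoriality of $H\mapsto\K_{H!}$, we obtain a commutative square of injections with vertices $\H^\Be_{1,E!}$, $\H^\Be_{1,E_\infty!}$, $\H^\Be_{1,E_0!}$, $\H^\mg_1$, the two horizontal maps being $\H^\Be_{1,E!}\hookrightarrow\H^\Be_{1,E_\infty!}$ and $\H^\Be_{1,E_0!}\hookrightarrow\H^\mg_1$ (with cokernel $C$ in both cases) and the vertical ones induced by $E_0\subset E$ and $\emptyset\subset E_\infty$. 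I claim the induced comparison map $c\colon C\to C$ is the identity. Indeed, in the proof of Lemma \ref{cokernel} each cokernel is computed as $\R^{-1}\pi_{\widetilde{\X}*}(\scr{C}^\bullet_{\widetilde{X},\widetilde{D}}\otimes\Cok(\kappa))$, and near a boundary circle $\widetilde{e}$ with $e\in E_0$ one has $\K_{E_\infty!}=\K_{\emptyset!}$ (both being the $*$-extension of $\K$ onto $\widetilde{e}$) and $\K_{E!}=\K_{E_0!}$ (both being the extension by zero), since the relevant spaces $\X^\rd_{E}$, $\X^\rd_{E_0}$, $\X^\rd_{E_\infty}$, $\X^\rd_{\emptyset}$ differ from one another only away from the circles over $E_0$; hence the two cokernels, and the comparison $c$ between them, coincide there, so $c=\mathrm{id}$. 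Commutativity of the square then shows that the composite $\H^\Be_{1,E_\infty!}\hookrightarrow\H^\mg_1\twoheadrightarrow\H^\mg_1/\H^\Be_{1,E_0!}\cong C$ equals the cokernel projection $\H^\Be_{1,E_\infty!}\twoheadrightarrow C$ followed by $c$, hence is surjective, which gives $\H^\Be_{1,E_0!}+\H^\Be_{1,E_\infty!}=\H^\mg_1$. (Exchanging the roles of $E_0$ and $E_\infty$ gives the same conclusion.) The main obstacle is precisely this last point — identifying $\Cok(\kappa)$ near the boundary circles over $E_0$ for both rows and verifying the comparison there is the identity; the rest is bookkeeping with the exact sequences already at hand.
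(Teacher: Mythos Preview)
Your proof is correct. Both your argument and the paper's rest on the same local observation about the sheaves $\K_{H!}$ near the boundary circles over $E$, but they package it differently.

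The paper's proof is a single line: the sum map $\K_{E_0!}\oplus\K_{E_\infty!}\to\K_{\emptyset!}$ is surjective on $\widetilde{\X}$ (because near $\widetilde{E_0}$ the summand $\K_{E_\infty!}$ already equals $\K_{\emptyset!}$, and symmetrically near $\widetilde{E_\infty}$). Feeding this into the long exact sequence for $\R^{\bullet}\pi_{\widetilde{\X}*}(\scr{C}^\bullet_{\widetilde{X},\widetilde{D}}\otimes-)$ and invoking $\H^\Be_{0,E!}=0$ (Lemma \ref{BEV}) gives $\H^\Be_{1,E_0!}+\H^\Be_{1,E_\infty!}=\H^\mg_1$ directly. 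The identification of $\L^\pm$ and the reduction to that identity are left implicit.

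You instead spell out the identification of $\L^\pm$, reduce to the same identity on $S^\circ$, and then prove it by comparing the two exact sequences from Lemma \ref{cokernel} in a commutative square, checking that the induced map on cokernels is the identity. Your local argument (``near $\widetilde{e}$ with $e\in E_0$ one has $\K_{E_\infty!}=\K_{\emptyset!}$ and $\K_{E!}=\K_{E_0!}$'') is exactly the content of the paper's surjectivity claim, so the two proofs are morally the same; yours is more explicit and avoids appealing to $\H^\Be_0=0$ directly, at the cost of a longer diagram chase. Either route is fine.
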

\begin{proof}
Since the sum $\K_{E_0!}\oplus \K_{E_\infty!}\to \K_{\emptyset !}$ is surjective,
we obtain the lemma. 
\end{proof}
Let $e_1,\dots, e_r$
be a frame of $\phi_S^*\H_{\dR,\bm{0}}^1$ around the origin of $B$.
\begin{definition}
For an open subset $V\subset T$
and a section $\varphi\in \Gamma(V,\IQ)$, 
we define the subsheaf
$\L^\Be_{\leqslant \varphi}\subset \L^\Be_{|V}$
as follows:
Let $\mathfrak{s}$ be a local section of $ \L^\Be_{|V}$. 
Then, there exist
an open subset $\widetilde{V'}\subset \widetilde{B}$ with $T\cap \widetilde{V'}\subset V$ and
a representative $\widetilde{\mathfrak{s}}\in \Gamma(\widetilde{V'}\setminus T, \H^\Be_{1,\bm{0}})$
of $\mathfrak{s}$.
We have the expression
\begin{align*}
\widetilde{\mathfrak{s}}=\sum_{i=1}^rh_i(u,v)e_i
\end{align*}
for some $h_i\in \Gamma(\widetilde{V'},\widetilde{\jmath}_{B*}\O_{B^*})$.
The section $\mathfrak{s}$ is a local section of $\L^\Be_{\leqslant \varphi}$
if and only if $e^{-\varphi}h_i\in \A_{B}^{\leqslant Z}$
for all $i=1,\dots, r$. 
\end{definition}
\begin{lemma}
$\L^\Be_{\leqslant}$ is a pre-Stokes filtration on $\L^\Be$.\qed
\end{lemma}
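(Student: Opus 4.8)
The plan is first to check that $\L^\Be_{\leqslant}$ is a well-defined $\k$-subsheaf of $\tau^{-1}\L^\Be$, and then to verify the two conditions of Definition \ref{pre-q}. For the well-definedness I would argue as follows. The frame $e_1,\dots,e_r$ exists near the origin of $B$ because $\H^1_{\dR,\bm{0}}=\H^1_{\dR,0,0}$ is $\O_S$-locally free by Theorem \ref{LF}; after shrinking $\widetilde{V'}$, every representative $\widetilde{\mathfrak{s}}$ of a germ $\mathfrak{s}$ of $\L^\Be$ at $\bm{\theta}\in T$ acquires an expression $\widetilde{\mathfrak{s}}=\sum_i h_i e_i$ with $h_i$ holomorphic on $\widetilde{V'}\cap B^*$, via the identification $\mathrm{i}_{\bm{0}}$. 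Then: (i) the condition ``$e^{-\varphi}h_i\in\A_{\widetilde{B}}^{\leqslant Z}$ for all $i$'' is a condition on germs along $T$, since moderate growth is tested on compact subsets, so it is independent of $\widetilde{V'}$ and of the chosen representative (two representatives agree on a smaller neighbourhood); (ii) passing to another frame changes the $h_i$ by an invertible $r\times r$ matrix with entries in $\O_{B,0}$, and, as $\A_{\widetilde{B}}^{\leqslant Z}$ is a $\varpi_B^{-1}\O_B$-module, both this matrix and its inverse act by factors bounded near the origin, so the condition is unchanged; (iii) a lift of $\varphi$ to $\widetilde{\IQ}$ is unique up to a locally bounded section $b$, and since $e^{\pm b}$ are locally bounded, multiplication by $e^{-b}$ preserves $\A_{\widetilde{B}}^{\leqslant Z}$, so the condition depends only on the class $\varphi\in\IQ$. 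The condition is clearly $\k$-linear in $\mathfrak{s}$ and compatible with restriction; regarding a local section of $\IQ$ over $V$ as a local section of $\tau\colon\IQ^\et\to T$, the subsheaves $\L^\Be_{\leqslant\varphi}$ agree on overlaps and glue to a subsheaf $\L^\Be_{\leqslant}\subset\tau^{-1}\L^\Be$.

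For condition (1) of Definition \ref{pre-q}, suppose $\psi_{\bm{\theta}}\leqslant_{\bm{\theta}}\varphi_{\bm{\theta}}$. By Definition \ref{IQ order} a lift of $\psi-\varphi$ then lies in $\log\A_{\widetilde{B}}^{\leqslant Z}$, that is $e^{\psi-\varphi}\in\A_{\widetilde{B}}^{\leqslant Z}$ near $\bm{\theta}$. If $\mathfrak{s}\in\L^\Be_{\leqslant\psi}$, so that $e^{-\psi}h_i\in\A_{\widetilde{B}}^{\leqslant Z}$ for every $i$, then $e^{-\varphi}h_i=e^{\psi-\varphi}\cdot(e^{-\psi}h_i)\in\A_{\widetilde{B}}^{\leqslant Z}$ because $\A_{\widetilde{B}}^{\leqslant Z}$ is a ring; hence $\mathfrak{s}\in\L^\Be_{\leqslant\varphi}$, which gives $\L^\Be_{\leqslant\psi_{\bm{\theta}}}\subset\L^\Be_{\leqslant\varphi_{\bm{\theta}}}$.

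For condition (2), the crucial observation is that $\phi_S^*(\eq/\tw)=v/(uv)=u^{-1}$, so the function $q=\exp(2\pi\i\eq/\tw)$ pulls back along $\phi_S$ to $\exp(2\pi\i u^{-1})$, which is the exponential of the lift $2\pi\i u^{-1}$ of $\rho(1)\in\IQ$. Since $\K=\k[q^{\pm1}]e^{-f/\tw}g^{\eq/\tw}\bm{e}$, under $\mathrm{i}_{\bm{0}}$ the action of $q^m$ on $\H^\Be_{1,\bm{0}}$ is multiplication by the function $q^m$ inside $\H^1_{\dR,\bm{0}}$; thus if $\widetilde{\mathfrak{s}}=\sum_i h_i e_i$ then $q^m\widetilde{\mathfrak{s}}=\sum_i(e^{2\pi\i m u^{-1}}h_i)e_i$. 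Taking $\widetilde{\varphi}+2\pi\i m u^{-1}$ as a lift of $\varphi+\rho(m)$, where $\widetilde{\varphi}$ is a lift of $\varphi$, we find $e^{-(\widetilde{\varphi}+2\pi\i m u^{-1})}\cdot(e^{2\pi\i m u^{-1}}h_i)=e^{-\widetilde{\varphi}}h_i$ for all $i$; hence $q^m\mathfrak{s}\in\L^\Be_{\leqslant\varphi+\rho(m)}$ if and only if $\mathfrak{s}\in\L^\Be_{\leqslant\varphi}$, and since $q^m$ is invertible on $\L^\Be_{\bm{\theta}}$ this yields the required equality $q^m\cdot\L^\Be_{\leqslant\varphi_{\bm{\theta}}}=\L^\Be_{\leqslant\varphi_{\bm{\theta}}+\rho(m)}$.

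The individual computations above are short, and the step demanding the most care is the well-definedness of $\L^\Be_{\leqslant}$ in the first paragraph: one must track that each ambiguity (the neighbourhood $\widetilde{V'}$, the representative $\widetilde{\mathfrak{s}}$, the de Rham frame, and the lift of $\varphi$ to $\widetilde{\IQ}$) alters the coefficient functions $h_i$ only by factors bounded near the origin, and one must correctly identify the $\k[q^{\pm1}]$-action on $\L^\Be$ with multiplication by $\exp(2\pi\i u^{-1})$ through $\mathrm{i}_{\bm{0}}$ and the definition of $\K$. Once this is settled, conditions (1) and (2) follow immediately from the ring structure of $\A_{\widetilde{B}}^{\leqslant Z}$ and the identity $\exp(2\pi\i\eq/\tw)\circ\phi_S=\exp(2\pi\i u^{-1})$.
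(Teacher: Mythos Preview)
Your proof is correct and is precisely the routine verification the paper leaves to the reader with its \qed: the paper gives no written argument, and your check of well-definedness (independence of representative, frame, and lift of $\varphi$) together with the verification of conditions (1) and (2) via the ring structure of $\A_{\widetilde{B}}^{\leqslant Z}$ and the identity $q\circ\phi_S=\exp(2\pi\i u^{-1})$ is exactly what is intended.
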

By Lemma \ref{compatible}, 
we have the following description of
$\L^\Be_{\leqslant}$:
\begin{lemma}\label{period criterion}
A local section $\mathfrak{s}$ of $\L^\Be$
is in $\L^\Be_{\leqslant\varphi}$
if the period integral
\begin{align*}
e^{-\varphi}\ \<\mathfrak{s},\omega \>_{\rm Per}
\end{align*}
twisted by $e^{-\varphi}$ is of moderate growth
for any local section $\omega$ of $\j^*\H_{\dR,\bm{0}}^1$.\qed
\end{lemma}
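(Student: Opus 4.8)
The plan is to realize, through the period pairing, the coefficients $h_i$ that occur in the definition of $\L^\Be_{\leqslant}$, and then to apply the hypothesis with $\omega$ ranging over a frame dual to $e_1,\dots,e_r$. First I would fix the data of that definition: a local section $\mathfrak{s}$ of $\L^\Be$ near a point $\bm\theta\in T$, a small open $\widetilde{V'}\subset\widetilde{B}$ with $T\cap\widetilde{V'}$ in the domain of $\mathfrak{s}$, a representative $\widetilde{\mathfrak{s}}\in\Gamma(\widetilde{V'}\setminus T,\phi_S^{-1}\H^\Be_{1,\bm{0}})$, and the expansion $\widetilde{\mathfrak{s}}=\sum_{i=1}^r h_i e_i$ with $h_i\in\Gamma(\widetilde{V'},\widetilde{\jmath}_{B*}\O_{B^*})$; through $\mathrm{i}_{\bm{0}}$ I also view $\widetilde{\mathfrak{s}}$ as a section of $\phi_S^*\H^1_{\dR,\bm{0}}$ over $\widetilde{V'}\cap B^*$. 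Since $\H^1_{\dR,0,0}$ and $\j^*\H^1_{\dR,0,0}$ are locally free $\O_S$-modules by Theorem \ref{LF}, their $\phi_S$-pullbacks are locally free near the origin of $B$; after shrinking $\widetilde{V'}$ I choose an $\O_B$-frame $\omega'_1,\dots,\omega'_r$ of $\phi_S^*\j^*\H^1_{\dR,\bm{0}}$.

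Next I would invoke perfectness to manufacture a dual frame on the sector. If $\widetilde{V'}$ is small, then $\phi_S\colon B^*\simeqto S^*$ carries $\widetilde{V'}\cap B^*$ into $S^*\setminus S^*_{\R}$ when $\bm\theta\notin T_{\R_+}\cup T_{\R_-}$, into a region of $S^*$ disjoint from $S^*_{\R_-}$ when $\bm\theta\in T_{\R_+}$, and into a region disjoint from $S^*_{\R_+}$ when $\bm\theta\in T_{\R_-}$. On each of these three kinds of region the isomorphisms of \S\ref{Glueing Betti} obtained from Proposition \ref{limits} and Lemma \ref{SUP}, together with the fact that $\j$ exchanges $S^*_{\R_+}$ and $S^*_{\R_-}$, identify $\H^1_{\dR,\bm{0}}$ and $\j^*\H^1_{\dR,\bm{0}}$ with a pair $\H^1_{\dR,H!}$, $\j^*\H^1_{\dR,(E\setminus H)!}$ carrying the perfect pairing $\<\cdot,\cdot\>'_{\dR}$ of Theorem \ref{analytic duality}: on $S^*\setminus S^*_{\R}$ all the lattices $\H^1_{\dR,a,b}$ already coincide, since the divisors $H_{a,n_e}$ of Lemma \ref{SUP} meet $S^*$ only inside $S^*_{\R}$, so one may take $H=\emptyset$; near $S^*_{\R_+}$ one takes $H=E_\infty$, and near $S^*_{\R_-}$ one takes $H=E_0$. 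Consequently the matrix $M=\bigl(\<e_i,\omega'_k\>'_{\dR}\bigr)_{i,k}$ is invertible over $\O_{B^*}(\widetilde{V'}\cap B^*)$, and $e^\vee_j\coloneqq\sum_k (M^{-1})_{jk}\,\omega'_k$ is a section of $\phi_S^*\j^*\H^1_{\dR,\bm{0}}$ on $\widetilde{V'}\cap B^*$ with $\<e_i,e^\vee_j\>'_{\dR}=\delta_{ij}$.

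Finally, Lemma \ref{COMPATI} and the $\O$-bilinearity of $\<\cdot,\cdot\>'_{\dR}$ give, on $\widetilde{V'}\cap B^*$,
\[
\<\widetilde{\mathfrak{s}},e^\vee_j\>_{\rm Per}=\<\widetilde{\mathfrak{s}},e^\vee_j\>'_{\dR}=\sum_{i=1}^r h_i\,\<e_i,e^\vee_j\>'_{\dR}=h_j .
\]
By hypothesis $e^{-\varphi}\<\mathfrak{s},\omega\>_{\rm Per}$ is of moderate growth along $\partial\widetilde{B}$ for every local section $\omega$ of $\j^*\H^1_{\dR,\bm{0}}$, so taking $\omega=e^\vee_j$ shows $e^{-\varphi}h_j\in\A_B^{\leqslant Z}$ near $\bm\theta$ for each $j=1,\dots,r$; by the very definition of $\L^\Be_{\leqslant}$ this means $\mathfrak{s}\in\L^\Be_{\leqslant\varphi}$.

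I expect the main obstacle to be the region-by-region matching in the second paragraph. One must verify, for a neighborhood of each point of $T$ — including points of $T_{\R_\pm}$, where $\L^\Be$ is only locally constant on the pieces of the stratification $\Theta$ — that $\H^1_{\dR,\bm{0}}$ and $\j^*\H^1_{\dR,\bm{0}}$ become simultaneously the two arguments of a single perfect pairing; this is exactly where the saturation of $\L^\Be$, the gluing of $\H^\mg_1$, $\H^\Be_{1,E_0!}$, $\H^\Be_{1,E_\infty!}$ and $\H^\rd_1$ into $\H^\Be_{1,\bm{0}}$, and the interchange of $E_0$ and $E_\infty$ under $\j$ all have to be reconciled.
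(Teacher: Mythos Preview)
Your approach is correct in spirit and is precisely what the paper's one-line reference to Lemma~\ref{compatible} asks the reader to unpack: identify $\H^1_{\dR,\bm 0}$ and $\j^*\H^1_{\dR,\bm 0}$ region-by-region with a complementary pair $\H^1_{\dR,H!}$, $\j^*\H^1_{\dR,(E\setminus H)!}$ so that the period pairing in the statement makes sense, invoke $i_H=i'_H$ (Lemma~\ref{COMPATI}) to equate $\<\widetilde{\mathfrak s},\cdot\>_{\rm Per}$ with $\<\widetilde{\mathfrak s},\cdot\>'_{\dR}$, and then read off the coefficients $h_j$ via a dual frame. The region matching in your second paragraph is right, and is where the real content lies.

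There is, however, a gap in the final step. You apply the hypothesis directly to $\omega=e^\vee_j$, but your $e^\vee_j=\sum_k(M^{-1})_{jk}\,\omega'_k$ is only a section over $\widetilde{V'}\cap B^*$: the entries $(M^{-1})_{jk}$ are a~priori just in $\O_{B^*}$. The phrase ``local section of $\j^*\H^1_{\dR,\bm 0}$'' has to mean a section of this locally free $\O_S$-module near the origin --- if one allowed arbitrary sections on open subsets of $S^*$, then testing against $\omega=e^{1/v}\omega'_1$ would make the hypothesis impossible to satisfy and the lemma vacuous. So $e^\vee_j$ does not, as written, qualify as a test section. The patch is short. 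Either argue that $(M^{-1})_{jk}\in\O_B(*Z)$ near the origin (the compatibility clause in Theorem~\ref{analytic duality} together with the algebraic perfectness of Theorem~\ref{Perfect theorem} forces $\det M$ to be a unit in $\O_B(*Z)$), apply the hypothesis to the genuine local sections $\omega'_k$, and then multiply by the moderate-growth coefficients $(M^{-1})_{jk}$; or, more cleanly, take the dual frame from the outset inside $\j^*\H^1_{\dR,-1,-1}\subset\j^*\H^1_{\dR,\bm 0}$ using Theorem~\ref{Perfect theorem}, so that $\<e_i,e^\vee_j\>_\dR=\tw^{-1}\delta_{ij}$ with $e^\vee_j$ honestly defined near $0\in S$, whence $h_j=(2\pi\i)^{-1}\tw\,\<\widetilde{\mathfrak s},e^\vee_j\>_{\rm Per}$ and the extra factor $\tw=uv$ is harmless. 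The second route also lets you sidestep most of the region-by-region bookkeeping you flagged as the main obstacle.
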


We also note that we have the 
quasi-duality pairing
$\<\cdot,\cdot\>_{\Be,\pm}$
 on $\L^\Be$
 using $\<\cdot,\cdot\>_{\Be}$ (defined in the previous section)
 in a natural way. 

\begin{lemma}
The quasi-duality pairing
$\<\cdot,\cdot\>_{\Be\pm}$ 
on $\L^\Be$
is compatible with
the pre-Stokes filtration $\L^\Be_{\leqslant}$. \qed
\end{lemma}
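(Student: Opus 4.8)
The plan is to derive both inequalities in the compatibility condition from the analytic definition of $\L^\Be_\leqslant$ through the chosen frame, together with the fact (Lemma~\ref{compatible}) that the Betti pairing is the restriction of the de Rham pairing $\<\cdot,\cdot\>'_\dR$ under the inclusions into de Rham cohomology. First I would reduce to the $+$ part. Given $\mathfrak{t}\in(\L^\Be)^-_{\leqslant\psi}$ and $\mathfrak{t}'\in(\j^*\L^\Be)^-_{\leqslant\psi'}$, the relations $(\j^*\L)^\pm=\j^*(\L^\mp)$ and the definition of $\j^*\L^\Be_\leqslant$ turn $(\j^{-1}\mathfrak{t}',\j^{-1}\mathfrak{t})$ into a pair of the type appearing in the $+$ condition, with indices $\j^*\psi'$ and $\j^*\psi$; since $\j^*\<\cdot,\cdot\>_+=-\<\cdot,\cdot\>_-\circ\mathrm{ex}$ and the involution carries $\k[\q^{\pm1}]_{\leqslant\j^*\chi}$ onto $\k[\q^{\pm1}]_{\leqslant\chi}$ (the sign being a unit), the $-$ inequality follows from the $+$ one. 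So I may fix a point $\bm{\theta}\in T\setminus T_{\R_-}$ and sections $\mathfrak{s}\in(\L^\Be)^+_{\leqslant\varphi}$, $\mathfrak{s}'\in(\j^*\L^\Be)^+_{\leqslant\varphi'}$ near $\bm{\theta}$ and show $\<\mathfrak{s},\mathfrak{s}'\>_+\in\k[\q^{\pm1}]_{\leqslant\varphi+\varphi'}$.

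Next I would introduce coordinates via the frame $e_1,\dots,e_r$ of $\phi_S^*\H^1_{\dR,\bm{0}}$ used to define $\L^\Be_\leqslant$. On a neighbourhood of $\bm{\theta}$ the de Rham representatives of $\mathfrak{s}$ and of $\j^{-1}\mathfrak{s}'$ may be written $\sum_i h_i e_i$ and $\sum_j h'_j e_j$ with $h_i,h'_j\in\widetilde{\jmath}_{B*}\O_{B^*}$; by the definitions of $\L^\Be_\leqslant$ and of its $\j^*$-transform, $e^{-\varphi}h_i\in\A_B^{\leqslant Z}$ and $e^{-\j^*\varphi'}h'_j\in\A_B^{\leqslant Z}$. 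Putting $k_j\coloneqq\j^*h'_j$, the de Rham representative of $\mathfrak{s}'$ itself is $\sum_j k_j\,\j^*e_j$, and $e^{-\varphi'}k_j\in\A_B^{\leqslant Z}$, since the involution $(u,v)\mapsto(-u,v)$ preserves $Z$ and moderate growth.

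Then I would compute the pairing. On the stratum of $T$ containing $\bm{\theta}$ the sheaf $\L^\Be$ is the boundary value of one of $\H^\mg_1$, $\H^\Be_{1,E_0!}$, $\H^\Be_{1,E_\infty!}$, $\H^\rd_1$, say of $\H^\Be_{1,H!}$, and $(\j^*\L^\Be)^+$ of $\j^*\H^\Be_{1,H'!}$ with $H'=E\setminus H$; combining Lemma~\ref{compatible} with the identification of $\H^1_{\dR,\bm{0}}$ with $\H^1_{\dR,H!}$ on that stratum (Proposition~\ref{limits}, Lemma~\ref{SUP}) and the perfect pairing of \S\ref{pairing}, one gets
\[
\<\mathfrak{s},\mathfrak{s}'\>_+=\sum_{i,j}h_i\,k_j\,M_{ij},\qquad M_{ij}\coloneqq\<e_i,\j^*e_j\>'_\dR .
\]
Each $M_{ij}$ is pulled back from $\tw^{-1}\O_S$ along $\tw=uv$, hence is $O(|uv|^{-1})$ and lies in $\A_B^{\leqslant Z}$; as $\A_B^{\leqslant Z}$ is a ring, $e^{-\varphi-\varphi'}\<\mathfrak{s},\mathfrak{s}'\>_+=\sum_{i,j}(e^{-\varphi}h_i)(e^{-\varphi'}k_j)M_{ij}$ is of moderate growth along $Z$ near $\bm{\theta}$.

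It remains to translate this back into the filtration of $\k[\q^{\pm1}]_T$. The value $\<\mathfrak{s},\mathfrak{s}'\>_+$ is a Laurent polynomial $P(\q)$ in $\q=\exp(2\pi\i u^{-1})$ (because $\<\cdot,\cdot\>_{\Be\pm}$ is $\k[\q^{\pm1}]$-valued), and $\Phi(\k[\q^{\pm1}]_T,\k[\q^{\pm1}]_{T\leqslant})=[u^{-1}2\pi\i\Z]$, so by Example~\ref{trivial q} the condition $P(\q)\in\k[\q^{\pm1}]_{\leqslant\varphi+\varphi'}$ is equivalent to $\q^k e^{-\varphi-\varphi'}\in\A_B^{\leqslant Z}$ near $\bm{\theta}$ for every monomial $c_k\q^k$ of $P$. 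From $|\q^k|=\exp(2\pi k\,r_u^{-1}\sin\theta_u)$ the distinct powers $\q^k$ are exponentially separated at each point of $T\setminus(T_{\R_+}\cup T_{\R_-})$, and on a full neighbourhood of a point of $T_{\R_\pm}$ their magnitudes are separated on the two sides $\pm\theta_u>0$; a leading-term argument then upgrades moderate growth of $P(\q)e^{-\varphi-\varphi'}$ to moderate growth term by term, which completes the $+$ case and hence the lemma. \emph{The main obstacle} is exactly this last step: making precise the dictionary between the combinatorial filtration of $\k[\q^{\pm1}]_T$ in Example~\ref{trivial q} and the moderate-growth condition, especially along the boundary circles $T_{\R_\pm}$, where only one side of the neighbourhood produces the constraint. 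A secondary nuisance is the bookkeeping of which $H\subset E$ governs the period/de Rham comparison on each stratum of $T$, but this is dictated by the gluing defining $\H^\Be_{1,\bm{0}}$.
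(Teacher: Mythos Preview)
The paper omits the proof (the lemma carries only a \qed), so there is no argument to compare against; your approach via Lemma~\ref{compatible} and the moderate-growth definition of $\L^\Be_\leqslant$ is exactly the intended one.

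Two remarks. First, the claim that $M_{ij}=\<e_i,\j^*e_j\>'_\dR$ lies in $\tw^{-1}\O_S$ is not quite right when $E\neq\emptyset$: the algebraic pairing of \S\ref{pairing} couples $\H^1_{\dR,a,b}$ with $\j^*\H^1_{\dR,-1-a,-1-b}$, so to evaluate it on $e_i,\j^*e_j\in\H^1_{\dR,\bm0}$ one must first push $\j^*e_j$ down to $\j^*\H^1_{\dR,-1,-1}$ using the exact sequences of Lemma~\ref{SUP}. This costs a factor of $\eq^{|E|}$ (up to a nonzero constant), so in general $M_{ij}\in\eq^{-|E|}\tw^{-1}\O_S$; pulled back via $(\tw,\eq)=(uv,v)$ this still lies in $\O_B(*Z)\subset\A_{\widetilde{B}}^{\leqslant Z}$, and the rest of your computation is unaffected. (Example~\ref{GDual} already exhibits this: $\<e_{-1},\j^*e_{-1}\>_\dR=\eq^{-1}\tw^{-1}$.)

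Second, your ``main obstacle'' dissolves once you observe that the filtration of Example~\ref{trivial q} coincides with the moderate-growth filtration under the embedding $\k[\q^{\pm1}]_T\hookrightarrow\imath_T^{-1}\widetilde{\jmath}_{B*}\O_{B^*}$, $\q\mapsto\exp(2\pi\i u^{-1})$; that is, $\k[\q^{\pm1}]_{T,\leqslant\chi}=\k[\q^{\pm1}]_T\cap e^{\chi}\A_{\widetilde{B}}^{\leqslant Z}$. Your leading-term argument is precisely what proves this identification, and it works uniformly near $T_{\R_\pm}$ since moderate growth on the two half-neighbourhoods gives both a lower and an upper bound on the degrees occurring in $P$. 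With this in hand, the conclusion $P(\q)\in\k[\q^{\pm1}]_{\leqslant\varphi+\varphi'}$ is tautological from $e^{-\varphi-\varphi'}P(\q)\in\A_{\widetilde{B}}^{\leqslant Z}$.
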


\subsection{Geometric conditions}

In the following discussion, 
we will assume the following conditions:
\begin{align}
\label{A_1}
&\text{The restriction $f_{|U}$ has at most $A_1$-singularities.}\\
\label{nowhere}
&\text{The divisor $E$ and the critical points of $f_{|U}$ do not intersect.}
\end{align}

\begin{definition}
Let $\Sigma(f, g)_Y$ be  the zero locus of the relative one form
\begin{align*}
df-\eq g^{-1}dg\in
H^0 \(\C \times Y, \Omega^1_{\C\times Y/\C}\). 
\end{align*}
We then define $\Sigma(f, g)$ to be the closure of $\Sigma(f, g)_Y$ in $\C \times U$. 
\end{definition}

\begin{lemma}\label{cover}
The natural projection $\pi_\Sigma\colon\Sigma(f, g)\to \C$ 
is a non-ramified finite covering over the disc 
$\Delta_\eq(r)=\{\eq\in\C_\eq\mid |\eq|<r\}$ 
for sufficiently small $r>0$. 
\end{lemma}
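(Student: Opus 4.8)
The plan is to analyze the defining equation of $\Sigma(f,g)$ near the fiber over $\eq=0$ and to show that over a small disc $\Delta_\eq(r)$ it is a smooth curve mapping by a finite unramified (hence covering) map to $\Delta_\eq(r)$. First I would observe that the fiber $\pi_\Sigma^{-1}(0)=\Sigma(f,g)\cap(\{0\}\times U)$ is precisely the zero locus in $U$ of the one-form $df$, i.e. the set $\Crit(f_{|U})$, together with possible limit points coming from the divisor $E$; but by assumption \eqref{nowhere} the critical points of $f_{|U}$ do not meet $E$, and by assumption \eqref{A_1} the restriction $f_{|U}$ has only $A_1$-singularities, so $\Crit(f_{|U})$ is a finite set of nondegenerate (Morse) critical points, say $\{x_1,\dots,x_N\}\subset Y=X\setminus D$. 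In particular the fiber over $\eq=0$ is a finite set of points, all lying in $\C\times Y$.

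Next I would set up local coordinates. Near each $x_k$, choose a holomorphic coordinate $z$ centered at $x_k$; since $x_k\notin E$, both $f$ and $g^{-1}dg$ are holomorphic there, and since $x_k$ is a nondegenerate critical point of $f$ we may write $\partial_z f(z)=z\,\psi(z)$ with $\psi(0)\neq 0$ (after possibly rescaling $z$; the $A_1$ condition is exactly the nonvanishing of the Hessian). Writing $g^{-1}dg = a(z)\,dz$ with $a(z)$ holomorphic near $0$, the equation of $\Sigma(f,g)_Y$ in $(\eq,z)$ becomes
\begin{align*}
F(\eq,z)\coloneqq \partial_z f(z)-\eq\, a(z)=z\,\psi(z)-\eq\,a(z)=0.
\end{align*}
At $(\eq,z)=(0,0)$ we have $F=0$ and $\partial_z F(0,0)=\psi(0)\neq 0$, so by the holomorphic implicit function theorem there is a unique holomorphic branch $z=z_k(\eq)$ solving $F=0$ near this point, defined for $|\eq|<r_k$ for some $r_k>0$, with $z_k(0)=0$. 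This shows $\Sigma(f,g)$ is smooth near $\pi_\Sigma^{-1}(0)$ and that $\pi_\Sigma$ is a local biholomorphism there.

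It then remains to check that for $r=\min_k r_k$ (shrunk further if needed) no further sheets appear: that is, $\pi_\Sigma^{-1}(\Delta_\eq(r))$ is exactly $\bigsqcup_{k}\{(\eq,z_k(\eq))\}$ and the map to $\Delta_\eq(r)$ is proper. Properness and the absence of extra components is where the main obstacle lies: a priori a sequence of points in $\Sigma(f,g)_Y$ with $\eq\to 0$ could escape to the divisor $D=P\cup E$ (where $\Sigma(f,g)$ is defined as a closure), and one must rule this out. Over $P=f^{-1}(\infty)$ this follows because $df-\eq g^{-1}dg$ has a pole of order $\geq 2$ there (from $f$) so cannot vanish for $\eq$ small; over $E$ one uses assumption \eqref{nowhere} together with the order-of-vanishing/pole analysis of $g^{-1}dg$ along $E$, so that for $|\eq|$ small the zero locus stays in a fixed compact subset of $\C\times Y$ away from $E$. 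Granting this compactness, $\pi_\Sigma$ restricted to $\pi_\Sigma^{-1}(\Delta_\eq(r))$ is a proper local biholomorphism to the connected disc $\Delta_\eq(r)$, hence a finite unramified covering, whose degree equals the number of critical points $N$ counted in the fiber over $0$. I would present the $P$-side and $E$-side compactness arguments carefully, as this boundary control is the only nontrivial point; the rest is the implicit function theorem.
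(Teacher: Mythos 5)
Your proposal handles the critical points $c\in\Crit(f)$ correctly (implicit function theorem after using the $A_1$ condition to get $\partial_z F(0,c)\neq 0$) and the exclusion of the fiber near $P$ is also along the right lines. But there is a genuine error in how you treat the divisor $E$, and it is not a minor technicality: you assert that ``the fiber over $\eq=0$ is a finite set of points, all lying in $\C\times Y$'' and later that ``for $|\eq|$ small the zero locus stays in a fixed compact subset of $\C\times Y$ away from $E$.'' Both claims are false whenever $E\neq\emptyset$. In fact the paper shows $\pi_\Sigma^{-1}(0)=E\sqcup\Crit(f)$: each point of $E$ contributes a sheet of $\Sigma(f,g)$. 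Concretely, in Example \ref{spec gamma} ($X=\P^1$, $f=g=z$, $E=E_0=\{0\}$, $\Crit(f)=\emptyset$) the zero locus of $df-\eq g^{-1}dg=(1-\eq/z)\,dz$ is $z=\eq$, which accumulates on $E$ as $\eq\to0$, and $\Sigma(f,g)=\{(\eq,y):y-\eq=0\}$ has $\pi_\Sigma^{-1}(0)=\{0\}=E$. Under your description there would be no sheets at all and the covering degree would be $N=|\Crit(f)|=0$, contradicting the fact (Corollary 4.11 in the paper) that the degree equals $|E\sqcup\Crit(f)|$.

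The fix is precisely the case analysis you omit: near $e\in E$, choose a coordinate $(W_e,y)$ with $g=y^{n_e}$ (so $g^{-1}dg=n_e\,dy/y$) and $f$ holomorphic with $df=f'(y)\,dy$; by assumption \eqref{nowhere}, $f'$ is nowhere vanishing on $W_e$. Then $\Sigma(f,g)\cap(\Delta_\eq(r)\times W_e)$ is cut out by $y f'(y)-n_e\eq=0$ (after clearing the pole), and $\partial_y\bigl(yf'(y)-n_e\eq\bigr)\big|_{(0,0)}=f'(0)\neq0$, so this sheet is smooth and unramified over $\Delta_\eq(r)$ for small $r$, passing through $(0,e)$. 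So the role of assumption \eqref{nowhere} near $E$ is not to keep zeros away from $E$; it is to make $f'$ nonvanishing there so that the branch through $E$ is unramified. Without adding this $E$-side analysis your argument both mis-identifies the central fiber and gets the degree wrong.
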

\begin{proof}
There exist $r>0$ and a small neighborhood $V$ of $P\subset X$
such that we have $({\Delta_\eq(r)}\times V)\cap\Sigma(f, g)=\emptyset$.
It follows that $\Sigma(f, g)\cap({\Delta_\eq(r)}\times U)$ is closed in ${\Delta_\eq(r)}\times X$.
Hence the projection
\begin{align*}
\pi_{r}:\Sigma(f, g)\cap (\Delta_\eq(r)\times Y)\longrightarrow {\Delta_\eq(r)}
\end{align*}
is a finite morphism.
It remains to prove that $\pi_r$ is non-ramified for sufficiently small $r>0$.
Take a point $p$ in $\pi_\Sigma^{-1}(0)$. 
By Condition (\ref{nowhere}), we have $\pi_\Sigma^{-1}(0)=E\sqcup \Crit(f)$.

Consider the case $p\in \Crit(f)$.
By Condition (\ref{A_1}), there exists a chart $(V_p;x)$
on which we have $f(x)=x^2+f(p)$. 
Let $h(x)$ be the function on $V_p$ defined by 
$h(x)d x=g^{-1}dg$. 
By Condition (\ref{nowhere}), $h(x)$ is a holomorphic function.
On ${\Delta_\eq(r)}\times V_p$, we have
\begin{align*}
df-\eq g^{-1}dg=(2x-\eq h(x))dx.
\end{align*}
Since $h(x)$ is holomorphic, 
\[\Sigma(f, g)\cap ({\Delta_\eq(r)}\times V_p)=\{(\eq,x)
\in{\Delta_\eq(r)}\times V_p\mid 2x-\eq h(x)=0\}\]
is smooth over ${\Delta_\eq(r)}$ for sufficiently small $r>0$.

Consider the case $p\in E$. 
There exists a coordinate 
$(W_p, y)$ such that $y(p)=0$ and 
$g(y)=y^{n_p}$ for some $n_p\in \Z$ on $W_p$. 
Let $f'(y)$ be the holomorphic function on $W_p$ by defined by $df=f'(y)dy$.
By Condition (\ref{nowhere}), we may assume that $f'(y)$ is nowhere vanishing on $W_p$. 
Then 
\begin{align*}
\Sigma(f, g)\cap ({\Delta_\eq(r)}\times W_p)=\{(\eq, y)\in {\Delta_\eq(r)}\times W_p\mid y f'(y)-n_p\eq =0 \}
\end{align*}
is smooth over ${\Delta_\eq(r)}$ for sufficiently small $r>0$.

Hence we obtain the lemma. 
\end{proof}

We also note that under this assumption, we have the following:
\begin{corollary}
The rank of $\H^1_{\dR}$ over $\O_{S^\circ}$ 
is equal to 
the number of elements of $\pi_{\Sigma}^{-1}(0)=E\sqcup \Crit(f)$. 
\end{corollary}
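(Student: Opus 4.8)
The plan is to compute the generic rank of $\H^1_{\dR}$ over $\O_{S^\circ}$ by exhibiting an explicit quasi-isomorphic complex and then counting. First I would restrict attention to a small bidisc $\nb(s_0)$ around a point $s_0=(\tw_0,\eq_0)\in S^\circ$ with $|\eq_0|<r$, where $r>0$ is chosen as in Lemma~\ref{cover}, and with $\tw_0\neq 0$. By Theorem~\ref{dRT}, $\H^1_{\dR|S^\circ}$ is locally free, so its rank at a generic point equals its rank at any point, and in particular it suffices to compute the dimension of the fibre $\H^1_{\dR}\otimes_{\O_{S^\circ}}\C_{s_0}$ for one such $s_0$. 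Using Theorem~\ref{dRT} together with Lemma~\ref{SUP} (as in the proof of Theorem~\ref{dRT}), around $s_0$ we have $\H^1_{\dR|\nb(s_0)}\simeq \H^1_{\dR,a,b|\nb(s_0)}$ for suitable $(a,b)$, and then Lemma~\ref{dR} and the argument of Theorem~\ref{LF} present the fibre at $s_0$ as the cokernel of an explicit finite-dimensional linear map
\begin{align*}
\tw_0 d + df - \eq_0\, g^{-1}dg \colon V^{n,0}_{a,b}\longrightarrow W^{n,0}_{a,b}
\end{align*}
which is injective; hence the fibre dimension is $\dim W^{n,0}_{a,b}-\dim V^{n,0}_{a,b}$, a Riemann--Roch quantity.

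Next I would reinterpret this Euler characteristic geometrically. The map above is, up to the lattice truncations, the de Rham differential of the rank-one connection $(\O_X(*D),\,d+\tw_0^{-1}(df-\eq_0 g^{-1}dg))$ on $X$, specialized at the point $s_0$. Since $f_{|U}$ has at most $A_1$-singularities (Condition~(\ref{A_1})) and $E$ is disjoint from $\Crit(f)$ (Condition~(\ref{nowhere})), for $(\tw_0,\eq_0)$ generic with $|\eq_0|<r$ the one-form $df-\eq_0 g^{-1}dg$ has exactly the zeros parametrized by $\pi_\Sigma^{-1}(\eq_0)$, and by Lemma~\ref{cover} the covering $\pi_\Sigma$ is non-ramified over $\Delta_\eq(r)$, so $\#\pi_\Sigma^{-1}(\eq_0)=\#\pi_\Sigma^{-1}(0)=\#(E\sqcup\Crit(f))$. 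The generic rank of the de Rham cohomology of a meromorphic connection of this type equals the number of zeros of the defining one-form on $U$ (counted with multiplicity), which here is exactly $\#(E\sqcup\Crit(f))$ because each zero is simple: at a point of $\Crit(f)$ the local model is $(2x-\eq h(x))dx$ with simple zero, and at a point of $E$ it is $(yf'(y)-n_p\eq)dy$ with simple zero, both as exhibited in the proof of Lemma~\ref{cover}.

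A clean way to organize the counting is to deform: for $\eq_0=0$ the one-form is $df$, whose cohomology $\cal{H}^1_{\dR}(f)$ has rank equal to $\#\Crit(f)$ plus, at the points of $E$, a contribution of one per point coming from the extra pole/zero of $g$ that is no longer cancelled — equivalently, one runs the index computation with the divisor $D=P\cup E$ rather than $P$, and the difference from the classical case (where the rank is $\#\Crit(f)$) is exactly $\#E$. Since the relevant Euler characteristic $\dim W^{n,0}_{a,b}-\dim V^{n,0}_{a,b}$ is independent of $\eq_0$ (it is locally constant in $s_0$ by local freeness, hence constant on the connected bidisc), this value equals $\#\Crit(f)+\#E=\#(E\sqcup\Crit(f))$, which is what we want.

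The main obstacle is the bookkeeping at the divisor $E$: one must check carefully that each component of $E$ contributes exactly one to the rank, i.e. that the pole orders $(a,b)$ in the lattice $\M_{a,b}$ can be chosen (via Lemma~\ref{SUP}) so that the local cohomology near a point $e\in E$ is one-dimensional and corresponds bijectively to the simple zero of $yf'(y)-n_p\eq$ produced in Lemma~\ref{cover}. This is a local computation with the explicit coordinates of Lemma~\ref{cover}, but it is the step where Conditions~(\ref{A_1}) and~(\ref{nowhere}) genuinely enter, and where one must be sure no spurious contributions appear from the interaction of the pole along $(f)_\infty$ with the zeros/poles of $g$. Everything else is Riemann--Roch and the deformation-invariance of the Euler characteristic.
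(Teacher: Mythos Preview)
Your approach is correct in outline but takes an unnecessary detour, and the paper's proof is considerably shorter. You work at a generic point $s_0=(\tw_0,\eq_0)\in S^\circ$ with $\tw_0\neq 0$, invoke the identification $\H^1_{\dR}\simeq\H^1_{\dR,a,b}$ near $s_0$, and then try to count via the zeros of $df-\eq_0 g^{-1}dg$ together with a deformation/Riemann--Roch argument. The difficulty you flag---the bookkeeping at $E$, where the one-form acquires simple poles and the naive ``rank equals number of zeros'' slogan for irregular connections does not apply verbatim---is real, and you leave it as a local computation rather than carrying it out.

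The paper avoids all of this by a single observation: Theorem~\ref{LF} says $\H^1_{\dR,0,0}$ is locally free over \emph{all} of $S$, not just $S^\circ$, so one may compute the rank as the fibre dimension at $(\tw,\eq)=(0,0)$. At that point the map $\tw\,d+df-\eq\,g^{-1}dg$ collapses to multiplication by $df$, and the fibre is
\[
H^0\bigl(U,\ \omega_U(E)/\O_U\, df\bigr).
\]
This is a skyscraper sheaf: at $c\in\Crit(f)$ the form $df$ has a simple zero (by (\ref{A_1})), contributing length $1$; at $e\in E$ the form $df$ is nonvanishing (by (\ref{nowhere})) so $\O_U\,df=\omega_U$ locally, and the extra simple pole from $\omega_U(E)$ contributes length $1$. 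Hence the total is $\#\Crit(f)+\#E$, with no deformation argument and no Riemann--Roch needed. Your route works, but the paper's choice of specialization point makes the ``main obstacle'' you identify disappear entirely.
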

\begin{proof}
By Theorem \ref{LF}, the rank is the same as the dimension of 
the fiber of $\H^1_{\dR,0,0}$ at $(z, s)=(0,0)$.
The fiber is canonically isomorphic to 
\[H^0(U,\omega_{U}(E)/\O_{U} df). \]
Since the support of $\omega_{U}(E)/\O_{U} df$
is $\pi_\Sigma^{-1}(0)$ (the multiplicity at each point is one), we obtain the lemma.
\end{proof}
Hence we also have:
\begin{corollary}
If $(f,g)$ satisfies $(\ref{A_1})$, $(\ref{nowhere})$ then
the rank of $\L^\Be$ is equal to 
the number of elements in $\pi_{\Sigma}^{-1}(0)=E\sqcup \Crit(f)$. \qed
\end{corollary}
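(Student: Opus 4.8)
The plan is to compute the rank of $\L^\Be$ by transporting the question to the de Rham side through the dictionary already established, and then to invoke the two Corollaries immediately preceding (the one producing the isomorphism $\mathrm{i}_{\bm 0}$ and the one computing the rank of $\H^1_\dR$). First I would recall that, by Definition \ref{Q-local}, the rank of the quasi-local system $\L^\Be$ on $(T,\Theta)$ is the common rank $r$ of the free $\k[q^{\pm1}]$-modules $\L^\Be_{|T_\star}$ over the strata $T_\star\in\Theta$. Since $\L^\Be=\imath_T^{-1}\widetilde{\jmath}_{B*}\phi_S^{-1}\H^\Be_{1,\bm 0}$ and $\phi_S\colon B^*\simeqto S^*$ is an isomorphism carrying the stratification of $S^*$ to that of $B^*$, the operations $\phi_S^{-1}$ and $\imath_T^{-1}\widetilde{\jmath}_{B*}$ do not change the generic rank; this is precisely what makes $\L^\Be$ a quasi-local system in the first place, via Lemma \ref{cokernel}. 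Hence the rank of $\L^\Be$ equals the rank of $\H^\Be_{1,\bm 0}$ as a quasi-local system on $S^*$, that is, the rank of the free $\k[q^{\pm1}]$-module $\H^\Be_{1,\bm 0|S^*\setminus S^*_{\R}}=\H^\mg_{1|S^*\setminus S^*_{\R}}$.

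Next I would feed this through the isomorphism $\mathrm{i}_{\bm 0}\colon \H^\Be_{1,\bm 0}\otimes_{\k[q^{\pm1}]}\O_{S^*}\simeqto \H^1_{\dR,\bm 0|S^*}$. Over the open stratum $S^*\setminus S^*_{\R}$ the left-hand side is a locally free $\O_{S^*}$-module of rank $r$, so $\H^1_{\dR,\bm 0}=\H^1_{\dR,0,0}$ has $\O$-rank $r$ there; since $\H^1_{\dR,0,0}$ is locally free of constant rank over $\O_S$ by Theorem \ref{LF}, this rank is $r$ everywhere on $S$, and in particular the fiber of $\H^1_{\dR,0,0}$ at $(\tw,\eq)=(0,0)$ has dimension $r$. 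By the computation carried out in the proof of the preceding Corollary, that fiber is canonically isomorphic to $H^0(U,\omega_U(E)/\O_U\,df)$, whose support is $\pi_\Sigma^{-1}(0)=E\sqcup\Crit(f)$ with multiplicity one at each point; therefore $r=\#\bigl(E\sqcup\Crit(f)\bigr)$, which is the assertion.

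I do not expect a genuine obstacle: under $(\ref{A_1})$ and $(\ref{nowhere})$ the statement is a formal consequence of results already in place. The only point needing care is the bookkeeping of where each occurrence of ``rank'' is taken -- over $\k[q^{\pm1}]$ on the relevant open stratum of $T$ (or of $S^*$) versus as an $\O_S$-fiber dimension at the origin -- and the bridge between these two descriptions is exactly the $\O_S$-local freeness of $\H^1_{\dR,0,0}$ (Theorem \ref{LF}) together with the isomorphism $\mathrm{i}_{\bm 0}$, both valid under the standing assumptions.
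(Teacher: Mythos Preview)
Your proposal is correct and is exactly the argument the paper has in mind: the corollary is marked \qed and preceded by ``Hence we also have,'' so the intended proof is precisely to pass from $\L^\Be$ to $\H^\Be_{1,\bm 0}$, then through $\mathrm{i}_{\bm 0}$ to $\H^1_{\dR,\bm 0}=\H^1_{\dR,0,0}$, and invoke the fiber computation at $(0,0)$ from the preceding corollary together with the $\O_S$-local freeness of $\H^1_{\dR,0,0}$ (Theorem \ref{LF}).
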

For each point $p$ in $\pi_\Sigma^{-1}(0)$, let
$\nu_{p}\colon{\Delta_\eq(r)} \simeqto \Sigma_p$ denote the section of 
of the projection $\pi_\Sigma$ to the sheet 
$\Sigma_p \subset \Sigma(f, g)\cap({\Delta_\eq(r)} \times U)$ which contains $p$. 
We will identify 
$\Sigma_p$
with its image in $U$ via the projection ${\Delta_\eq(r)}\times U\to U$. 
Then $\nu_p$ will be identified with 
the composition with the projection, i.e. 
it will be considered as a map $\nu_p\colon {\Delta_\eq(r)}\to U$.

We then put $f_p \coloneqq f\circ\nu_{p} \colon {\Delta_\eq(r)} \to \C$
and $g_p\coloneqq g\circ \nu_{p} \colon {\Delta_\eq(r)} \to \C$. 
Let $n_p$ be the  integer such that 
$g_p(\s)$ is of the form $\s^{n_p}h(\s)$ for $h(\s)$ is holomorphic with $h(0)\neq 0$. 
Using these notations, we define the goodness of $(f, g)$:
\begin{definition}\label{good pair}
We will say that the pair  $(f, g)$ is \textit{good} if 
for any two distinct points $p, p'\in\pi_\Sigma^{-1}(0)$
with $(f_p, g_p)\neq (f_{p'}, g_{p'})$ as pairs of germs in $\O_{{\Delta_\eq(r)},0}$,
either
$f_p(0)\neq f_{p'}(0)$ or $n_p\neq n_{p'}$ holds. 
\end{definition}

\begin{example}\label{spec bessel}
Consider the Example \ref{Bessel}.
In this case, we have $E=\emptyset$, 
$\Crit(f)=\{\pm 1\}$, and 
\[\Sigma(f, g)=\{(\s, y)\in  \C\times Y \mid y^2-\s y-1=0\}.\]
We have
$f_{\pm 1}(\s)=\pm \sqrt{\s^2+4}$
and $g_{\pm 1}(\s)=2^{-1}(\eq\pm \sqrt{\s^2+4})$.
\end{example}

\begin{example}\label{spec gamma}
Consider the Example \ref{EXG}.
In this case, we have
$E=E_0=\{0\}$, 
$\Crit(f)=\emptyset$ and 
\[\Sigma(f, g)=\{(\s, y)\in  \C \times U \mid y-\s=0\}.\] 
We have $f_{0}(\s)=g_0(\s)=\s$. 
\end{example}

In the following, we always assume that $(f, g)$
is a good pair.

\subsection{Statement of the main theorem}
The
main result of this paper is the following:
\begin{theorem}\label{main theorem}
Assume that the pair $(f, g)$ is good.
In particular, they satisfy the conditions $(\ref{A_1})$, $(\ref{nowhere})$.
Then, 
$\L^\Be_{\leqslant}$
is a good Stokes filtration on $\L^\Be_{}=\L^\Be(f,g)$. 
Moreover, the exponential factor 
$\Phi(f, g)\coloneqq \Phi(\L^\Be_{}, \L^\Be_{\leqslant})$
is given by 
\begin{align*}
&\Phi(f,g)=\bigsqcup_{p\in \pi_{\Sigma}^{-1}(0)} \Phi_p(f,g),
&
\Phi_p(f, g)\coloneqq \left[ u^{-1}\( \log g_p(v)-\frac{f_p(v)}{v}+2\pi\i\Z\) \right].
\end{align*}
\end{theorem}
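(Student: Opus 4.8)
The plan is to work locally on $T$ and use the saddle point (stationary phase) method to analyze the period integrals $\langle\mathfrak{s},\omega\rangle_{\mathrm{Per}}$ that define the filtration $\L^\Be_{\leqslant}$, exactly as in the differential case behind Theorem \ref{L(f)}, but keeping track of the difference variable. The first step is to pull everything back along $\phi_S(u,v)=(uv,v)$ so that the parameter degenerating to zero is $\tw = uv$ and the relevant exponential becomes $e^{-f/\tw}g^{\eq/\tw} = e^{-f/(uv)}g^{1/u}$. Restricting to a sheet $\Sigma_p$ of the covering $\pi_\Sigma$ (Lemma \ref{cover}), a basis of rapid decay / moderate growth cycles through the Lefschetz thimble emanating from $p$ contributes, by the saddle point method, a period integral whose leading asymptotic behavior when $u\to 0$ is governed by the exponent $e^{-f_p(v)/(uv)}g_p(v)^{1/u}$, up to a factor of the form $u^{\text{(half-integer)}}$ (from the Gaussian integration in the $A_1$ case) times a holomorphic symbol. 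Taking the logarithm of the exponential part (divided by $\exp$ of a locally bounded function, which is what $\IQ$ records) yields exactly $u^{-1}\bigl(\log g_p(v) - f_p(v)/v\bigr)$ modulo $u^{-1}2\pi\i\Z$, which is the claimed generator of $\Phi_p(f,g)$. The indeterminacy $2\pi\i\Z$ is precisely the ambiguity of $g^{1/u}$, and it is tracked by the $\k[q^{\pm1}]$-module structure via the compatibility $q\cdot\L_{\leqslant\varphi}=\L_{\leqslant\varphi+\rho(1)}$.

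Second, I would verify the axioms of Definition \ref{Def q-Stokes}. Condition (1), that $\gr\L^\Be$ is a local system on $\Phi(\L^\Be,\L^\Be_{\leqslant})$, and condition (2), the local splitting, both follow from the saddle point analysis: away from the Stokes lines $\St_{p,p'}$ the thimbles can be chosen so that each contributes a well-defined one-dimensional graded piece, and on a small contractible $V$ one produces the splitting $\eta_V$ by sending each basis element of $\tau_!\gr\L^\Be$ to the corresponding thimble section, checking via the asymptotic expansions that it preserves the filtration and induces the identity on the graded pieces. The reflection/monodromy phenomena on $T_{\R_\pm}$ — where two naively-equal exponents $\Phi_{n_i,h_i}$ and $\Phi_{n_j,h_j}$ with $h_i(0)=h_j(0)$ but $n_i\neq n_j$ have crossing behavior, cf. the gamma-function computation in Example \ref{GAMMA FUN} — account for the need to pass to $\L^{\Be\pm}$ there; the saturatedness of $\L^\Be$ is already established in \S\ref{Glueing Betti}. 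Condition (3), that each coarse graded part $\Gr_{\Phi_p}(\L^\Be)$ is $\pm$saturated according to the sign of $n_p$, is checked using the exact sequences of Lemma \ref{cokernel} relating $\H^\mg_1$, $\H^\Be_{1,E_0!}$, $\H^\Be_{1,E_\infty!}$, $\H^\rd_1$: the sign of $n_p=n_{e}$ or, for a critical point, the behavior of $g_p$ at $v=0$, dictates on which of $S^*_{\R_\pm}$ the corresponding cycle extends, which translates directly into $+$ or $-$saturatedness.

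Third, the goodness of the exponential factor: $\Phi(f,g)=\bigsqcup_p\Phi_{n_p,h_p(v)}$ with $h_p(v)$ the germ such that $\log g_p(v)-f_p(v)/v = u\cdot u^{-1}(\cdot)$ has $h_p(0)$ read off from $-f_p(0)$ and from the holomorphic part, while $n_p$ is the order of $g_p$ at $0$; Definition \ref{good pair} says exactly that for distinct $p,p'$ either $f_p(0)\neq f_{p'}(0)$ or $n_p\neq n_{p'}$, which is precisely the condition in Definition \ref{GOODNESS} that $h_p(0)\neq h_{p'}(0)$ or $n_p-n_{p'}\neq0$. When $(f_p,g_p)=(f_{p'},g_{p'})$ as germs the two pieces coincide and should be merged, so the disjointness in the statement is genuine. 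The main obstacle I anticipate is making the saddle point analysis uniform and rigorous near the points of $E$ (where $g$ vanishes or blows up but $f$ is regular, by condition (\ref{nowhere})), since there the critical point of the phase $f-\eq g^{-1}g'$ on $\Sigma_p$ collides with a pole or zero of $g^{\eq/\tw}$: one must check that the local frame $e_1,\dots,e_r$ of $\phi_S^*\H^1_{\dR,\bm0}$ chosen near the origin of $B$ interacts correctly with the rapid decay conditions along $E_0$ and $E_\infty$, and that the resulting asymptotics still have the predicted exponential type with the $\k[q^{\pm1}]$-compatibility intact. This is the technical heart, and it is where the submodules $\H^1_{\dR,a,b}$ and the four Betti theories of \S\ref{intro Betti} are indispensable; once the asymptotics are controlled there, the rest is a routine transcription of the classical Stokes-filtered-local-system argument.
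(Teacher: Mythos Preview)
Your plan coincides with the paper's: construct Lefschetz-thimble sections indexed by $p\in\pi_\Sigma^{-1}(0)$, apply the saddle point method to their period integrals (via Lemma \ref{period criterion}) to place each in the correct filtration level, and read off the exponential factor and its goodness from Definition \ref{good pair}. The paper organizes this by first stating an intermediate Theorem \ref{WTS} listing the properties the thimble sections $\mathfrak{s}^\phi_c$, $\mathfrak{s}^\phi_{e,\pm}$ must have on the sectors $V^\phi$, $V^\phi_\pm$, and then showing \ref{WTS} $\Rightarrow$ \ref{main theorem} by assembling frames of $\L^\Be$, $(\L^\Be)^+$, $(\L^\Be)^-$ from them.

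The one substantive step your sketch leaves open is why the thimble sections form a $\k[q^{\pm1}]$-\emph{frame}, not merely a collection sitting in the right filtration levels with nonzero graded image. Over a field a rank count would suffice, but over $\k[q^{\pm1}]$ this is not automatic, and without it your proposed splitting $\eta_V$ is not known to be an isomorphism. The paper handles this by including in Theorem \ref{WTS} the requirement that the Betti intersection matrix $\bigl(\langle\mathfrak{s}^\phi_{p,+},\j^*\mathfrak{s}^{\phi+\pi}_{p',-}\rangle_{\Be+}\bigr)_{p,p'}$ be the identity; since $\langle\cdot,\cdot\rangle_{\Be}$ is perfect, this forces the $\mathfrak{s}^\phi_{p,\pm}$ to be a free basis. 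Geometrically the identity matrix is arranged by constructing the paths so that $\gamma^\phi_{p,\eq}$ and $\gamma^{\phi+\pi}_{p',\eq}$ are disjoint for $p\neq p'$ and meet transversally only at $\nu_p(\eq)$ for $p=p'$ (property $(\gamma2)$ in \S\ref{properties of gamma}). A secondary point: near $e\in E$ the paper builds \emph{two} topologically distinct path families $\gamma^\phi_{e,+}$ and $\gamma^\phi_{e,-}$ over the sectors $\cal{S}^\phi_\pm(\delta)$, one ending on $\varpi_X^{-1}(e)$ and one avoiding $e$ entirely, the choice governed by the sign of $\Re(e^{-\i\phi}n_e\eq)$; this dichotomy (worked out via the foliation of $\alpha_\eq=df-\eq g^{-1}dg$ in \S\ref{foliation}) is what directly produces condition (3) of Theorem \ref{WTS} and hence of Definition \ref{Def q-Stokes}, rather than an appeal to Lemma \ref{cokernel} alone.
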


The proof of this theorem will be finished in \S \ref{saddle point method}. 
The goodness of $\Phi(f,g)$
directly follows from the assumption
that the pair $(f,g)$
is good in the sense of
Definition \ref{good pair}. 
We firstly 
explain 
what will be actually shown in the proof. 

\begin{definition}
A real number $\phi\in \R$
is called \textit{generic} (with respect to $(f, g)$)
if 
\begin{align*}
\Im(e^{-\i\phi}(f(p)-f(p')))\neq 0 
\end{align*}
for $p, p'\in \pi_\Sigma^{-1}(0)$ with $f(p)\neq f(p')$. 
\end{definition}

Remark that if $\phi$ is generic, then  $\phi+\pi\i n$ ($n\in \Z$)
is also generic. 
For a generic $\phi$, 
we set
\begin{align*}
&V^\phi\coloneqq \left\{(\theta_u,\theta_v)\in T\middle| \cos(\theta_u+\theta_v-\phi)>0 \right\},\\
&V^\phi_{+}\coloneqq \left\{(\theta_u,\theta_v)\in V^\phi\middle| \cos(\theta_v-\phi)>0\right\},\\
&V^\phi_{-}\coloneqq\left\{(\theta_u,\theta_v)\in V^\phi\middle| \cos(\theta_v-\phi)<0\right\}.
\end{align*}
Note that we have
$V^\phi_+\cap T_{\R_+}\neq \emptyset$, $V^\phi_+\cap T_{\R_-}=\emptyset$, 
$V^\phi_-\cap T_{\R_-}\neq \emptyset$, and $V^\phi_-\cap T_{\R_+}=\emptyset$.

We also have
\begin{align*}
&\iota(V^{\phi})=V^{\phi+\pi}, 
&\iota(V^{\phi}_{\pm})=V^{\phi+\pi}_{\mp}.
\end{align*}
Theorem \ref{main theorem}
follows from the following:
\begin{theorem}\label{WTS}
For any generic $\phi$,
we have the 
sections 
\begin{align*}
&\mathfrak{s}_c^\phi\in \L^\Be_{}(V^\phi),&&(c\in \Crit(f))\\
&\mathfrak{s}_{e,\pm}^\phi\in \L^\Be_{}(V^\phi_{\pm}), 
&&(e\in E)
\end{align*}
with the following properties$:$
\begin{enumerate}
\item There are sections 
\begin{align*}
&\varphi_c\in H^0(V^\phi, \Phi_c(f,g))&& (c\in \Crit(f)),\\ 
&\varphi_{e,\pm}\in H^0(V^\phi_{\pm},\Phi_e(f,g)), 
&&(e\in E)
\end{align*}
such that 
\begin{enumerate}
\item
$\mathfrak{s}_c^\phi\in \L_{\leqslant \varphi_c}^\Be(V^\phi)$
for $c\in \Crit(f)$,
$\mathfrak{s}_{e,\pm}^\phi\in \L^\Be_{\leqslant \varphi_{e,\pm}}(V^\phi_{\pm})$
for $e\in E$, and
\item
the induced sections $\gr_{\varphi_c}(\mathfrak{s}_c^\phi)$, 
$\gr_{\varphi_{e,\pm}}(\mathfrak{s}_{e,\pm}^\phi)$ are non-zero. 
\end{enumerate}
\item
Let $\mathfrak{s}_{c,\pm}^\phi$ denote the restrictions of $\mathfrak{s}_{c}^\phi$
to $V^\phi_{\pm}$. 
Then the intersection matrix
\[\(\<\mathfrak{s}^\phi_{p,+}, \mathfrak{s}^{\phi+\pi}_{p',-}\>_{\Be+}\)_{p, p'\in \pi_{\Sigma}^{-1}(0)}\]
is the identity matrix.
%The pairings
%$\langle \mathfrak{s}_c^\phi, \mathfrak{s}^{\phi+\pi}_c\rangle_{\Be,+},
%\langle\mathfrak{s}_{e,+}^\phi, \mathfrak{s}_{e,-}^{\phi+\pi}\rangle_{\Be,+}
%$
%are invertible elements in $\k[q^{\pm 1}]$.
\item
The quotient classes 
\begin{align*}
[&\mathfrak{s}^\phi_{e,+}]_{\bm\theta}\in \L_{\bm\theta}^\Be/(\L^\Be)^-_{\bm\theta}
&&\text{for $e\in E_0$, $\bm{\theta}\in V^\phi_+\setminus T_{\R_+}$
and} 
\\
&[\mathfrak{s}^\phi_{e', -}]_{\bm\theta'}\in \L_{\bm\theta'}^\Be/(\L^\Be)^+_{\bm\theta'}
&&\text{for $e'\in E_\infty$, $\bm\theta'\in V^\phi_-\setminus T_{\R_-}$}
\end{align*}
are non-zero,
while $[\mathfrak{s}^\phi_{e,-}]_{\bm\theta'}\in \L_{\bm\theta'}^\Be/(\L^\Be)^+_{\bm\theta'}$ 
and $[\mathfrak{s}^\phi_{e', +}]_{\bm\theta}\in \L_{\bm\theta}^\Be/(\L^\Be)^-_{\bm\theta}$ are zero. 
\end{enumerate}
\end{theorem}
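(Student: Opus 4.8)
## Proof plan for Theorem \ref{WTS}

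The plan is to construct the sections $\mathfrak{s}_c^\phi$ and $\mathfrak{s}_{e,\pm}^\phi$ explicitly as rapid-decay (or moderate-growth) cycles in the Betti homology, using the geometry of the projection $\pi_\Sigma\colon \Sigma(f,g)\to \C$ from Lemma \ref{cover}, and then to read off their asymptotic behavior via the period pairing criterion of Lemma \ref{period criterion}. Concretely, for a critical point $c\in\Crit(f)$, I would take a Lefschetz thimble: the union over $v$ in a small disc of the steepest-descent path through the saddle $\nu_c(v)$ of the phase $\mathrm{Re}(e^{-\i\phi}(f(x)-v\,g^{-1}(x)\log g(x))/u)$, oriented so that $e^{-(f-vg^{-1}dg)/\tw}$ decays along it; tensoring with the flat section $e^{-f/\tw}g^{\eq/\tw}\bm e$ gives a class $\mathfrak{s}_c^\phi\in\L^\Be(V^\phi)$. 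For $e\in E$, one end of the relevant cycle is anchored at the puncture $e$ (which lies in $Y$-closure but must be treated as a moderate-growth or rapid-decay end depending on the sign of $n_e$ and the sheet); this is why the sections $\mathfrak{s}_{e,\pm}^\phi$ are only defined on the half-domains $V^\phi_\pm$, matching the constructibility stratification $\Theta$.

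The core analytic input is the saddle-point / stationary-phase estimate. I would carry this out as follows. First, fix a meromorphic frame $e_1,\dots,e_r$ of $\phi_S^*\H^1_{\dR,\bm0}$ near the origin of $B$ (which exists by Theorem \ref{dRT} and \S\ref{S lattice}), and let $\omega_1,\dots,\omega_r$ be relative $1$-forms representing a dual basis. The functions $h_i(u,v)=\langle\mathfrak{s}_c^\phi,\omega_i^\vee\rangle_{\mathrm{Per}}$ are period integrals $\int_{\gamma}e^{-(f-v\log g)/(uv)}\,\omega_i$ over the thimble $\gamma$; substituting $\tw=uv$, the exponent is $u^{-1}(-f(x)/v+\log g(x))$ plus a bounded correction, so as $u\to 0$ in the sector corresponding to $V^\phi$ the Laplace method gives $h_i(u,v)=e^{\varphi_c(u,v)}u^{1/2}(a_i(v)+O(u))$ with $\varphi_c$ the class of $u^{-1}(\log g_c(v)-f_c(v)/v)$, i.e. a section of $\Phi_c(f,g)$, and $a_i$ not all zero. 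This is exactly condition (1): $\mathfrak{s}_c^\phi\in\L^\Be_{\leqslant\varphi_c}$ and $\gr_{\varphi_c}(\mathfrak{s}_c^\phi)\neq 0$. The genericity of $\phi$ is what guarantees that the steepest-descent paths through distinct saddles can be chosen disjointly and that the dominant saddle on $V^\phi$ is unambiguous, so that the $\Phi_p$ are genuinely separated (goodness of $\Phi(f,g)$) and the thimbles are well-defined on all of $V^\phi$.

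For condition (2), I would identify $\mathfrak{s}^{\phi+\pi}_{p',-}$ with a dual (vanishing-cycle type) thimble for the phase rotated by $\pi$, so that over the overlap $V^\phi_+\cap(\imath$-image of$V^{\phi+\pi}_-)$ the pairing $\langle\mathfrak{s}^\phi_{p,+},\mathfrak{s}^{\phi+\pi}_{p',-}\rangle_{\Be,+}$ is the topological intersection number of a descending and an ascending thimble; by the usual Picard–Lefschetz picture these meet transversally in a single point when $p=p'$ and not at all otherwise, giving the identity matrix — here one uses the skew-symmetric quasi-duality pairing $\langle\cdot,\cdot\rangle_{\Be,\pm}$ of \S\ref{Glueing Betti} and the normalization in Example \ref{GDual}/\ref{BDual} to fix the scalar to be exactly $1$. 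For condition (3), the point is that for $e\in E_0$ the thimble $\mathfrak{s}^\phi_{e,+}$ has an end genuinely running into the puncture $e$ where the integrand has moderate (not rapid) decay, so it represents a nonzero class in $\L^\Be/(\L^\Be)^-$ (the "$-$" part being the rapid-decay-at-$E_0$ subsheaf), whereas pushed to the "$+$" quotient the same end becomes a boundary and the class dies; this is the translation into the quasi-local-system language of Lemma \ref{cokernel}, and the signs $n_e>0$ vs.\ $n_e<0$ dictate which of $V^\phi_\pm$ carries the nonzero class.

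Finally, to deduce Theorem \ref{WTS} $\Rightarrow$ Theorem \ref{main theorem}: the sections $\{\mathfrak{s}_c^\phi\}\cup\{\mathfrak{s}_{e,\pm}^\phi\}$ form, after applying $\gr$, a local frame of $\tau_!\gr\L^\Be$ by (1b), so by (1a) they give a filtered morphism $\tau_!\gr\L^\Be\to\L^\Be$ over each $V^\phi$; condition (2) shows this morphism is an isomorphism (the intersection matrix being the change-of-basis matrix between the $\mathfrak{s}$-frame and its dual frame, which is invertible), yielding the splitting $\eta_{\bm\theta}$ required in Definition \ref{Def q-Stokes}(2); condition (3) together with the coarse-filtration bookkeeping gives the $\pm$-saturation of $\Gr_{\Phi_j}(\L^\Be)$ demanded by Definition \ref{Def q-Stokes}(3). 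Varying $\phi$ over all generic values covers $T$, so $\L^\Be_{\leqslant}$ is a good Stokes filtration with the stated exponential factor. I expect the main obstacle to be the saddle-point analysis in the degenerate directions — precisely, making the Laplace estimate uniform as $v\to 0$ (the two-variable real-blowup corner), where the saddle $\nu_p(v)$ may collide with the puncture $e\in E$ on the boundary of the disc $\Delta_\eq(r)$; Condition (\ref{nowhere}) is exactly what rules this collision out, and checking that it genuinely suffices for uniformity of the remainder $O(u)$ locally in $v$ up to $v=0$ is the delicate point.
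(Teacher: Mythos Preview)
Your proposal is correct and follows essentially the same route as the paper: construct Lefschetz-type thimbles through the saddles $\nu_p(\eq)$ (the paper does this in \S\ref{properties of gamma}--\S\ref{construction of gamma}, using the foliation theory of the Abelian differential $\alpha_\eq=df-\eq\,g^{-1}dg$ to control the paths near $P$ and $E$), apply the saddle-point method via Lemma~\ref{period criterion} for condition~(1), and read off condition~(2) from the transversal single-point intersection of ascending and descending thimbles, with condition~(3) coming from whether or not an end of the cycle is anchored at a point of $E$. The one place where the paper invests more than your sketch suggests is the construction of $\gamma^\phi_{e,\pm}$ near $e\in E$ (\S\ref{Around E}): since $\nu_e(\eq)\to e$ as $\eq\to 0$, the saddle collides with a pole of $g^{-1}dg$, and the paper uses an auxiliary map $\varphi_\eq$ to separate the two sheets of the trajectory structure and to show one sheet stays trapped near $e$ while the other escapes---this is precisely the uniformity-at-$v=0$ issue you flagged as the main obstacle.
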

\begin{proof}[Proof of ^^ ^^ Theorem \ref{WTS} $\Rightarrow$ Theorem \ref{main theorem}"]
Let
$\bm{\theta}$ be a point in $T_+\cup T_-$. 
Take generic 
$\phi_+$ and $\phi_-$ such that 
both $V^{\phi_+}_+$ and $V^{\phi_-}_-$
contain $\bm\theta$. 
If we assume Theorem \ref{WTS}, 
we have the sections 
$\mathfrak{s}^{\phi_\pm}_{c}\in \Gamma(V^{\phi_\pm},\L^\Be)$ 
for $c\in \Crit(f)$,
%$\mathfrak{s}^{\phi_+}_{e,+}\in 
%\Gamma(V^{\phi_+}_+,\L^\Be)$ $(e\in E)$,
and the sections 
$\mathfrak{s}_{e,\pm}^{\phi_\pm}\in 
\Gamma(V^{\phi_\pm}_\pm,\L^\Be)$ for 
$e\in E$
which satisfy the conditions in Theorem \ref{WTS}. 
By the conditions in Theorem \ref{WTS},  
we have the following frames of $\L^+,\L^-$ and $\L$ on a neighborhood $\nb(\bm\theta)$ of $\bm\theta$
(We put $\L\coloneqq \L^\Be$ for simplicity of notations):
\begin{align}\label{+-}
&\L^+_{|\nb(\bm\theta)}= \bigoplus_{p\in\pi_{\Sigma}^{-1}(0)} \k[q^{\pm 1}]\mathfrak{s}^{\phi_+}_{p,+},
&\L^-_{|\nb(\bm\theta)}=\bigoplus_{p\in\pi_{\Sigma}^{-1}(0)} \k[q^{\pm 1}]\mathfrak{s}^{\phi_-}_{p,-},
\end{align}
\begin{align*}
&\L_{|\nb(\bm\theta)}=\(\bigoplus_{c\in \Crit(f)}\k[q^{\pm 1}]\mathfrak{s}^{\phi_+}_c\)
\oplus \(\bigoplus_{e\in E_0}\k[q^{\pm 1}]\mathfrak{s}^{\phi_+}_{e,+}\)
\oplus\(\bigoplus_{e'\in E_\infty}\k[q^{\pm 1}]\mathfrak{s}^{\phi_-}_{e',-}\).
\end{align*}
Using these frames, 
we obtain the conditions (1) and (2) in Definition \ref{Def q-Stokes} on $\nb(\bm\theta)$.
(In the case $\bm\theta\in T_{\R_+}\cup T_{\R_-}$, 
we only need to consider (\ref{+-}) to see (1) and (2) in Definition \ref{Def q-Stokes}.) 
We also obtain (3) in Definition \ref{Def q-Stokes} from (3) in Theorem \ref{WTS}. 
\end{proof}

\subsection{Families of paths}\label{properties of gamma}
For  $\phi\in\R$
and $\delta>0$,
we set $\Delta^*_\eq(\delta)\coloneqq \Delta_\eq(\delta)\setminus\{0\}$ and
\begin{align*}
\cal{S}_\pm^\phi(\delta)\coloneqq \{\eq\in \Delta^*_\eq(\delta)\mid \pm \Re(e^{-\i\phi}\eq)>0\}.
\end{align*}
Note that $\cal{S}_+^{\phi+\pi}(\delta)=\cal{S}_-^\phi(\delta)$ and 
$\cal{S}^{\phi+\pi}_-(\delta)=\cal{S}_+^{\phi}(\delta)$.

For a sufficiently small $\delta>0$ and a generic $\phi$, we shall construct 
the families of paths
\begin{align*}
&\gamma^\phi_c\colon \Delta_\eq(\delta)\times \R\to \Delta_\eq(\delta)\times Y,
&&(\eq, t)\mapsto \(\eq,\gamma^\phi_{c,\eq}(t)\)\\
&\gamma^\phi_{e,\pm}\colon \cal{S}_\pm^\phi(\delta)\times \R\to
 \cal{S}_\pm^\phi(\delta)\times Y,
&&(\eq, t)\mapsto \(\eq,\gamma^\phi_{c,\pm,\eq}(t)\)
\end{align*}
 indexed by $c\in\Crit(f)$, $e\in E$, and $\pm$
with the following properties:
\begin{enumerate}
\item[$(\gamma1)$] $\gamma^\phi_{c,\eq}(0)=\nu_c(\eq)$, and $\gamma^\phi_{e,\pm,\eq}(0)=\nu_e(\eq)$.  
\item[$(\gamma2)$] For a fixed $\eq\in \cal{S}_\pm^\phi(\delta)$, 
the path $\gamma^\phi_{c,\eq}$ $(c\in \Crit(f))$
does not intersect with $\gamma^{\phi+\pi}_{c',\eq}$ $(c'\in \Crit(f), c'\neq c)$
or $\gamma^{\phi+\pi}_{e',\pm,\eq}$ $(e'\in E)$.
Similarly, $\gamma^\phi_{e,\pm,\eq}$ $(e\in E)$
does not intersect with $\gamma^{\phi+\pi}_{c',\eq}$ or
$\gamma^{\phi+\pi}_{e',\pm,\eq}$ ($e'\neq e\in E$).
 
\item[$(\gamma3)$] For $c\in \Crit(f)$, 
take a branch of $\log g$ at $c$. 
For $e\in E$, 
take a branch of $\log g$
on $\{\nu_e(\eq)\in Y\mid \eq\in\cal{S}_\pm^\phi(\delta)\}$ (see \S \ref{Around E} below).
Then the function $\Re(e^{-\i\theta}(f-\eq \log g))$
monotonically increases along $\gamma^\phi_{c,\eq}$
and $\gamma^\phi_{e,\pm,\eq}$
as $|t|$ increases. 
%\item
%For $c\in \Crit(f)$, the limits 
%\begin{align*}
%&p^\phi_{c,\pm}\coloneqq \lim_{t\to\pm \infty}\gamma_{c,\eq}^\phi(t),
%\end{align*}
%exists in $P\subset X$. 
%For $e\in E_0$
\end{enumerate}
The construction will be given in \S \ref{construction of gamma}
after the preparations in \S \ref{dumbbell} and \S \ref{foliation}. 
Then we will use these paths 
to construct the sections in 
Theorem \ref{WTS} after some modifications.

\subsection{Preliminaries}\label{dumbbell}
Let $\phi\in \R$ be generic.
For $c\in \Crit(f)$,
let $L^\phi_c\colon\R\to Y$, $t\mapsto L^\phi_c(t)$
be a path 
which satisfy the following conditions:
\begin{itemize}
\item $L_c^\phi(0)=c$, and
\item
$f\circ L_c^\phi(t)=f(c)+e^{\i\phi}t^2$ for $t\in\R$. 
\end{itemize}
These conditions determine $L^\phi_c$ up to the re-parametrization $t\mapsto -t$. 
For $e\in E$, 
let $L^\phi_e\colon \R_{\geq 0}\to Y$, $t\mapsto L^\phi_e(t)$
be a path with the following conditions:
\begin{itemize}
\item $L_e^\phi(0)=e$, and 
\item $f\circ L_e^\phi(t)=f(e)+e^{\i\phi}t$ for $t\geq 0$. 
\end{itemize}
Since $\phi$ is generic, any distinct two of 
$L_c^{\phi}$ $(c\in \Crit(f))$ and $L_e^{\phi}$ $(e\in E)$ do
not intersect with each other.
Moreover, if $p, p'\in \Crit(f)\cup E$ are different (i.e. $p\neq p')$, then
$L_p^\phi$ and $L_{p'}^{\phi+\pi}$ do not intersect with each other. 

There exists the following limits in $X$:
\begin{align*}
&p^\phi_{c,\pm}\coloneqq \lim_{t\to\pm \infty} L^\phi_c(t),
&p_{e}^\phi\coloneqq \lim_{t\to \infty} L_e^\phi(t).
\end{align*}

Let $\Delta(p_{c,\pm}^\phi)$
be a coordinate disk centered at $p_{c,\pm}^\phi$.
Take $R>0$
with 
\begin{align*}
\text{
$L^\phi_c([-R,R])\cap \Delta(p_{c,+}^\phi)\neq \emptyset$
and $L^\phi_c([-R,R])\cap \Delta(p_{c,-}^\phi)\neq \emptyset$.}
\end{align*}
Then we take a tuber neighborhood 
$\bm{L}^\phi_c(R)$ of $L^\phi_c([-R,R])$
which is relatively compact in $Y$. 
We take a  disk $\Delta(c)\subset \bm{L}^\phi_c(R)$
centered at $c$.

We can take the tuber neighborhood $\bm{L}^\phi_c(R)$
and the disk $\Delta(c)$ so that 
the following condition hold:
For any $x\in \bm{L}^\phi_c(R)\setminus \Delta(c)$, 
the path $\ell_x(t)$ given by 
\begin{align*}
\text{$\ell_x(0)=x$, 
and $f\circ\ell_x(t)=f(x)+e^{\i\phi}t$}
\end{align*}
intersects with the boundary of 
$ \bm{L}^\phi_c(R)\setminus \(\Delta(c)\cup \Delta(p^\phi_+)\cup \Delta(p^\phi_-)\)$
only at the boundary of $\Delta(c)\cup \Delta(p_{c,+}^\phi)\cup \Delta(p_{c-}^\phi)$.

Similarly, 
let $\Delta(p_e^\phi)$ and $\Delta(e)$ be coordinate disks centered at
$p_e^\phi$ and $e$, respectively. 
Take $r,R>0$ such that 
$L^\phi_e([r,R])\cap \Delta(e)\neq\emptyset$ and $L^\phi_e(r,R)\cap \Delta(p_{e}^\phi)\neq \emptyset$.
Let $\bm{L}^\phi_e(r,R)$
be a relatively compact tuber neighborhood of $L_e([r,R])$
with a similar property as $\bm{L}_c^\phi(R)$,
i.e.
for any $y\in \bm{L}_e(r,R)$, the path $\ell_y^\phi$
defined by the same condition
intersects with the boundary of $\bm{L}^\phi_e(r,R)\setminus \(\Delta(e)\cup\Delta(p^\phi_e)\)$
only at the boundary of $\Delta(e)\cup\Delta(p^\phi_e)$.

Since $\bm{L}^\phi_e(r,R)$ and $\bm{L}_c^\phi(R)$
are assumed to be 
relatively compact, 
we have:
\begin{lemma}\label{GDG}
For given $R>0,r>0$, 
fix the branches of $\log g$ on $\bm{L}^\phi_e(r,R)\setminus \Delta(e)$
and on $\bm{L}_c^\phi(R)$.
There exists $\delta>0$
such that 
if $|\eq|<\delta$, 
then
the following property holds: 
for any $x\in \bm{L}^\phi_c(R)\setminus \Delta(c)$, 
and any $y\in \bm{L}_e^\phi(r,R)\setminus \Delta(e)$, 
the functions
\begin{align*}
&\Re\(e^{-\i\phi}(f-\eq \log g)\)\circ \ell_x^\phi (t),
&\Re\(e^{-\i\phi}(f-\eq\log g)\)\circ\ell_y^\phi (t)
\end{align*}
monotonically increases as $t$ increase,
as long as $\ell_x^\phi(t)$ and $\ell_y^\phi(t)$
are in the closures of 
 $\bm{L}^\phi_c(R)\setminus\( \Delta(c)\cup\Delta(p^\phi_{c,+})\cup \Delta(p^\phi_{c,-})\)$
 and $\bm{L}_e^\phi(r,R)\setminus \(\Delta(e)\cup \Delta(p_e^\phi)\)$, 
 respectively. \qed
\end{lemma}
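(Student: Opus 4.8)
The plan is to treat the statement as a uniform-continuity/compactness argument relating the $\eq=0$ picture, where the monotonicity is clean, to the nearby $\eq\neq 0$ picture. First I would set up the $\eq=0$ case as the backbone. By construction of the paths $\ell_x^\phi$ we have $f\circ \ell_x^\phi(t)=f(x)+e^{\i\phi}t$, hence $\Re\!\left(e^{-\i\phi}f\right)\circ\ell_x^\phi(t)=\Re\!\left(e^{-\i\phi}f(x)\right)+t$, which is strictly increasing with derivative exactly $1$; the same holds verbatim for $\ell_y^\phi$. So for $\eq=0$ the functions in the statement have $t$-derivative identically $1$ on the relevant portions of the paths.

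Next I would differentiate the $\eq$-dependent term. Along $\ell_x^\phi(t)$, since $dt = e^{-\i\phi}\,df$ along the path, the chain rule gives
\begin{align*}
\frac{d}{dt}\Re\!\left(e^{-\i\phi}\bigl(f-\eq\log g\bigr)\right)\circ\ell_x^\phi(t)
= 1 - \Re\!\left(\eq\, e^{-\i\phi}\,\frac{g^{-1}dg}{df}\Big|_{\ell_x^\phi(t)}\right).
\end{align*}
The key point is that on the closure $K_c$ of $\bm{L}^\phi_c(R)\setminus\bigl(\Delta(c)\cup\Delta(p^\phi_{c,+})\cup\Delta(p^\phi_{c,-})\bigr)$ — a compact set disjoint from $\Crit(f)$ by Condition (\ref{nowhere}) and from the poles of $g^{-1}dg$ since it lies in $Y$ — the meromorphic ratio $(g^{-1}dg)/df$ is a bounded holomorphic function (here the chosen branch of $\log g$ enters, but only $g^{-1}dg$ matters for the derivative, so no ambiguity arises). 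Let $M_c = \sup_{K_c}\lvert (g^{-1}dg)/df\rvert <\infty$, and similarly define $M_e$ on the analogous compact set $K_e$ for $\bm{L}_e^\phi(r,R)$. Then on these compacta the derivative above is bounded below by $1-\lvert\eq\rvert M_c$ (resp. $1-\lvert\eq\rvert M_e$), so choosing $\delta < \tfrac12\min(M_c^{-1},M_e^{-1})$ over the finitely many $c\in\Crit(f)$ and $e\in E$ forces the derivative to be $\geq \tfrac12>0$ throughout, giving strict monotone increase. I would organize this by first fixing $R,r$ and the branches of $\log g$, then extracting $M_c,M_e$, then setting $\delta$ — exactly the order in which the hypotheses are presented.

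The main obstacle, such as it is, is bookkeeping rather than mathematics: one must be careful that the region over which monotonicity is asserted is precisely where $\ell_x^\phi(t)$ stays in the stated closure, so that the compactness estimate applies only where needed, and that the finitely many choices (one $M_p$ per $p\in\Crit(f)\cup E$) are amalgamated into a single $\delta$. One should also note that $(g^{-1}dg)/df$ extends holomorphically across the boundary circles of the small disks $\Delta(c),\Delta(e),\Delta(p^\phi_{c,\pm}),\Delta(p_e^\phi)$ so that "closure" causes no trouble — this is where Condition (\ref{nowhere}) (critical points of $f$ avoid $E$, and the chosen neighborhoods are relatively compact in $Y$) is used in an essential way. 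No further input is required; the conclusion is then immediate from the sign of the derivative.
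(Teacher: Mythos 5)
Your proposal is correct and matches the paper's intent: the paper leaves the lemma unproved, introducing it with the remark "Since $\bm{L}^\phi_e(r,R)$ and $\bm{L}_c^\phi(R)$ are assumed to be relatively compact, we have:", so the intended argument is precisely the compactness/boundedness estimate you carried out. One small slip: in your formula for the $t$-derivative, the $e^{-\i\phi}$ prefactor should cancel — since $\frac{d}{dt}(f\circ\ell^\phi_x)=e^{\i\phi}$, the chain rule gives $\frac{d}{dt}\Re\bigl(e^{-\i\phi}(f-\eq\log g)\bigr)\circ\ell^\phi_x = 1-\Re\bigl(\eq\,(g^{-1}dg)/df\bigr)$, without the extra $e^{-\i\phi}$ — but since $|e^{-\i\phi}|=1$ this does not affect the bound $1-|\eq|M_c$, and the rest of the argument is sound.
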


In the following discussion, 
when we consider a coordinate function $z$ on 
a disk $\Delta$ in $X$, we always assume 
that $z$ is defined on a neighborhood of the closure of $\Delta$.

\subsection{Foliations of Abelian differentials around the singular points}\label{foliation}
For each $\eq\in\C$, we set
\begin{align*}
\alpha_\eq\coloneqq df-\eq g^{-1}dg. 
\end{align*}
We shall describe the straight arcs of 
the Abelian differential $\alpha_\eq$
around the singular points of 
$\alpha_\eq$
when $|\eq|>0$ is sufficiently small. 

Here, by the term ^^ straight arc', 
we mean a path along which the function 
$\Im (e^{-\i\phi}(f-\eq\log g))$ is constant 
and $\Re(e^{-\i\phi}(f-\eq \log g))$ increases 
for some $\phi$. 
When we specify $\phi$, we also use the term $\phi$-arc. 
The pole of $\alpha_\eq$
is $D=P\sqcup E$, 
and the zero of 
$\alpha_\eq$
is $\{\nu_p(\eq)\mid p\in \pi_{\Sigma}^{-1}(0)=E\sqcup \Crit(f)\}$
by definition. 
\subsubsection{Around $P$} 
A maximal straight arc (in a direction) is called
\textit{a trajectory}. 
\begin{lemma}\label{trap}
Fix a positive real number $\delta>0$. 
Then there exists an open neighborhood
$\Delta(p)$ of $p$
such that any trajectory of $\alpha_\eq$
which intersects $\Delta(p)$
tends to $p$
for any $\eq\in \Delta_\eq(\delta)$. 
\end{lemma}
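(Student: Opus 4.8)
\textbf{Proof plan for Lemma \ref{trap}.}
The plan is to reduce the statement to a local computation in a coordinate chart $(V,z)$ centered at $p\in P$, in which $f$ has a pole of order $m_p>0$, so $f_{|V}=z^{-m_p}$ after a suitable choice of coordinate. Since $p\in P$ and $g$ is a meromorphic function, $g^{-1}dg$ has at worst a simple pole at $p$, so $g^{-1}dg_{|V}=\bigl(k/z + \text{holomorphic}\bigr)dz$ for some $k\in\Z$; thus on $\cal{V}=S\times V$ we have
\begin{align*}
\alpha_\eq = df-\eq g^{-1}dg = \Bigl(-m_p z^{-m_p-1} - \eq\,k z^{-1} - \eq\, \tilde h(z)\Bigr)dz
\end{align*}
for some holomorphic $\tilde h$. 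The key observation is that for $|z|$ small and $|\eq|$ bounded, the term $-m_p z^{-m_p-1}$ dominates: $\alpha_\eq = -m_p z^{-m_p-1}(1+O(z)+O(\eq z^{m_p}))dz$, so $\alpha_\eq$ is a ``perturbation'' of the model Abelian differential $d(z^{-m_p})$ uniformly in $\eq\in\Delta_\eq(\delta)$.

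First I would recall the structure of trajectories of the model differential $d(w)$ with $w=z^{-m_p}$: in the $w$-coordinate the $\phi$-arcs are literally straight lines $\Im(e^{-\i\phi}w)=\text{const}$, and since $z\to p$ corresponds to $w\to\infty$, every trajectory that enters a neighborhood $\{|w|>1/\rho\}$ of infinity escapes to infinity, i.e. tends to $p$. Here one must be slightly careful because $z\mapsto z^{-m_p}$ is an $m_p$-fold cover near $p$, so a small punctured disk $\{0<|z|<\epsilon\}$ maps onto $\{|w|>\epsilon^{-m_p}\}$ with $m_p$ sheets; but the escape-to-infinity property of straight lines is invariant under this covering, so the conclusion for the model is that any trajectory meeting $\{0<|z|<\epsilon\}$ tends to $p$ once $\epsilon$ is small enough. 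Then I would set up the estimate for the actual differential: write $\alpha_\eq = h_\eq(z)\,dz$ with $h_\eq(z) = -m_p z^{-m_p-1}(1+\epsilon_\eq(z))$, where $|\epsilon_\eq(z)|\le C(|z|+|\eq||z|^{m_p})$ on a fixed small disk, uniformly for $\eq\in\Delta_\eq(\delta)$ after shrinking $\delta$ if necessary. Choose a primitive $F_\eq(z)=\int^z \alpha_\eq$ near $p$ (which is multivalued only through a $\log$-term with coefficient $O(\eq)$, harmless for the argument); then $F_\eq(z) = z^{-m_p}(1+O(z))+O(\eq\log z)$, so $|F_\eq(z)|\to\infty$ as $z\to p$.

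The core of the argument is a ``trapping'' estimate: along any straight arc of $\alpha_\eq$, $\Re(e^{-\i\phi}F_\eq)$ is strictly increasing, so it suffices to show that once such an arc enters a sufficiently small neighborhood $\Delta(p)\subset V$ of $p$, it cannot leave through $\partial\Delta(p)$. For this I would choose $\Delta(p)=\{|F_\eq(z)|>N\}\cap V$ for $N$ large — more precisely, first fix $N$ large enough that $\{|F_\eq(z)|>N\}$ has a connected component $\Delta(p)$ compactly containing $p$ and contained in $V$, uniformly for $\eq\in\Delta_\eq(\delta)$ (possible because $F_\eq\to F_0=z^{-m_p}$ uniformly on compacta of $V\setminus\{p\}$). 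On $\partial\Delta(p)$ the level curve $\{|F_\eq|=N\}$ has the property that the straight arcs of $\alpha_\eq$ cross it transversally pointing inward relative to $\Re(e^{-\i\phi}F_\eq)$ increasing; this is where one compares the gradient of $\Re(e^{-\i\phi}F_\eq)$ with the outward normal and uses that near $\infty$ in the $w$-picture the level sets of $|w|$ are convex enough that increasing-real-part arcs spiral inward. I expect this transversality/convexity step to be the main obstacle: one has to make precise, uniformly in $\eq$, that the perturbation does not destroy the qualitative picture that ``large modulus region is forward- and backward-invariant for straight arcs'' — equivalently that $F_\eq$ maps $\Delta(p)$ properly onto a neighborhood of $\infty$ in a way that pushes forward straight lines to straight lines up to a uniformly small distortion. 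Once this is established, a straight arc entering $\Delta(p)$ has $\Re(e^{-\i\phi}F_\eq)\to+\infty$ along it (it cannot exit, and $\Re(e^{-\i\phi}F_\eq)$ is unbounded on $\Delta(p)$ only near $p$), hence $|F_\eq|\to\infty$, hence $z\to p$, which is the claim. Finally I would note the uniformity in $\delta$ is automatic from the construction since all estimates were made uniform for $\eq\in\Delta_\eq(\delta)$, possibly after one shrinking of $\delta$.
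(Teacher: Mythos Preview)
Your local setup is right and matches the paper's implicit picture, but the paper's own proof is a two-line citation: it observes that the residue of $\alpha_\eq$ at $p$ equals $\eq$ times a constant (namely minus the order of $g$ at $p$), hence is bounded on $\Delta_\eq(\delta)$, and then invokes the proof of \cite[Theorem~7.4]{Strebel} on the local trajectory structure near a higher-order pole. So you are essentially attempting to reprove Strebel's result from scratch.

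Your attempt has a genuine gap in the trapping step. The region $\Delta(p)=\{|F_\eq(z)|>N\}$ is \emph{not} forward-invariant for straight arcs, contrary to what you claim. Already in the model $w=F_0=z^{-m_p}$, a line $\Im(e^{-\i\phi}w)=c$ with $0<|c|<N$ enters $\{|w|>N\}$, crosses into $\{|w|\le N\}$, and re-enters on the other side; on the boundary circle $|w|=N$ the direction of increasing $\Re(e^{-\i\phi}w)$ points toward $\{|w|>N\}$ only on half the circle. So the transversality/``convexity'' assertion is simply false, and an arc can leave through $\partial\Delta(p)$. The correct local argument (which is what Strebel does) uses instead the half-planes $\{\Re(e^{-\i\phi}w)>N\}$, which \emph{are} forward-invariant by construction; one then checks that the $2m_p$ preimage-sectors of these half-planes, as $\phi$ varies, cover a full punctured neighborhood of $p$ in the $z$-coordinate, so that any trajectory meeting that neighborhood lies in one such sector and is driven to $p$ within it. The boundedness of the residue is exactly what makes this sectorial decomposition uniform in $\eq\in\Delta_\eq(\delta)$. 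If you want to avoid redoing this, just cite Strebel as the paper does.
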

\begin{proof}The residue 
of $\alpha_\eq$
at $p$
is given by $\eq$ times constant
and hence bounded on $\Delta_\eq(r)$. 
Then we obtain the claim 
by the proof
of \cite[Theorem 7.4]{Strebel}.
\end{proof}

\subsubsection{Around $\Crit(f)$}\label{around c}
%Around $c\in \Crit(f)$, 
%we can take the following
%^^ ^^ trivialization" of $\alpha_\eq$:
\begin{lemma}\label{w_c}
There exist $\delta>0$,
a neighborhood 
$\Delta(c)$ of $c$,
and 
an open embedding 
$w\colon \Delta_\eq(\delta)\times \Delta(c)\to \Delta_\eq(\delta)\times \C$
over $\Delta_\eq(\delta)$
such that 
for each $\eq\in \Delta_\eq(\delta)$, 
$w_\eq\coloneqq w_{|\{\eq\}\times \Delta(c)}$ is a coordinate on $\Delta(c)$
centered at $\nu_e(\eq)$
with 
$\alpha_\eq=2w_\eq d w_\eq$. 
\end{lemma}
\begin{proof}
We can take a neighborhood $\nb(c)$
of $c$ so that  $\log g$ is univalent on $\nb(c)$. 
Take a coordinate $z=z_c$ on $\nb(c)$ centered at $c$. 
Then we set 
\[F(\eq,z)\coloneqq f(z)-\eq \log g(z).\]
Note that we have $dF_\eq=\alpha_\eq$ on $\nb(c)$ for each 
$\eq$, where $F_\eq(z)=F(\eq,z)$. 
Then let
$G(\eq,z)$ 
be the function defined by
the following equation:
\begin{align*}
F(z,\eq)-F(\nu_c(\eq),\eq)=(z-\nu_c(\eq))^2G(\eq,z).
\end{align*}
where we denote $z\circ\nu_c(\eq)$ by $\nu_c(\eq)$ for simplicity. 
Since $G(\eq,\nu_c(\eq))= \partial_z^2F(\eq,\nu_c(\eq))\neq 0$, 
taking $\delta>0$ and $\Delta(c)\subset \nb(c)$ small enough, 
we may assume that 
$G(\eq, z)$ is nowhere vanishing on $\Delta_\eq(r)\times \Delta(c)$. 
Then we take
\[w_\eq(z)\coloneqq (z-\nu_c(\eq))\sqrt{G(\eq, z)}\]
and $w(\eq,z)=(\eq, w_\eq(z))$, which satisfies $\alpha_\eq=w_\eq d w_\eq$. 
\end{proof}
\begin{remark}
There is an ambiguity of the signature of $w_\eq$.
\end{remark}

\subsubsection{Around $E$}\label{Around E}
Take a disk $\Delta(e)$
with a coordinate $z=z_e$ centered at $e$
such that $f(z)=z+f(e)$ on $\Delta(e)$. 
We have $g(z)=z^{n_e}h(z)$
where $n_e\in \Z\setminus \{0\}$ and 
$h$ is nowhere vanishing holomorphic function on $\Delta(e)$
(taking $\Delta(e)$ smaller if necessary).
In this coordinate, 
we have
\[\alpha_\eq=d z-\eq n_e \frac{dz}{z}-\eq d\log h.\]
and hence 
$\nu_e(\eq)=z(\nu_e(\eq))$ is characterized by the 
following equation:
\begin{align}\label{nu_e}
\(1-\eq\frac{1}{h(\nu_e(\eq))}\frac{dh}{dz}(\nu_e(\eq))\)\nu_e(\eq)=n_e\eq.
\end{align}
Roughly speaking, 
(\ref{nu_e}) implies that 
$\nu_e(\eq)$ is close to $n_e\eq$
when $|\eq|$ is small. 
Let $\cal{S}\subset \C^*$ be a
proper sector, i.e. subset of the form
$\cal{S}=\{\eq\in \C\mid a<\arg(\eq)<b\}$
with $|a-b|<2\pi$. We set $\cal{S}(\delta)\coloneqq \cal{S}\cap \Delta_\eq(\delta)$.
Such intersections are also called proper sectors. 
The equation (\ref{nu_e}) implies the following:
\begin{lemma}\label{sector}
For any proper sector $\cal{S}$, 
there exists $\delta>0$ such that 
$\nu_e(\cal{S}(\delta))$ is contained in a proper sector in $\Delta(e)$. \qed
\end{lemma}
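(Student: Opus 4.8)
The plan is to solve the defining equation (\ref{nu_e}) for $\nu_e(\eq)$ asymptotically as $\eq\to 0$ and simply read off the argument. First I would put $\psi(z)\coloneqq \frac{1}{h(z)}\frac{dh}{dz}(z)=(\log h)'(z)$, which is holomorphic on $\Delta(e)$, and set $G(\eq,z)\coloneqq z\(1-\eq\psi(z)\)-n_e\eq$, so that (\ref{nu_e}) reads $G(\eq,\nu_e(\eq))=0$. Since $G(0,0)=0$ and $\del_z G(0,0)=1\neq 0$, the holomorphic implicit function theorem gives a holomorphic solution $z=\nu_e(\eq)$ on a disk $\Delta_\eq(\delta_0)$ with $\nu_e(0)=0$ (this is of course also a consequence of Lemma \ref{cover}, which makes $\pi_\Sigma$ an \'etale cover over $\Delta_\eq(r)$, so that its sections $\nu_p$ are holomorphic).

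Next I would compute the leading term. Differentiating $G(\eq,\nu_e(\eq))\equiv 0$ at $\eq=0$ gives $\nu_e'(0)=-\del_\eq G(0,0)/\del_z G(0,0)=n_e$, which is nonzero because $e\in E$ forces $n_e\in\Z\setminus\{0\}$. Hence $\nu_e(\eq)=n_e\eq\,u(\eq)$ for a holomorphic function $u$ near $0$ with $u(0)=1$. Consequently, on a punctured neighborhood of $0$ one has $\arg\nu_e(\eq)=\arg n_e+\arg\eq+\arg u(\eq)$, where, by continuity of $u$ and $u(0)=1$, the term $\arg u(\eq)\to 0$ as $\eq\to 0$.

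Finally, given a proper sector $\cal{S}=\{a<\arg\eq<b\}$ with $b-a<2\pi$, I would choose $\epsilon\coloneqq (2\pi-(b-a))/4>0$ and then pick $\delta\in(0,\delta_0]$ so small that $|\arg u(\eq)|<\epsilon$ and $\nu_e(\eq)\in\Delta(e)$ whenever $0<|\eq|<\delta$. Then $\nu_e(\cal{S}(\delta))$ is contained in the set $\{\,a+\arg n_e-\epsilon<\arg z<b+\arg n_e+\epsilon\,\}\cap\Delta(e)$, whose angular width $b-a+2\epsilon=(b-a)/2+\pi<2\pi$ makes it a proper sector in $\Delta(e)$, which is exactly the claim. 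There is no genuine obstacle here: the only point that matters is the nonvanishing $\nu_e'(0)=n_e\neq 0$, which is what prevents $\nu_e(\cal{S}(\delta))$ from winding around the origin; everything else is a first-order Taylor estimate and a continuity argument.
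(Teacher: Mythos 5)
Your argument is correct and is essentially the one the paper intends: the lemma is stated with a \qed because the author regards it as an immediate consequence of the displayed equation (\ref{nu_e}), which says precisely that $\nu_e(\eq)\sim n_e\eq$ as $\eq\to 0$. What you have done is simply make this explicit via the implicit function theorem, the computation $\nu_e'(0)=n_e\neq 0$, and the resulting angular estimate, so there is no substantive difference in approach.
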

%We then have the following:
%Set $\Delta_{\eq}^*(\delta)=\Delta_\eq(\delta)\setminus\{0\}$ for $\delta>0$. 
In a similar way as Lemma \ref{w_c}, 
we obtain:
\begin{lemma}\label{w_e}
Let $\cal{S}$ be a proper sector in $\C_\eq$. 
Then, there exist $\delta>0$, 
an open neighborhood 
$\mathcal{U}_{e}\subset \cal{S}(\delta)\times \Delta(e)$
of the graph of $\nu_e$ on a proper sector $ \cal{S}(\delta)\subset \Delta_{\eq}^*(\delta)$, 
and an open embedding 
$w\colon \cal{U}_{e}^\phi\to \cal{S}(\delta)\times \C$
over $\cal{S}(\delta)$
such that 
we have 
$\alpha_\eq=w_\eq d w_\eq$ on the fiber $\cal{U}_{e,\eq}=\cal{U}_e\times_{\Delta^*_\eq(\delta)}\{\eq\}$. 
\end{lemma}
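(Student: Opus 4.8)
The final statement is Lemma \ref{w_e}, on the existence of a "flattening" coordinate $w_\eq$ for the Abelian differential $\alpha_\eq$ near a zero $\nu_e(\eq)$ lying over the puncture $e\in E$, now parametrized over a proper sector $\cal{S}(\delta)$ rather than over a full disk $\Delta_\eq(\delta)$. Let me plan a proof that mirrors the proof of Lemma \ref{w_c}.

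Key technical point: near $c\in\Crit(f)$ the function $\log g$ is univalent on a neighborhood of $c$, so $F(\eq,z)=f(z)-\eq\log g(z)$ is a genuine holomorphic function. Near $e\in E$, $g$ has a zero or pole, so $\log g$ is multivalued; but we have $g(z)=z^{n_e}h(z)$ with $h$ nowhere vanishing, and the issue is only the $\log z$ term. Over a proper sector $\cal{S}(\delta)$, Lemma \ref{sector} tells us $\nu_e(\cal{S}(\delta))$ lies in a proper sector of $\Delta(e)$, hence avoids $0$, so a branch of $\log z$ can be chosen consistently on a neighborhood $\cal{U}_e$ of the graph of $\nu_e$. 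This is exactly where properness of the sector is used, and it's why the statement can't be globalized over $\Delta_\eq(\delta)$.

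So I'd write: Pick $\delta_0>0$ and $\Delta(e)$ small so that $g(z)=z^{n_e}h(z)$ with $h$ holomorphic and nowhere vanishing on (a neighborhood of the closure of) $\Delta(e)$, and $f(z)=z+f(e)$. By Lemma \ref{sector}, shrinking $\delta$, $\nu_e(\cal{S}(\delta))$ is contained in a proper subsector $\cal{S}'$ of $\Delta(e)$; choose a neighborhood $\cal{U}_e\subset\cal{S}(\delta)\times\Delta(e)$ of the graph of $\nu_e$ small enough that for each $\eq$ the fiber $\cal{U}_{e,\eq}$ is a simply connected neighborhood of $\nu_e(\eq)$ not meeting $z=0$, and fix a single-valued branch of $\log z$ on $\bigcup_\eq\cal{U}_{e,\eq}$. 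Set $F(\eq,z)\coloneqq z+f(e)-\eq\bigl(n_e\log z+\log h(z)\bigr)$ on $\cal{U}_e$, so that $dF_\eq=\alpha_\eq$ on $\cal{U}_{e,\eq}$ and $\partial_z F(\eq,\nu_e(\eq))=0$ by the definition of $\nu_e$. Define $G(\eq,z)$ by $F(\eq,z)-F(\eq,\nu_e(\eq))=(z-\nu_e(\eq))^2 G(\eq,z)$; it is holomorphic on $\cal{U}_e$ and $G(\eq,\nu_e(\eq))=\tfrac12\partial_z^2 F(\eq,\nu_e(\eq))$. A direct computation gives $\partial_z^2 F(\eq,z)=-\eq\bigl(-n_e z^{-2}+(\log h)''\bigr)$; since $n_e\neq 0$ and $\nu_e(\eq)\to 0$ while $|\nu_e(\eq)|\gtrsim|\eq|$ (from \eqref{nu_e}), we get $\eq\nu_e(\eq)^{-2}\not\to 0$, hence $G(\eq,\nu_e(\eq))\neq 0$ on $\cal{S}(\delta)$ for $\delta$ small. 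Shrinking $\cal{U}_e$, assume $G$ is nowhere vanishing on $\cal{U}_e$. Then set $w_\eq(z)\coloneqq (z-\nu_e(\eq))\sqrt{G(\eq,z)}$ (a branch of the square root chosen continuously, using simple connectedness of $\cal{U}_{e,\eq}$) and $w(\eq,z)=(\eq,w_\eq(z))$. One checks $w_\eq$ is a coordinate centered at $\nu_e(\eq)$ and $w_\eq\,dw_\eq=d\bigl((z-\nu_e(\eq))^2 G(\eq,z)\bigr)/2=\tfrac12 dF_\eq=\tfrac12\alpha_\eq$; absorbing the factor $\tfrac12$ into $w_\eq$ (rescale by $\sqrt2$) yields $\alpha_\eq=w_\eq\,dw_\eq$, after possibly shrinking $\cal{U}_e$ to ensure $w$ is an open embedding by the inverse function theorem with estimates uniform in $\eq$ over the sector.

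The main obstacle, as in Lemma \ref{w_c}, is getting all choices ($\delta$, $\cal{U}_e$, the branch of $\log z$, the nonvanishing of $G$, and the injectivity of $w$) to be uniform in $\eq$ over the whole sector $\cal{S}(\delta)$ — the degeneration $\nu_e(\eq)\to 0$ as $\eq\to 0$ means $\cal{U}_{e,\eq}$ must shrink toward $0$, so one cannot take a fixed neighborhood independent of $\eq$; properness of the sector (via Lemma \ref{sector}) is precisely what keeps $\nu_e(\eq)$ in a fixed proper subsector bounded away from the other singular directions, allowing a coherent choice of logarithm branch and square-root branch. I expect the verification of these uniformities — essentially a parametrized inverse function theorem argument — to be the only real content, the rest being the same formal manipulation as in Lemma \ref{w_c}.
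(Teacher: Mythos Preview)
Your proposal is correct and follows essentially the same approach as the paper. The paper's own proof is just two sentences: invoke Lemma~\ref{sector} to choose a branch of $\log g$ on a neighborhood of the graph of $\nu_e$ over $\cal{S}(\delta)$, then proceed exactly as in Lemma~\ref{w_c}; you have simply written out those details explicitly, including the verification that $G(\eq,\nu_e(\eq))\neq 0$ (which here follows from $\eq\,\nu_e(\eq)^{-2}\sim 1/(n_e^2\eq)$ rather than from Condition~(\ref{A_1}) as in the critical-point case).
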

\begin{proof}
By Lemma \ref{sector}, 
on a neighborhood of the graph of $\nu_e$ on $\cal{S}(\delta)$, 
can take a branch of $\log g$. 
Then the remaining proof is similar to that of 
Lemma \ref{w_c}
\end{proof}

For each $\eq\in\Delta^*_\eq(\delta)$, 
let  $\varphi_\eq\colon \Delta(e)\to\C$
be the holomorphic function
 defined by 
\[\varphi_\eq(z)\coloneqq z\exp\(-\frac{z}{n_e\eq}+\frac{1}{n_e}\int_0^z d\log h\).\]
%Here, we assume that $\Delta(e)$ is simply connected 
%so that $\int_0^z d\log h$ is well defined.
Then we have
\[-n_e\eq d\varphi_\eq=\varphi_\eq \alpha_\eq.\]
By this formula,
$\varphi_\eq$
is
branched at $\nu_e(\eq)$
with ramification index two. 
We also have 
$\alpha_\eq=\varphi^*(-n_e\eq \zeta^{-1}d\zeta)$. 
Hence the straight arcs of $\alpha_\eq$
can be seen as 
the pull back of straight arcs of $-n_e\eq \zeta^{-1}d\zeta$. 
In particular, 
$\{z\in \Delta(e)\mid |\varphi_\eq(z)|=|\varphi_\eq(\nu_e(\eq))|\}$
defines a union of straight arcs in $\Delta(e)$. 

We shall consider the pull back
$V_\eq\coloneqq \varphi_\eq^{-1}(\{\zeta\in \C\mid |\zeta|<|\varphi_\eq(\nu_e(\eq))|\})$.
It has two connected components $V^0_\eq$ and $V^1_\eq$, 
where $e\in V^0_\eq$. 
\begin{lemma}\label{||=||}
If $\delta>0$ is sufficiently small and $\eq\in \Delta^*_\eq(\delta)$, 
then the closure of $V^0_\eq$ in $X$ is in the interior of $\Delta(e)$. 
\end{lemma}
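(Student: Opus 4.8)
The plan is to reduce the statement to an explicit one-variable model by rescaling near $e$. Write $\eta(z):=\frac1{n_e}\int_0^z d\log h$, a function holomorphic on a neighbourhood of $\overline{\Delta(e)}$ with $\eta(0)=0$, so that $\varphi_\eq(z)=z\,e^{\eta(z)}e^{-z/(n_e\eq)}$. From the identity $-n_e\eq\,d\varphi_\eq=\varphi_\eq\alpha_\eq$ and the fact that for $|\eq|$ small $\alpha_\eq$ has a single simple zero $\nu_e(\eq)$ in $\Delta(e)$, one sees that $\nu_e(\eq)$ is the unique critical point of $\varphi_\eq$ in $\Delta(e)$ and that it is nondegenerate; moreover equation (\ref{nu_e}) gives $\nu_e(\eq)=n_e\eq\,(1+O(\eq))$. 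Put $w:=z/(n_e\eq)$, $w_\eq:=\nu_e(\eq)/(n_e\eq)$, $c_\eq:=\varphi_\eq(\nu_e(\eq))/(n_e\eq)$ and
\[
\widetilde\varphi_\eq(w):=\frac{\varphi_\eq(n_e\eq\,w)}{n_e\eq}=w\,e^{\eta(n_e\eq\,w)}e^{-w}.
\]
Under $z\mapsto w$ the set $\{\,|\varphi_\eq|<|\varphi_\eq(\nu_e(\eq))|\,\}$ becomes $\{\,|\widetilde\varphi_\eq|<|c_\eq|\,\}$ (note that the threshold is now \emph{exactly} the critical modulus of $\widetilde\varphi_\eq$, since $c_\eq=\widetilde\varphi_\eq(w_\eq)$), and $V^0_\eq$ corresponds to the connected component $\widetilde V_\eq$ of $0$ in this set. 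As $\eq\to0$ one has $\widetilde\varphi_\eq\to\psi$ in the $C^\infty$ topology on compact subsets of $\C$, where $\psi(w):=w\,e^{-w}$, while $w_\eq\to1$, the unique simple critical point of $\psi$, and $|c_\eq|\to e^{-1}=|\psi(1)|$. It therefore suffices to prove that $\widetilde V_\eq\subset\{|w|<R\}$ for a fixed $R$ and all small $\eq$: then $V^0_\eq\subset\{|z|<R\,|n_e|\,|\eq|\}$, which for $\eq$ small is a closed disc contained in the interior of $\Delta(e)$, and this disc contains the closure of $V^0_\eq$ in $X$.

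The model is handled by an elementary computation: $|\psi(e^{i\theta})|=e^{-\cos\theta}\ge e^{-1}$ for all $\theta$, with equality only at $\theta=0$. Hence the circle $\{|w|=1\}$ is disjoint from $\{|\psi|<e^{-1}\}$, and, since $0$ lies inside that circle, the connected component of $0$ in $\{|\psi|<e^{-1}\}$ must be contained in $\{|w|<1\}$; in particular it is relatively compact.

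To transfer this to $\widetilde\varphi_\eq$ I would fix a small $\epsilon>0$ and use $\{|w|=1\}$ as a barrier. Since $\widetilde\varphi_\eq\to\psi$ uniformly on $\{|w|\le2\}$ and $w_\eq\to1$, on $\{|w|=1\}\setminus\{|w-w_\eq|<\epsilon\}$ one gets $|\widetilde\varphi_\eq|\ge e^{-1}+c(\epsilon)>|c_\eq|$ for all small $\eq$, so $\widetilde V_\eq$ can only reach outside $\{|w|\le1\}$ through the small window near the critical point $w_\eq$. There I would use the local normal form: near $w_\eq$ one writes $\widetilde\varphi_\eq(w)=c_\eq+a_\eq(w-w_\eq)^2u_\eq(w)$ with $u_\eq$ holomorphic, $u_\eq(w_\eq)=1$, and $a_\eq\to\tfrac12\psi''(1)=-\tfrac12e^{-1}\ne0$; the sublevel set $\{|\widetilde\varphi_\eq|<|c_\eq|\}$ then splits, in a fixed neighbourhood of $w_\eq$, into two opposite sectors, one whose axis points towards the origin (this is the part of $\widetilde V_\eq$ near the saddle) and one pointing towards the unbounded ``finger'' $\{\Re w\to+\infty\}$; a direct inspection of this picture shows that the origin-sector stays within $\{|w|<1+C|\eq|\}$ for a fixed $C$, whence $\widetilde V_\eq\subset\{|w|<1+C|\eq|\}$ and one may take any $R>1$.

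The main obstacle is precisely this last step. Because the threshold $|\varphi_\eq(\nu_e(\eq))|$ sits exactly at a critical modulus of $\varphi_\eq$, one cannot separate $\widetilde V_\eq$ from the finger by a crude sub- or super-level sandwich — any enlargement of the threshold immediately captures the unbounded finger — and the separation has to be read off from the local behaviour at the simple saddle $w_\eq$, with the $\eq$-dependence of $w_\eq$, $c_\eq$ and $a_\eq$ tracked carefully. An equivalent route, which avoids the model, argues directly with the harmonic function $\log|\varphi_\eq|$ on $\Delta(e)\setminus\{0\}$: its only critical point is the nondegenerate saddle at $\nu_e(\eq)$, with value $\log|\varphi_\eq(\nu_e(\eq))|$, so for each $c$ below that value the connected component of $0$ in $\{\log|\varphi_\eq|<c\}$ grows as $c$ increases without meeting a critical level or (as one checks) the boundary $\partial\Delta(e)$, hence remains a topological disc inside $\Delta(e)$; taking the union over $c$ yields $V^0_\eq$.
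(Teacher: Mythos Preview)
Your approach (rescale by $n_e\eq$, compare to the model $\psi(w)=we^{-w}$, then perturb) is genuinely different from the paper's and, if completed, would even give the sharper bound $\overline{V^0_\eq}\subset\{|z|\lesssim |n_e\eq|\}$. The model step is correct. However, the perturbation step --- which you correctly identify as the main obstacle --- is not carried out: your local saddle analysis is only asserted (``a direct inspection of this picture shows\dots''), and your alternative Morse argument contains a circular step. You write ``(as one checks) the boundary $\partial\Delta(e)$'', but this is exactly what has to be checked: on $\partial\Delta(e)$ the function $\log|\varphi_\eq|$ \emph{does} take values below the critical level (near $z=R_e e^{i\arg(n_e\eq)}$ one has $\log|\varphi_\eq|\sim -R_e/|n_e\eq|\to-\infty$), so the growing component of $0$ could in principle reach that boundary arc before it reaches the saddle. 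Ruling this out is the entire content of the lemma, so the Morse route as stated assumes the conclusion.

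The paper sidesteps all of this with a short counting argument on the \emph{fixed} outer circle. On $|z|=R_e$ the equation $|\varphi_\eq(z)|=|\varphi_\eq(\nu_e(\eq))|$ has exactly two solutions for small $\eq$: writing $z=R_e e^{i\theta}$ and $n_e\eq=\rho e^{i\alpha}$, the equation becomes $\cos(\theta-\alpha)=(\rho/R_e)\bigl(\log(R_e/\rho)+O(1)\bigr)\to 0^{+}$, which for small $\rho$ has exactly two roots in $\theta$. On the other hand, $\overline{V^1_\eq}$ must meet $\partial\Delta(e)$: otherwise $\overline{V^1_\eq}$ would be compact in $\Delta(e)$ with $|\varphi_\eq|\le|\varphi_\eq(\nu_e(\eq))|$, and since the only zero of $\varphi_\eq$ in $\Delta(e)$ is $0\in V^0_\eq$, the minimum modulus principle forces $|\varphi_\eq|$ constant there, contradicting $V^1_\eq\ne\emptyset$. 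Thus $\overline{V^1_\eq}\cap\partial\Delta(e)$ is a nonempty union of arcs, whose endpoints lie in $\{|z|=R_e,\ |\varphi_\eq(z)|=|\varphi_\eq(\nu_e(\eq))|\}$, hence there are at least two such endpoints; both boundary solutions are already accounted for by $V^1_\eq$, so $\overline{V^0_\eq}$ cannot meet $\partial\Delta(e)$. The point is that this argument never goes near the saddle $\nu_e(\eq)$: the delicate critical-level perturbation problem you ran into is bypassed by working on the fixed circle $|z|=R_e$, where everything is far from degenerate.
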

\begin{proof}
Take $R_e>0$ so that 
$\Delta(e)=\{z\mid |z|<R_e\}$. 
On the one hand, 
the equation
\begin{align*}
&|z|= R_e, &|\varphi_\eq(z)|=|\varphi_\eq(\nu_e(\eq))|
\end{align*}
has two solutions for each $\eq\in \Delta^*_\eq(\delta)$ if $\delta>0$ is sufficiently small.
On the other hand, the closure $\overline{V}^1_\eq$ of $V^1_\eq$
intersect with $\del \Delta(e)$. 
Hence we have
$\# \del(\overline{V}^1_\eq\cap \del\Delta(e))\geq2$. 
It follows that the closure $\overline{V}^0_\eq$
does not intersect with $\del \Delta(e)$. 
\end{proof}

Let $\gamma^\phi_{\eq}\colon (-\epsilon',\epsilon')\to \cal{U}_{e,\eq}$
($\eq\in \cal{S}$ for a proper sector $\cal{S}$)
be a path
determined by the conditions
(i) $\gamma^\phi_{\eq}(0)=\nu_e(\eq)$,
and  
(ii) $w_\eq\circ\gamma_{\eq}^\phi=e^{\i\phi/2}t$ 
for $\phi\in \R$ and sufficiently small $\epsilon'>0$. 

In the case $\Re(e^{-\i\phi}n_e\eq)>0$,
the composition
$\varphi_\eq\circ\gamma^\phi_\eq$
satisfies the inequality
\begin{align*}
|\varphi_\eq(\gamma_\eq^\phi(t))|\leq  |\varphi(\nu_e(\eq))|,
\end{align*}
where the equality holds if and only if $t=0$. 
In this case, taking the signature of $w$ properly, by Lemma \ref{||=||},
the path $\gamma^\phi_\eq$ extends
to the path $\gamma^\phi_\eq\colon (-\infty, \epsilon]\to X$, ($\epsilon>\epsilon'$)
such that
\begin{itemize}
\item the path is the $\phi$-arc of 
$\alpha_\eq$ on $(-\infty,\epsilon)\setminus \{0\}$, and that
\item the image $\gamma^\phi_\eq(-\infty, \epsilon)$ is in $\Delta(e)$
with $\gamma^\phi_\eq(\epsilon)\in\del\Delta(e)$. 
\end{itemize}
We have 
$\lim_{t\to -\infty} \gamma(t)=e$, 
and $\Re\(e^{-\i\phi}z(\gamma^\phi_\eq(\epsilon))\)>0$
for $\eq\in \Delta_\eq(\delta)$ with $\delta>0$ sufficiently small. 

In the case $\Re(e^{-\i\phi}n_e\eq)>0$,
the composition
$\varphi_\eq\circ\gamma^\phi_\eq$
satisfies the inequality
\begin{align*}
|\varphi_\eq(\gamma_\eq^\phi(t))|\geq  |\varphi(\nu_e(\eq))|,
\end{align*}
where the equality holds if and only if $t=0$.
In this case, 
the path $\gamma^\phi_\eq$ extends
to the path $\gamma^\phi_\eq\colon [-\epsilon_0, \epsilon_1]\to X$, ($\epsilon_0,\epsilon_1>\epsilon'$)
such that
\begin{itemize}
\item the path is the $\phi$-arc of $\alpha_\eq$ on $(-\epsilon_0,\epsilon_1)\setminus\{0\}$, and that
\item the image $\gamma^\phi_\eq((-\epsilon_0,\epsilon_1))$ is in $\Delta(e)$ with 
$\gamma^\phi_\eq(\epsilon_i)\in \del\Delta(e)$, $(i=0,1)$.
\end{itemize}
For sufficiently small $\delta>0$, 
$\Re\(e^{-\i\phi}z(\gamma^\phi_\eq(\epsilon_i))\)>0$
($i=0,1$ and $\eq\in\Delta_\eq(\delta)$). 

\subsection{Construction of family of paths}\label{construction of gamma}
Let $\phi\in \R$ be generic. 
We shall construct
the paths explained in \S \ref{properties of gamma}. 
\subsubsection{Construction of $\gamma^\phi_c$}\label{Cc}
Take a family of coordinates 
$w\colon \Delta_\eq(\delta)\times \Delta(c)\to \Delta_\eq(\delta)\times\C$ 
which satisfies the conditions Lemma \ref{w_c}. 
Then, 
there exists 
a family of closed intervals 
$I_\eq=[-\epsilon_\eq^-,\epsilon_\eq^+]$
($\eq\in \Delta_\eq(\delta)$, $\epsilon_\eq^\pm>0$)
and 
a family of paths
$\gamma^\phi_{c,\eq}\colon I_\eq\to Y$
such that $\gamma_{c,0}^\phi=L^\phi_c$ (see \S \ref{dumbbell}) restricted to $[\epsilon^-_0, \epsilon^+_0]$, 
\begin{itemize}
\item $\gamma^\phi_{c,\eq}((-\epsilon_\eq^-,\epsilon_\eq^+))\subset \Delta(c)$, $\gamma^\phi_{c,\eq}(0)=\nu_c(\eq)$, 
\item $w_\eq\circ\gamma^\phi_{c,\eq}(t)=e^{\i\phi/2}t$ for $t\in (-\epsilon_\eq-,\epsilon_\eq^+)$, and that 
\item $\gamma^\phi_{c,\eq}(\epsilon_\eq^\pm)\in \del\Delta(c)$. 
\end{itemize}
Take $\bm{L}_c^\phi(R)$ as in \S \ref{dumbbell}. 
Then we can extend $\gamma^\phi_{c,\eq}$
connecting it
with $\ell^\phi_{x^+_\eq}$ and $\ell^{\phi+\pi}_{x^{-}_\eq}$ 
where we put $x^\pm_\eq\coloneqq \gamma^\phi_{c,\eq}(\epsilon_\eq^\pm)$
(we take linear re-parametrization if necessary). 
Then, we can take
$R^\pm_{c,\eq}>0$ smoothly depending on $\eq$
such that
$\gamma^\phi_{c,\eq}(\pm R^\pm_{c,\eq})\in \Delta(p_{c,\pm}^\phi)$. 
We then extend the path 
so that 
it is the $\phi$-arc of $\alpha_\eq$ 
for $t>R^+_{c,\eq}$, and
$(\phi+\pi)$-arc of $\alpha_\eq$
for $t<-R^-_{c,\eq}$.  
By Lemma \ref{trap}, 
taking $\delta>0$ 
sufficiently small, 
we have
\begin{align*}
\lim_{t\to \pm \infty}\gamma^\phi_{c,\eq}(t)=p^\phi_{c,\pm}
\end{align*}
for each $\eq\in \Delta_\eq(\delta)$.

In summary,
we take 
the family of paths $\gamma^\phi_{c,\eq}\colon \R\to Y$
by the following conditions:
\begin{enumerate}
\item On $(-\epsilon^{-}_{\eq},\epsilon^+_{\eq})$, we have $w_\eq(\gamma_\eq^\phi(t))=e^{\i\phi/2}t$.
\item On  $[\epsilon_\eq^+,R_{c,\eq}^+]$ (resp. $[-R^-_{c,\eq}, -\epsilon^-_\eq]$), 
$\gamma^\phi_{c,\eq}$ is  $\phi$-arc (resp. $(\phi+\pi)$-arc) of $\alpha_0=df$. 
\item On  $(R_{c,\eq}^+,\infty)$ (resp. $(-\infty, -R_{c,\eq}^-)$ ), 
$\gamma^\phi_{c,\eq}$ is  $\phi$-arc (resp. $(\phi+\pi)$-arc) of $\alpha$.
\end{enumerate}

\begin{lemma}\label{case of c}
The family of paths 
$\gamma^\phi_c\colon \Delta_\eq(\delta)\times \R\to \Delta_\eq(\delta)\times Y$, 
$(\eq, t)\mapsto (\eq, \gamma^\phi_{c,\eq}(t))$
defined above satisfies conditions $(\gamma 1)$ and $(\gamma 3)$
in \S $\ref{properties of gamma}$. 
\end{lemma}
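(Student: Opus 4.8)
The plan is to verify the two properties in turn; both are essentially read off from the construction in \S\ref{Cc}, so the argument will be short, and the lemma is really a clean-up statement.

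For $(\gamma1)$ I would argue as follows. By item (1) of the construction one has $w_\eq(\gamma^\phi_{c,\eq}(0))=e^{\i\phi/2}\cdot 0=0$, and by Lemma \ref{w_c} the coordinate $w_\eq$ is centered at $\nu_c(\eq)$; hence $\gamma^\phi_{c,\eq}(0)=\nu_c(\eq)$. It is also convenient to record here that the path stays in $Y$: near $c$ it lies in $\Delta(c)$, which is contained in $Y$ by Condition (\ref{nowhere}); on the intermediate pieces it lies in the tube $\bm{L}^\phi_c(R)$, which is relatively compact in $Y$; and on the outer pieces it lies in a coordinate disk about $p^\phi_{c,\pm}$, which after shrinking meets $D$ only in $p^\phi_{c,\pm}$.

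For $(\gamma3)$ the idea is to fix a branch of $\log g$ near $c$ (available by the proof of Lemma \ref{w_c}), continue it along $\gamma^\phi_{c,\eq}$, put $F_\eq:=f-\eq\log g$, and check monotonicity of $\rho_\eq(t):=\Re\bigl(e^{-\i\phi}F_\eq(\gamma^\phi_{c,\eq}(t))\bigr)$ in $|t|$ on each of the three kinds of pieces, then concatenate. On the central interval, the proof of Lemma \ref{w_c} gives the identity $F_\eq=w_\eq^2+F_\eq(\nu_c(\eq))$, so $\rho_\eq(t)=t^2+\mathrm{const}$ and there is nothing to estimate. On the two intermediate intervals, after an affine reparametrization the path is $\ell^\phi_x$ for the relevant endpoint $x\in\partial\Delta(c)$ — on the negative side one uses that a $(\phi+\pi)$-arc traversed in the direction of increasing $|t|$ is a $\phi$-arc — and by the defining property of $\bm{L}^\phi_c(R)$ it stays in the closure of $\bm{L}^\phi_c(R)\setminus(\Delta(c)\cup\Delta(p^\phi_{c,+})\cup\Delta(p^\phi_{c,-}))$ until it reaches $\partial\Delta(p^\phi_{c,\pm})$; Lemma \ref{GDG} then gives monotonicity of $\rho_\eq$. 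On the two outer intervals the path is by construction a $\phi$-arc (resp.\ $(\phi+\pi)$-arc) of $\alpha_\eq=dF_\eq$, hence a $\phi$-arc in the direction of increasing $|t|$, so $\rho_\eq$ increases by the very definition of a straight arc, while Lemma \ref{trap} ensures (for $\delta$ small) that the path remains in the chosen disk and tends to $p^\phi_{c,\pm}$. Since $\rho_\eq$ is continuous in $t$ and increasing in $|t|$ on each of these finitely many pieces, it is increasing in $|t|$ on all of $\R$.

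The only real point of care — more bookkeeping than a genuine obstacle — is to fix at the outset a single $\delta>0$ small enough for Lemma \ref{cover}, Lemma \ref{w_c}, Lemma \ref{GDG} and Lemma \ref{trap} to hold simultaneously, and to track the branch of $\log g$ by analytic continuation along the path so that the three pieces glue into one continuous function $\rho_\eq$. Once that is arranged, each monotonicity check reduces either to the identity $F_\eq=w_\eq^2+\mathrm{const}$, to a direct application of Lemma \ref{GDG}, or to the definition of a straight arc.
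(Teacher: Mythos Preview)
Your proof is correct and follows the same approach as the paper, which simply records that $(\gamma1)$ is part of the construction and that $(\gamma3)$ follows from the construction together with Lemma~\ref{GDG}. You have just unpacked these two sentences: the piecewise decomposition into the central $w_\eq$-interval (where $F_\eq=w_\eq^2+\mathrm{const}$), the intermediate $\alpha_0$-arcs inside $\bm{L}^\phi_c(R)$ (where Lemma~\ref{GDG} applies), and the outer $\alpha_\eq$-arcs (monotone by the definition of a straight arc) is exactly what the paper intends.
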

\begin{proof}
The condition $(\gamma 1)$ follows directly from the definition. 
The condition $(\gamma 3)$ follows from the construction 
and Lemma \ref{GDG}. 
\end{proof}

\subsubsection{Construction of $\gamma^\phi_{e,\pm}$}\label{construct gamma e}
If
$e\in E_0$,
the integer $n_e$ is positive. 
Hence we have
$ \Re(e^{-\i\phi}n_e\eq)>0$ on $\cal{S}^\phi_+(\delta)$
and $-\Re(e^{-\i\phi}n_e\eq)>0$ on $\cal{S}^\phi_-(\delta)$. 
Then we put 
$\gamma^\phi_{e,+,\eq}(t)=\gamma^\phi_{\eq}(t)$
for $t\in (-\infty,\epsilon)$, $\eq\in \cal{S}^\phi_{+}(\delta)$
and $\gamma^\phi_{e,-,\eq}(t)=\gamma^\phi_{\eq}(t)$
for $t\in [\epsilon_0,\epsilon_1]$, $\eq\in \cal{S}_-^\phi(\delta)$. 

Similarly to the case of 
$\gamma^\phi_c$, 
we extend 
$\gamma^\phi_{e,+,\eq}$
to $(-\infty, R_{e,+,\eq}]$
so that $\gamma^\phi_{e,+,\eq}$
is the $\phi$-arc of $\alpha_0=df$ on $(\epsilon, R_{e,+,\eq})$
(here, we note that we may assume $\gamma^\phi_{e,+}((\epsilon, R_{e,+,\eq}))\subset \bm{L}^\phi_e(r, R)$), 
and $\gamma^\phi_{e,+}(R_{e,+,\eq})\in \Delta(p_{e})$. 
Then on $(R_{e,+,\eq},\infty)$, we extend 
$\gamma^\phi_{e,+}$ as the $\phi$-trajectory ray of $\alpha_\eq$. 
Then, $\underset{t\to \infty}{\lim}\gamma^\phi_e(t)=p_e^\phi$
by Lemma \ref{trap}.

Similarly, 
we extend $\gamma^\phi_{e,-,\eq}$
to $[-R^-_{e,-,\eq}, R^+_{e,+,\eq}]$
so that $\gamma^\phi_{e,-,\eq}$
is $\phi$-arc (resp. $\phi+\pi$-arc) of $\alpha_0=df$
on $(\epsilon_1, R^{+}_{e,-,\eq})$
(resp. $(-R_{e,-,\eq}^-,-\epsilon_-)$) 
and $\gamma^\phi_{e,-,\eq}(\pm R_{e,-,\eq}^\pm)\in \Delta(e)$. 
Then, 
we extend it to $\R$ 
by considering the $\phi$-arc and $(\phi+\pi)$-arcs on $t>R^+_{e,-,\eq}$ and 
$t<-R^-_{e,-,\eq}$, respectively. 
We also have
$\underset{t\to\pm \infty}{\lim}\gamma^\phi_{e,-,\eq}(t)=p_e^\phi. $

If $e'\in E_\infty$, 
we have $\mp\Re(e^{-\i\phi}n_{e'} \eq)>0$
for $\eq\in\cal{S}^\phi_{\pm}(\delta)$. 
Hence we can construct $\gamma^\phi_{e',\pm}$
by the same way as $\gamma^\phi_{e,\mp}$ $(e\in E_0)$. 

\begin{lemma}
The families of paths $\gamma^\phi_{c}$ $(c\in \Crit(f))$, $\gamma^\phi_{e,\pm}$ $(e\in E)$
satisfy the conditions $(\gamma1)$, $(\gamma2)$, and $(\gamma3)$ in \S $\ref{properties of gamma}$. 
\end{lemma}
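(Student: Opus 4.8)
The final statement to prove is a lemma asserting that the families of paths $\gamma^\phi_c$ ($c\in\Crit(f)$) and $\gamma^\phi_{e,\pm}$ ($e\in E$), constructed in \S\ref{Cc} and \S\ref{construct gamma e}, satisfy the three conditions $(\gamma1)$, $(\gamma2)$, $(\gamma3)$ of \S\ref{properties of gamma}.

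\medskip

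The plan is to verify the three conditions separately, reusing as much of the already-established machinery as possible. For $(\gamma1)$ and $(\gamma3)$, the work is essentially done: Lemma \ref{case of c} already gives $(\gamma1)$ and $(\gamma3)$ for the paths $\gamma^\phi_c$, and an entirely parallel argument handles $\gamma^\phi_{e,\pm}$. Concretely, for $(\gamma1)$ I would simply point to the defining conditions: $\gamma^\phi_{c,\eq}(0)=\nu_c(\eq)$ holds because $w_\eq(\gamma^\phi_{c,\eq}(0))=0$ and $w_\eq$ is centered at $\nu_c(\eq)$ by Lemma \ref{w_c}, while $\gamma^\phi_{e,\pm,\eq}(0)=\nu_e(\eq)$ holds by the construction via $\gamma^\phi_\eq$ in \S\ref{Around E}, whose value at $t=0$ is $\nu_e(\eq)$. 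For $(\gamma3)$ I would split each path into the pieces appearing in its construction: near the zero $\nu_p(\eq)$ the path is, in the coordinate $w_\eq$, the straight ray $w_\eq = e^{\i\phi/2}t$, along which $\Re\bigl(e^{-\i\phi}(f-\eq\log g)\bigr)=\Re\bigl(e^{-\i\phi}w_\eq^2\bigr)+\text{const}=t^2+\text{const}$ is monotone in $|t|$; on the intermediate stretches inside the tubular neighborhoods $\bm{L}^\phi_c(R)$ or $\bm{L}^\phi_e(r,R)$ the path follows $\ell^\phi_x$ (or its $(\phi+\pi)$-analogue), where Lemma \ref{GDG} gives monotonicity for $\delta$ small; and on the outermost rays the path is a $\phi$-arc (or $(\phi+\pi)$-arc) of $\alpha_\eq$ itself, along which $\Re\bigl(e^{-\i\phi}(f-\eq\log g)\bigr)$ is by definition monotone increasing. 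Matching the orientations at the gluing points (which is built into the parametrization chosen in \S\ref{Cc} and \S\ref{construct gamma e}) gives global monotonicity of $\Re\bigl(e^{-\i\phi}(f-\eq\log g)\bigr)$ in $|t|$, which is $(\gamma3)$.

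\medskip

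The condition $(\gamma2)$ — disjointness — is where the genericity of $\phi$ does the real work and is the main point to establish. The key observation is that any two of the model paths $L^\phi_p$ ($p\in\Crit(f)\cup E$) and $L^{\phi+\pi}_{p'}$ ($p'\neq p$) are already disjoint by the remark in \S\ref{dumbbell}: since $f\circ L^\phi_p(t)-f(p)=e^{\i\phi}t^2$ (resp.\ $e^{\i\phi}t$) while $f\circ L^{\phi+\pi}_{p'}(t')-f(p')=-e^{\i\phi}t'^2$ (resp.\ $-e^{\i\phi}t'$), a common point would force $f(p)+e^{\i\phi}t^2=f(p')-e^{\i\phi}t'^2$, i.e.\ $f(p)-f(p')\in\R e^{\i\phi}$, contradicting genericity when $f(p)\neq f(p')$; and when $f(p)=f(p')$ the images of the half-rays in the $f$-plane emanate from the same point in opposite directions and meet only at that point, which is excluded since the construction keeps the paths inside disjoint coordinate disks near $p,p'$. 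The paths $\gamma^\phi_{p,\eq}$ are $C^0$-close to the model paths $L^\phi_p$ as $\delta\to 0$ — on the bounded part this is the continuous dependence in Lemmas \ref{w_c}, \ref{w_e}, \ref{GDG}, and on the unbounded trajectory rays the containment in the prescribed small disks $\Delta(p^\phi_{c,\pm})$, $\Delta(p^\phi_e)$ near $P$ follows from Lemma \ref{trap}. Since disjointness of the (closed, eventually-in-disjoint-ends) model configuration is an open condition, it persists for all $\eq\in\Delta_\eq(\delta)$ (or $\cal{S}^\phi_\pm(\delta)$) once $\delta$ is chosen small enough; here I would be careful to shrink $\delta$ to handle the finitely many pairs $(p,p')$ simultaneously. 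I would also note that on the ends the paths $\gamma^\phi_{p,\eq}$ and $\gamma^{\phi+\pi}_{p',\eq}$ run into genuinely different small disks around points of $P$ (or, for the $E$-paths of type $+$ versus $-$, into $e$ itself versus a point of $\del\Delta(e)$ with $\Re(e^{-\i\phi}z)>0$), so no crossing can occur near infinity. Assembling these observations yields $(\gamma2)$, and with $(\gamma1)$ and $(\gamma3)$ already in hand the lemma follows.

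\medskip

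The main obstacle I anticipate is the bookkeeping in $(\gamma2)$: one must check that a single $\delta>0$ works for all pairs of indices at once, and that the "$C^0$-close to the model path" statement is uniform enough near the noncompact ends — this is where Lemma \ref{trap} (trapping of trajectories near $P$) and Lemma \ref{sector}/\ref{||=||} (control of the $E$-picture over a proper sector) must be invoked carefully, and where one should explicitly match the case distinctions $e\in E_0$ versus $e\in E_\infty$ and $\pm$ against the sign of $\Re(e^{-\i\phi}n_e\eq)$. None of this is conceptually hard, but it is the part that requires attention rather than a one-line reference.
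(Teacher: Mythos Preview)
Your treatment of $(\gamma1)$ and $(\gamma3)$ is correct and matches the paper's: both simply invoke Lemma~\ref{case of c} for $\gamma^\phi_c$ and observe that the same piecewise argument (local coordinate $w_\eq$ near the zero, Lemma~\ref{GDG} on the tubular stretch, definition of a $\phi$-arc on the trajectory tails) works verbatim for $\gamma^\phi_{e,\pm}$.

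For $(\gamma2)$ your overall strategy---disjointness of the model configuration $\{L^\phi_p\}\cup\{L^{\phi+\pi}_{p'}\}$ for generic $\phi$, plus stability under the small-$\delta$ perturbation---is exactly the idea behind the paper's one-line ``the tubular neighborhoods $\bm{L}^\phi_c(R_c)$, $\bm{L}^\phi_e(r_e,R_e)$, $\bm{L}^{\phi+\pi}_e(r'_e,R'_e)$ can be taken without nontrivial intersections.'' However, your handling of the ends near $P$ contains a genuine error. You assert that $\gamma^\phi_{p,\eq}$ and $\gamma^{\phi+\pi}_{p',\eq}$ ``run into genuinely different small disks around points of $P$,'' but this is false in general: the limit points $p^\phi_{c,\pm}$, $p^{\phi+\pi}_{c',\pm}$, $p^\phi_e$ may very well coincide (already in Example~\ref{EXG} one has $P=\{\infty\}$, so every tail converges to the same point), and even when $|P|>1$ nothing prevents two thimbles from sharing an endpoint. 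So the disjointness near $P$ cannot be obtained by separating the target disks.

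The paper closes this gap differently: on each $\Delta(p)$ the construction in \S\ref{Cc} and \S\ref{construct gamma e} makes all of the relevant tails genuine $\phi$-arcs (equivalently $(\phi+\pi)$-arcs) of the \emph{same} Abelian differential $\alpha_\eq$. Thus inside $\Delta(p)$ the tails are leaves of a single foliation; by Lemma~\ref{trap} each such leaf meets $\partial\Delta(p)$ in a single point and is determined by it. Since the entry points lie in the pairwise disjoint sets $\bm{L}^\phi_c\cap\Delta(p)$, $\bm{L}^{\phi+\pi}_{c'}\cap\Delta(p)$, $\bm{L}^\phi_e\cap\Delta(p)$, the tails lie on distinct leaves and are therefore disjoint in $\Delta(p)$. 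Replacing your ``different disks'' claim with this foliation argument repairs the proof; the rest of your $(\gamma2)$ reasoning is fine.
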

\begin{proof}
The conditions $(\gamma1)$ and $(\gamma3)$ for $\gamma^\phi_c$ is
proved in Lemma \ref{case of c}, and the same conditions 
for $\gamma^\phi_{e,\pm}$ can also be proved in the same way. 
The condition ($\gamma 2$)
follows from the fact that 
for generic $\phi$, 
we can take $\bm{L}^\phi_c(R_c)$, $\bm{L}_{e}^\phi(r_e,R_e)$,
and $\bm{L}_e^{\phi+\pi}(r'_e,R'_e)$
so that they don't have nontrivial intersections
and the fact that 
on $\Delta(p)$, the paths are taken to be 
the $\phi$ (or, $\phi+\pi$)-arcs of $\alpha_\eq$. 
\end{proof}
%We remark
%that 
%the paths constructed above
%also have the 
%following 
%properties:
%\begin{enumerate}
%\item[$(\gamma 4)$]
%For each $c\in \Crit(f)$, 
%$\gamma_{c,\eq}^\phi$ and $\gamma_{c,\eq}^{\pi+\pi}$ intersect transversally 
%at $\nu_c(\eq)$ and do not intersect at any other points. 
%Similarly, for each $e\in E$,
%$\gamma^\phi_{e,+,\eq}$ and $\gamma^{\phi+\pi}_{e,-,\eq}$
%intersect transversaly only at $\nu_e(\eq)$. 
%\item[$(\gamma 5)$]
%The limits when $t\to\pm \infty$ in $X$ exist for each paths and we have
%\begin{align*}
%&\lim_{t\to \pm \infty} \gamma_{c,\eq}^\phi(t)=p_{c,\pm}^\phi\in P,
%&&(c\in \Crit(f))\\
%&\lim_{t\to- \infty}\gamma^{\phi}_{e,+,\eq}(t)=e, 
%\quad \lim_{t\to \infty }\gamma^\phi_{e,+,\eq}(t)=\lim_{t\to\pm \infty}\gamma^\phi_{e,-,\eq'}=p^\phi_e,
%&&(e\in E_0,\eq\in \cal{S}^\phi_+,\eq'\in \cal{S}^\phi_-),\\
%&\lim_{t\to- \infty}\gamma^{\phi}_{e,-,\eq}(t)=e, 
%\quad \lim_{t\to \infty }\gamma^\phi_{e,-,\eq}(t)=\lim_{t\to\pm \infty}\gamma^\phi_{e,+,\eq'}=p^\phi_e,
%&&(e\in E_\infty,\eq\in \cal{S}^\phi_-,\eq'\in \cal{S}^\phi_+).
%\end{align*}
%\item[$(\gamma 6)$]
%Near critical points
%\end{enumerate}

\subsection{End of the proof of the Theorem
\ref{WTS}}\label{saddle point method}
We shall construct
the sections 
$\mathfrak{s}^\phi_c$ for $c\in \Crit(f)$
and $\mathfrak{s}^\phi_{e,\pm}$ for $e\in E$
in Theorem \ref{WTS}, i.e. 
finish the proof.
 
\subsubsection{Construction of $\mathfrak{s}^\phi_c$}\label{construct s_c}
Let 
$\gamma_c^\phi$ be the path constructed in \S \ref{Cc}. 
Put $\overline{\R}\coloneqq \R\cup\{\pm \infty\}$, 
which is isomorphic to the 1-simplex $[0,1]$. 
Then,
we can extend it to the family of paths 
$\widetilde{\gamma}^\phi_c\colon \Delta_\eq(\delta)\times \overline{\R}\to \Delta_\eq(\delta)\times \widetilde{X}$. 
For $(\tw,\eq)\in S^\circ$, 
set  \[P_\tw^\rd\coloneqq \{(\tw,\eq)\}\times_{S^\circ}\cal{P}^\rd\subset \widetilde{X}.\] 
Then we can easily see that 
if $\Re(e^{-\i\phi}\tw)>0$, 
then $\widetilde{\gamma}^\phi_{c,\eq}(\pm \infty)\in P^\rd_\tw$. 
Hence,
extending $\widetilde{\gamma}^\phi_c$
to $\C^*_\tw\times \Delta_\eq(\delta)$ trivially, 
and taking branch of $\log g$ on $\nu_e(\Delta_\eq(\delta))$, 
we obtain a section 
\begin{align*}
{\mathfrak{s}}^\phi_c\coloneqq \widetilde{\gamma}_c^\phi\otimes e^{-f/\tw}g^{\tw/\eq}\bm{e}
\end{align*}
of $\H^\rd_{1}$, which can also be seen as a section in $\Gamma(V^\phi,\L^\Be)$. 

\begin{lemma}\label{c-case}
The section $\mathfrak{s}^\phi_c$ $(c\in \Crit(f))$ 
constructed above satisfies the condition $(1)$ in Theorem \ref{WTS}. 
\end{lemma}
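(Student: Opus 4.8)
The plan is to compute the asymptotics of the period integrals of $\mathfrak{s}^\phi_c$ by the saddle point method and read off from the leading term both the exponential factor $\varphi_c$ and the non-vanishing of $\gr_{\varphi_c}(\mathfrak{s}^\phi_c)$. First I would fix $\varphi_c\in H^0(V^\phi,\Phi_c(f,g))$ to be the branch of $u^{-1}\bigl(\log g_c(v)-f_c(v)/v\bigr)$ determined by the branch of $\log g$ at $c$ chosen in property $(\gamma 3)$. In the coordinates $\tw=uv$, $\eq=v$ one has $u^{-1}=\eq/\tw$, so
\begin{align*}
\varphi_c=\frac{\eq\log g(\nu_c(\eq))-f(\nu_c(\eq))}{\tw},
\end{align*}
which is exactly the value at the saddle $\nu_c(\eq)$ of the exponent $(\eq\log g-f)/\tw$ occurring in $e^{-f/\tw}g^{\eq/\tw}$. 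By Lemma \ref{period criterion} it then suffices to show that, for every local section $\omega$ of $\j^*\H^1_{\dR,\bm{0}}$ represented by $\widetilde\omega\in\tw^{-1}\Omega^1_{\X/S}(*\cal{D})$, the function
\begin{align*}
e^{-\varphi_c}\langle\mathfrak{s}^\phi_c,\omega\rangle_{\Per}=\int_{\gamma^\phi_c}e^{-w_\eq^2/\tw}\,\widetilde\omega
\end{align*}
is of moderate growth near every point of $V^\phi$, and that for a suitable $\omega$ its leading term is non-zero; here $w_\eq^2$ denotes the global primitive $(f-\eq\log g)(x)-(f-\eq\log g)(\nu_c(\eq))$ of $\alpha_\eq$, which near $c$ agrees with the square of the coordinate $w_\eq$ of Lemma \ref{w_c}.

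Next I would split $\gamma^\phi_c$ into the portion near the saddle, where $w_\eq\circ\gamma^\phi_{c,\eq}(t)=e^{\i\phi/2}t$ and the integrand is $e^{-e^{\i\phi}t^2/\tw}\widetilde\omega$, and the two outer portions. Since $\Re(e^{\i\phi}/\tw)>0$ on $V^\phi$, the Laplace expansion gives that the near-saddle contribution equals $\sqrt{\pi\tw e^{-\i\phi}}\,\bigl(\widetilde\omega/dt\bigr)(\nu_c(\eq))\,(1+O(\tw))$, which is $O(|\tw|^{1/2})$ on compact subsets of $V^\phi$. On the outer portions I would use property $(\gamma 3)$ together with $|\arg\tw-\phi|<\pi/2$ on $V^\phi$ to see that $\Re(w_\eq^2/\tw)$ is non-negative and increases away from $\nu_c(\eq)$, with a strict exponential gain outside a neighbourhood of the saddle; since the endpoints of $\gamma^\phi_c$ lie in $\cal{P}^\rd$, this makes the outer integrals absolutely convergent, bounded uniformly by a constant times the value of the integrand at the saddle, and negligible compared with the near-saddle term. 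This would give that $e^{-\varphi_c}\langle\mathfrak{s}^\phi_c,\omega\rangle_{\Per}$ is of moderate growth, which is condition $(1)(a)$ of Theorem \ref{WTS}.

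For condition $(1)(b)$ I would choose $\omega$ by means of a frame of $\H^1_{\dR,\bm{0}}$ adapted to $\pi_\Sigma^{-1}(0)=E\sqcup\Crit(f)$: the fibre at $(\tw,\eq)=(0,0)$ is $H^0(U,\omega_U(E)/\O_U\,df)$, supported with multiplicity one on $\pi_\Sigma^{-1}(0)$, so after shrinking one may take the frame so that its $p$-th member is a delta at $p$; dualising via the perfect pairing $\langle\cdot,\cdot\rangle_\dR$ of Theorem \ref{Perfect theorem} produces a section $\omega_c$ of $\j^*\H^1_{\dR,\bm{0}}$ with $\bigl(\widetilde\omega_c/dw_\eq\bigr)(\nu_c(\eq))$ a nowhere-zero holomorphic function of $\eq$ near $0$. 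For this $\omega_c$ the computation above gives $e^{-\varphi_c}\langle\mathfrak{s}^\phi_c,\omega_c\rangle_{\Per}\sim C\tw^{1/2}$ with $C\neq0$ on compact subsets of $V^\phi$, so $\langle\mathfrak{s}^\phi_c,\omega_c\rangle_{\Per}$ is asymptotic to a non-zero multiple of $e^{\varphi_c}\tw^{1/2}$ and is not rapidly decaying after the $e^{-\varphi_c}$-twist. On the other hand, using the compatibility of the filtration with $q$ (Definition \ref{pre-q}) and the goodness of $\Phi(f,g)$ (Definition \ref{good pair}), the germs of $\L^\Be_{<\varphi_c}$ at a point of $V^\phi$ are built from sections $\psi<\varphi_c$ of the form $\varphi_c+\rho(m)$ with $q^m$ of rapid decay towards that point, so each such germ pairs with $\omega_c$ to give a function that is rapidly decaying after the $e^{-\varphi_c}$-twist. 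Comparing, $\mathfrak{s}^\phi_c\notin\L^\Be_{<\varphi_c}$, i.e. $\gr_{\varphi_c}(\mathfrak{s}^\phi_c)\neq0$.

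The main obstacle is the Laplace-type estimate: one must match the near-saddle expansion with the outer contributions and establish the required bound uniformly as $\tw\to0$ over compact subsets of $V^\phi$ — this is precisely where properties $(\gamma 2)$, $(\gamma 3)$ of the paths and Lemmas \ref{trap} and \ref{GDG} (control of the trajectories near $P$ and of $\Re(e^{-\i\phi}(f-\eq\log g))$ along the outer arcs) enter — and one must pin down the leading coefficient precisely enough to see that it is non-zero, which is the role of the adapted frame and of the perfectness of $\langle\cdot,\cdot\rangle_\dR$. The remaining verifications (that $\varphi_c$ defines a genuine section of $\Phi_c(f,g)$ over $V^\phi$, and the structural description of $\L^\Be_{<\varphi_c}$ used above) are comparatively routine given Definitions \ref{pre-q} and \ref{good pair}.
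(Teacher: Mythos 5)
Your proof follows the same route as the paper's: express the period pairing as an integral over $\gamma^\phi_c$, apply the saddle point method using the Morse coordinate of Lemma~\ref{w_c} and the monotonicity $(\gamma3)$, and conclude via Lemma~\ref{period criterion}; the paper simply leaves the Laplace estimates to the cited references (\cite{AGV}, \cite{Mochizuki}), while you spell them out. One small inaccuracy in your treatment of $(1)(b)$: $\L^\Be_{<\varphi_c}=\sum_{\psi<\varphi_c}\L^\Be_{\leqslant\psi}$ ranges over \emph{all} germs $\psi<\varphi_c$ of $\IQ$, not only those of the form $\varphi_c+\rho(m)$, but this does not matter since the argument you actually need — for any $\psi<\varphi_c$, $e^{\psi-\varphi_c}$ is rapidly decaying, so $\mathfrak{s}^\phi_c\in\L^\Be_{\leqslant\psi}$ would force $e^{-\varphi_c}\langle\mathfrak{s}^\phi_c,\omega_c\rangle_{\Per}$ to be rapidly decaying, contradicting the $\sim C\tw^{1/2}$ asymptotics — works uniformly in $\psi$.
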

\begin{proof}
Let $\omega$ be a section of $\pi_{\X*}\j^*\M\otimes \Omega^1_{X}$ 
which represents a section $[\omega]$ of $\j^*\H^1_{\dR,\bm{0}}$. 
Then we have 
\begin{align*}
\<\mathfrak{s}_{c}^\phi, [\omega]\>_{\rm Per}=\int_{\gamma^\phi_{c,\eq}}e^{-f/\tw}g^{\eq/\tw}\omega.
\end{align*}
Then, by the condition $(\gamma 3)$, the description in \S \ref{around c}, and the standard arguments
of saddle point method (see e.g  \cite{AGV}, \cite{Mochizuki}), 
we obtain that
 \[\exp(-\varphi_c(\tw,\eq))\<\mathfrak{s}_{c}^\phi, [\omega]\>_{\rm Per}\]
 is of moderate growth (and not of rapid decay)
when $(\tw,\eq)\to (0,0)$ in the neighborhood of $V^\phi$ in $\widetilde{B}$
where $\varphi_c(\tw,\eq)$ correspond to the branch of $\log g$ 
fixed when $\mathfrak{s}^\phi_c$ is defined. 
This implies the condition (1) of Theorem \ref{WTS} by Lemma \ref{period criterion}. 
\end{proof}

\subsubsection{Construction of $\mathfrak{s}_{e,-}^\phi$ $(e\in E_0)$ and $\mathfrak{s}^\phi_{e',+}$ $(e'\in E_\infty)$}
\label{construct s_e 1}
For $e\in E_0$ let $\gamma^\phi_{e,-}$ be a 
path constructed in \S \ref{construct gamma e}. 
For $e'\in E_\infty$
In both cases, the paths are extended  to 
the families of paths
\begin{align*}
&\widetilde{\gamma}^\phi_{e,-}\colon \cal{S}^\phi_-(\delta)\times \overline{\R}\to\cal{S}^\phi_-\times \widetilde{X},
&\widetilde{\gamma}^\phi_{e',+}\colon \cal{S}^\phi_+(\delta)\times \overline{\R}\to \cal{S}^\phi_+\times \widetilde{X} .
\end{align*}
Similarly as \S \ref{construct s_c}, 
the end points $\widetilde{\gamma}^\phi_{e,-}(\pm\infty)$
are in $P^\rd_{\tw}$ if $\Re(e^{-\i\phi}\tw)>0$. 
By Lemma \ref{sector}, 
we can take a branch of $\log g$ on $\nu_e(\cal{S}^\phi_-(\delta))$
and on $\nu_{e'}(\cal{S}^\phi_+(\delta))$. 
Then we obtain the sections
\begin{align*}
&\mathfrak{s}^\phi_{e,-}\coloneqq \widetilde{\gamma}_{e,-}^\phi\otimes e^{-f/\tw}g^{\eq/\tw}\bm{e},
&\mathfrak{s}^\phi_{e',+}\coloneqq \widetilde{\gamma}_{e',+}^\phi\otimes e^{-f/\tw}g^{\eq/\tw}\bm{e}.
\end{align*}
\begin{lemma}
The sections $\mathfrak{s}^\phi_{e,-}$ 
and $\mathfrak{s}^\phi_{e',+}$ satisfy
the conditions $(1)$ and $(3)$ in Theorem \ref{WTS}. 
\end{lemma}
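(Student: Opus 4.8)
The plan is to establish condition~(1) exactly as in Lemma~\ref{c-case}, and to deduce the relevant (vanishing) part of condition~(3) formally, after identifying $\L^{\pm}$ with the appropriate Betti sheaves.

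For condition~(1) I would argue for $\mathfrak{s}^\phi_{e,-}$ with $e\in E_0$, the case of $\mathfrak{s}^\phi_{e',+}$ with $e'\in E_\infty$ being identical. Given a local section $[\omega]$ of $\j^*\H^1_{\dR,\bm{0}}$ represented by a section $\omega$ of $\pi_{\X*}\j^*\M\otimes\Omega^1_X$, one has $\<\mathfrak{s}^\phi_{e,-},[\omega]\>_{\rm Per}=\int_{\gamma^\phi_{e,-,\eq}}e^{-f/\tw}g^{\eq/\tw}\omega$. Along $\gamma^\phi_{e,-,\eq}$ the function $\Re(e^{-\i\phi}(f-\eq\log g))$ attains, by $(\gamma 3)$, its strict minimum at $t=0$, i.e.\ at the zero $\nu_e(\eq)$ of $\alpha_\eq$; near $\nu_e(\eq)$ the local model of Lemma~\ref{w_e} ($\alpha_\eq=w_\eq\,dw_\eq$) exhibits $f-\eq\log g$ as a nondegenerate quadratic in $w_\eq$ plus the constant $f_e(\eq)-\eq\log g_e(\eq)$; and, as noted in \S\ref{construct s_e 1}, the two ends of $\widetilde{\gamma}^\phi_{e,-,\eq}$ lie in $P^\rd_\tw$ whenever $\Re(e^{-\i\phi}\tw)>0$, which holds throughout $V^\phi_-$, so the integrand is rapidly decaying there and contributes nothing. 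The standard stationary phase/Laplace asymptotics (cf.\ \cite{AGV}, \cite{Mochizuki}) then give that $\exp(\tw^{-1}(f_e(\eq)-\eq\log g_e(\eq)))\,\<\mathfrak{s}^\phi_{e,-},[\omega]\>_{\rm Per}$ is of moderate growth as $(\tw,\eq)\to(0,0)$ inside $V^\phi_-$, and is not of rapid decay when $\omega$ does not vanish at $\nu_e(\eq)$. Taking $\varphi_{e,-}\coloneqq[u^{-1}(\log g_e(v)-f_e(v)/v)]$, computed with the branch of $\log g$ used to define $\mathfrak{s}^\phi_{e,-}$ (available on $\nu_e(\cal{S}_-^\phi(\delta))$ by Lemma~\ref{sector}), this is a section of $\Phi_e(f,g)$ over $V^\phi_-$, and Lemma~\ref{period criterion} yields $\mathfrak{s}^\phi_{e,-}\in\L^\Be_{\leqslant\varphi_{e,-}}(V^\phi_-)$ with $\gr_{\varphi_{e,-}}(\mathfrak{s}^\phi_{e,-})\neq0$, which is condition~(1).

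For the vanishing in condition~(3) I would first observe that $\L^-=\imath_T^{-1}\widetilde{\jmath}_{B*}\phi_S^{-1}\H^\Be_{1,E_0!}$ and $\L^+=\imath_T^{-1}\widetilde{\jmath}_{B*}\phi_S^{-1}\H^\Be_{1,E_\infty!}$, viewed as subsheaves of $\L^\Be$ through the inclusions of Lemma~\ref{cokernel}: these are local systems of free $\k[q^{\pm1}]$-modules on $T\setminus T_{\R_+}$ and on $T\setminus T_{\R_-}$ whose restrictions to $T_{\R_-}$ and to $T_{\R_+}$ agree with $\L^\Be$ by the definition of $\H^\Be_{1,\bm{0}}$ in \S\ref{Glueing Betti}, hence coincide with the sheaves furnished by Lemma~\ref{Glueing L}. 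Next, for each fixed $\eq\neq0$ the loop $\gamma^\phi_{e,-,\eq}$ (with $e\in E_0$) has support inside $Y$ away from its two endpoints: near $e$ it lies in $\{|\varphi_\eq|\ge|\varphi_\eq(\nu_e(\eq))|\}$, which is disjoint from the component $V^0_\eq\ni e$ of $V_\eq$, so it is bounded away from $e$ by Lemma~\ref{||=||}, while outside $\Delta(e)$ it follows $\phi$-arcs inside the relatively compact neighborhood $\bm{L}^\phi_e(r,R)\subset Y$ of $L^\phi_e$, getting trapped at $p^\phi_e\in P$ by Lemma~\ref{trap}, with both ends in $P^\rd_\tw$ on $V^\phi_-$. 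Hence $\mathfrak{s}^\phi_{e,-}$ already defines a class in $\H^\rd_1=\H^\Be_{1,E!}$, and through $\H^\Be_{1,E!}\hookrightarrow\H^\Be_{1,E_\infty!}$ (Lemma~\ref{cokernel}) a section of $\L^+$; therefore $[\mathfrak{s}^\phi_{e,-}]_{\bm\theta'}=0$ in $\L_{\bm\theta'}^\Be/(\L^\Be)^+_{\bm\theta'}$ for every $\bm\theta'\in V^\phi_-\setminus T_{\R_-}$. The corresponding statement for $\mathfrak{s}^\phi_{e',+}$ with $e'\in E_\infty$ is symmetric, using $\H^\Be_{1,E!}\hookrightarrow\H^\Be_{1,E_0!}=\L^-$.

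The hard part is the analytic estimate behind the second paragraph: the stationary phase asymptotics must hold \emph{uniformly} as $(\tw,\eq)\to(0,0)$ inside the full two-dimensional sector $V^\phi_-$, even though the saddle $\nu_e(\eq)$ collides with the boundary point $e$ as $\eq\to0$ and the coefficient $g^{\eq/\tw}=z^{n_e\eq/\tw}(\cdots)$ degenerates there. Concretely, one must show that the contribution of a shrinking disc about $e$ to the period integral is negligible against that of the saddle at $\nu_e(\eq)$; this uses the explicit local structure of $g^{\eq/\tw}$ near $e$ together with the fact, provided by \S\ref{Around E}, that the loop stays on the set $|\varphi_\eq|\ge|\varphi_\eq(\nu_e(\eq))|$ and hence at a distance from $e$ of order $|\eq|$ (Lemma~\ref{||=||}): one is then always in one of the regimes $\Re(n_e\eq/\tw)\to+\infty$ (where $|z|^{n_e\Re(\eq/\tw)}\to0$ at that scale) or $\Re(n_e\eq/\tw)\to-\infty$ (where the compensating factor in $e^{-\varphi_{e,-}}$ coming from $\eq\log g_e(\eq)$ controls the excess), so in either case the leading term of the integral is $e^{\varphi_{e,-}}$ times a moderate, non-rapidly-decaying factor. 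Making this comparison rigorous is the only genuine work; the remaining bookkeeping is routine.
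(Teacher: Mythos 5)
Your proof is correct and matches the paper's own argument, which is stated in just two sentences: condition (1) is ``similar to that of Lemma \ref{c-case}'' (the saddle-point estimate around $\nu_e(\eq)$ using the local coordinate of Lemma \ref{w_e} and Lemma \ref{period criterion}), and condition (3) is ``trivial by the construction.'' You have simply unpacked what ``trivial by construction'' means---the chains $\widetilde{\gamma}^\phi_{e,-}$ (for $e\in E_0$) and $\widetilde{\gamma}^\phi_{e',+}$ (for $e'\in E_\infty$) begin and end in $P^\rd_\tw$ and never touch $E$, hence already define classes in $\H^\rd_1=\H^\Be_{1,E!}$, which injects via Lemma \ref{cokernel} into $\H^\Be_{1,E_\infty!}=\L^+$ (resp.\ $\H^\Be_{1,E_0!}=\L^-$), so the quotient classes vanish as required; the uniformity worry you flag at the end is not addressed explicitly in the paper either and does not affect the comparison of approaches.
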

\begin{proof}
The proof of the condition (1) is similar to that of Lemma \ref{c-case}. 
The condition (3) is trivial by the construction. 
\end{proof}

\subsubsection{Construction of $\mathfrak{s}_{e,+}^\phi$ $(e\in E_0)$ and $\mathfrak{s}^\phi_{e',-}$ $(e'\in E_\infty)$}
For $e\in E_0$
let $\gamma^\phi_{e,+}$
be the family of paths constructed in \S \ref{construct gamma e}.
Then,
the limit ${\gamma}^\phi_{e,+}(\infty)\coloneqq \lim_{t\to \infty}\gamma$ exists in $\widetilde{X}$ 
and similarly to \S \ref{construct s_c} and \S \ref{construct s_e 1}
the limit ${\gamma}^\phi_{e,+}(\infty)$ is in $P^\rd_\tw$ if $\Re(e^{-\i\phi}\tw)>0$. 
However, 
the limit $\lim_{t\to-\infty}\gamma^{\phi}_{e,+,\eq}(t)$ does not exist (in general) in $\widetilde{X}$. 
Hence we modify the path 
by
taking $\widetilde{\gamma}^\phi_{e,+,\eq}\colon \overline{\R}\to \widetilde{X}$
as follows:
\begin{itemize}
\item $\widetilde{\gamma}^\phi_{e,+,\eq}(t)=\gamma^{\phi}_{e,+,\eq}(t)$ for $t\in [-\epsilon', \infty]$ 
(see \S \ref{Around E} for the definition of $\epsilon'$). 
\item $\widetilde{\gamma}^\phi_{e,+,\eq}([-\infty,\epsilon'])$ is a line segment which connects 
$\widetilde{\gamma}^\phi_{e,+,\eq}(\epsilon')$
and a point in $\varpi_{X}^{-1}(e)$ of constant argument with respect to the coordinate $z=z_e$. 
\end{itemize}
%\begin{remark}
%The sufficiently small $\epsilon'=\epsilon'(\eq)$ can be taken to be $C^\infty$ in $\eq$. 
%\end{remark}
In a similar way as in \S \ref{construct s_e 1}, 
we obtain a section 
\begin{align*}
\mathfrak{s}^\phi_{e,+}\coloneqq \widetilde{\gamma}^\phi_{e,+}\otimes e^{-f/\tw}g^{\eq/\tw}\bm{e}.
\end{align*}
By the same way with suitable changes of $+$ and $-$, 
we also obtain the 
family of paths $\widetilde{\gamma}^\phi_{e',-}$ and the
section
\begin{align*}
\mathfrak{s}^\phi_{e',-}\coloneqq \widetilde{\gamma}^\phi_{e',-}\otimes e^{-f/\tw}g^{\eq/\tw}\bm{e}
\end{align*}
for $e'\in E_\infty$. 

\begin{lemma}
The sections $\mathfrak{s}^{\phi}_{e,+}$ $(e\in E_0)$
and $\mathfrak{s}^\phi_{e',-}$ $(e'\in E_\infty)$
satisfy
the conditions $(1)$ and $(3)$ in Theorem \ref{WTS}. 
\end{lemma}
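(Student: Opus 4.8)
The plan is to follow the template of Lemma \ref{c-case}, the only genuinely new ingredient being an estimate for the line segment that was inserted near $e$ to make the path $\widetilde{\gamma}^\phi_{e,+}$ converge in $\widetilde{X}$. By the $+\!\leftrightarrow\!-$, $E_0\!\leftrightarrow\!E_\infty$, $T_{\R_+}\!\leftrightarrow\!T_{\R_-}$ symmetry with which the paths and sections were set up in \S\ref{construct gamma e}, it suffices to treat $\mathfrak{s}^\phi_{e,+}$ for $e\in E_0$. For condition $(1)$, by Lemma \ref{period criterion} it is enough to exhibit a section $\varphi_{e,+}\in H^0(V^\phi_+,\Phi_e(f,g))$ such that $e^{-\varphi_{e,+}}\langle\mathfrak{s}^\phi_{e,+},[\omega]\rangle_{\rm Per}$ is of moderate growth for every class $[\omega]$, and not of rapid decay for at least one. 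I take $\varphi_{e,+}$ to be the section of $\Phi_e(f,g)=[u^{-1}(\log g_e(v)-f_e(v)/v+2\pi\i\Z)]$ attached to the branch of $\log g$ on $\nu_e(\cal{S}^\phi_+(\delta))$ fixed when $\mathfrak{s}^\phi_{e,+}$ was defined; writing $g_e(v)=v^{n_e}h_e(v)$ with $h_e(0)\neq0$ and $f_e(v)=f(e)+\nu_e(v)$ in the chart $z=z_e$, one checks $\log g_e(v)-f_e(v)/v=n_e\log v+h(v)/v$ with $h\in\O_{\C,0}$, $h(0)=-f(e)$, so $\varphi_{e,+}$ really is a section of $\Phi_{n_e,h(v)}$. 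Since $\langle\mathfrak{s}^\phi_{e,+},[\omega]\rangle_{\rm Per}=\int_{\widetilde{\gamma}^\phi_{e,+,\eq}}e^{-f/\tw}g^{\eq/\tw}\omega$, everything reduces to estimating this contour integral.

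I split $\int_{\widetilde{\gamma}^\phi_{e,+,\eq}}=\int_{\mathrm{main}}+\int_{\mathrm{seg}}$, where $\mathrm{main}$ is the part along the original path $\gamma^\phi_{e,+,\eq}$ and $\mathrm{seg}$ is the inserted segment from $\widetilde{\gamma}^\phi_{e,+,\eq}(\epsilon')$ to the boundary point on $\widetilde{e}=\varpi_X^{-1}(e)$. The main part is handled exactly as in Lemma \ref{c-case}: condition $(\gamma3)$ gives the monotonicity of $\Re(e^{-\i\phi}(f-\eq\log g))$ as $|t|$ grows, and the local normal form $\alpha_\eq=w_\eq\,dw_\eq$ near $\nu_e(v)$ from Lemma \ref{w_e} makes the path pass through a nondegenerate saddle of $(f-\eq\log g)/\tw$ along its steepest descent curve (valid because $\arg\tw\approx\phi$ for $\bm\theta\in V^\phi$), so the standard saddle-point method (\cite{AGV}, \cite{Mochizuki}) yields $\int_{\mathrm{main}}=e^{\varphi_{e,+}}\cdot(\text{moderate, nonvanishing leading term})$, using $-(f-\eq\log g)(\nu_e(v))/\tw=u^{-1}(\log g_e(v)-f_e(v)/v)=\varphi_{e,+}$ under $\tw=uv$, $\eq=v$. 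For the segment, I work in the chart $z=z_e$ with $f=f(e)+z$, $g=z^{n_e}h(z)$ and substitute $z=v\zeta$. The key observation is that the a priori unbounded factors $v^{-n_e/u}$, $h_e(0)^{-1/u}$ and $e^{\nu_e(v)/(uv)}$ carried by $e^{-\varphi_{e,+}}$ are exactly compensated by the factors $z^{n_e\eq/\tw}$, $h(z)^{\eq/\tw}$, $e^{-z/\tw}$ of the integrand together with the endpoint (Laplace) contribution of $\int_0^{\approx n_e}\zeta^{n_e/u}e^{-\zeta/u}\,d\zeta$ at $\zeta\approx n_e$; concretely this uses $g_e(v)=\nu_e(v)^{n_e}h(\nu_e(v))$, $f_e(v)=f(e)+\nu_e(v)$ and $\nu_e(v)\sim n_ev$ from \eqref{nu_e}. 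Since moreover a representative $\omega$ of the relevant dual de Rham class vanishes along $E_0$, the substitution also produces a factor $v^N$ with $N\geq1$, so $e^{-\varphi_{e,+}}\int_{\mathrm{seg}}$ is moderate and subdominant to the main part. Hence $e^{-\varphi_{e,+}}\langle\mathfrak{s}^\phi_{e,+},[\omega]\rangle_{\rm Per}$ is moderate for every $[\omega]$, and for $[\omega]$ with nonvanishing value at $\nu_e(v)$ it has a nonzero asymptotic expansion, so $\mathfrak{s}^\phi_{e,+}\notin\L^\Be_{<\varphi_{e,+}}$ and $\gr_{\varphi_{e,+}}(\mathfrak{s}^\phi_{e,+})\neq0$. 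This gives $(1)$.

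For condition $(3)$, fix $\bm\theta\in V^\phi_+\setminus T_{\R_+}$, so $\bm\theta\in T_+\cup T_-$. By the gluing of \S\ref{Glueing Betti}, near $\bm\theta$ the sheaf $\L^\Be$ is $\imath_T^{-1}\widetilde{\jmath}_{B*}\phi_S^{-1}\H^\mg_1$ while $(\L^\Be)^-$ is the corresponding pull back of $\H^\Be_{1,E_0!}$ (the sheaf realizing $\H^\Be_{1,\bm0}$ along $T_{\R_-}$), and by Lemma \ref{cokernel} with $H_1=\emptyset$, $H_2=E_0$ the quotient satisfies $\L^\Be_{\bm\theta}/(\L^\Be)^-_{\bm\theta}\simeq\bigoplus_{e\in E_0}\k[q^{\pm1}]/(1-q^{n_e})$, the quotient map sending a relative $1$-cycle with $\K$-coefficients to its boundary $0$-chain on each circle $\widetilde{e}$, viewed in the monodromy-$q^{n_e}$ local system. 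For $\bm\theta\in V^\phi_+$ the cycle $\widetilde{\gamma}^\phi_{e,+,\eq}$ has its $(+\infty)$-endpoint in $P^\rd_\tw$ (because $\bm\theta\in V^\phi$) and its other endpoint a single point of $\widetilde{e}$ (by the construction of the inserted segment), so its class is a unit of $\k[q^{\pm1}]$ in the $e$-th summand and $0$ in the others; since $n_e>0$ we have $\k[q^{\pm1}]/(1-q^{n_e})\neq0$, hence this class is nonzero, i.e. $[\mathfrak{s}^\phi_{e,+}]_{\bm\theta}\neq0$. The statement for $\mathfrak{s}^\phi_{e',-}$ with $e'\in E_\infty$ follows by the same argument after interchanging $+\!\leftrightarrow\!-$, $E_0\!\leftrightarrow\!E_\infty$, $T_{\R_+}\!\leftrightarrow\!T_{\R_-}$ (using $(\L^\Be)^+$ and $\L^\Be/(\L^\Be)^+\simeq\bigoplus_{e'\in E_\infty}\k[q^{\pm1}]/(1-q^{n_{e'}})$ in place of the above).

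The main obstacle is the segment estimate in $(1)$: one must verify that the unbounded exponential prefactors dictated by the very definition of $\Phi_e(f,g)$ (that is, by $\log g_e$ and $f_e$) cancel precisely against the prefactors produced by the integrand and by the endpoint-Laplace behaviour of $\int_0^{\approx n_e}\zeta^{n_e/u}e^{-\zeta/u}\,d\zeta$, leaving a genuinely moderate (indeed $O(v^N)$, $N\geq1$) remainder. This is a careful but elementary computation once the local normal forms $g(z)=z^{n_e}h(z)$, $f(z)=f(e)+z$ and the defining relation \eqref{nu_e} of $\nu_e$ are matched with $\varphi_{e,+}$. The saddle-point asymptotics of the main part are standard, exactly as in Lemma \ref{c-case}, and the homological input for $(3)$ is formal given Lemma \ref{cokernel}.
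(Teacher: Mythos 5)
Your argument for condition~$(3)$ is correct and usefully fills in what the paper dismisses as trivial by construction; the identification of the quotient $\L^\Be_{\bm\theta}/(\L^\Be)^-_{\bm\theta}$ via Lemma~\ref{cokernel} and the boundary-class interpretation of $[\mathfrak{s}^\phi_{e,+}]_{\bm\theta}$ are right.

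For condition~$(1)$ you take a genuinely different route from the paper, and as written it has a gap. The paper exploits the identification $\j^*\H^1_{\dR,\bm{0}}(\widetilde{V}^\phi_+)\simeq\j^*\H^1_{\dR,E_0!}(\widetilde{V}^\phi_+)\simeq\j^*\lim_{a\to-\infty}\H^1_{\dR,a,b}$ to represent $[\omega]$ by a form $\omega_a$ with arbitrarily high vanishing order along $E_0$, and then invokes a limit Stokes theorem to replace the modified contour $\widetilde{\gamma}^\phi_{e,+}$ by the honest steepest-descent path $\gamma^\phi_{e,+}$; the inserted ``segment'' thereby disappears from the contour of integration and Lemma~\ref{c-case} applies verbatim. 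You instead keep the segment and estimate it directly, and the step ``a representative $\omega$ of the relevant dual de Rham class vanishes along $E_0$, ... a factor $v^N$ with $N\geq 1$'' is both imprecise and too weak. A representative of the class in $\j^*\pi_{\X*}(\M_{1,1}\otimes\tw^{-1}\Omega^1_{\X/S})$ need not vanish at $e$ at all; and on the segment the factor $g^{\eq/\tw}=z^{n_e/u}h(z)^{1/u}$ contributes $|z|^{n_e\cos\theta_u/|u|}$ to the integrand, whose exponent is negative and tends to $-\infty$ as $|u|\to 0$ whenever $\cos\theta_u<0$, a range of directions certainly present in $V^\phi_+\setminus T_{\R_+}$. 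Thus no single finite $N$ even makes $\int_{\rm seg}$ converge on a full neighborhood of such $\bm{\theta}$; you must take $a$ arbitrarily negative (vanishing order $|a|$ as large as the relevant compact in $\widetilde{B}$ requires), which is precisely the paper's device and needs to be invoked explicitly. Once $\omega_a$ is chosen with this freedom, the paper's contour deformation is the cleaner conclusion: Stokes' theorem on a region collapsing onto $e$ gives $\int_{\widetilde{\gamma}}\cdots\omega_a=\int_{\gamma}\cdots\omega_a$, removing the segment altogether, and $\gamma^\phi_{e,+}$ is the genuine steepest descent through $\nu_e(\eq)$, so the saddle-point asymptotics are standard. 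Your direct estimate of $\int_{\rm seg}$ could presumably be pushed through with the same choice of $\omega_a$, but both the convergence and the claimed subdominance (the segment begins essentially at the saddle) would then have to be tracked against the vanishing order carefully, and the cancellation computation you sketch would need to be carried out to the end.
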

\begin{proof}
The condition $(3)$
is trivial by the construction. 
We shall check the condition $(1)$ in the case $e\in E_0$ 
(the case $e'\in E_\infty$ can also be treated in the same way).
Let $[\omega]$
be a section of $\j^*\H^1_{\dR,\bm{0}}$
on a neighborhood of $(0,0)\in S$
represented by $\omega\in \pi_{\X*}\M\otimes \Omega^{1}_{\X/S}$. 
Set 
\[\widetilde{V}^\phi_+\coloneqq 
\(\{\tw\in \C\mid \Re(e^{-\i\phi}\tw)>0\}
\times\cal{S}^\phi_+(\delta)\) \subset{S}^*. \]
Then, we have $\j^*\H^1_{\dR,\bm{0}}(\widetilde{V}^\phi_+)
=\j^*\H^1_{\dR, E_0!}(\widetilde{V}^\phi_+)=\j^*(\lim\H^{1}_{\dR, a ,b})$, 
and hence 
$[\omega]$ is also represented by
$\omega_a\in\j^*\pi_*(\M_{a, b}(*\cal{P})\otimes \Omega^1_{\X/S})$ where  
$a$ is arbitrary small.  
Taking $a$ sufficiently small, the period pairing can be computed as follows:
\begin{align*}
\<\mathfrak{s}^\phi_{e,+}, [\omega]\>_\Per
&=\int_{\widetilde{\gamma}^\phi_{e,+}}e^{-f/\tw}g^{\eq/\tw}\omega_a\\
&=\int_{\gamma^\phi_{e,+}}e^{-f/\tw}g^{\eq/\tw}\omega_a&(\text{limit Stokes theorem})\\
&=\int_{\gamma^\phi_{e,+}}e^{-f/\tw}g^{\eq/\tw}\omega&(\text{limit Stokes theorem})
\end{align*}
Then,  we can use the same discussion as in Lemma \ref{c-case} to see (1).
\end{proof}

\begin{proof}[Proof of Theorem \ref{WTS}]
It remains to see that 
the sections constructed above satisfies 
the condition (2) in Theorem \ref{WTS}. 
This follows
from the following observations: 
\begin{itemize}
\item The paths $\gamma_{c,\eq}^{\phi}$ and $\gamma^\phi_{e,\eq,\pm} $
do not intersect with 
$\gamma_{c',\eq}^{\phi+\pi}$ or $\gamma_{e',\eq,\pm}^{\phi+\pi} $ if $c\neq c'$ 
and $e\neq e'$, where $c,c'\in \Crit(f)$ and $e, e'\in E$ (condition $(\gamma 2)$).
\item
The paths $\gamma_{c,\eq}^\phi$ and $\gamma_{c,\eq}^{\phi+\pi}$
(resp. $\gamma_{e,\eq,\pm}^\phi$ and $\gamma^{\phi+\pi}_{e,\eq,\mp}$)
intersect transversally only at $\nu_c(\eq)$ (resp. $\nu_e(\eq)$), 
at which we can take the branch of $\log g$ so that 
$\<\mathfrak{s}_c^\phi,\mathfrak{s}_c^{\phi+\pi}\>=1$
(resp. $\<\mathfrak{s}_{e\pm}^\phi, \mathfrak{s}^{\phi+\pi}_{e,\mp}\>=1$).
\end{itemize}
(see \cite{MMT} and \cite{FSY}.) 
\end{proof}

\subsection{Examples}\label{last examples}
In this last section,
we shall give two examples of 
$\L^\Be(f,g)$. 
The first one is related to the gamma function,
and the second one is related to cylindrical functions. 
\subsubsection{Gamma function}
We consider the case 
treated in Examples \ref{EXG}, \ref{Ga}, \ref{GDual}, and \ref{spec gamma}.
In this case, $\pi_{\Sigma}^{-1}(0)=E_0=\{0\}$. 
Any $\phi\in \R$ is generic. 

For $(\tw,\eq)\in \widetilde{V}^\phi_+$, 
we have
\begin{align*}
\<\widetilde{\mathfrak{s}}^\phi_{0,+},\j^*e_{-1}\>_\Per
&=\int_{\gamma^{\phi}_{0,+,\eq}}e^{-z/\tw}z^{\eq/\tw}\tw^{-1}\frac{dz}{z}\\
&=\tw^{-1}\int_{\gamma^{\phi}_{0,+,\eq}}e^{-\zeta}(\zeta\tw)^{\eq/\tw}\frac{d\zeta}{\zeta}\\
&=\tw^{\eq/\tw-1}\Gamma(\tw/\eq).
\end{align*}
By the computation in \ref{GDual},
we obtain
\[\widetilde{\mathfrak{s}}^\phi_{0,+}=(2\pi\i)^{-1}\tw^{\eq/\tw}\Gamma(\eq/\tw)e_{0}\]
in $\H^1_{\dR,\bm{0}|S^*}$. 
Note that 
we can directly check the 
relations $[\nabla_\euler,\widetilde{\mathfrak{s}}^\phi_{0,+}]=0$ and 
$\shift(\widetilde{\mathfrak{s}}^\phi_{0,+})=\widetilde{\mathfrak{s}}^\phi_{0,+}$
in this case. 
In a similar way, 
using the Hankel's counter integral representation of the gamma function,
we also obtain
\[\widetilde{\mathfrak{s}}^{\phi}_{0,-}=(2\pi\i)^{-1}(1-q)\tw^{\eq/\tw}\Gamma(\eq/\tw)e_{0}\]
on $V^\phi_-$. 
Hence we obtain the following:
\begin{proposition}
We have the natural isomorphism
\begin{align*}
(\L^\Be(f,g),\L^\Be_{\leqslant}(f,g))\simeq (\L_\Gamma,\L_{\Gamma\leqslant})
\end{align*}
where $f, g$ is as in Example \ref{EXG}
and $\L_\Gamma$ is as in Example \ref{GAMMA FUN}.\qed
\end{proposition}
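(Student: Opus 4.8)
The plan is to identify, explicitly, the Betti homology sheaf $\L^\Be(z,z)$ with the sheaf $\L_\Gamma$ defined in Example \ref{GAMMA FUN}, and then to match their Stokes filtrations. Since $\pi_\Sigma^{-1}(0) = E_0 = \{0\}$ in this case, the rank of $\L^\Be(z,z)$ is one over $\k[q^{\pm1}]$, and by Theorem \ref{WTS} the sections $\mathfrak{s}^\phi_{0,+}$ (on $V^\phi_+$) and $\mathfrak{s}^\phi_{0,-}$ (on $V^\phi_-$) furnish frames of $(\L^\Be)^+$ and $(\L^\Be)^-$ respectively. The computations just preceding the proposition identify these sections, via the period isomorphism $\mathrm{i}_{\bm 0}$ of the corollary in \S\ref{Glueing Betti} and the explicit dual basis of Example \ref{GDual}, with
\begin{align*}
\widetilde{\mathfrak{s}}^\phi_{0,+} = (2\pi\i)^{-1}\tw^{\eq/\tw}\Gamma(\eq/\tw)e_0, \qquad
\widetilde{\mathfrak{s}}^\phi_{0,-} = (2\pi\i)^{-1}(1-q)\tw^{\eq/\tw}\Gamma(\eq/\tw)e_0
\end{align*}
inside $\H^1_{\dR,\bm 0|S^*}$. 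First I would pull these expressions back along $\phi_S\colon B\to S$, $(u,v)\mapsto (uv,v)$, so that $\tw = uv$, $\eq = v$, $\eq/\tw = u^{-1}$, and $q = \exp(2\pi\i u^{-1})$; then $\tw^{\eq/\tw} = (uv)^{u^{-1}} = u^{u^{-1}}v^{u^{-1}}$ (with a suitable choice of branch), so the two frames become $(2\pi\i)^{-1}u^{u^{-1}}v^{u^{-1}}\Gamma(u^{-1})e_0$ and $(2\pi\i)^{-1}u^{u^{-1}}v^{u^{-1}}(1-q)\Gamma(u^{-1})e_0$.

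Next I would construct the isomorphism of sheaves $\L^\Be(z,z)\to \L_\Gamma$ on $T$ by sending $\widetilde{\mathfrak{s}}^\phi_{0,+}\mapsto u^{u^{-1}}v^{u^{-1}}\Gamma(u^{-1})$ on $T\setminus T_{\R_-}$ and $\widetilde{\mathfrak{s}}^\phi_{0,-}\mapsto u^{u^{-1}}v^{u^{-1}}(1-q)\Gamma(u^{-1})$ on $T\setminus T_{\R_+}$ — that is, the common factor $(2\pi\i)^{-1}$ is dropped and the frame element $e_0$ is dropped. I must check that these two local definitions agree on the overlap $T_+\cup T_-$ as submodules of $\imath_T^{-1}\widetilde{\jmath}_{B*}\O_{B^*}$, using the glueing data of $\L^\Be$ described in Lemma \ref{Glueing L} and the definition of $\L_\Gamma$; this amounts to matching the $\k[q^{\pm1}]$-module generators $\mathfrak{s}^\phi_{0,\pm}$ of $(\L^\Be)^\pm$ with the generators $u^{u^{-1}}v^{u^{-1}}\Gamma(u^{-1})$ and $u^{u^{-1}}v^{u^{-1}}(1-q)\Gamma(u^{-1})$ of $\L_\Gamma^\pm$, which is a formal consequence of the explicit formulas above together with the fact that $\L^\Be(z,z)$ is saturated (proven in \S\ref{Glueing Betti}) while $\L_\Gamma$ is visibly saturated from its definition. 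Since both sides are quasi-local systems of rank one with the same glueing behaviour over the strata of $\Theta$, a module isomorphism matching frames on each $T_\star$ and compatible with restrictions extends uniquely to an isomorphism of quasi-local systems.

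Finally I would verify that this isomorphism is filtered, i.e. carries $\L^\Be_{\leqslant\varphi}(z,z)$ onto $\L_{\Gamma,\leqslant\varphi}$ for every local section $\varphi$ of $\IQ$. Both filtrations are defined by the same recipe — a local section lies in the $\leqslant\varphi$ part iff, after dividing by $e^\varphi$, its coordinate with respect to a holomorphic frame of $\phi_S^*\H^1_{\dR,\bm 0}$ (resp. of $\imath_T^{-1}\widetilde{\jmath}_{B*}\O_{B^*}$) has moderate growth; see the definition of $\L^\Be_\leqslant$ in \S\ref{Glueing Betti}, the definition of $\L_{\Gamma,\leqslant}$ in Example \ref{GAMMA FUN}, and Lemma \ref{period criterion}. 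Under the isomorphism the coordinate functions differ only by the invertible constant $(2\pi\i)^{-1}$ and the change of frame $e_0 \leftrightarrow 1$, neither of which affects the moderate-growth condition, so the filtrations correspond. The exponential factor and graded part are then read off from the Stirling and reflection formulas exactly as in Example \ref{GAMMA FUN}, and consistently with Theorem \ref{main theorem}, which gives $\Phi_0(z,z) = [u^{-1}(\log v - 1 + 2\pi\i\Z)]$ since $f_0(v) = g_0(v) = v$ (Example \ref{spec gamma}). The one point requiring genuine care — the main (though still mild) obstacle — is the bookkeeping of branches of $\log u$, $\log v$ and of $\tw^{\eq/\tw}$ across the strata $T_{\R_\pm}$, to ensure that the frame matching is globally consistent on $T$ and not merely locally; this is handled by the same "suitable choice of branch" conventions already invoked in Example \ref{GAMMA FUN}.
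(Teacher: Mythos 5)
Your proof is correct and follows exactly the route the paper intends. The key inputs are the period computations (given just above the proposition) identifying $\widetilde{\mathfrak{s}}^\phi_{0,+}$ and $\widetilde{\mathfrak{s}}^\phi_{0,-}$ with $(2\pi\i)^{-1}\tw^{\eq/\tw}\Gamma(\eq/\tw)e_0$ and $(2\pi\i)^{-1}(1-q)\tw^{\eq/\tw}\Gamma(\eq/\tw)e_0$ respectively; after that, the coordinate change $(\tw,\eq)=(uv,v)$, the frame matching $\mathfrak{s}^\phi_{0,\pm}\leftrightarrow$ generators of $\L_\Gamma^{\pm}$, and the observation that the filtrations are defined by the same moderate-growth recipe relative to the fixed frame $e_0$ are exactly what the paper leaves implicit before the $\qed$.
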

\subsubsection{Cylindrical functions}
We consider the case
treated in Examples \ref{Bessel}, \ref{BDual}, and \ref{spec bessel}. 
In this case,
we have 
$\pi_{\Sigma}^{-1}(0)=\Crit(f)=\{\pm 1\}$. 
We put $c_1=1$, $c_2=-1$. 
$\phi\in\R$ is generic if and only if 
$e^{\i\phi}\notin\R$. 
Take $\phi$ so that $\Im (e^{\i\phi})>0$. 
Let $H^{(1)}_\nu(z)$ and $H^{(2)}_\nu(z)$ $(\nu, z \in \C,i=1,2)$ denote the first and second
Hankel function. 
We then put 
\[C^{(i)}(\tw,\eq)\coloneqq \pi\i e^{\pi\i \eq /2\tw}H^{(i)}_{\eq/\tw}(2\i \tw^{-1})\]
for $(\tw,\eq)\in S^*$ and $i=1,2$. 
Then we have
\begin{align*}
\<\mathfrak{s}_{c_i}^\phi, \j^*e_{j}\>
&=\tw^{-1}\int_{\gamma^\phi_{c}}e^{-({z+z^{-1}})/\tw}z^{\eq/\tw-j-1}\frac{dz}{z}\\
&=\tw^{-1}e^{\pi\i(\eq/\tw-j-1)/2}\int_{\gamma^\phi_c}e^{-(\zeta-\zeta^{-1})/\tw}\zeta^{\eq/\tw-j-1}\frac{d\zeta}{\zeta}\\
&=\tw^{-1}C^{(i)}(\tw, \eq-(j+1)\tw)
\end{align*}
for $i=1,2$ and $j=0,-1$.
By the computation given in Example \ref{BDual}, 
we
obtain
\begin{align*}
2\pi\i\mathfrak{s}_{c_i}^\phi= C^{(i)}(\tw,\eq)e_0+C^{(i)}(\tw,\eq-\tw) e_{-1}
\quad (i=1,2).
\end{align*}
We can directly check that $\nabla_\euler(\mathfrak{s}_{c_i}^\phi)=0$ and 
$\shift(\mathfrak{s}_{c_i}^\phi)=\mathfrak{s}_{c_i}^\phi$. 
% C(\tw, \eq+\tw)-C(\tw,\eq-\tw)=\eq C(\tw,\eq)

Note that in the coordinate $(u, v)$, 
we have 
\[C^{(i)}(uv, v)= \pi\i e^{\pi\i /2u}H^{(i)}_{u^{-1}}(2\i u^{-1}v^{-1})\]
Hence Theorem \ref{main theorem} in this case
seems to be closely related to 
the asymptotic analysis of Olver \cite{Olver} on Bessel functions. 

\subsection*{Acknowledgement}
The author is grateful to 
Fumihiko Sanda, Jeng-Daw Yu, Saiei-Jaeyeong Matsubara-Heo, 
Keiji Matsumoto, 
Mikhail Kapranov, and Tatsuki Kuwagaki for valuable discussions. 
The discussion with Saiei-Jaeyeong Matsubara-Heo 
gave the author an idea on the way of
taking basis in 
the proof of Theorem \ref{WTS}.  
Keiji Matsumoto told the author the paper \cite{MMT}. 

He would like to express his gratitude to Takuro Mochizuki 
and Kyoji Saito  
for their kindness and encouragement. 
Takuro Mochizuki also kindly pointed out some mistakes in an 
early draft.

He also would like to thank his son, Hinata,
who joined his family when he 
was writing the first draft of this paper, 
for giving him happiness and pleasure. 

This work was supported by JSPS KAKENHI Grant Number JP18H05829, 
19K21021, 20K14280, and partially by 18H01116.
This work was also supported by World Premier International Research Center Initiative (WPI), MEXT, Japan.
%
%We then cons
%
%
%
%
%\subsection{} 
%We shall finish the proof of Theorem \ref{WTS}
%\subsection{}
%
% 
%Before going to the proof of 
%Theorem \ref{WTS} in general, 
%we shall consider  particular examples, i.e.
%Example \ref{Bessel} and Example \ref{EXG}.
%
%\begin{proposition}
%We have an isomorphism
%\begin{align*}
%(\L^\Be(f,g),\L_{\leqslant}^\Be)\simeq (\L_\Gamma,\L_{\Gamma\leqslant})
%\end{align*}
%if Stokes filtered quasi-local systems
%where $f=g=z$ on $X=\P^1$. 
%\end{proposition}

\end{document}